\def\R {\mathbb{R}}
\def\C {\mathcal{C}}
\def\N {\mathbb{N}}
\def\S {\mathbb{S}}
\def\from {\colon}
\def\bv {\mathbf{v}}
\def\bu {\mathbf{u}}
\def\bw {\mathbf{w}}
\def\bW {\mathbf{W}}
\def\eps{\varepsilon}
\def\dist{{\rm dist}}
\newcommand{\loc}{\mathrm{loc}}
\newcommand{\sgn}{\mathrm{sgn}}
\renewcommand{\div}{\mathrm{div}}
\newcommand{\problem}[1] {(P)_{#1}}
\newcommand{\probbd}[1] {(PD)_{#1}}
\newcommand{\classG} {\mathcal{G}}
\newcommand{\classH} {\mathcal{H}}
\newcommand{\nuACF} {\nu^{\mathrm{ACF} }}
\newcommand{\nuLio} {\nu^{\mathrm{Liou} }}
\newcommand{\de}[1] {\mathrm{d} #1}
\newcommand{\ddfrac}[2] {\frac{\displaystyle #1 }{\displaystyle #2} }
\DeclareMathOperator*{\tsum}{\textstyle{\sum}}
\DeclareMathOperator*{\osc}{osc}
\newtheorem{proposition}{Proposition}[section]
\newtheorem{theorem}[proposition]{Theorem}
\newtheorem{corollary}[proposition]{Corollary}
\newtheorem{lemma}[proposition]{Lemma}
\theoremstyle{definition}
\newtheorem{definition}[proposition]{Definition}
\newtheorem{remark}[proposition]{Remark}
\numberwithin{equation}{section}
\title[Segregation problems involving half laplacians]{Uniform H\"older bounds for strongly competing systems  involving the square root of the laplacian}
\author{Susanna Terracini}
\email{susanna.terracini@unito.it}
\address{Dipartimento di Matematica "Giuseppe Peano", Universit\`a degli Studi
di Torino, Via Carlo Alberto 10, 10123 Torino, Italy}
\author{Gianmaria Verzini }
\email{gianmaria.verzini@polimi.it}
\address{Dipartimento di Matematica, Politecnico di Milano, p.za Leonardo da
Vinci 32,  20133 Milano, Italy}
\author{Alessandro Zilio}
\email{alessandro.zilio@mail.polimi.it}
\address{Dipartimento di Matematica, Politecnico di Milano, p.za Leonardo da
Vinci 32,  20133 Milano, Italy}
\thanks{Work partially supported by the PRIN2009 grant ``Critical Point Theory and
Perturbative Methods for Nonlinear Differential Equations''.}
\subjclass[2010]{Primary: 35J65; secondary: 35B40 35B44 35R11 81Q05 82B10.}
\keywords{Square root of the laplacian, spatial segregation, strongly competing systems, optimal regularity of limiting profiles, singular perturbations}
\begin{document}

\begin{abstract}
For a class of competition-diffusion nonlinear systems involving the square root of the Laplacian, including the fractional Gross-Pitaevskii system
\[
(-\Delta)^{1/2} u_i=\omega_i u_i^3 + \lambda_i u_i
-\beta u_i\sum_{j\neq i}a_{ij}u_j^2,\qquad i=1,\dots,k,
\]
we prove that $L^\infty$ boundedness implies $\C^{0,\alpha}$ boundedness for every $\alpha\in[0,1/2)$, uniformly as $\beta\to +\infty$. Moreover we prove that the limiting profile is $\C^{0,1/2}$.
This system arises, for instance, in the relativistic Hartree-Fock approximation theory for
$k$-mixtures of Bose-Einstein condensates in different hyperfine states.
\end{abstract}
\maketitle

\section{Introduction}
Regularity issues involving fractional laplacians are very challenging, because of the genuinely non-local nature of such operators, and for this reason they have recently become the object of an intensive research, especially when associated with the asymptotic analysis and the study of free boundary problems, see for instance
\cite{css,atcafsal,garpetr,crs,MR2680400,MR2675483,DeSRoq} and references therein. The present paper is concerned with this topic,
when the creation of a free boundary is triggered by the interplay between fractional diffusion and \emph{competitive interaction}.

Several physical phenomena can be described by a certain number of densities (of mass, population,
probability, ...) distributed in a domain and subject to laws of diffusion, reaction, and
competitive interaction. Whenever the competition is the prevailing feature, the
densities tend to segregate, hence determining a partition of the domain. When anomalous diffusion
is involved, one is lead to consider the class of stationary systems of semilinear equations
\begin{equation*}
\begin{cases}
(-\Delta)^s u_i=f_i(x,u_i)-\beta u_i\sum_{j\neq i}g_{ij}(u_j)
\\u_i\in H^{s}(\R^N),
\end{cases}
\end{equation*}
thus focusing on the singular limit problem obtained when the (positive) parameter $\beta$,
accounting for the
competitive interactions, diverges to  $\infty$. Among the others, the cases
$f_i(s)=r_is(1-s/K_i)$, $g_{ij}(s)=a_{ij}s$ (logistic internal dynamics with Lotka-Volterra competition) and $f_i(s)=\omega_i s^3 + \lambda_i s$, $g_{ij}(s)=a_{ij}s^2$ (focusing-defocusing Gross-Pitaevskii system with competitive interactions, see for instance \cite{MR2090357, MR2435464}) are of the highest interest in the applications to population dynamics and theoretical physics, respectively.

For the standard Laplace diffusion operator (namely $s=1$),  the analysis of the qualitative properties of solutions to the corresponding systems
has been undertaken, starting from \cite{MR2090357, MR1939088, MR1962357}, in a series of recent papers \cite{ctv, ww,MR2529504, MR2393430, nttv}, also in the parabolic case \cite{wz,dwz1,dwz2,dwz3}.  In the singular limit one finds a vector $\bu=(u_1,\cdots,u_k)$ of limiting profiles  with mutually disjoint supports: indeed, \emph{the segregated states} $u_i$ satisfy $u_i\cdot u_j\equiv 0$, for $i\neq j$, and
\begin{equation*}
-\Delta u_i=f_i(x,u_i) \qquad\text{ whenever }\;u_i \neq 0\;, \ \ i=1,\ldots,k.
\end{equation*}
Natural questions concern the functional classes of convergence (a priori bounds), optimal regularity of the limiting profiles, equilibrium conditions at the interfaces, and regularity of the nodal set. In
\cite{ctv} (for the Lotka-Volterra competition) and \cite{nttv} (for the variational Gross-Pitaevskii one) it is proved that $L^\infty$ boundedness implies $\C^{0,\alpha}$ boundedness, uniformly as $\beta\to+\infty$, for every $\alpha\in(0,1)$. Moreover, in the second case, it is shown that the limiting profiles are Lipschitz continuous. The proof relies upon elliptic estimates,  the blow-up technique, the monotonicity formulae by Almgren \cite{almgren} and Alt-Caffarelli-Friedman \cite{acf}, and it reveals a subtle interaction between diffusion and competition aspects. This interaction mainly occurs at two levels: the validity and exactness of the  Alt-Caffarelli-Friedman monotonicity formula and, consequently, the validity of Liouville type theorems for entire solutions to semilinear systems.

In this paper we address the problem of a priori bounds and optimal regularity of the limiting profiles in the simplest case of anomalous diffusion, driven by the square root of the laplacian. As well known, anomalous diffusion arises when the Gaussian statistics of the classical Brownian motion is replaced by a different one, allowing for the L\'evy jumps (or flights).  In the light of already built theory  for the regular laplacian, we focus on the joint effect of diffusion and competition as the (non local) diffusion process acts on a longer range.

Our model problem will be the following:
\begin{equation}
 \begin{cases}\label{eq:initial_problem}
(-\Delta)^{1/2} u_i=f_{i,\beta}(u_i)-\beta u_i\sum_{j\neq i} u_j^2\\
u_i\in H^{1/2}(\R^N).
\end{cases}
\end{equation}
This class of problems includes the already mentioned Gross-Pitaevskii systems with focusing or defocusing nonlinearities
\begin{equation*}
\begin{cases}
(-\Delta + m_i^2)^{1/2}u_i=\omega_iu_i^3+\lambda_{i,\beta}u_i-\beta u_i\sum_{j\neq i} a_{ij}u^2_j\;\\
u_i\in H^{1/2}(\R^N),
\end{cases}
\end{equation*}
with $a_{ij}=a_{ji}>0$, which is the relativistic version of the  Hartree-Fock approximation theory for mixtures of Bose-Einstein condensates in different hyperfine states. Even though we will perform the proof in the case $m_i=0$ (and $a_{ij}=1$), the general case, allowing positive masses $m_i>0$, follows with minor changes and it is actually a bit simpler.

As it is well known (see e.g. \cite{cs}), the $N$-dimensional half laplacian can be interpreted as a Dirichlet-to-Neumann operator and solutions to problem \eqref{eq:initial_problem} as traces of harmonic functions on the $(N+1)$-dimensional half space having the right-hand side of  \eqref{eq:initial_problem} as normal derivative. For this reason, it is worth stating our main results for harmonic functions with nonlinear Neumann boundary conditions involving strong competition terms. We use the following notations: for any dimension $N\geq1$, we consider the half ball $B^+_r(x_0,0):= B_r(x_0,0)\cap\{y>0\}$, which boundary contains the spherical part $\partial^+B^+_r :=\partial B_r \cap\{y>0\}$ and the flat one $\partial^0B^+_r :=B_r \cap\{y=0\}$ (here $y$ denotes the $(N+1)$-th coordinate).
\begin{theorem}[Local uniform H\"older bounds]\label{thm: intro_local}
Let the functions $f_{i,\beta}$ be continuous and uniformly bounded (w.r.t. $\beta$) on bounded sets, and let $\{\bv_{\beta}=(v_{i,\beta})_{1\leq i\leq k}\}_{\beta}$ be a family of $H^1(B^+_1)$ solutions to the problems
\[
    \begin{cases}
    - \Delta v_i = 0 & \text{in } B^+_1\\
    \partial_{\nu} v_i = f_{i,\beta}(v_i) - \beta v_i \tsum_{j \neq i} v_j^2 & \text{on } \partial^0 B^+_1.
    \end{cases} \eqno \problem{\beta}
\]
Let us assume that
\[
    \| \bv_{\beta} \|_{L^{\infty}(B^+_1)} \leq M,
\]
for a constant $M$ independent of $\beta$. Then for every $\alpha \in (0,1/2)$ there exists a constant
$C = C(M,\alpha)$, not depending on $\beta$, such that
\[
    \| \bv_\beta\|_{\C^{0,\alpha}\left(\overline{B^+_{1/2}}\right)} \leq C(M,\alpha).
\]
Furthermore, $\{\bv_{\beta}\}_{\beta }$ is relatively compact in $H^1(B^+_{1/2}) \cap \C^{0,\alpha}\left(\overline{B^+_{1/2}}\right)$ for every $\alpha < 1/2$.
\end{theorem}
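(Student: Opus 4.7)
The plan is to adapt the blow-up and monotonicity strategy of \cite{ctv,nttv} to the Caffarelli--Silvestre extension setting exploited in the statement, where the half-Laplacian acts as a Dirichlet-to-Neumann operator on a half-ball. I would argue by contradiction: suppose that for some $\alpha\in(0,1/2)$ the $\C^{0,\alpha}(\overline{B^+_{1/2}})$ norms of $\bv_{\beta_n}$ blow up along a sequence $\beta_n\to+\infty$. After relabelling, the failing component may be taken to be the first one, and one picks $x_n,y_n\in\overline{B^+_{1/2}}$ with $L_n:=|v_{1,\beta_n}(x_n)-v_{1,\beta_n}(y_n)|/|x_n-y_n|^\alpha\to+\infty$. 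The $L^\infty$ bound forces $r_n:=|x_n-y_n|\to 0$, and one can choose $(x_n,y_n)$ so that they nearly realize the $\alpha$-H\"older seminorm on progressively smaller balls (the standard almost-maximizer selection via the ``doubling variables'' device).

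Second, I would rescale by setting
\[
w_{i,n}(X):=\frac{1}{L_n r_n^\alpha}\, v_{i,\beta_n}(x_n+r_n X),
\]
so that the Neumann-trace scaling gives, on the flat boundary of the growing half-domain, an effective internal reaction of size $r_n^{1-\alpha}/L_n\to 0$ (because $f_{i,\beta}$ is uniformly bounded) and an effective competition parameter $M_n=\beta_n L_n^2 r_n^{1+2\alpha}$. The almost-maximizer choice of $(x_n,y_n)$ guarantees a uniform \emph{local} H\"older seminorm bound for $\bw_n$, while $[w_{1,n}]_{\C^{0,\alpha}}\ge 1$ is ensured at the two designated points. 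Ascoli--Arzel\`a extracts a nontrivial locally H\"older limit $\bw$, defined either on $\R^{N+1}_+$ or on a half-space with a hyperplane boundary (the latter if $x_n$ approaches $\partial^+B^+_1$), and satisfying $|\bw(X)|\le C(1+|X|)^\alpha$.

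Third, I would split into two cases according to the behaviour of $M_n$. If $\{M_n\}$ stays bounded, $\bw$ solves a global analogue of $\problem{M_\infty}$ on the half-space with sublinear growth; an Almgren-type monotonicity formula adapted to the extension then forces $\bw$ to be constant, contradicting $[w_1]_{\C^{0,\alpha}}\ge 1$. If $M_n\to+\infty$ the limit is segregated: $w_iw_j\equiv 0$ on $\R^N\times\{0\}$ for $i\neq j$, each $w_i$ is harmonic on $\R^{N+1}_+$ with homogeneous Neumann condition on $\{w_i>0\}$. The decisive analytic ingredient is an Alt--Caffarelli--Friedman-type monotonicity formula for pairs of such competing traces in the extension; its spectral content is the first eigenvalue on a hemispherical cap with mixed Dirichlet/Neumann conditions, which corresponds precisely to the characteristic exponent $1/2$. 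Combined with $\alpha<1/2$, this yields a Liouville-type rigidity that annihilates the limit and closes the contradiction.

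Once the uniform bound is established for every $\alpha<1/2$, Ascoli--Arzel\`a provides a $\C^{0,\alpha'}$-convergent subsequence on $\overline{B^+_{1/2}}$ for every $\alpha'<1/2$. Uniform $H^1$ bounds follow by testing the equation with $v_{i,\beta_n}\eta^2$, $\eta$ a cutoff: the harmonic interior contribution is controlled by the $L^\infty$ bound, the $f_{i,\beta}$ boundary term by its uniform boundedness, and the nonnegative competition term $\beta_n v_{i,\beta_n}^2\sum_{j\neq i}v_{j,\beta_n}^2$ is directly absorbed on the right. Weak $H^1$ compactness combined with strong uniform convergence upgrades to strong $H^1$ convergence on $B^+_{1/2}$. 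The heart of the argument---and the only truly non-routine step---is the construction of the ACF-type monotone quantity in the extension together with the spectral identification of the critical exponent; this is the reason the exponent $1/2$ appears in the conclusion and the main novelty relative to the local second-order case.
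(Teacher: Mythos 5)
Your outline captures the overall architecture of the paper's argument (contradiction, blow-up, $M_n=\beta_n L_n^2 r_n^{1+2\alpha}$, case split, ACF/Almgren, Liouville), but it contains one gap that is precisely the technical heart of the paper, and glosses over a second.

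The decisive gap is your claim that the ACF-type monotonicity in the extension ``corresponds precisely to the characteristic exponent $1/2$.'' The paper explicitly does \emph{not} know this. The relevant ACF exponent $\nuACF$ is defined by minimizing $\frac12\sum_{i=1}^2\gamma(\lambda_1(\omega_i))$ over 2-partitions $(\omega_1,\omega_2)$ of $\S^{N-1}=\partial\S^N_+$ (Definition \ref{def: nu}), and the only available bound is $0<\nuACF\le\frac12$ (Lemma \ref{lem: nuacf>0}). Remark \ref{rem: non convexity eigenv} stresses that, unlike the local Laplacian case of \cite{ctv,nttv} where convexity of the spectral functional makes the ACF exponent exact, here the function $\varphi(\vartheta)=\frac12[\Gamma(\vartheta)+\Gamma(\pi-\vartheta)]$ is \emph{not} convex and it is open whether $\nuACF<\frac12$. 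Consequently the blow-up argument you describe only produces the bound for $\alpha<\nuACF$ (that is Theorem \ref{thm:_local_holder}), and a separate, genuinely different mechanism is needed to reach all $\alpha<\frac12$. The paper supplies this via Theorem \ref{thm: sharp Liouville}, proved by a blow-down to a homogeneous segregation profile (Lemma \ref{lem:_blowdown_compactness}), a dimensional-descent inequality $\nuLio(N)\ge\nuLio(N-1)$ (Lemma \ref{lem: dimesional descent}), and an explicit two-dimensional classification giving $\nuLio(1)=\frac12$ (Proposition \ref{prp: classification on plane}). Without that chain, the contradiction in your blow-up analysis only rules out exponents below $\nuACF$, not below $\frac12$.

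A secondary but non-routine step you skip is controlling the additive constant $\bw_n(0)$. Since you rescale $v_{i,\beta_n}$ rather than its oscillation, nothing a priori prevents $\bw_n(0)\to\infty$. The paper devotes Lemmas \ref{lem: bound in Mn not infinitesimal local}--\ref{lem: w(0) bdd} to this, and the key estimate (Lemma \ref{eqn: decay with perturbations}) is a supersolution construction exhibiting only \emph{polynomial} decay for $(-\Delta)^{1/2}u\le -Mu$, in contrast with the exponential decay in the local case. This slow decay is one of the new difficulties flagged in the introduction and cannot be elided by appealing to the local-Laplacian analogy. Also, the paper does not use an almost-maximizer selection: it multiplies by a cutoff $\eta$ that vanishes on $\partial^+B^+$ and runs \emph{two} blow-up sequences $\bw_n$ and $\bar\bw_n$ (Lemma \ref{lem:_uniform_convergence_w_and_bar_w}) to keep the H\"older seminorm normalization and the PDE simultaneously; this is a different device from what you sketch, though it serves the same purpose.
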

As a byproduct, up to subsequences we have convergence of the above solutions to a limiting profile, which components are segregated on the boundary $\partial^0 B^+$. If furthermore $f_{i,\beta}\to f_i$, uniformly on compact sets, we can prove that this limiting profile satisfies
\[
    \begin{cases}
    - \Delta v_i = 0 & \text{in } B^+_1\\
    v_i \partial_{\nu} v_i = f_{i}(v_i)v_i & \text{on } \partial^0 B^+_1.
    \end{cases}
\]
One can see that, for solutions of this type of equation, the highest possible regularity correspond to the H\"older exponent $\alpha=1/2$. As a matter of fact, we can prove that the limiting profiles do enjoy such optimal regularity.
\begin{theorem}[Optimal regularity of limiting profiles]\label{thm: intro_limiting_prof}
Under the assumptions above, assume moreover that the locally Lipschitz continuous functions $f_i$ satisfy $f_i(s) = f_i'(0)s + O(|s|^{1+\eps})$ as $s\to 0$, for some $\eps>0$. Then $\bv\in\C^{0,1/2}\left(\overline{B^+_{1/2}}\right)$.
\end{theorem}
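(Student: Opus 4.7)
My plan is a blow-up argument at free-boundary points on the flat boundary, combined with a Liouville-type classification for entire segregated harmonic configurations in the half-space, the latter resting on Almgren and Alt--Caffarelli--Friedman (ACF) monotonicity formulas, adapted to the half-Laplacian via its Caffarelli--Silvestre extension. A preliminary reduction removes the easy points: interior harmonic regularity together with Theorem \ref{thm: intro_local} immediately gives the $\C^{0,1/2}$ bound in the interior of $B^+_{1/2}$; at a point $\bar x\in\partial^0 B^+_{1/2}$ where some $v_i(\bar x)\neq 0$, segregation forces every other component to vanish in a relative neighborhood of $\bar x$, so $v_i$ solves the scalar Neumann problem $\partial_\nu v_i=f_i(v_i)$ with locally Lipschitz right-hand side and is therefore even $\C^{0,1}$ there. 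Only the segregation set $\Gamma:=\{\bv=0\}\cap\partial^0 B^+_{1/2}$ needs attention.

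I argue by contradiction. Suppose the $\C^{0,1/2}$ bound fails: there exist $x_n\in\Gamma$ and $r_n\downarrow 0$ such that $L_n:=\sup_{B^+_{r_n}(x_n)}|\bv|$ satisfies $L_n/r_n^{1/2}\to+\infty$. A standard Morrey-type selection of the worst radius additionally guarantees the growth bound $\sup_{B^+_{Rr_n}(x_n)}|\bv|\leq CR^{1/2}L_n$ for every $R\geq 1$ with $Rr_n\leq 1/4$. I then rescale by
\[
\bw_n(x):=\frac{1}{L_n}\bv(x_n+r_n x),
\]
so that $\|\bw_n\|_{L^\infty(B^+_1)}=1$, $\sup_{B^+_R}|\bw_n|\leq CR^{1/2}$, and each $\bw_n$ satisfies $-\Delta w_{i,n}=0$ in $B^+_R$ together with the rescaled Neumann condition
\[
\partial_\nu w_{i,n}=\frac{r_n}{L_n}\,f_i(L_n w_{i,n})\qquad\text{on }\partial^0 B^+_R,
\]
plus the segregation $w_{i,n}w_{j,n}=0$ for $i\neq j$. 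The hypothesis $f_i(s)=f_i'(0)s+O(|s|^{1+\eps})$ is used precisely at this point: the right-hand side is bounded by $C(r_n|w_{i,n}|+r_n L_n^{\eps}|w_{i,n}|^{1+\eps})$, and since $L_n$ is a priori bounded by $\|\bv\|_{L^\infty}$ and $r_n\to 0$, this tends to zero locally uniformly.

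By Theorem \ref{thm: intro_local} applied on each $B^+_R$ together with a diagonal extraction, a subsequence of $\bw_n$ converges in $\C^{0,\alpha}_\loc(\overline{\R^{N+1}_+})$ for every $\alpha<1/2$ and strongly in $H^1_\loc$ to a nontrivial limit $\bw$: its components are harmonic in $\R^{N+1}_+$, segregated on $\{y=0\}$, have vanishing Neumann trace on their positivity sets, satisfy $\sup_{B^+_R}|\bw|\leq CR^{1/2}$ for every $R\geq 1$, and $\bw(0)=0$. At the free-boundary origin one applies the Almgren frequency formula $N_{\bw}(0,\cdot)$: the polynomial growth bound forces $N_{\bw}(0,+\infty)\leq 1/2$, while the ACF-type monotonicity formula for two competing nonnegative harmonic functions with disjoint flat-boundary traces, at the exponent matching the half-Laplacian $\C^{0,1/2}$ scaling, forces $N_{\bw}(0,0^+)\geq 1/2$. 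Hence $N_{\bw}(0,\cdot)\equiv 1/2$, so that $\bw$ is $1/2$-homogeneous; combined with the strong $H^1_\loc$ convergence and the "worst radius'' selection, which ensures that in the limit at least two components of $\bw$ are genuinely nontrivial, this contradicts the strict divergence $L_n/r_n^{1/2}\to+\infty$.

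The main obstacle is the Liouville step, which requires the simultaneous validity in the half-Laplacian setting of (i) an Almgren frequency formula at a free-boundary point for a segregated harmonic configuration in a half-ball, and (ii) an ACF-type monotonicity formula, at the exponent matching the $\C^{0,1/2}$ regularity of the half-Laplacian, for two nonnegative harmonic functions in a half-ball with disjoint traces on the flat boundary. Both are the fractional counterparts of the formulas used in \cite{nttv} for the standard Laplacian, and their derivation requires a careful treatment of weighted Dirichlet integrals near the flat boundary, together with control of the Neumann terms produced by the Caffarelli--Silvestre extension.
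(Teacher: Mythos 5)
Your blow-up scheme is a natural guess, but it hits a genuine obstruction that the paper goes to considerable lengths to avoid: you invoke ``the ACF-type monotonicity formula \dots at the exponent matching the half-Laplacian $\C^{0,1/2}$ scaling'' to get $N_{\bw}(0,0^+)\geq 1/2$, and no such formula is known here. The ACF formula of Section \ref{sec:acf} (Theorem \ref{thm: ACF}) produces the exponent $\nuACF$, and Remark \ref{rem: non convexity eigenv} and Lemma \ref{lem: nuacf>0} explicitly flag that one only knows $0<\nuACF\leq \tfrac12$ and that the spherical optimal-partition functional $\varphi(\vartheta)$ is \emph{not} convex, so that $\nuACF<\tfrac12$ cannot be excluded. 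This is precisely the point where the half-Laplacian differs from the Laplacian, and the step you borrow from \cite{nttv} does not transfer. The paper replaces it with the \emph{sharp Liouville theorem} (Theorem \ref{thm: sharp Liouville}), which is proved not through ACF but through the blow-down (Lemma \ref{lem:_blowdown_compactness}), the dimensional descent Lemma \ref{lem: dimesional descent}, and the explicit one-dimensional classification of Proposition \ref{prp: classification on plane} showing $\nuLio(1)=1/2$. Without an argument that either establishes $\nuACF=\tfrac12$ or routes around it (as the paper does by descent), your Almgren lower bound $N_{\bw}(0,0^+)\geq\tfrac12$ is unproven.

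There is also a methodological mismatch worth noting, since it affects how the final contradiction is reached. The paper does not prove Theorem \ref{thm: intro_limiting_prof} via a blow-up limit at all. It first establishes $\C^{0,\alpha}$ bounds for every $\alpha<1/2$ (Theorems \ref{thm:_local_holder} and \ref{thm:_global_holder}, with the sharp Liouville as input), then proves a \emph{perturbed} Almgren formula for the limiting profile itself (Lemma \ref{lem: Almgren monotonicity for the limiting profiles}) whose lower bound $N(x_0,0^+)\geq 1+\tfrac12$ at points of $\mathcal Z$ is extracted precisely from the already-established sub-$1/2$ H\"older regularity, not from ACF; finally it converts the Almgren bound into boundedness of the Morrey quotient $\Phi(x_0,r)$ (Lemmas \ref{lem:_upperbound_Morrey}, \ref{lem:_upperbound_Morrey 2}) and concludes. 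This bootstrap is what makes the optimal-exponent claim work despite the unknown value of $\nuACF$. In your scheme, even granting the $1/2$-homogeneous blow-up limit, the leap ``this contradicts $L_n/r_n^{1/2}\to+\infty$'' is left unjustified: a nontrivial $1/2$-homogeneous limit is \emph{compatible} with $\bv$ being exactly $\C^{0,1/2}$, so one must still run a Morrey or a nondegeneracy argument to close the contradiction, which is essentially what the paper's Lemmas \ref{lem:_upperbound_Morrey} and \ref{lem:_upperbound_Morrey 2} accomplish. Your reduction to the segregation set $\Gamma$ and your reading of the hypothesis on $f_i$ (to make the rescaled Neumann data vanish) are both correct; the gap is the Liouville input and the final contradiction.
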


Once local regularity is established, we can move from $\problem{\beta}$ and deal with global problems, adding suitable boundary conditions. An example of results that we can prove is the following.
\begin{theorem}[Global uniform H\"older bounds]\label{thm: intro_global}
Let the functions $f_{i,\beta}$ be continuous and uniformly bounded (w.r.t. $\beta$) on bounded sets, and let $\{\bu_{\beta}\}_{\beta }$ be a family of $H^{1/2}(\R^N)$ solutions to the problems
\[
    \begin{cases}
    ( -\Delta)^{1/2} u_i= f_{i,\beta}(u_i) - \beta u_i \tsum_{j \neq i} u_j^2 & \text{on } \;\Omega\\
    u_i\equiv 0 &\text{on} \; \R^N\setminus \Omega,
    \end{cases}
\]
where $\Omega$ is a bounded domain of $\R^N$, with sufficiently  smooth boundary. Let us assume that
\[
    \| \bu_{\beta} \|_{L^{\infty}(\Omega)} \leq M,
\]
for a constant $M$ independent of $\beta$. Then for every $\alpha \in (0,1/2)$ there exists a constant
$C = C(M,\alpha)$, not depending on $\beta$, such that
\[
    \| \bu_\beta\|_{\C^{0,\alpha}(\R^N)} \leq C(M,\alpha).
\]
\end{theorem}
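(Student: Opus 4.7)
The plan is to deduce the global bound from Theorem~\ref{thm: intro_local} (interior H\"older estimate on half-balls) together with a uniform-in-$\beta$ boundary decay for $\bu_\beta$, and then to interpolate between the two. First I would pass to the Caffarelli--Silvestre extension $\bv_\beta=(v_{i,\beta})$ of $\bu_\beta$ to $\R^{N+1}_+$: each $v_{i,\beta}$ is harmonic in $\R^{N+1}_+$, has trace $u_{i,\beta}$ on $\R^N\times\{0\}$ (hence vanishes on $(\R^N\setminus\Omega)\times\{0\}$), and on $\Omega\times\{0\}$ satisfies a Neumann condition of the form in $\problem{\beta}$. The maximum principle keeps $\|\bv_\beta\|_{L^\infty(\R^{N+1}_+)}\leq M$.

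For the interior estimate, I would fix $x_0\in\Omega$, set $r:=\dist(x_0,\partial\Omega)/2$, and rescale $\tilde v_{i,\beta}(y,z):=v_{i,\beta}(x_0+ry,rz)$: then $\tilde{\bv}_\beta$ solves a problem of type $\problem{\tilde\beta}$ on $B^+_1$ with $\tilde\beta=r\beta$ and rescaled data $\tilde f_{i,\tilde\beta}(s):=r f_{i,\beta}(s)$, still uniformly bounded. Theorem~\ref{thm: intro_local} applied to $\tilde{\bv}_\beta$ and then scaled back gives
\[
[\bv_\beta]_{\C^{0,\alpha}(\overline{B^+_{r/2}(x_0,0)})}\leq C(M,\alpha)\,r^{-\alpha},\qquad \alpha\in(0,1/2),
\]
with $C$ independent of $\beta$; this is an interior estimate that degenerates as $x_0\to\partial\Omega$.

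Next I would establish a uniform boundary decay $|u_{i,\beta}(x)|\leq C\dist(x,\partial\Omega)^{1/2}$. Assuming $u_{i,\beta}\geq 0$ (the natural physical regime, and automatic by the maximum principle under mild conditions on $f_{i,\beta}$), the competition term is non-positive, so $(-\Delta)^{1/2}u_{i,\beta}\leq K$ with $K:=\sup\{|f_{i,\beta}(s)|:\,\beta>0,\,i,\,|s|\leq M\}$. Letting $w$ solve $(-\Delta)^{1/2}w=K$ in $\Omega$ with $w\equiv 0$ on $\R^N\setminus\Omega$, the boundary regularity theory for the Dirichlet problem for the fractional Laplacian on smooth domains (Ros-Oton and Serra) yields $w\in\C^{0,1/2}(\R^N)$ with $w(x)\leq C_\Omega\dist(x,\partial\Omega)^{1/2}$, and weak comparison then gives $0\leq u_{i,\beta}(x)\leq w(x)\leq C\dist(x,\partial\Omega)^{1/2}$ uniformly in $\beta$. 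Via the Poisson representation in the half-space this upgrades to $\|\bv_\beta\|_{L^\infty(B^+_r(x_0,0))}\leq Cr^{1/2}$ whenever $\dist(x_0,\partial\Omega)\lesssim r$.

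Finally I would combine the two. Given $x,y\in\R^N$ with $\delta:=|x-y|$ and (WLOG) $\dist(x,\partial\Omega)\leq \dist(y,\partial\Omega)$: when $\dist(x,\partial\Omega)\leq 8\delta$ the boundary decay directly gives $|u_{i,\beta}(x)-u_{i,\beta}(y)|\leq C\delta^{1/2}\leq C'\delta^\alpha$; otherwise, with $r=\dist(x,\partial\Omega)/2\geq 4\delta$, the normalized rescaling $\tilde v:=v/\|\bv_\beta\|_{L^\infty(B^+_r(x,0))}$ puts the solution in the $L^\infty$-unit ball of Theorem~\ref{thm: intro_local}, and scaling back with $\|\bv_\beta\|_{L^\infty(B^+_r(x,0))}\leq Cr^{1/2}$ yields $|u_{i,\beta}(x)-u_{i,\beta}(y)|\leq Cr^{1/2-\alpha}\delta^\alpha\leq C\delta^\alpha$. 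The hard part will be the uniform-in-$\beta$ boundary decay: the competition term makes $(-\Delta)^{1/2}u_{i,\beta}$ unbounded from below as $\beta\to+\infty$, so the decay must rely on a \emph{one-sided} comparison exploiting the non-positivity of the interaction, and a fully sign-free version would require a further reflection/extension across the Dirichlet part $(\R^N\setminus\Omega)\times\{0\}$ to reduce boundary regularity to an interior situation covered by Theorem~\ref{thm: intro_local}.
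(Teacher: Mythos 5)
Your route is genuinely different from the one taken in the paper. The paper proves the boundary result (Theorem \ref{thm:_global_holder}, of which Theorem \ref{thm: intro_global} is a corollary via the covering argument of Theorem \ref{thm: global_global}) by a blow--up trichotomy at the boundary: after rescaling, the flat boundary $\tau_n(\partial^0 B^+)$ splits into either all--Dirichlet (handled by the classical Liouville theorem after odd reflection), all--Neumann (handled by the interior scheme via Theorem \ref{thm: sharp Liouville}), or a genuine mixed half--space with a flat edge, and for this third case the paper proves a dedicated Liouville theorem (Proposition \ref{prp: liouville_boundary}), whose proof rests on the Alt--Caffarelli--Friedman-type formula with sharp exponent $1/2$ for the half--space with a flat trace interface (Proposition \ref{prp: ACF sym}). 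Your proposal sidesteps that entirely by combining the already-proved interior Theorem \ref{thm: intro_local} at the natural scale $r\sim\dist(x_0,\partial\Omega)$ with a uniform-in-$\beta$ boundary decay $|u_{i,\beta}|\leq C\dist(\cdot,\partial\Omega)^{1/2}$ obtained by barrier comparison, and then interpolating. This is a clean and more modular scheme; what it buys is a proof that does not require a new Liouville theorem for the mixed Dirichlet--Neumann half--space, at the cost of importing the $(-\Delta)^{1/2}$ boundary-regularity of Ros-Oton--Serra (or, equivalently, an explicit torsion-type barrier $c(R^2-|x|^2)_+^{1/2}$ on exterior tangent balls). Both routes exploit, in different ways, the one-sided sign structure of the competition term: the paper uses $v_i\partial_\nu v_i\leq 0$ to feed Proposition \ref{prp: ACF sym} / Proposition \ref{prp: liouville_boundary}, while you use it to make $u_{i,\beta}$ a (one-sided) subsolution of a $\beta$-independent problem.

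Two points need repair, though neither is fatal. First, the sign assumption $u_{i,\beta}\geq 0$ is not part of the hypotheses, and your proposed remedy -- odd reflection across $(\R^N\setminus\Omega)\times\{0\}$ to ``reduce to an interior situation'' -- does not work: the reflection leaves an edge at $\partial\Omega\times\{0\}$ where the Dirichlet and Neumann conditions meet, and that edge is exactly what creates the $C^{0,1/2}$ obstruction (this is precisely what the paper's Case 3 and Proposition \ref{prp: liouville_boundary} are for). The correct, sign-free fix is Kato's inequality for $(-\Delta)^{1/2}$: since the competition term has the form $-g\,u_i$ with $g\geq 0$, one gets $(-\Delta)^{1/2}|u_{i,\beta}|\leq |f_{i,\beta}(u_{i,\beta})|\leq K$ weakly (in the extension picture this is exactly the paper's Lemma \ref{lem: measure inequality}), and then $|u_{i,\beta}|\leq w$ by comparison. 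Second, in the normalized rescaling $\tilde v=v/c_r$ with $c_r=\|\bv_\beta\|_{L^\infty(B^+_r(x,0))}$ the rescaled reaction term has size $\sim (r/c_r)K$, which is not uniformly bounded if $c_r\ll r$; you need a further dichotomy. When $c_r\leq Cr$ the trivial oscillation bound $|u(x)-u(y)|\leq 2c_r\leq 2Cr\leq 2C\,r^{1-\alpha}|x-y|^\alpha$ already closes (since $1-\alpha>0$ and $r$ is bounded), and when $c_r>Cr$ the ratio $r/c_r$ is bounded and your normalized application of Theorem \ref{thm: intro_local} goes through, giving $[\bv_\beta]_{\C^{0,\alpha}(B^+_{r/2})}\leq C\,c_r\,r^{-\alpha}\leq C\,r^{1/2-\alpha}\leq C$. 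With these two fixes the argument closes.
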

Analogous results hold, for instance, when the square root of the laplacian is replaced with the spectral fractional laplacian with homogeneous Dirichlet boundary conditions on bounded domains (see \cite{ct}). Moreover, note that $L^\infty$ bounds can be derived from $H^{1/2}$ ones, once suitable restrictions are imposed on the growth rate (subcritical) of the nonlinearities and/or on the dimension $N$, by means of a Brezis-Kato type argument.

In order to pursue the program just illustrated, compared with the case of the standard laplacian,
a number of new difficulties has to be overcome. For instance, the polynomial decay of the
fundamental solution of $(-\Delta)^{1/2}+1$ already affects the rate of segregation. Furthermore,
since such segregation occurs only in  the $N$-dimensional space, it is natural to expect free
boundaries of codimension $2$. But, perhaps,  the most  challenging issue lies in the lack of the
validity of an exact Alt-Caffarelli-Friedman monotonicity formula. This reflects, at the spectral
level, the lack of convexity of the eigenvalues with respect to domain variations, see Remark
\ref{rem: non convexity eigenv} below. To attack these problems new tools are in order, involving
different extremality conditions and new monotonicity formulas (associated with trace spectral
problems).

Let us finally mention that general fractional laplacians arise in many models of enhanced anomalous
diffusion; such operators  are of real interest both in population dynamics and in relativistic
quantum electrodynamics. This strongly motivates the extension of the theory in this direction, for
any $s\in(0,1)$.

The paper is organized as follows:

{\tableofcontents}

\subsection{Notation}

Throughout the paper, we will agree that any $X\in\R^{N+1}$ can be written as $X=(x,y)$, with
$x\in\R^N$ and $y\in\R$, in such a way that $\R^{N+1}_+:=\R^{N+1}\cap\{y>0\}$. For any $D\subset
\R^{N+1}$ we write
\[
\begin{split}
D^+&:= D\cap\{y>0\},\\
\partial^+D^+&:=\partial D \cap\{y>0\},\\
\partial^0D^+ &:=D\cap\{y=0\}.
\end{split}
\]
In most cases, we use this notation with $D=B_r(x_0,0)$ (the $(N+1)$-dimensional ball centered at a point of $\R^N$). In such case, we denote
\[
S^{N-1}_r(x_0,0):=\{(x,0): x \in \R^{N},\, |x-x_0| = r\} = \partial B_r^+\setminus\left(\partial^+ B_r^+
\cup\partial^0 B_r^+\right).
\]
Beyond the usual functional spaces, we will write
\[
H^1_{\loc}\left(\overline{\R^{N+1}_+}\right) := \{v :
\forall D \subset \R^{N+1} \text{ open and bounded}, v|_{D^+}
\in H^1(D^+)\}.
\]
Finally, we write $B^+$ for $B_1^+$, and we denote with $C$ any constant we need not to specify (possibly assuming different values even in the same expression).

\section{Alt-Caffarelli-Friedman type monotonicity formulae}\label{sec:acf}

This section is devoted to the proof of some monotonicity formulae of
Alt-Caffarelli-Friedman (ACF) type.
\subsection{Segregated ACF formula}
The validity of ACF type formulae depends on optimal partition problems involving
spectral properties of the domain. In the present situation, the spectral problem we
consider involves a pair of functions defined on $\S^{N}_+ := \partial^+ B^+$.
As a peculiar fact, here such functions have not disjoint support
on the whole $\S^N_+$, but only on its boundary $\S^{N-1}$. In this way we are lead to
consider the following optimal partition problem on $\S^{N-1}$.
\begin{definition}\label{def: lambda}
For each open subset $\omega$ of $\S^{N-1}:=\partial\S^N_+$ we define the first eigenvalue
associated to $\omega$ as
\[
    \lambda_1(\omega) :=
    \inf\left\{
    \frac{\int_{\S^{N}_+} |\nabla_{T} u|^2 \, \de{\sigma}
    }{\int_{\S^{N}_+}  u^2\, \de{\sigma}} :
    u \in H^1(\S^{N}_+), \, u\equiv0 \text{ on }\S^{N-1}\setminus\omega \right\}.
\]
Here $\nabla_{T} u$ stands for the (tangential) gradient of $u$ on $\S^{N}_+$.
\end{definition}

\begin{definition}\label{def: nu}
On $\S^{N-1}$ we define the set of 2-partition $\mathcal{P}^2$ by
\[
    \mathcal{P}^2 := \left\{(\omega_1, \omega_2)\from \omega_i \subset \S^{N-1}
\text{ open}, \, \omega_1 \cap \omega_2 = \emptyset \right\}
\]
and the number, only depending on $N$,
\[
\begin{split}
    \nuACF :&= \frac{1}{2} \inf_{(\omega_1, \omega_2) \in \mathcal{P}^2}
\sum_{i=1}^{2} \left( \sqrt{ \left(\frac{N-1}{2} \right)^2 + \lambda_1(\omega_i)
} - \frac{N-1}{2} \right)\\ & = \frac{1}{2} \inf_{(\omega_1, \omega_2) \in
\mathcal{P}^2} \sum_{i=1}^{2} \gamma({\lambda_1(\omega_i)}).
\end{split}
\]
\end{definition}
\begin{remark}\label{rem: gamma lambda}
As it is well known, $u$ achieves $\lambda_1(\omega)$ if and only if it is one signed, and its $ \gamma (\lambda_1(\omega))$-homogeneous extension to $\R^{N+1}_+$ is harmonic.
\end{remark}
\begin{remark}\label{rem: non convexity eigenv}
By symmetrization arguments, one may try to restrict the study of the above optimal partition problem to the case when both $\omega_i$ are spherical caps. In such a situation, writing $\Gamma(\vartheta):=\gamma(\lambda_1(\omega_\vartheta))$ for the spherical cap $\omega_\vartheta$ with opening $\vartheta$, one is lead to minimize the quantity
\[
\varphi(\vartheta):=\frac12\left[\Gamma(\vartheta)+\Gamma(\pi-\vartheta) \right],
\qquad \vartheta\in[0,\pi].
\]
It is worthwhile noticing that the function $\varphi$ is not convex, indeed one can prove
that
\[
\varphi(0)=\varphi\left(\frac{\pi}{2}\right)=\varphi(\pi)=\frac12
\]
(for details, see the proofs of Lemma \ref{lem: nuacf>0} and Proposition \ref{prp: ACF sym} below).
Thus, in particular, it is not clear whether the minimum of $\varphi$ may be strictly less that $1/2$.
As already mentioned, this marks a notable difference with respect to the standard diffusion case.
\end{remark}
\begin{lemma}\label{lem: nuacf>0}
For every dimension $N$, it holds $0<\nuACF\leq \frac12$.
\end{lemma}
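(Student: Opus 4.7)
The lemma comprises two bounds, which I would attack separately.

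For the upper bound $\nuACF \leq 1/2$, I would test the infimum on the degenerate admissible pair $(\omega_1, \omega_2) = (\S^{N-1}, \emptyset) \in \mathcal{P}^2$. With $\omega_1 = \S^{N-1}$ no Dirichlet condition appears, so constants minimize the Rayleigh quotient and $\lambda_1(\omega_1) = 0$, giving $\gamma(0) = 0$. With $\omega_2 = \emptyset$ the admissible functions must vanish on the whole equator $\S^{N-1}$, reducing the Rayleigh quotient to the first Dirichlet eigenvalue of the spherical Laplacian on the hemisphere $\S^N_+$; odd reflection across $\S^{N-1}$ identifies this with the smallest eigenvalue among odd spherical harmonics on $\S^N$, namely $\lambda_1(\emptyset) = N$, realized by the coordinate function $y$ (whose $1$-homogeneous harmonic extension to $\R^{N+1}_+$ is $y$ itself). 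The direct calculation $\gamma(N) = (N+1)/2 - (N-1)/2 = 1$ then yields $\nuACF \leq \tfrac12 (0+1) = 1/2$.

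For the positivity $\nuACF > 0$ I would argue by contradiction. Since $\gamma$ is continuous, strictly increasing on $[0,\infty)$, and vanishes only at $0$, any minimizing sequence $(\omega_1^n,\omega_2^n) \in \mathcal{P}^2$ whose sum tends to zero must satisfy $\lambda_1(\omega_i^n) \to 0$ for both $i=1,2$. Disjointness $\omega_1^n \cap \omega_2^n = \emptyset$ forces at least one of them, say $\omega_1^n$ after extracting a subsequence, to have surface measure at most $|\S^{N-1}|/2$, hence $E_n := \S^{N-1} \setminus \omega_1^n$ has measure at least $\delta := |\S^{N-1}|/2 > 0$. The contradiction will then follow from the uniform Poincar\'e-type estimate: there exists $c(\delta) > 0$ such that $\lambda_1(\S^{N-1}\setminus E) \geq c(\delta)$ whenever $|E| \geq \delta$.

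I would prove this Poincar\'e estimate by a compactness-contradiction argument. Were it false, one could select near-minimizers $u_n \in H^1(\S^N_+)$ with $\|u_n\|_{L^2(\S^N_+)} = 1$, $\|\nabla_T u_n\|_{L^2(\S^N_+)} \to 0$, and trace vanishing on sets $E_n \subset \S^{N-1}$ with $|E_n| \geq \delta$. By weak $H^1$ and strong $L^2$ compactness, $u_n \to u$ with $\|u\|_{L^2(\S^N_+)} = 1$ and $\nabla_T u \equiv 0$, so $u$ is a nonzero constant. The compact trace embedding $H^1(\S^N_+) \hookrightarrow L^2(\S^{N-1})$ then yields strong convergence of the traces to $u$ in $L^2(\S^{N-1})$, and combining with the vanishing of $u_n$ on $E_n$,
\[
\delta\, u^2 \leq |E_n|\, u^2 = \int_{E_n} |u - u_n|^2 \,\de{\sigma} \leq \|u - u_n\|_{L^2(\S^{N-1})}^2 \to 0,
\]
forcing $u \equiv 0$, a contradiction. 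The main difficulty here is ensuring that the Poincar\'e constant is uniform over all measurable $E \subset \S^{N-1}$ with $|E| \geq \delta$, with no regularity assumption on $\partial E$; the compactness argument above handles this precisely because only the measure lower bound is invoked.
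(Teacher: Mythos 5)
Your proof is correct, and on the upper bound it follows the paper's approach exactly: test on the degenerate partition $(\S^{N-1},\emptyset)$, with $\lambda_1(\S^{N-1})=0$ from constants and the Dirichlet eigenvalue of the hemisphere achieved by $y$. Your computation $\lambda_1(\emptyset)=N$, hence $\gamma(N)=\frac{N+1}{2}-\frac{N-1}{2}=1$, is the right one; the paper writes $\lambda_1(\emptyset)=2N$, which is a typo (it is inconsistent with $\gamma(2N)>1$ and with the assertion $\varphi(0)=\varphi(\pi)=\tfrac12$ in Remark~\ref{rem: non convexity eigenv}). On the lower bound you take a genuinely different, though closely related, route. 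The paper works with both near-minimizers $u_1^n,u_2^n$ simultaneously: small tangential gradients force both to converge in $H^1(\S^N_+)$ (hence strongly in $L^2$ of the trace) to the same nonzero constant, while the disjointness $\omega_1^n\cap\omega_2^n=\emptyset$ makes $\int_{\S^{N-1}}u_1^nu_2^n\,\de\sigma=0$ for every $n$; the two facts are incompatible. You instead factor the argument through a standalone uniform Poincar\'e-type lemma $\lambda_1(\S^{N-1}\setminus E)\ge c(\delta)$ for $|E|\ge\delta$, proved by the same pair of compactness facts, and you exploit disjointness only through the measure-theoretic observation that one of $\omega_1^n,\omega_2^n$ must have equatorial measure at most $\tfrac12|\S^{N-1}|$. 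The paper's version is marginally shorter and needs no measure bookkeeping; yours uses only one of the two sequences and isolates a reusable quantitative lemma, at the cost of an extra subsequence extraction. Both are sound.
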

\begin{proof}
The bound from above easily follows by comparing with the value corresponding
to the partition $(\S^{N-1},\emptyset)$: indeed, it holds $\lambda_1(\S^{N-1})=0$,
achieved by $u(x,y)\equiv1$, and $\lambda_1(\emptyset)=2N$,
achieved by $u(x,y)=y$. In order to prove the estimate from below,
let us first observe that, for each pair $(\omega_1,\omega_2) \in
\mathcal{P}^2$, there exist two functions $u_1$ and $u_2$ in $H^1(\S^{N}_+)$
such that $u_i\equiv 0$ on $\S^{N-1}\setminus\omega_i$,
\[
    \lambda_1(\omega_i) = \int\limits_{\S^{N}_+} |\nabla_{T} u_i|^2 \, \de{\sigma}
\quad \text{and} \quad \int\limits_{\S^{N}_+}  u_i^2 \, \de{\sigma} = 1.
\]
This claim is a consequence of the compactness both of the embedding
$H^1(\S^N_+)\hookrightarrow L^2(\S^N_+)$ and of the trace operator from
$H^1(\S^N_+)$ to $L^2(\S^{N-1})$ (recall that the constraint is continuous with
respect to the $L^2(\S^{N-1})$ topology).

We proceed by contradiction, supposing that there exists a sequence of
2-partition $(\omega_1^n, \omega_2^n) \in \mathcal{P}^2$ such that
\[
    \gamma\left( {\lambda_1(\omega_1^n)}\right) +
\gamma\left( {\lambda_1(\omega_2^n)}\right) \to 0.
\]
Since the function $\gamma$ is non negative and increasing, it must be that
$\lambda_1(\omega_i^n) \to 0$ for $i =1,2$, that is, there exist two sequences
of functions $u_1^n$ and $u_2^n$ in $H^1(\S^{N}_+)$ such that
$u_i\equiv 0$ on $\S^{N-1}\setminus\omega_i$,
\[
    \int\limits_{\S^{N}_+} |\nabla_{T} u_i|^2 \, \de{\sigma} \to 0  \quad \text{while}
\quad \int\limits_{\S^{N}_+}  u_i^2 \, \de{\sigma} = 1.
\]
Therefore, up to a subsequence, it holds
\[
    u_1^n, u_2^n \rightharpoonup |\S^{N}_+|^{1/2} \text{ in } H^1(\S^{N}_+)
\quad \text{and} \quad \int\limits_{\S^{N-1}}  u_1^n u_2^n \, \de{\sigma} = 0
\]
which are incompatible.
\end{proof}
Under the previous notations, we can prove the following monotonicity formula.
\begin{theorem}\label{thm: ACF}
Let $v_1, v_2 \in H^1(B_R^+(x_0,0))$ be continuous functions such that
\begin{itemize}
   \item  $v_1 v_2|_{\{y=0\}} = 0$, $v_i(x_0,0) = 0$;
   \item  for every non negative $\phi \in \C_0^{\infty}(B_R(x_0,0))$,
   \[
   \int\limits_{\R^{N+1}_+}(-\Delta v_i)v_i\phi \, \de x \de y + \int\limits_{\R^N}(\partial_\nu v_i)v_i\phi \,
   \de x = \int\limits_{\R^{N+1}_+}\nabla v_i\cdot\nabla(v_i\phi) \, \de x \de y \leq0.
   \]
\end{itemize}
Then the function
\[
    \Phi(r) := \prod_{i=1}^{2} \frac{1}{r^{2\nuACF}} \int\limits_{B_r^+(x_0,0)}
\frac{|\nabla v_i|^2}{|X-(x_0,0)|^{N-1} } \,\de{x} \de{y}
\]
is monotone non decreasing in $r$ for $r \in (0,R)$.
\end{theorem}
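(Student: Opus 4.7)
The plan is to reduce the monotonicity $\Phi'(r) \ge 0$ to an estimate on each factor separately. Assume WLOG that $x_0 = 0$, and set $I_i(r) := \int_{B_r^+} |\nabla v_i|^2/|X|^{N-1} \de X$, so that logarithmic differentiation gives
\[
\bigl(\log \Phi\bigr)'(r) = \sum_{i=1}^{2} \frac{I_i'(r)}{I_i(r)} - \frac{4\nuACF}{r}.
\]
Since $v_1 v_2 \equiv 0$ on $\{y=0\}$ and the $v_i$ are continuous, the positivity sets $\omega_i(r) := \{\theta \in \S^{N-1}: v_i(r\theta) \neq 0\}$ form a $2$-partition of $\S^{N-1}$, so by Definition \ref{def: nu} we have $\gamma(\lambda_1(\omega_1(r))) + \gamma(\lambda_1(\omega_2(r))) \geq 2\nuACF$. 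It is therefore enough to establish, for each index $i$ separately, the pointwise inequality
\[
2\gamma(\lambda_1(\omega_i(r)))\, I_i(r) \leq r I_i'(r).
\]

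To prove this, the strategy is to test the weak subsolution condition on $v_i$ against (a smoothing of) the non-negative radial function $\phi(X) := \bigl(|X|^{-(N-1)} - r^{-(N-1)}\bigr)_+$. Radiality makes the normal derivative of $\phi$ vanish on $\partial^0 B_r^+$, and $|X|^{-(N-1)}$ is harmonic away from the origin, so after integration by parts and passing to the limit in the regularization---the singularity at $X=0$ being harmless thanks to continuity of $v_i$ and $v_i(0)=0$---a direct calculation in polar coordinates yields
\[
I_i(r) \leq \frac{N-1}{2}\, F_i(r) + \frac{r}{2}\, F_i'(r), \qquad F_i(r) := \int_{\S^N_+} v_i^2(r\theta) \,\de \sigma.
\]
The rescaled function $w(\theta) := v_i(r\theta) \in H^1(\S^N_+)$ vanishes on $\S^{N-1}\setminus\omega_i(r)$, so the Rayleigh characterization in Definition \ref{def: lambda} gives
\[
\lambda_1(\omega_i(r))\, F_i(r) \leq \int_{\S^N_+} |\nabla_T w|^2 \,\de\sigma = r^2\bigl(P_i(r) - A_i(r)\bigr),
\]
where $P_i(r) := \int_{\S^N_+} |\nabla v_i|^2(r\theta) \,\de\sigma$ and $A_i(r) := \int_{\S^N_+} (\partial_s v_i)^2(r\theta) \,\de\sigma$. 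In particular, $r I_i'(r) = r^2 P_i(r) \ge r^2 A_i(r) + \lambda_1 F_i(r)$, giving one-sided control of both $I_i(r)$ and $r I_i'(r)$ in terms of the same quantities.

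To close the argument, Cauchy-Schwarz applied to $\tfrac{1}{2}F_i'(r) = \int v_i \partial_s v_i \,\de\sigma$ gives $F_i'(r)/2 \leq \sqrt{F_i(r) A_i(r)}$. Writing $\lambda := \lambda_1(\omega_i(r))$, $\gamma := \gamma(\lambda)$, $a := r\sqrt{A_i(r)}$, $b := \sqrt{\lambda F_i(r)}$, the estimates above become
\[
2\gamma I_i(r) \leq \frac{\gamma(N-1)}{\lambda}\, b^2 + \frac{2\gamma}{\sqrt{\lambda}}\, ab, \qquad r I_i'(r) \geq a^2 + b^2,
\]
and the desired pointwise inequality reduces to non-negativity of the quadratic form
\[
a^2 - \frac{2\gamma}{\sqrt{\lambda}}\, ab + \left(1 - \frac{\gamma(N-1)}{\lambda}\right) b^2,
\]
whose discriminant equals $4\bigl[\gamma(\gamma + N-1) - \lambda\bigr]/\lambda$, which is precisely zero by the characteristic relation defining $\gamma(\lambda)$ in Definition \ref{def: nu}. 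The main obstacle I expect is the careful justification of the preliminary identity: one must approximate the singular, only Lipschitz test function $\phi$ by a sequence in $\C^\infty_0(B_R)$, handle the origin via $v_i(0)=0$, and verify that the radial structure of $\phi$ really does eliminate the flat boundary contribution on $\partial^0 B_r^+$ in the limit.
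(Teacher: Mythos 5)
Your proof is correct and follows essentially the same route as the paper's (Lemmas~\ref{lem: phi is well defined} and~\ref{lem: bound from below ACF}): your preliminary estimate $I_i(r)\le\tfrac{N-1}{2}F_i(r)+\tfrac{r}{2}F_i'(r)$ is exactly the content of Lemma~\ref{lem: phi is well defined} after rescaling, and your reduction to a quadratic form with vanishing discriminant is an algebraic rephrasing of the paper's minimization of $(\mathcal{R}+t^2)/(t+\tfrac{N-1}{2})$ over $t\in\R^+$ in Lemma~\ref{lem: bound from below ACF}. The only superficial difference is in the approximating radial test function---you take the truncated fundamental solution $\bigl(|X|^{1-N}-r^{1-N}\bigr)_+$ directly, whereas the paper uses $\eta_\delta\Gamma_\eps$ together with a separate appeal to the superharmonicity of $\Gamma_\eps$---and both lead to the same bound.
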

\begin{remark}\label{rem:chang_sign_ACF_segr}
Since
\begin{equation}\label{eqn: acf_per_moduli}
\int\limits_{\R^{N+1}_+}\nabla v_i\cdot\nabla(v_i\phi) \, \de x \de y =
\int\limits_{\R^{N+1}_+}\left[|\nabla v_i|^2\phi + \frac12\nabla (v_i)^2\cdot\nabla\phi\right] \de x \de y,
\end{equation}
we have that if $v_1,v_2$ satisfy the assumptions of Theorem \ref{thm: ACF} then
also $|v_1|,|v_2|$ do.
\end{remark}
By the above remark, we can assume without loss of generality that $v_1$ and $v_2$ are non negative.
Since the theorem is trivial if either $v_1\equiv 0$ or $v_2 \equiv 0$, we will
prove it when both $v_1$ and $v_2$ are non zero. Moreover, by translating and
scaling, the theorem can be proved under the assumption that $x_0 = 0$ and $R=
1$. We will need the following technical lemmas.
\begin{definition}\label{def:Gamma_1}
We define ${\Gamma_1} \in \C^1(\R^{N+1}_+; \R^+)$ as
\[
    {\Gamma_1}(X) :=
    \begin{cases}
        \frac{1}{|X|^{N-1}} & |X|\geq 1\\
        \frac{N+1}{2} - \frac{N-1}{2} |X|^2 & |X| < 1.
    \end{cases}
\]
We let also $\Gamma_{\eps}(X) = {\Gamma_1}(X/\eps) \eps^{1-N}$, so that
$\Gamma_{\eps} \nearrow \Gamma = |X|^{1-N}$, a multiple of the fundamental
solution of the half-laplacian, as $\eps \to 0$.
\end{definition}

\begin{remark}
Let us observe that each ${\Gamma_\eps}$ is radial and, in
particular, $\partial_{\nu} {\Gamma_\eps} = 0$ on $\R^N$. Moreover, they are
superharmonic on $\R^{N+1}_+$.
\end{remark}

\begin{lemma}\label{lem: phi is well defined}
Let $v_1, v_2$ be as in Theorem \ref{thm: ACF}. The
function
\begin{equation}\label{eqn: improper integral}
   r\mapsto\int\limits_{B_r^+} \frac{|\nabla v_i|^2}{|X|^{N-1}}
\,\de{x} \de{y}
\end{equation}
is well defined and bounded in any compact subset of $(0,1)$.
\end{lemma}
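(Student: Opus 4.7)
My plan is to exploit the weak subharmonicity hypothesis of Theorem~\ref{thm: ACF} by testing it against the regularized fundamental solutions $\Gamma_\eps$ of Definition~\ref{def:Gamma_1}, then letting $\eps \to 0$ by monotone convergence. By Remark~\ref{rem:chang_sign_ACF_segr} I may assume $v_i \geq 0$, and by translation and scaling I take $x_0 = 0$, $R = 1$. Since the integrand in \eqref{eqn: improper integral} is non-negative, it is enough to exhibit, for each $r \in (0,1)$, a finite bound depending only on the data on a slightly larger half-ball.

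Fix $r \in (0,1)$ and $\rho \in (r,1)$, and let $\chi \in \C_0^\infty(B_\rho)$ be a radial, non-negative cutoff with $\chi \equiv 1$ on $B_r$. Using identity \eqref{eqn: acf_per_moduli} with the admissible test function $\phi = \Gamma_\eps \chi \geq 0$ (approximated in $H^1_0$ by $\C_0^\infty$ functions since $\Gamma_\eps$ is only $\C^1$), the hypothesis yields
\[
\int_{B^+_\rho} |\nabla v_i|^2 \Gamma_\eps \chi \,\de{X} \leq -\frac{1}{2} \int_{B^+_\rho} \nabla v_i^2 \cdot \nabla(\Gamma_\eps \chi)\,\de{X}.
\]
I would then split $\nabla(\Gamma_\eps \chi) = \chi \nabla \Gamma_\eps + \Gamma_\eps \nabla \chi$ and integrate by parts the first summand. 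The boundary integral produced on $\partial^0 B^+_\rho = \{y=0\}$ vanishes because $\Gamma_\eps$ is radial, so its Neumann trace is identically zero. The outcome is
\[
\int_{B^+_\rho} |\nabla v_i|^2 \Gamma_\eps \chi \,\de{X} \leq \frac{1}{2}\int \chi v_i^2 \Delta \Gamma_\eps\,\de{X} + \frac{1}{2}\int v_i^2 \nabla \chi \cdot \nabla \Gamma_\eps\,\de{X} - \int \Gamma_\eps v_i \nabla v_i \cdot \nabla \chi\,\de{X}.
\]

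The first term on the right is non-positive since $\Gamma_\eps$ is superharmonic and $\chi v_i^2 \geq 0$. The remaining two are supported in the annulus $\{\nabla \chi \neq 0\} \subset B_\rho \setminus B_r$, where for every $\eps < r$ one has $\Gamma_\eps = |X|^{1-N}$, so $\Gamma_\eps \leq r^{1-N}$ and $|\nabla \Gamma_\eps| \leq (N-1) r^{-N}$, uniformly in $\eps$. Combining these bounds with the continuity of $v_i$ on $\overline{B^+_\rho}$ and with $\nabla v_i \in L^2(B^+_\rho)$, a Cauchy--Schwarz estimate produces a constant $C$, depending on $r$, $\rho$, $\chi$ and the $L^\infty$ and $H^1$ norms of $v_i$ on $B^+_\rho$ but not on $\eps$, such that $\int |\nabla v_i|^2 \Gamma_\eps \chi \,\de{X} \leq C$. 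Since $\Gamma_\eps \nearrow |X|^{1-N}$ monotonically, monotone convergence delivers the desired bound $\int_{B^+_r} |\nabla v_i|^2 |X|^{1-N}\,\de{X} \leq C$.

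The principal obstacle is performing the integration by parts rigorously with the merely $\C^1$ weight $\Gamma_\eps \chi$: one must justify its use as a test function in the weak inequality and verify that the boundary contribution on $\{y=0\}$ indeed vanishes. Once the radial property $\partial_y \Gamma_\eps|_{y=0}\equiv 0$ and the superharmonicity of $\Gamma_\eps$---both built into the choice of approximant---are brought into play, the three-term decomposition is manifestly controlled away from the origin, and passing to the limit in $\eps$ is routine.
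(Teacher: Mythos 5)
Your proof is correct and follows the same essential strategy as the paper: test the weak inequality with a non-negative admissible function built from $\Gamma_\eps$ and a cutoff, exploit the superharmonicity of $\Gamma_\eps$ and the fact that $\partial_\nu\Gamma_\eps\equiv 0$ on $\{y=0\}$ to dispose of the terms that would otherwise obstruct the estimate, then pass to the limit $\eps\to 0$ by monotone convergence. The only notable difference is in the bookkeeping of the cutoff. You fix a smooth $\chi$ with $\chi\equiv 1$ on $B_r$, $\supp\chi\subset B_\rho$, and the resulting bound involves volume and trace integrals over the annulus $B_\rho\setminus B_r$. The paper instead lets the cutoff shrink to $\mathbf{1}_{B_r}$ (through a family $\eta_\delta$ supported in $B_{r+\delta}$), which converts the annular error terms into a surface integral on $\partial^+ B_r^+$ and yields, after sending $\eps\to 0$, the clean inequality
\[
\int_{B_r^+}\frac{|\nabla v_i|^2}{|X|^{N-1}}\,\de x\,\de y\le\frac{1}{r^{N-1}}\int_{\partial^+ B_r^+}v_i\,\partial_\nu v_i\,\de\sigma+\frac{N-1}{2r^N}\int_{\partial^+ B_r^+}v_i^2\,\de\sigma.
\]
Both versions suffice for the statement of the lemma; the paper's sharper form is chosen because it is the precise estimate reused in the next lemma to bound the logarithmic derivative of the ACF quantity. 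Your version establishes boundedness but would require redoing the spherical estimate before Lemma~\ref{lem: bound from below ACF} can proceed.
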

\begin{proof}
We proceed as follows: let $\eps > 0$, $\delta>0$ and let 
$\eta_\delta\in \C^\infty_0 (B_{r+\delta})$ be a smooth, radial cutoff function such that
$0\leq\eta_\delta\leq1$ and $\eta_\delta=1$ on $B_r$.
Choosing $\phi=\eta_\delta \Gamma_{\eps}$ in the second assumption of the theorem,
and recalling equation \eqref{eqn: acf_per_moduli}, we obtain
\begin{multline*}
\int\limits_{\R^{N+1}_+}\left[|\nabla v_i|^2\Gamma_{\eps} +
\frac12\nabla (v_i)^2\cdot\nabla\Gamma_{\eps}\right]\eta_\delta
\de x \de y
\leq -\int\limits_{\R^{N+1}_+}\frac12\Gamma_{\eps}\nabla (v_i)^2\cdot\nabla\eta_\delta \de x \de y\\
= \int\limits_{r}^{r+\delta} \left[-\eta_\delta'(\rho)\int\limits_{\partial^+B^+_\rho}\Gamma_{\eps}v_i\nabla v_i
\cdot \frac{X}{|X|}\de\sigma \right]\de\rho.
\end{multline*}
Passing to the limit as $\delta\to0$ we obtain, for almost every $r\in(0,1)$,
\[
\int\limits_{B^+_r}\left[|\nabla v_i|^2\Gamma_{\eps} +
\frac12\nabla (v_i)^2\cdot\nabla\Gamma_{\eps}\right]\de x \de y
\leq
\int\limits_{\partial^+B^+_r}\Gamma_{\eps}v_i\partial_\nu v_i\de\sigma,
\]
which, combined with the inequality $-\Delta \Gamma_{\eps} \geq 0$ tested with
$v_i^2/2$ leads to
\[
    \int\limits_{B_r^+} |\nabla v_i|^ 2 \Gamma_{\eps} \, \de{x}\de{y} \leq
\int\limits_{\partial^+ B_r^+}\left(\Gamma_{\eps} v_i {\partial_\nu v_i }
 - \frac{v_i^2}{2} {\partial_\nu} \Gamma_{\eps}
\right)\, \de{\sigma}.
\]
Letting $\eps \to 0^+$, by monotone convergence we infer
\begin{equation}\label{eqn: estim from above denom}
    \int\limits_{B_r^+} \frac{|\nabla v_i|^ 2}{|X|^{N-1}} \, \de{x}\de{y} \leq
\frac{1}{r^{N-1}} \int\limits_{\partial^+ B_r^+} v_i \frac{\partial v_i
}{\partial \nu} \, \de{\sigma} + \frac{N-1}{2r^{N}} \int\limits_{\partial^+
B_r^+} v_i^2 \, \de{\sigma}
\end{equation}
and this, in turns, proves the lemma.
\end{proof}

\begin{lemma}\label{lem: bound from below ACF}
Let $v_1, v_2$ be two non trivial functions satisfying the assumptions of
Theorem \ref{thm: ACF}. It holds
\begin{equation}\label{eqn: bound from below nu}
    \sum_{i=1}^2 \ddfrac{\int\limits_{\partial^+ B_r^+}\tfrac{|\nabla
v_i|^2}{|X|^{N-1}} \,\de{\sigma} }{\int\limits_{B_r^+} \tfrac{|\nabla
v_i|^2}{|X|^{N-1}} \,\de{x} \de{y}} \geq \frac{4}{r}\nuACF.
\end{equation}
\end{lemma}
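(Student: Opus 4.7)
The plan is to estimate each summand's denominator from above via \eqref{eqn: estim from above denom}, to bound the numerator from below via the spectral quantity $\lambda_1$ of Definition \ref{def: lambda}, and then to perform a one-variable minimization that will naturally reproduce the exponent $\gamma(\lambda_1)$.

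First I would use Remark \ref{rem:chang_sign_ACF_segr} to assume $v_1,v_2\ge 0$. Decomposing the gradient on $\partial^+ B_r^+$ as $|\nabla v_i|^2=(\partial_\nu v_i)^2+|\nabla_T v_i|^2$, introduce the shorthands
\[
A_i:=\int_{\partial^+ B_r^+}(\partial_\nu v_i)^2\de{\sigma},\quad B_i:=\int_{\partial^+ B_r^+}|\nabla_T v_i|^2\de{\sigma},\quad C_i:=\int_{\partial^+ B_r^+}v_i^2\de{\sigma}.
\]
Then the $i$-th summand on the left-hand side of \eqref{eqn: bound from below nu} is $r^{-(N-1)}(A_i+B_i)$ divided by the quantity controlled in \eqref{eqn: estim from above denom}. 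Applying Cauchy--Schwarz $\int v_i\,\partial_\nu v_i\le\sqrt{A_iC_i}$ to the right-hand side of \eqref{eqn: estim from above denom}, the task reduces to bounding from below
\[
\frac{r(A_i+B_i)}{r\sqrt{A_iC_i}+\tfrac{N-1}{2}C_i}.
\]

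The spectral input comes from the inequality $B_i\ge r^{-2}\lambda_1(\omega_i)C_i$. To obtain it I would rescale $v_i$ from $\partial^+ B_r^+$ to $\S^{N}_+$ by $w_i(\theta):=v_i(r\theta)$; the hypothesis $v_1v_2\equiv 0$ on $\{y=0\}$ together with continuity ensures that the open subsets $\omega_i\subset\S^{N-1}$ where the trace of $w_i$ is positive form a $2$-partition $(\omega_1,\omega_2)\in\mathcal P^2$, and the inequality then follows from Definition \ref{def: lambda} and the standard scaling of tangential gradients and spherical measure. Substituting this and setting $\alpha_i:=r\sqrt{A_i/C_i}$ collapses the problem to the elementary
\[
\inf_{\alpha\ge 0}\frac{\alpha^2+\lambda_1(\omega_i)}{r(\alpha+\tfrac{N-1}{2})}=\frac{2\gamma(\lambda_1(\omega_i))}{r},
\]
attained at $\alpha=\gamma(\lambda_1(\omega_i))$. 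Summing over $i=1,2$ and recalling from Definition \ref{def: nu} that $\tfrac12(\gamma(\lambda_1(\omega_1))+\gamma(\lambda_1(\omega_2)))\ge\nuACF$ concludes.

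The most delicate step will be the exact matching between the algebraic optimization and the definition of $\gamma$: the identity $\inf_\alpha(\alpha^2+\lambda)/(\alpha+(N-1)/2)=2\gamma(\lambda)$ is sharp precisely at $\alpha=\gamma(\lambda)$, which, by Remark \ref{rem: gamma lambda}, is the homogeneity degree of the harmonic extension attaining $\lambda_1(\omega)$; this is the structural reason why crude Cauchy--Schwarz suffices here. A minor technical matter is to verify that $(\omega_1,\omega_2)\in\mathcal P^2$; after passing to $|v_i|$, this is immediate from disjointness of the supports on $\{y=0\}$.
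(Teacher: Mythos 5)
Your proof is correct and takes essentially the same approach as the paper's: both reduce via \eqref{eqn: estim from above denom} to boundary integrals over $\partial^+B_r^+$, apply Cauchy--Schwarz to the term $\int v_i\partial_\nu v_i$, use the spectral inequality coming from Definition~\ref{def: lambda} to control the tangential-gradient contribution, and close with the one-variable minimization yielding $2\gamma(\lambda_1(\omega_i))$. The only difference is cosmetic: the paper rescales $v_i$ to $\S^N_+$ before applying Cauchy--Schwarz and minimizing in the Rayleigh-quotient variable $t$, while you keep all quantities on $\partial^+B_r^+$ and absorb the scaling into $\alpha_i:=r\sqrt{A_i/C_i}$; these are the same computation.
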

\begin{proof}
First we use the estimate \eqref{eqn: estim from above denom} to bound from below the left hand side of
\eqref{eqn: bound from below nu}:
\begin{multline*}
    \dfrac{\int\limits_{\partial^+ B_r^+}\frac{|\nabla v_i|^2}{|X|^{N-1}
}\,\de{\sigma} }{\int\limits_{B_r^+} \frac{|\nabla v_i|^2 }{|X|^{N-1}} \,\de{x}
\de{y}} \geq  \dfrac{\int\limits_{\partial^+ B_r^+}|\nabla v_i|^2 \,\de{\sigma}
}{\int\limits_{\partial^+ B_r^+} v_i \partial_{\nu} v_i \, \de{\sigma} + (N-1)
\frac{r}{2} \int\limits_{\partial^+ B_r^+} v_i^2 \, \de{\sigma} }\\ =  \frac{1}{r}
\dfrac{\int\limits_{\S^{N}_+}|\nabla v_i^{(r)}|^2 \,\de{\sigma} }{\int\limits_{\S^{N}_+}
v_i^{(r)} \partial_{\nu} v_i^{(r)} \, \de{\sigma} + \frac{N-1}{2}
\int\limits_{\S^{N}_+} (v_i^{(r)})^2 \, \de{\sigma} },
\end{multline*}
where $v_i^{(r)}\from \S^{N-1}_+ \to \R$ is defined as $v_i^{(r)}(\xi) =
v_i(r\xi)$. We now
estimate the right hand side as follows: the numerator writes
\[
    \begin{split}
    \int\limits_{\S^{N}_+} |\nabla v_i^{(r)}|^2 \, \de{\sigma}  &= \int\limits_{\S^{N}_+}
|\partial_{\nu} v_i^{(r)}|^2 \, \de{\sigma}  + \int\limits_{\S^{N}_+} |\nabla_T
v_i^{(r)}|^2 \, \de{\sigma}  \\
    &= \int\limits_{\S^{N}_+} |v_i^{(r)}|^2 \, \de{\sigma}  \left( \underbrace{\frac{
\int\limits_{\S^{N}_+} |\partial_{\nu} v_i^{(r)}|^2 \, \de{\sigma} }{\int\limits_{\S^{N}_+}
|v_i^{(r)}|^2 \, \de{\sigma} } }_{t^2} + \underbrace{\frac{\int\limits_{\S^{N}_+}
|\nabla_T v_i^{(r)}|^2 \, \de{\sigma} }{\int\limits_{\S^{N}_+} |v_i^{(r)}|^2 \,
\de{\sigma}} }_{\mathcal{R}} \right).
    \end{split}
\]
where $\mathcal{R}$ stands for the Rayleigh quotient of $v_i^{(r)}$ on
$\S^{N}_+$. On the other hand, by the Cauchy-Schwarz inequality, the denominator
may be estimated from above by
\begin{multline*}
    \int\limits_{\S^{N}_+} v_i^{(r)} \partial_{\nu} v_i^{(r)} \, \de{\sigma} +
r\frac{N-1}{2} \int\limits_{\S^{N}_+} |v_i^{(r)}|^2 \, \de{\sigma} \\
\leq
\left(\int\limits_{\S^{N}_+} |v_i^{(r)}|^2 \, \de{\sigma}\right)^{1/2}
\left(\int\limits_{\S^{N}_+} \partial_{\nu} v_i^{(r)} \, \de{\sigma}\right)^{1/2} +
r\frac{N-1}{2} \int\limits_{\S^{N}_+} |v_i^{(r)}|^2 \, \de{\sigma}\\
    \leq\int\limits_{\S^{N}_+} |v_i^{(r)}|^2 \, \de{\sigma} \left[
\underbrace{\left(\frac{ \int\limits_{\S^{N}_+} |\partial_{\nu} v_i^{(r)}|^2 \,
\de{\sigma} }{\int\limits_{\S^{N}_+} |v_i^{(r)}|^2 \, \de{\sigma} }\right)^{1/2}}_{t} +
\frac{N-1}{2} \right].
\end{multline*}
As a consequence
\[
    \dfrac{\int\limits_{\partial^+ B_r^+}\frac{|\nabla v_i|^2}{|X|^{N-1}
}\,\de{\sigma} }{\int\limits_{B_r^+} \frac{|\nabla v_i|^2 }{|X|^{N-1}} \,\de{x}
\de{y}} \geq \frac{1}{r} \min_{t \in \R^+} \frac{\mathcal{R}+t^2}{ t +
\frac{N-1}{2}}.
\]
A simple computation shows that the minimum is achieved when
\[
    t = \gamma( {\mathcal{R}}) = \sqrt{ \left(\frac{N-1}{2}\right)^2 +
\mathcal{R} } - \frac{N-1}{2},
\]
and it is equal to $2\gamma( {\mathcal{R}})$. Summing over $i = 1, 2$, we obtain
\[
   \sum_{i=1}^2 \ddfrac{\int\limits_{\partial^+ B_r^+}\tfrac{|\nabla v_i|^2}{|X|^{N-1}}
\,\de{\sigma} }{\int\limits_{B_r^+} \tfrac{|\nabla v_i|^2}{|X|^{N-1}} \,\de{x} \de{y}}
\geq \frac{2}{r} \inf_{(\omega_1, \omega_2) \in \mathcal{P}^2} \sum_{i=1}^{2}
\gamma\left(\lambda_1(\omega_i) \right)
= \frac{4}{r}\nuACF
\]
where the inequality follows by substituting each $\mathcal{R}$ with their
optimal value, that is, the eigenvalue $\lambda_1(\omega_i)$.
\end{proof}

\begin{proof}[Proof of Theorem \ref{thm: ACF}]
As already noticed, we may assume that $x_0 = 0$ and $R =1$ and that both $v_1$
and $v_2$ are non trivial and non negative. We start observing that the function $\Phi(r)$ is
positive and absolutely continuous for $r \in (0,1)$, since it is the product of
functions which are positive and absolutely continuous in $(0,1)$. Therefore,
the theorem follows once we prove that $\Phi'(r) \geq 0$ for almost every $r \in
(0,1)$. A direct computation of the logarithmic derivative of $\Phi$ shows that
\[
    \frac{\Phi'(r)}{\Phi(r)} = -\frac{4\nuACF}{r} + \sum_{i=1}^2
\dfrac{\int\limits_{\partial^+ B_r^+}|\nabla v_i|^2 /|X|^{N-1} \,\de{\sigma}
}{\int\limits_{B_r^+} |\nabla v_i|^2 / |X|^{N-1} \,\de{x} \de{y}} \geq 0
\]
where the last inequality follows by Lemma \ref{lem: bound from below ACF}.
\end{proof}

As we mentioned, Theorem \ref{thm: ACF} will be crucial in proving interior regularity
estimates. We now provide a related result, suitable to treat regularity up to the boundary.
Differently from before, in this case we can show that the optimal exponent in the corresponding monotonicity formula is exactly $\gamma=1/2$.
\begin{proposition}\label{prp: ACF sym}
Let $v \in H^1(B_R^+)$ be a continuous function such that
\begin{itemize}
   \item  $v_1(x,0) = 0$ for $x_1 \leq 0$;
   \item  for every non negative $\phi \in \C_0^{\infty}(B_R)$,
   \[
   \int\limits_{\R^{N+1}_+}(-\Delta v)v\phi \, \de x \de y + \int\limits_{\R^N}(\partial_\nu v)v\phi \,
   \de x = \int\limits_{\R^{N+1}_+}\nabla v\cdot\nabla(v\phi) \, \de x \de y \leq0.
   \]
\end{itemize}
Then the function
\[
    \Phi(r) := \frac{1}{r} \int\limits_{B_r^+}
\frac{|\nabla v|^2}{|X|^{N-1} } \,\de{x} \de{y}
\]
is monotone non decreasing in $r$ for $r \in (0,R)$.
\end{proposition}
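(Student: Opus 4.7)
The strategy is to mimic the proof of Theorem \ref{thm: ACF}, collapsing the pair of segregated densities to the single function $v$ and replacing the variational constant $\nuACF$ by the first eigenvalue of a single spherical cap. The advantage of this symmetric setup is that the corresponding spherical eigenvalue can now be computed \emph{exactly}, yielding the sharp exponent $1/2$ in place of the undetermined value $2\nuACF$.

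After using Remark \ref{rem:chang_sign_ACF_segr} and identity \eqref{eqn: acf_per_moduli} to pass to $|v|$, I would assume $v \geq 0$, non trivial, and $R = 1$ by scaling. Repeating verbatim the proof of Lemma \ref{lem: phi is well defined}, that is, testing the integral inequality against $\eta_\delta \Gamma_\eps$, exploiting $-\Delta \Gamma_\eps \geq 0$ and $\partial_\nu \Gamma_\eps = 0$ on $\R^N$, and then sending $\delta, \eps \to 0^+$, one obtains that the integral defining $\Phi$ is finite on compact subsets of $(0,1)$, that $\Phi$ is absolutely continuous, and that the one-function analogue of \eqref{eqn: estim from above denom} holds. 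Running the computation of Lemma \ref{lem: bound from below ACF} for the single function $v$ then yields
\[
\frac{\displaystyle\int_{\partial^+ B_r^+}\frac{|\nabla v|^2}{|X|^{N-1}}\,\de{\sigma}}{\displaystyle\int_{B_r^+}\frac{|\nabla v|^2}{|X|^{N-1}}\,\de{x}\,\de{y}} \geq \frac{2\gamma(\mathcal{R})}{r},
\]
where $\mathcal{R}$ is the Rayleigh quotient on $\S^{N}_+$ of $v^{(r)}(\xi) := v(r\xi)$. The hypothesis $v(\cdot,0) \equiv 0$ on $\{x_1 \leq 0\}$ forces $v^{(r)}$ to vanish on $\omega_- := \{\xi \in \S^{N-1} : \xi_1 \leq 0\}$; hence $\mathcal{R} \geq \lambda_1(\omega_+)$, with $\omega_+ := \S^{N-1}\setminus\overline{\omega_-}$ the opposite open hemisphere, and $\gamma(\mathcal{R}) \geq \gamma(\lambda_1(\omega_+))$ by monotonicity of $\gamma$.

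The genuinely new ingredient is the identification $\gamma(\lambda_1(\omega_+)) = 1/2$. I would exhibit the explicit $1/2$-homogeneous harmonic extension
\[
W(X) := \rho^{1/2}\cos(\theta/2), \qquad \rho := (x_1^2 + y^2)^{1/2}, \qquad \theta := \arg(x_1 + iy) \in [0,\pi],
\]
which is non negative on $\overline{\R^{N+1}_+}$ and vanishes precisely on $\{x_1 \leq 0,\ y = 0\}$. Its trace $W|_{\S^{N}_+}$ is thus an admissible, one-signed competitor in the variational problem defining $\lambda_1(\omega_+)$; since its harmonic extension has homogeneity $1/2$, Remark \ref{rem: gamma lambda} forces $\gamma(\lambda_1(\omega_+)) = 1/2$. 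Plugging this into the previous estimate, the logarithmic derivative
\[
\frac{\Phi'(r)}{\Phi(r)} = -\frac{1}{r} + \frac{\displaystyle\int_{\partial^+ B_r^+}\frac{|\nabla v|^2}{|X|^{N-1}}\,\de{\sigma}}{\displaystyle\int_{B_r^+}\frac{|\nabla v|^2}{|X|^{N-1}}\,\de{x}\,\de{y}} \geq -\frac{1}{r} + \frac{2 \cdot \tfrac{1}{2}}{r} = 0
\]
for a.e.\ $r \in (0,1)$, which completes the proof.

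The step I expect to be the most delicate is the exact computation $\gamma(\lambda_1(\omega_+)) = 1/2$ via the explicit one-signed extension $W$: this is the single input not directly borrowed from Section \ref{sec:acf}, and it is precisely what makes the exponent sharp, an advantage unavailable in the pair problem where the value of $\nuACF$ remains unknown. Everything else is a direct transcription of the arguments in Section \ref{sec:acf} with the pair $(v_1,v_2)$ replaced by the single function $v$.
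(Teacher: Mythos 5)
Your proof is correct and essentially identical to the paper's: the paper likewise reduces to the explicit $1/2$-homogeneous harmonic extension of $\sqrt{x_1^+}$ (which in polar coordinates is exactly your $W=\rho^{1/2}\cos(\theta/2)$, since $\cos(\theta/2)=\sqrt{(1+\cos\theta)/2}$ gives $W=\sqrt{(\sqrt{x_1^2+y^2}+x_1)/2}$), invokes Remark~\ref{rem: gamma lambda} to identify $\gamma(\lambda_1(\bar\omega))=1/2$, and then runs the one-function version of the ACF machinery from Lemma~\ref{lem: bound from below ACF} and Theorem~\ref{thm: ACF} to conclude $\Phi'/\Phi\ge 0$.
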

\begin{proof}
Let $\bar\omega:=\S^{N-1}\cap\{x_1>0\}$, and let $v$ denote the
$1/2$ homogeneous, harmonic extension of $v(x,0)=\sqrt{x_1^+}$ to $\R^{N+1}_+$, that is
\[
v(x,y)= \sqrt{\frac{\sqrt{x_1^2+y^2}+x_1}{2}}.
\]
Since $v$ is positive for $y>0$, Remark \ref{rem: gamma lambda} implies that $v|_{\S^N_+}$ is an eigenfunction associated to $\lambda_1(\bar\omega)$, providing
\[
\gamma(\lambda_1(\bar\omega))=\frac12.
\]
But then, reasoning as in the proofs of  Lemma \ref{lem: bound from below ACF} and
Theorem \ref{thm: ACF}, we readily obtain that
\[
\frac{\Phi'(r)}{\Phi(r)}\geq \frac{2}{r}\left[-\frac12+\gamma(\lambda_1(\bar\omega))\right]
=0. \qedhere
\]
\end{proof}

\subsection{Perturbed ACF formula}

We now move from Theorem \ref{thm: ACF} and introduce a perturbed version of the
monotonicity formula, suitable for functions which coexist on the boundary, rather
than having disjoint support.
\begin{theorem}\label{thm: ACF perturbed}
Let $\nuACF$ be as in Definition \ref{def: nu}, and let $v_1,v_2 \in H^1_{\loc}\left(\overline{\R^{N+1}_+}\right)$ be continuous functions such that, for every non negative $\phi \in \C^{\infty}_0\left(\overline{\R^{N+1}_+}\right)$ and $j\neq i $,
\begin{multline*}
\int\limits_{\R^{N+1}_+}(-\Delta v_i)v_i\phi \, \de x \de y + \int\limits_{\R^N}(\partial_\nu v_i + v_iv_j^2)
v_i\phi \,\de x \\
= \int\limits_{\R^{N+1}_+}\nabla v_i\cdot\nabla(v_i\phi) \, \de x \de y + \int\limits_{\R^N}v_i^2v_j^2\phi \, \de x  \leq0.
\end{multline*}
For any $\nu' \in (0, \nuACF)$ there exists $\bar{r} >1$ such that the function
\[
    \Phi(r) := \prod_{i=1}^{2} \Phi_i(r)
\]
is monotone non decreasing in $r$ for $r \in (\bar{r}, \infty)$, where
\[
    \Phi_i(r) := \frac{1}{r^{2\nu'}}\left(\int\limits_{B_r^+} |\nabla v_i|^2 {\Gamma_1}
\,\de{x} \de{y} +  \int\limits_{\partial^0 B_r^+} v_i^2 v_j^2 {\Gamma_1} \,\de{x}
\right), \quad \text{ for } j \neq i.
\]
\end{theorem}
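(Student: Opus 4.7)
The strategy mirrors the proof of Theorem \ref{thm: ACF}: reduce the monotonicity of $\Phi$ to a lower bound on the logarithmic derivatives of $\Phi_i$, test the penalized inequality against the regularized kernel $\Gamma_1$ (which is radial, superharmonic on $\R^{N+1}_+$, and satisfies $\partial_y\Gamma_1\equiv 0$ on $\{y=0\}$), and rescale to an optimal partition problem on $\S^N_+$. The essential novelty is that the boundary integral $\int_{\partial^0 B_r^+} v_i^2 v_j^2\Gamma_1$ propagates through every step and, after rescaling, produces a penalty term of the form $r\int_{\S^{N-1}}(v_1^{(r)} v_2^{(r)})^2$ in the Rayleigh-type quotients. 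The proof then hinges on showing that for large $r$ this penalty forces the two traces to segregate in the limit, recovering asymptotically the segregated constant $\nuACF$.

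Set $D_i(r):=\int_{B_r^+}|\nabla v_i|^2\Gamma_1\,\de x\de y + \int_{\partial^0 B_r^+}v_i^2 v_j^2\Gamma_1\,\de x$, so that $\Phi_i = r^{-2\nu'}D_i$, and restrict to $r > \bar r > 1$ so that $\Gamma_1(X) = |X|^{1-N}$ on $\partial B_r^+$. The coarea formula gives
\[
D_i'(r) = \frac{1}{r^{N-1}}\left[\int_{\partial^+ B_r^+}|\nabla v_i|^2\,\de\sigma + \int_{S^{N-1}_r}v_i^2 v_j^2\,\de\sigma\right].
\]
Testing the assumed penalized inequality against $\phi=\eta_\delta\Gamma_1$, passing to $\delta\to 0$ and using the superharmonicity of $\Gamma_1$ exactly as in Lemma \ref{lem: phi is well defined}, the term $\int_{\partial^0 B_r^+}v_i^2 v_j^2\Gamma_1$ appears with the correct sign and is absorbed on the left, yielding the same upper bound as in the segregated case,
\[
D_i(r) \leq \frac{1}{r^{N-1}}\int_{\partial^+ B_r^+}v_i\partial_\nu v_i\,\de\sigma + \frac{N-1}{2r^N}\int_{\partial^+ B_r^+}v_i^2\,\de\sigma.
\]
Rescaling via $v_i^{(r)}(\xi):=v_i(r\xi)$, splitting $|\nabla v_i^{(r)}|^2 = |\partial_\nu v_i^{(r)}|^2 + |\nabla_T v_i^{(r)}|^2$, applying Cauchy--Schwarz to the cross term and minimizing over the normalized radial derivative exactly as in Lemma \ref{lem: bound from below ACF}, one obtains
\[
\frac{D_i'(r)}{D_i(r)} \geq \frac{2}{r}\gamma(\mathcal{R}_i^r),\qquad \mathcal{R}_i^r := \frac{\int_{\S^N_+}|\nabla_T v_i^{(r)}|^2\,\de\sigma + r\int_{\S^{N-1}}(v_i^{(r)} v_j^{(r)})^2\,\de\sigma}{\int_{\S^N_+}(v_i^{(r)})^2\,\de\sigma}.
\]

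The key new ingredient is a perturbed optimal partition estimate: for every $\nu' < \nuACF$ there exists $\bar r$ such that, for every $r > \bar r$ and every pair of non trivial $w_1, w_2\in H^1(\S^N_+)$,
\[
\gamma(\mathcal{R}_1)+\gamma(\mathcal{R}_2) \geq 2\nu',\qquad \mathcal{R}_i := \frac{\int_{\S^N_+}|\nabla_T w_i|^2 + r\int_{\S^{N-1}}w_1^2 w_2^2}{\int_{\S^N_+}w_i^2}.
\]
I would argue by contradiction: along a failing sequence $r_n\to\infty$, normalized by $\int_{\S^N_+}(w_i^n)^2 = 1$, the uniform bound on $\gamma(\mathcal{R}_i^n)$ forces $\int|\nabla_T w_i^n|^2$ bounded and $r_n\int_{\S^{N-1}}(w_1^n w_2^n)^2 \to 0$. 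Compactness of $H^1(\S^N_+)\hookrightarrow L^2(\S^N_+)$ and of the trace $H^1(\S^N_+)\to L^2(\S^{N-1})$ yields $w_i^n\rightharpoonup w_i^\infty$ in $H^1$, strongly in $L^2(\S^N_+)$ and in $L^2(\S^{N-1})$; hence $\int_{\S^N_+}(w_i^\infty)^2 = 1$ and, by Fatou, $w_1^\infty w_2^\infty\equiv 0$ on $\S^{N-1}$. The disjoint positivity sets $\omega_i := \{w_i^\infty|_{\S^{N-1}}\neq 0\}$ (enlarged to open sets by a standard capacitary approximation, without increasing $\lambda_1$) form an admissible $2$-partition, and lower semicontinuity of the Dirichlet energy together with continuity and monotonicity of $\gamma$ give $\sum_i\gamma(\lambda_1(\omega_i)) \leq 2\nu' < 2\nuACF$, contradicting Definition \ref{def: nu}. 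Assembling, for $r > \bar r$,
\[
\frac{\Phi'(r)}{\Phi(r)} = -\frac{4\nu'}{r} + \sum_{i=1}^2\frac{D_i'(r)}{D_i(r)} \geq \frac{2}{r}\left(\sum_{i=1}^2\gamma(\mathcal{R}_i^r) - 2\nu'\right) \geq 0,
\]
as claimed. The main obstacle is precisely this perturbed partition lemma: one has to rule out trivialization of the limiting pair (secured by the $L^2(\S^N_+)$ normalization), obtain genuine trace segregation in the limit (secured by trace compactness), and bridge the gap between the abstract positivity sets and the open sets on which $\lambda_1$ was originally defined.
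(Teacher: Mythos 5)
Your reduction of the monotonicity of $\Phi$ to a lower bound on the logarithmic derivatives, the use of $\Gamma_1$ and its superharmonicity, and the passage to the rescaled Rayleigh-type quotients $\mathcal{R}_i^r$ all match the paper's Lemma \ref{lem: bound from below ACF perturbed} and the framing of the proof. The gap lies in the final ingredient: the ``perturbed optimal partition estimate'' you formulate for \emph{every} pair of nontrivial $w_1,w_2\in H^1(\S^N_+)$ is in fact \emph{false}. The quotient
\[
\mathcal{R}_i=\frac{\int_{\S^N_+}|\nabla_T w_i|^2+r\int_{\S^{N-1}}w_1^2w_2^2}{\int_{\S^N_+}w_i^2}
\]
is not scale-invariant with respect to $w_j$: taking $w_1=w_2=a$ (a small constant) gives $\mathcal{R}_1=\mathcal{R}_2=r\,a^2\,|\S^{N-1}|/|\S^N_+|$, which can be made arbitrarily small for any fixed $r>\bar r$ by choosing $a^2\sim\eps/r$, so that $\gamma(\mathcal{R}_1)+\gamma(\mathcal{R}_2)\to 0$. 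Your contradiction argument silently normalizes $\|w_i^n\|_{L^2(\S^N_+)}=1$, but this normalization is not harmless: if the original pair has $\|w_i\|^2=t_i$, then after normalizing the penalty coefficient in $\mathcal{R}_i$ becomes $r\,t_j$ rather than $r$, and the bound $\gamma(\mathcal{R}_i^n)\le K$ forces only $r_n t_j^n\int(\hat w_1^n\hat w_2^n)^2\le K'$, from which one cannot conclude segregation of the limits unless $t_j^n$ stays bounded below.

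This is exactly where the paper inserts a step you are missing: using the hypothesis with $\phi\equiv 1$, the paper shows that $r\mapsto\int_{\S^N_+}(v_i^{(r)})^2\,\de\sigma$ is non-decreasing, and, since the $v_i$ are non trivial, it is bounded below by a positive constant for large $r$ (equation \eqref{eqn: bound of L2 from below}). This guarantees that when the abstract lemma is applied to the functions $v_i^{(r)}$ actually arising in the problem, the effective penalty coefficient $r\,\|v_j^{(r)}\|_{L^2}^2$ still diverges, so the degenerate scaling you have left open cannot occur. Once you add this $L^2$ lower bound (and restate the compactness lemma for $L^2$-normalized pairs with penalty coefficient tending to infinity, rather than for arbitrary $w_1,w_2$), your argument closes correctly and recovers the paper's proof.
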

\begin{remark}\label{rem:chang_sign_ACF_pert}
We observe that, analogously to Remark \ref{rem:chang_sign_ACF_segr}, the main assumption of
Theorem \ref{thm: ACF perturbed} can be equivalently rewritten as
\begin{equation*}
\int\limits_{\R^{N+1}_+}\left[|\nabla v_i|^2\phi + \frac12\nabla (v_i)^2\cdot\nabla\phi\right] \de x \de y + \int\limits_{\R^N}v_i^2v_j^2\phi \, \de x \leq0,
\end{equation*}
for every compactly supported $\phi\geq0$. In particular, if $v_1, v_2$ satisfy such assumption, so $|v_1|,|v_2|$ do. Moreover, reasoning as in Lemma \ref{lem: phi is well defined}, we obtain that,
for every $\phi\geq0$ and almost every $r$,
\begin{equation}\label{eqn: acf_perturbed per_moduli}
\int\limits_{B^+_r}\left[|\nabla v_i|^2\phi + \frac12\nabla (v_i)^2\cdot\nabla\phi\right]\, \de x \de y + \int\limits_{\partial^0B^+_r}v_i^2v_j^2\phi \, \de x \leq \int\limits_{\partial^+B^+_r}(\partial_\nu v_i )v_i\phi
\, \de \sigma,
\end{equation}
\end{remark}
The proof of Theorem \ref{thm: ACF perturbed} follows the lines of the one of
Theorem \ref{thm: ACF}.
\begin{lemma}\label{lem: bound from below ACF perturbed}
Let $v_1, v_2$ be two non trivial functions satisfying the assumptions of
Theorem \ref{thm: ACF perturbed}. Then, for any $r > 1$, it holds
\begin{equation}\label{eqn: bound from below perturbed}
    \sum_{i=1}^2 \ddfrac{\int\limits_{\partial B_r^+} |\nabla v_i|^2 {\Gamma_1}
\,\de{\sigma} +  \int\limits_{r \S^{N-1} } v_i^2 v_j^2 {\Gamma_1} \,\de{\sigma}
}{\int\limits_{B_r^+} |\nabla v_i|^2 {\Gamma_1} \,\de{x} \de{y} +  \int\limits_{\partial^0
B_r^+} v_i^2 v_j^2 {\Gamma_1} \,\de{x} } \geq \frac{2}{r} \sum_{i=1}^{2}
\gamma\left(  {\Lambda_i(r) } \right),
\end{equation}
where
\[
    \Lambda_i(r) = \frac{\int\limits_{\S^{N}_+} |\nabla_T v_i^{(r)}|^2 \, \de{\sigma} +
r\int\limits_{\S^{N-1} } (v_i^{(r)} v_j^{(r)})^2 \,\de{\sigma}  }{\int\limits_{\S^{N}_+}
|v_i^{(r)}|^2 \, \de{\sigma}}
\]
(again, $v_i^{(r)}\from \S^{N-1}_+ \to \R$ is such that $v_i^{(r)}(\xi) = v_i(r\xi)$). 
\end{lemma}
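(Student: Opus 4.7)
The proof will mirror closely that of Lemma \ref{lem: bound from below ACF}, with two new ingredients: the singular weight $|X|^{1-N}$ is replaced by the $C^1$ truncation $\Gamma_1$, and the boundary interaction term $\int v_i^2v_j^2\Gamma_1$ coming from the perturbed inequality \eqref{eqn: acf_perturbed per_moduli} must be carried throughout. Two features of $\Gamma_1$ make this substitution transparent: it is radial, so that $\partial_y\Gamma_1$ vanishes identically on $\{y=0\}$, and it is globally superharmonic on $\R^{N+1}_+$. Moreover, the hypothesis $r>1$ guarantees that on $\partial^+ B_r^+$ one has the explicit values $\Gamma_1=r^{1-N}$ and $\partial_\nu\Gamma_1=-(N-1)r^{-N}$, so that these weights can be factored cleanly out of all boundary integrals.

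The first step is to obtain the perturbed analogue of \eqref{eqn: estim from above denom}. Testing \eqref{eqn: acf_perturbed per_moduli} with a radial cutoff of $\Gamma_1$ (following the density argument of Lemma \ref{lem: phi is well defined}) and integrating the cross term $\tfrac{1}{2}\int\nabla v_i^2\cdot\nabla\Gamma_1$ by parts, the contribution from the flat boundary vanishes by radiality, the term $\tfrac{1}{2}\int v_i^2(-\Delta\Gamma_1)$ has the favourable sign and can be discarded, and one arrives at
\[
\int\limits_{B_r^+}|\nabla v_i|^2\Gamma_1\,\de{x}\de{y}+\int\limits_{\partial^0 B_r^+}v_i^2v_j^2\Gamma_1\,\de{x}\leq r^{1-N}\int\limits_{\partial^+ B_r^+}v_i\partial_\nu v_i\,\de\sigma+\frac{N-1}{2r^N}\int\limits_{\partial^+ B_r^+}v_i^2\,\de\sigma.
\]
Combining this with the fact that $\Gamma_1=r^{1-N}$ also on $\partial^+ B_r^+$ and on $rS^{N-1}$, the common factor cancels, and after the usual rescaling $v_i^{(r)}(\xi):=v_i(r\xi)$ the ratio in \eqref{eqn: bound from below perturbed} reduces to $(1/r)$ times a dimensionless quotient on $\S^N_+$.

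Finally, splitting $|\nabla v_i^{(r)}|^2$ into its normal and tangential components on $\S^N_+$, setting $t^2:=\int_{\S^N_+}|\partial_\nu v_i^{(r)}|^2\big/\int_{\S^N_+}(v_i^{(r)})^2$, and applying Cauchy--Schwarz in the denominator, this quotient is minorized by
\[
\min_{t\geq 0}\frac{t^2+\Lambda_i(r)}{t+\frac{N-1}{2}}=2\gamma(\Lambda_i(r)),
\]
since the perturbation term $r\int_{\S^{N-1}}(v_i^{(r)}v_j^{(r)})^2\,\de\sigma$ enters only as an additive Robin-type correction inside the definition of $\Lambda_i(r)$. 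Summing over $i=1,2$ yields \eqref{eqn: bound from below perturbed}. The principal technical subtlety I expect is the choice of weight: unlike $|X|^{1-N}$, the regularized $\Gamma_1$ is only superharmonic (not harmonic) inside the unit ball, so one must carefully exploit the non-negativity of the discarded term $\int v_i^2(-\Delta\Gamma_1)$ rather than hope for it to vanish; this is precisely what dictates the hypothesis $r>1$, ensuring that on the varying spherical boundary $\Gamma_1$ coincides with the true fundamental solution and the algebraic minimization runs identically to the segregated case.
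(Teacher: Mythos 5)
Your proof is correct and follows essentially the same route as the paper's: test the perturbed inequality \eqref{eqn: acf_perturbed per_moduli} with $\phi=\Gamma_1$, integrate the cross term by parts and discard the superharmonicity contribution $-\tfrac12\int v_i^2\Delta\Gamma_1\geq0$, use the explicit values $\Gamma_1=r^{1-N}$ and $\partial_\nu\Gamma_1=-(N-1)r^{-N}$ on $\partial^+B_r^+$ (valid since $r>1$), rescale via $v_i^{(r)}$, and then run the $t^2/(t+\tfrac{N-1}{2})$ minimization after bounding the denominator by Cauchy--Schwarz exactly as in Lemma~\ref{lem: bound from below ACF}. Your closing observation about the role of $r>1$ and of the regularization $\Gamma_1$ versus $|X|^{1-N}$ correctly identifies the only new technical care needed beyond the segregated case.
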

\begin{proof}
By choosing $\phi=\Gamma_1$ (Definition \ref{def:Gamma_1}) in equation \eqref{eqn: acf_perturbed per_moduli} we obtain, for a.e. $r > 0$,
\[
    \int\limits_{B_r^+} \left[ |\nabla v_i|^2 {\Gamma_1} + \frac12 \nabla (v_i)^2 \cdot \nabla \Gamma_1 \right] \,\de{x} \de{y} +  \int\limits_{\partial^0
B_r^+} v_i^2 v_j^2 {\Gamma_1} \,\de{x} \leq \int\limits_{\partial^+ B_r^+} v_i {\partial_\nu}v_i {\Gamma_1} \, \de{\sigma}.
\]
The superharmonicity of $\Gamma_1$ yields then
\[
    \int\limits_{B_r^+} |\nabla v_i|^2 {\Gamma_1} \,\de{x} \de{y} +  \int\limits_{\partial^0
B_r^+} v_i^2 v_j^2 {\Gamma_1} \,\de{x} \leq \int\limits_{\partial^+ B_r^+}\left(
v_i {\partial_\nu}v_i {\Gamma_1} - \frac{v_i^2}{2}
{\partial_\nu} {\Gamma_1} \right)\, \de{\sigma}.
\]
Recalling that $r>1$ we can use the previous estimate to bound from below the
left hand side of equation \eqref{eqn: bound from below perturbed}, obtaining
\[
    \ddfrac{\int\limits_{\partial B_r^+} |\nabla v_i|^2 {\Gamma_1} \,\de{\sigma} +
\int\limits_{r \S^{N-1} } v_i v_j^2 {\Gamma_1} \,\de{\sigma} }{\int\limits_{B_r^+} |\nabla
v_i|^2 {\Gamma_1} \,\de{x} \de{y} + \int\limits_{\partial^0 B_r^+} v_i v_j^2 {\Gamma_1}
\,\de{x} }  \geq  \frac{1}{r} \dfrac{\int\limits_{\S^{N}_+} |\nabla v_i^{(r)}|^2
\,\de{\sigma} +  r\int\limits_{\S^{N-1} } (v_i^{(r)} v_j^{(r)})^2 \,\de{\sigma}
}{\int\limits_{\S^{N}_+} v_i^{(r)} \partial_{\nu} v_i^{(r)} \, \de{\sigma} +
\frac{N-1}{2} \int\limits_{\S^{N}_+} (v_i^{(r)})^2 \, \de{\sigma} }.
\]
We now estimate the right hand side as follows: the numerator writes
\begin{multline*}
    \int\limits_{\S^{N}_+} |\nabla v_i^{(r)}|^2 \, \de{\sigma} + r\int\limits_{\S^{N-1} }
(v_i^{(r)} v_j^{(r)})^2 \,\de{\sigma} \\ = \int\limits_{\S^{N}_+} |\partial_{\nu}
v_i^{(r)}|^2 \, \de{\sigma}  + \int\limits_{\S^{N}_+} |\nabla_T v_i^{(r)}|^2 \,
\de{\sigma} + r\int\limits_{\S^{N-1} } (v_i^{(r)} v_j^{(r)})^2 \,\de{\sigma}   \\
    = \int\limits_{\S^{N}_+} |v_i^{(r)}|^2 \, \de{\sigma}  \left( \underbrace{\frac{
\int\limits_{\S^{N}_+} |\partial_{\nu} v_i^{(r)}|^2 \, \de{\sigma} }{\int\limits_{\S^{N}_+}
|v_i^{(r)}|^2 \, \de{\sigma} } }_{t^2} + \underbrace{\frac{\int\limits_{\S^{N}_+}
|\nabla_T v_i^{(r)}|^2 \, \de{\sigma} + r\int\limits_{\S^{N-1} } (v_i^{(r)}
v_j^{(r)})^2 \,\de{\sigma}  }{\int\limits_{\S^{N}_+} |v_i^{(r)}|^2 \, \de{\sigma}}
}_{\mathcal{R}} \right).
\end{multline*}
We may bound the denominator as in Lemma \eqref{lem: bound from below ACF}. As a
consequence
\[
    \ddfrac{\int\limits_{\partial B_r^+} |\nabla v_i|^2 {\Gamma_1}\,\de{\sigma} +
\int\limits_{r \S^{N-1} } v_i^2 v_j^2 {\Gamma_1} \,\de{\sigma} }{\int\limits_{B_r^+} |\nabla
v_i|^2 {\Gamma_1}  \,\de{x} \de{y} + \int\limits_{\partial^0 B_r^+} v_i^2 v_j^2
{\Gamma_1} \,\de{x} } \geq \frac{1}{r} \min_{t \in \R^+} \frac{\mathcal{R}+t^2}{
t + \frac{N-1}{2}}.
\]
Minimizing with respect to $t$ as in Lemma \eqref{lem: bound from below ACF} and
summing over $i = 1, 2$, we obtain equation \eqref{eqn: bound from below
perturbed}.
\end{proof}

\begin{proof}[Proof of Theorem \ref{thm: ACF perturbed}]
Without loss of generality, we assume that both $v_1$ and $v_2$ are non trivial.
As in Theorem \ref{thm: ACF}, we will prove that the logarithmic
derivative of $\Phi$ is non negative for any $\nu' \in (0,\nuACF)$ and $r$
sufficiently large. Again, a direct computation shows that
\[
    \begin{split}
    \frac{\Phi'(r)}{\Phi(r)} &= -\frac{4\nu'}{r} + \sum_{i=1}^2
\ddfrac{\int\limits_{\partial B_r^+} |\nabla v_i|^2 {\Gamma_1}  \,\de{\sigma} +
\int\limits_{r \S^{N-1} } v_i^2 v_j^2 {\Gamma_1} \,\de{\sigma} }{\int\limits_{B_r^+} |\nabla
v_i|^2 {\Gamma_1}  \,\de{x} \de{y} +  \int\limits_{\partial^0 B_r^+} v_i^2 v_j^2
{\Gamma_1} \,\de{x} }\\
    &\geq \frac{4}{r} \left[ -\nu' +\frac12 \sum_{i=1}^{2}
    \gamma \left(  \Lambda(v_i^{(r)}) \right)
\right]
    \end{split}
\]
and thus it is sufficient to prove that there exists $\bar{r} > 1$ such that, for every
$r > \bar{r}$, the last term is nonnegative. Of course if $\Lambda_i(r)\to +\infty$ for
some $i$ then there is nothing to prove; thus we can suppose that each $\Lambda_i(r)$ is
bounded uniformly. To begin with, we see that, for $r$ large,
\begin{equation}\label{eqn: bound of L2 from below}
    H(r):=\|v_{i}^{(r_n)}\|_{L^2(\S^N_+)}^2 = \int\limits_{\S^N_+} (v_i^{(r)})^2 \de{\sigma}
\geq C > 0.
\end{equation}
Indeed, the choice of $\phi\equiv1$ in equation \eqref{eqn: acf_perturbed per_moduli}
yields
\[
    H'(r) = \int\limits_{\S^N_+}  r \partial_{\nu} (v_i^2)(r\xi) \de{\sigma} \geq 0,
\]
and, since the functions are non trivial, $H$ cannot be identically 0.

Let us suppose by contradiction that there exists a sequence $r_n \rightarrow \infty$ such that
\begin{equation}\label{eqn: bound of character}
    \frac12\sum_{i=1}^{2} \gamma\left( \Lambda_i(r_n) \right) \leq \nu' < \nuACF.
\end{equation}
We introduce the renormalized sequence
\begin{equation*}
    w_{i,n} = \frac{ v_i^{(r_n)} }{ \left( \int\limits_{\S^N_+} (v_i^{(r_n)})^2
\de{\sigma} \right)^{1/2} }, \quad \text{so that} \quad
\|w_{i,n}\|_{L^2(\S^N_+)} = 1.
\end{equation*}
Recall that $\Lambda_i(r_n)$ is uniformly bounded, that is
\[
    K \geq \Lambda_i(r_n) = \int\limits_{\S^N_+}  |\nabla_{T} w_{i,n}|^2 \de{\sigma} +
\int\limits_{\S^{N-1}} r_n w_{i,n} ^2 w_{i,n}^2 \|v_{i}^{(r_n)}\|_{L^2(\S^N_+)}
\de{\sigma}
\]
and, together with \eqref{eqn: bound of L2 from below}, this yields
\begin{equation}\label{eqn: separated bounds}
    \int\limits_{\S^N_+} |\nabla_{T} w_{i,n}|^2 \, \de{\sigma} \leq K \quad \text{and}
\quad  \int\limits_{\S^{N-1}} w_{i,n} ^2 w_{i,n}^2 \, \de{\sigma} \leq \frac{1}{r_n}
K'.
\end{equation}
Hence there exist functions $\bar{w}_i \in H^1(\S^N_+)$ such that, up to
subsequences, $w_{i,n_k} {\rightharpoonup} \bar{w}_i$, weakly in $H^1(\S^N_+)$, with
$\|\bar{w}_i\|_{L^2(\S^N_+)} = 1$. Moreover, from the weak lower semi-continuity of the norm,
\begin{equation*}
    \liminf_{k \to \infty} \Lambda_i(r_{n_k}) \geq \int\limits_{\S^N_+} |\nabla_{T}
\bar{w}_{i}|^2 \de{\sigma}_N \geq \lambda_1(\{ \bar{w}_i|_{y=0} > 0\}).
\end{equation*}
From \eqref{eqn: separated bounds} we have that $w_i^{(r)} w_j^{(r)} \rightarrow
0$ a.e. on $\S^{N-1}$ and $\bar{w}_{i} \bar{w}_{j} = 0$ on $\S^{N-1}$. This
means that the limit configuration $(w_1,w_2)$ induces a partition of
$\S^{N}_+$, for which we have
\begin{equation*}
    \liminf_{k \rightarrow \infty} \frac12 \sum_{i=1}^{2}
\gamma\left( {\Lambda_i(r_{n_k})}\right) \geq \nuACF
\end{equation*}
in contradiction with \eqref{eqn: bound of character}.
\end{proof}

\section{Almgren type monotonicity formulae}\label{sec:almgren}

In the following, we will be concerned with a number of entire profiles, that is $k$-tuples
of functions defined on the whole $\R^{N+1}_+$, which will be obtained from
solutions to problem $\problem{\beta}$, by suitable limiting procedures.
This section is devoted to the proof of some monotonicity formulae
of Almgren type, related to such profiles.

\subsection{Almgren's formula for segregation entire profiles}\label{subsec: Alm seg prof}
To start with, we consider $k$-tuples $\bv$ having components with segregated traces on $\R^N$.
In such a situation, on one hand each component of $\bv$, when different from zero, satisfies a limiting version of $\problem{\beta}$, where the internal dynamics are trivialized; on the other hand, the interaction
between different components is now described by the validity of some Pohozaev type identity.
We recall that, in order to prove the Almgren formula, it is sufficient to require the Pohozaev
identity to hold only in spherical domains. Nonetheless, we prefer to assume its validity in the broader
class of cylindrical domains, that is domains which are products of spherical and cubic ones. This choice will be useful in classifying the possible limiting profiles,
when we will be involved in a procedure of dimensional reduction.

More precisely, let $C_{r,l}^+(x_0,0)\subset \R^{N+1}_+$ be any set such that there exists $h \in \N$, $h \leq N$,
and a decomposition $\R^{N+1}_+ = \R^{h+1}_+ \oplus \R^{N-h}$ such that, writing
\[
    \R^{N+1}_+\ni X = (x',x'',y), \text{ with } (x',y) \in \R^{h+1}_+, \, x'' \in \R^{N-h},
\]
it holds
\[
C_{r,l}^+(x_0,0) = B_r^+(x'_0,0) \times Q_l(x''_0).
\]
Here, $B_r^+ \subset \R^{h+1}_+$ denotes an half ball of radius $r$,
and $Q_l \subset \R^{N-h}$ a cube of edge length equal to $2l$.
\begin{definition}[Segregation entire profiles]\label{def:segr ent prof}
We denote with $\classG_s$ the set of functions $\bv \in H^1_\loc\left(\overline{\R^{N+1}_+}; \R^k\right)$,
$\bv=(v_1,\dots,v_k)$ continuous, which satisfy the following assumptions:
\begin{enumerate}
 \item $v_i v_j |_{y = 0} = 0$ for every $j\neq i$;
 \item for every $i$,
\begin{equation}\label{eqn: equation of classG_s}
    \begin{cases}
    - \Delta v_i = 0 & \text{ in } \R^{N+1}_+\\
    v_i \partial_{\nu} v_i = 0 & \text{ on } \R^N \times \{0\};
    \end{cases}
\end{equation}
\item for any $x_0\in\R^N$ and a.e. $r>0$, $l>0$,
\begin{multline}\label{eqn: Pohozaev for classGs}
        \int\limits_{C_{r,l}^+} \tsum_{i} 2 |\nabla_{(x',y)} v_i|^2 -(h+1) |\nabla v_i|^2 \,\de{x}\de{y}  +
    r\int\limits_{\partial^+B^+_r \times Q_l } \tsum_{i} |\nabla v_i|^2 \,\de{\sigma} + \\
     = 2r \int\limits_{\partial^+B^+_r \times Q_l }  \tsum_{i} |\partial_{\nu} v_i|^2 \,\de{\sigma}- 2 \int\limits_{B^+_r \times \partial^+Q_l }  \tsum_{i} \partial_{\nu}    v_i \nabla_{(x',y)} v_i \cdot (x'-x_0',y) \,\de{\sigma},
\end{multline}
where $\nabla_{(x',y)}$ is the gradient with respect to the directions in $\R^{h+1}_+$.
\end{enumerate}
\end{definition}
\begin{remark}
Let $\bv\in\classG_s$. By choosing $h=N$ in the above definition, we obtain that the
spherical Pohozaev identity holds, namely
\begin{equation}\label{eqn: Pohozaev spheres for classGs}
    (1-N) \int\limits_{ B^+_r} \tsum_{i} |\nabla v_i|^2 \, \de x \de y + r \int\limits_{\partial^+ B^+_r} \tsum_{i} |\nabla v_i|^2 \, \de \sigma = 2 r \int\limits_{\partial^+ B^+_r} \tsum_{i} |\partial_{\nu} v_i|^2 \, \de \sigma
\end{equation}
for a.e. $r>0$.
\end{remark}
Let us define, for every $x_0 \in \R^N$ and $r > 0$,
\[
    \begin{split}
    E(x_0, r) &:= \frac{1}{r^{N-1}} \int\limits_{B^+_r(x_0,0)} \tsum_{i}
|\nabla v_i|^2 \, \de{x} \de{y}\\
    H(x_0,r) &:= \frac{1}{r^{N}} \int\limits_{\partial^+ B^+_r(x_0,0)}
\tsum_{i} v_i^2 \, \de{\sigma}.
    \end{split}
\]
Let $x_0$ be fixed. Since $\bv \in H^1_{\loc}\left(\overline{\R^{N+1}_+}, \R^k\right)$, both $E$ and $H$ are locally absolutely continuous functions on $(0,+\infty)$, that is, both $E'$ and $H'$ are $L^1_{\loc}(0,\infty)$ (here, $'=\de/\de r$).
\begin{theorem}\label{thm:_Almgren_for_classG_s}
Let $\bv \in \classG_s$, $\bv \not\equiv 0$. For every $x_0 \in \R^N$ the function (Almgren frequency function)
\[
    N(x_0,r) := \frac{E(x_0,r)}{H(x_0, r)}
\]
is well defined on $(0,\infty)$, absolutely continuous, non decreasing, and it satisfies the identity
\begin{equation}\label{eqn: logarithmic derivative of H}
    \frac{\mathrm{d}}{\mathrm{d}r} \log H(r) = \frac{2N(r)}{r}.
\end{equation}
Moreover, if $N(r)\equiv \gamma$ on an open interval, then $N\equiv\gamma$ for every $r$, and
$\bv$ is a homogeneous function of degree $\gamma$.
\end{theorem}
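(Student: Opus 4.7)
The plan is to follow the classical scheme underlying Almgren's monotonicity formula, exploiting harmonicity of each $v_i$ in $\R^{N+1}_+$ together with the two boundary conditions built into the definition of $\classG_s$, which ensure that every integral over $\partial^0 B_r^+$ that arises drops out. Without loss of generality I fix $x_0=0$ and write $B_r^+:=B_r^+(0,0)$. The identity \eqref{eqn: logarithmic derivative of H} comes first: rescaling $X=r\xi$ to the unit upper hemisphere gives
$$H(r)=\int_{\S^N_+}\tsum_i v_i(r\xi)^2\,\de\sigma,\qquad H'(r)=\frac{2}{r^N}\int_{\partial^+B_r^+}\tsum_i v_i\,\partial_\nu v_i\,\de\sigma,$$
while Green's identity applied to each harmonic $v_i$ on $B_r^+$, combined with $v_i\partial_\nu v_i=0$ on $\partial^0 B_r^+$ from \eqref{eqn: equation of classG_s}, yields
$$\int_{B_r^+}\tsum_i|\nabla v_i|^2\,\de x\,\de y=\int_{\partial^+B_r^+}\tsum_i v_i\,\partial_\nu v_i\,\de\sigma;$$
dividing these identifies $H'(r)/H(r)$ with $2N(r)/r$.

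For the monotonicity of $N$, I would compute $E'$ using the spherical Pohozaev identity \eqref{eqn: Pohozaev spheres for classGs} (the case $h=N$ of item (3) in Definition \ref{def:segr ent prof}). Straightforward differentiation gives
$$E'(r)=-\frac{N-1}{r^N}\int_{B_r^+}\tsum_i|\nabla v_i|^2\,\de x\,\de y+\frac{1}{r^{N-1}}\int_{\partial^+B_r^+}\tsum_i|\nabla v_i|^2\,\de\sigma,$$
and Pohozaev cancels exactly this combination, leaving $E'(r)=(2/r^{N-1})\int_{\partial^+B_r^+}\tsum_i|\partial_\nu v_i|^2\,\de\sigma$. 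Assembling the expressions for $E'$ and $H'$, the numerator of $(E/H)'$ is, up to a positive factor,
$$\int_{\partial^+B_r^+}\tsum_i|\partial_\nu v_i|^2\cdot\int_{\partial^+B_r^+}\tsum_i v_i^2 \;-\; \left(\int_{\partial^+B_r^+}\tsum_i v_i\,\partial_\nu v_i\right)^{2},$$
which is nonnegative by the vector Cauchy-Schwarz inequality on $L^2(\partial^+B_r^+;\R^k)$ applied to $(v_i)_i$ and $(\partial_\nu v_i)_i$. Hence $N$ is non decreasing.

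For the rigidity statement, suppose $N\equiv\gamma$ on an open interval $I$. Equality in the above Cauchy-Schwarz for every $r\in I$ forces a pointwise proportionality $\partial_\nu v_i(X)=\mu(r)v_i(X)$ on $\partial^+B_r^+$ with a common scalar $\mu(r)$ independent of $i$; combined with $H'/H=2\gamma/r$, this pins down $\mu(r)=\gamma/r$ and yields the pointwise Euler relation $X\cdot\nabla v_i(X)=\gamma v_i(X)$ throughout the annular region $\{|X|\in I\}\cap\R^{N+1}_+$. Hence each $v_i$ coincides on this annulus with its $\gamma$-homogeneous extension $\tilde v_i(X):=|X|^\gamma v_i(r_0 X/|X|)$ for a fixed $r_0\in I$; since $\tilde v_i$ is harmonic on $\R^{N+1}_+\setminus\{0\}$ (as the $\gamma$-homogeneous extension of a function that is already harmonic and $\gamma$-homogeneous on the annulus), unique continuation for harmonic functions applied component-wise yields $v_i\equiv\tilde v_i$ on all of $\R^{N+1}_+$, giving the desired $\gamma$-homogeneity and, in particular, $N\equiv\gamma$ on $(0,\infty)$. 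The most delicate point is this final propagation step: the nonlinear boundary condition $v_i\partial_\nu v_i=0$ prevents a naive reflection argument, and one must instead restrict the unique-continuation argument to the open half-space, where only harmonicity enters.
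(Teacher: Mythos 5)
Your proposal follows the same route as the paper: compute $E'$ via the spherical Pohozaev identity, identify $E$ and $H'$ via Green's identity and the boundary condition $v_i\partial_\nu v_i=0$, apply Cauchy--Schwarz on $L^2(\partial^+ B_r^+;\R^k)$ for the monotonicity, and use equality in Cauchy--Schwarz to derive the Euler relation $X\cdot\nabla v_i=\gamma v_i$ and hence homogeneity, extended to all of $\R^{N+1}_+$ by unique continuation for harmonic functions. All of this is correct and essentially identical to what the paper does.

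What is missing is the claim that $N$ is \emph{well defined on all of} $(0,\infty)$, i.e.\ that $H(x_0,r)>0$ for every $r>0$. Your computation divides by $H$ without establishing that it is nonzero, and the monotonicity of $N$ only makes sense where $N$ is defined. This is not a triviality: a priori, $\bv\not\equiv 0$ only guarantees $H>0$ on some nonempty open interval $(r_1,r_2)$. The paper closes this gap by bootstrapping from the two facts you did establish. Since $\frac{\de}{\de r}\log H = 2N/r\ge 0$ wherever $H>0$, $\log H$ is nondecreasing there, so $H>0$ for all $r>\bar r$ once $H(\bar r)>0$; this forces $r_2=\infty$. For the lower endpoint, supposing $r_1:=\inf\{r:H>0\text{ on }(r,\infty)\}>0$, the monotonicity of $N$ gives $N(r)\le N(2r_1)$ for $r_1<r\le 2r_1$, hence $\frac{\de}{\de r}\log H\le 2N(2r_1)/r$, which upon integration yields $H(r)\ge H(2r_1)(r/2r_1)^{2N(2r_1)}$, bounded away from zero as $r\to r_1^+$; continuity of $H$ then contradicts $H(r_1)=0$. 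Without this step, the assertion that $N$ is defined, absolutely continuous and monotone on the whole half-line $(0,\infty)$ rather than on an unspecified subinterval is not justified, so you should add this propagation argument.
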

\begin{proof}
Up to a translation, we may suppose that $x_0 = 0$. Obviously $H \geq 0$, and $H > 0$ on a nonempty interval
$(r_1,r_2)$, otherwise $\bv \equiv 0$. As a consequence, either $\bv$ is a nontrivial constant, and the theorem easily follows; or, by harmonicity, $\bv$ is not constant in the whole $B^+_{r_2}$, and also $E > 0$ for $r<r_2$. Passing to the logarithmic derivatives, the monotonicity of $N$ will be a consequence of the claim
\[
    \frac{N'(r)}{N(r)} = \frac{E'(r)}{E(r)} - \frac{H'(r)}{H(r)} \geq 0 \qquad \text{ for } r \in (r_1,r_2).
\]
Deriving $E$ and using the Pohozaev identity \eqref{eqn: Pohozaev spheres for classGs},  we have that
\[
\begin{split}
    E'(r) &=\frac{1-N}{r^N} \int\limits_{ B^+_r} \tsum_{i} |\nabla v_i|^2 \, \de{x} \de{y} + \frac{1}{r^{N-1}} \int\limits_{\partial^+ B^+_r} \tsum_{i} |\nabla v_i|^2 \, \de\sigma\\
     & =  \frac{2}{r^{N-1}} \int\limits_{\partial_+ B^+_r} \tsum_{i} |\partial_{\nu} v_i|^2 \, \de\sigma,
\end{split}
\]
while testing equation \eqref{eqn: equation of classG_s} with $v_i$ in $B_r^+$ and summing over $i$, we obtain
\[
    E(r) = \frac{1}{r^{N-1}} \int\limits_{B^+_r} \tsum_{i} |\nabla v_i|^2 \, \de x \de y = \frac{1}{r^{N-1}} \int\limits_{\partial^+ B^+_r} \tsum_{i} v_i \partial_{\nu} v_i \, \de \sigma.
\]
As far as $H$ is concerned, we find
\[
    H'(r) =  \frac{2}{r^N} \int\limits_{\partial^+ B_r^+} \tsum_{i}  v_i \partial_{\nu} v_i \, \de\sigma.
\]
As a consequence, by the Cauchy-Schwarz inequality, we have
\begin{equation}\label{eqn: Cauchy Schwarz for N}
      \frac12\,\frac{N'(r)}{N(r)} = \ddfrac{\int\limits_{\partial^+ B^+_r} \tsum_{i} |\partial_{\nu} v_i|^2 \, \de \sigma }{ \int\limits_{\partial^+ B^+_r} \tsum_{i} v_i \partial_{\nu} v_i \, \de \sigma } - \ddfrac{ \int\limits_{\partial^+ B_r^+}  \tsum_{i} v_i \partial_{\nu} v_i \, \de{\sigma} }{ \int\limits_{\partial^+ B_r^+} \tsum_{i} v_i^2 \, \de\sigma }\geq 0
      \qquad \text{ for } r \in (r_1,r_2).
\end{equation}
Moreover, on the same interval,
\[
    \frac{\mathrm{d}}{\mathrm{d}r} \log H(r) = \frac{H'(r)}{H(r)} =
\frac{2E(r)}{rH(r)} = \frac{2N(r)}{r}.
\]
Let us show that we can choose $r_1=0$, $r_2=+ \infty$. On one hand, the above equation provides that,
if $\log H(\bar r)>-\infty$, then $\log H(r)>-\infty$ for every $r>\bar  r$, so that $r_2=+\infty$. On
the other hand, let us assume by contradiction that
\[
    r_1 := \inf \{ r: H(r) > 0\text{ on }(r,+\infty)\} > 0.
\]
By monotonicity, we have that $N(r) < N(2r_1)$ for every $r_1 < r \leq 2r_1$. It follows that
\[
    \frac{\mathrm{d}}{\mathrm{d}r} \log H(r) \leq \frac{2N(2r_1)}{r} \implies \frac{H(2r_1)}{H(r)} \leq \left(\frac{2r_1}{r}\right)^{2N(2r_1)}
\]
and, since $H$ is continuous, $H(r_1) >0$, a contradiction.

Now, let us assume $N(r)\equiv \gamma$ on some interval $I$.
Recalling equation \eqref{eqn: Cauchy Schwarz for N}, we see that
\[
    \left( \int\limits_{\partial^+ B_r^+}  \tsum_{i} v_i \partial_{\nu} v_i \,
\de\sigma \right)^2 = \int\limits_{\partial^+ B_r^+} \tsum_{i} v_i^2 \, \de\sigma
\int\limits_{\partial^+ B^+_r} \tsum_{i} |\partial_{\nu} v_i|^2 \, \de\sigma,
\]
which is true, by the Cauchy-Schwarz inequality, if and only if $ \bv$ and $\partial_{\nu} \bv$ are parallel, that is
\[
    v_i = \lambda(r) \partial_{\nu} v_i = \frac{\lambda(r)}{r} X \cdot \nabla v_i,\quad\text{for every } r \in I.
\]
Using the definition of $N$, we have $\gamma= r/\lambda(r)$ for every $r \in I$, so that
\[
    \gamma v_i = X \cdot \nabla v_i \quad \forall i = 1, \dots, k.
\]
But this is the Euler equation for homogeneous functions, and it implies that $\bv$
is homogeneous of degree $\gamma$. Since each $v_i$ is also
harmonic in $\R^{N+1}_+$, the homogeneity extends to the whole of $\R^{N+1}_+$, yielding
$N(r) \equiv \gamma$ for every $r > 0$.
\end{proof}
In a standard way, from Theorem \ref{thm:_Almgren_for_classG_s} we infer that the growth properties of the
elements of $\classG_s$ are related with their Almgren quotient.
\begin{lemma}\label{lem: first consequence classG_s}
Let $\bv \in \classG_s$, and let $\gamma$, $\bar r$ and $C$ denote positive constants.
\begin{enumerate}
 \item If $|\bv(X)| \leq C|X-(x_0,0)|^{\gamma}$ for every $X\not\in B^+_{\bar r}(x_0,0)$, then $N(x_0,r) \leq \gamma$ for every $r>0$.
 \item If $|\bv(X)| \leq C|X-(x_0,0)|^{\gamma}$ for every $X\in B^+_{\bar r}(x_0,0)$, then $N(x_0,r) \geq \gamma$ for every $r>0$.
\end{enumerate}
\end{lemma}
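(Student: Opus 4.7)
The plan is to exploit the identity $\frac{\mathrm{d}}{\mathrm{d}r}\log H(x_0,r) = 2N(x_0,r)/r$ from Theorem \ref{thm:_Almgren_for_classG_s}, integrating it against the monotonicity of $N$. For both statements, I would translate so that $x_0=0$, write $N(r), H(r)$, and use the fact that, since $\bv \not\equiv 0$, the theorem gives $H(r) > 0$ for all $r > 0$.

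For part (1), I would first observe that the pointwise bound $|\bv(X)| \leq C|X|^\gamma$ on $\R^{N+1}_+\setminus B^+_{\bar r}$ implies, by direct integration over the hemisphere $\partial^+B^+_r$, a polynomial bound $H(r) \leq C' r^{2\gamma}$ for every $r \geq \bar r$. Suppose by contradiction that $N(r_0) =: \gamma_0 > \gamma$ for some $r_0$. By monotonicity, $N(s) \geq \gamma_0$ for every $s \geq r_0$, so integrating the logarithmic derivative formula between $r_0$ and $r > r_0$ gives
\[
\log\frac{H(r)}{H(r_0)} = \int_{r_0}^{r}\frac{2N(s)}{s}\,\de{s} \geq 2\gamma_0\log\frac{r}{r_0},
\]
i.e. $H(r) \geq H(r_0)(r/r_0)^{2\gamma_0}$ for all $r \geq r_0$. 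Combined with $H(r) \leq C' r^{2\gamma}$ and the strict inequality $\gamma_0 > \gamma$, letting $r \to \infty$ produces a contradiction.

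Part (2) is symmetric, reversing the roles of small and large $r$. The inner bound $|\bv(X)| \leq C|X|^\gamma$ on $B^+_{\bar r}$ yields $H(r) \leq C' r^{2\gamma}$ for every $r \leq \bar r$. If instead $N(r_0) =: \gamma_0 < \gamma$ for some $r_0$, monotonicity gives $N(s) \leq \gamma_0$ for all $s \leq r_0$, and integrating from $r < r_0$ to $r_0$ yields
\[
\log\frac{H(r_0)}{H(r)} = \int_{r}^{r_0}\frac{2N(s)}{s}\,\de{s} \leq 2\gamma_0\log\frac{r_0}{r},
\]
so $H(r) \geq H(r_0)(r/r_0)^{2\gamma_0}$ for every $r \leq r_0$. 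Taking $r \leq \min(r_0,\bar r)$ and dividing the two estimates yields $H(r_0)\,r_0^{-2\gamma_0} \leq C'\, r^{2(\gamma-\gamma_0)}$, whose right-hand side tends to $0$ as $r \to 0^+$ while the left-hand side is a positive constant since $H(r_0) > 0$, again a contradiction.

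No step is really delicate here; the only point requiring a little care is ensuring positivity of $H$ so that we may divide and take logarithms, and this is guaranteed by the nontriviality of $\bv$ together with the propagation argument already used in the proof of Theorem \ref{thm:_Almgren_for_classG_s}. The whole lemma is essentially a textbook consequence of having both a monotone frequency function and an explicit formula for $(\log H)'$ in terms of it.
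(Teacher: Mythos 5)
Your proof is correct and follows essentially the same route as the paper's: translate $|\bv|\leq C|X|^\gamma$ into the bound $H(r)\leq C'r^{2\gamma}$ on the appropriate range, then integrate the identity $\frac{\de}{\de r}\log H=2N/r$ against the monotonicity of $N$ to reach a contradiction. The paper merely phrases the contradiction via $N(R)\geq\gamma+\eps$ rather than naming $\gamma_0$; the substance is identical.
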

\begin{proof}
Let $\bv \in \classG_s$, and let us assume the growth condition for $r\geq\bar r$.
We observe that it implies, for $r$ large, $H(r) \leq C r^{2\gamma}$.
Arguing by contradiction, let us suppose that there exists $R > \bar r$ such that $N(x_0, R) \geq \gamma + \eps$. By monotonicity of $N$ we have
\[
    \frac{\mathrm{d}}{\mathrm{d}r} \log H(r) \geq \frac{2}{r} (\gamma +\eps) \quad \forall r \geq R
\]
and, integrating in $(R,r)$, we find
\[
    C r^{2(\gamma+\eps)} \leq H(r) \leq C r^{2\gamma},
\]
a contradiction for $r$ large enough. On the other hand, if the growth condition holds
for $r<\bar r$, we can argue in an analogous way, assuming that
\[
    \frac{\mathrm{d}}{\mathrm{d}r} \log H(r) \leq \frac{2(\gamma - \eps)}{r}
\]
for $r$ small enough and obtaining again a contradiction.
\end{proof}
\begin{corollary}\label{cor:4.5}
If $\bv \in \classG_s$ is globally H\"older continuous of exponent $\gamma$ on $\R^{N+1}_+$,
then it is homogeneous of degree $\gamma$ with respect to any of its (possible) zeroes, and
\[
\mathcal{Z}:=\{x \in \R^N: \bv(x,0) = 0\}\quad\text{is an affine subspace of }\R^N.
\]
Furthermore, if $\gamma<1$, then
\[
\mathcal{Z}=\emptyset\qquad\iff\qquad \bv\text{ is a (nontrivial) constant.}
\]
\end{corollary}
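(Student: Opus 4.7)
The plan is to combine Almgren's monotonicity formula with the two-sided growth estimates of Lemma \ref{lem: first consequence classG_s} to pin down the frequency at every zero of $\bv$, and then use the resulting homogeneity twice: once to force $\mathcal{Z}$ to be affine, and once to handle the case $\mathcal{Z}=\emptyset$ via a Liouville argument.

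Fix $x_0\in\mathcal{Z}$, so that $\bv(x_0,0)=0$. Global $\gamma$-H\"older continuity immediately yields $|\bv(X)|\leq C|X-(x_0,0)|^{\gamma}$ for every $X\in\overline{\R^{N+1}_+}$, so both items of Lemma \ref{lem: first consequence classG_s} apply (the small-scale bound giving $N(x_0,r)\geq\gamma$, the large-scale one giving $N(x_0,r)\leq\gamma$), forcing $N(x_0,\cdot)\equiv\gamma$ on $(0,\infty)$. The rigidity clause of Theorem \ref{thm:_Almgren_for_classG_s} then makes $\bv$ homogeneous of degree $\gamma$ with respect to $(x_0,0)$, which is the first assertion.

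To obtain the affine structure of $\mathcal{Z}$, I would pick two points $x_0,x_1\in\mathcal{Z}$ and use homogeneity about $(x_0,0)$ along the direction $(x_1-x_0,0)$ to get $\bv(x_0+t(x_1-x_0),0)=t^{\gamma}\bv(x_1,0)=0$ for every $t\geq 0$; the symmetric use of homogeneity about $(x_1,0)$ covers the opposite ray, so the whole line through $x_0$ and $x_1$ lies in $\mathcal{Z}$. An elementary midpoint argument---if $x_0,x_0+u,x_0+v\in\mathcal{Z}$, then line-closure yields $x_0+2u,x_0+2v\in\mathcal{Z}$, whose midpoint $x_0+u+v$ lies in $\mathcal{Z}$ as well---upgrades line-closure to closure under affine combinations, so $\mathcal{Z}$ is an affine subspace of $\R^N$.

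Finally, assume $\gamma<1$. The implication ``$\bv$ a nontrivial constant $\Rightarrow\mathcal{Z}=\emptyset$'' is immediate, so only the converse requires work. The segregation condition $v_iv_j|_{y=0}=0$ combined with $\mathcal{Z}=\emptyset$ tells us that the open sets $\{v_i(\cdot,0)\neq 0\}$ are pairwise disjoint and cover the connected space $\R^N$, so exactly one component---say $v_1$---is nontrivial (and of constant sign) on the boundary, while the remaining $v_j$ vanish identically on $\R^N\times\{0\}$. For $j\neq 1$, an odd reflection makes $v_j$ harmonic on all of $\R^{N+1}$ and still globally $\gamma$-H\"older, so the Liouville theorem for harmonic functions of sub-linear growth ($\gamma<1$) forces $v_j\equiv 0$; for $v_1$, the Neumann condition $v_1\partial_\nu v_1=0$ reduces to $\partial_\nu v_1=0$, so the even reflection $\tilde v_1(x,y):=v_1(x,|y|)$ is harmonic on $\R^{N+1}$, still $\gamma$-H\"older, and again forced to be constant by the same Liouville theorem. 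The most delicate step of the whole argument is this last reduction, since it requires checking that the reflected functions inherit both harmonicity (across the Dirichlet or Neumann boundary, respectively) and the sub-linear growth needed to invoke Liouville.
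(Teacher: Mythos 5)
Your proposal is correct and follows the same route as the paper: at a zero, global H\"older continuity plus both parts of Lemma \ref{lem: first consequence classG_s} pin the Almgren frequency to $\gamma$, and the rigidity clause of Theorem \ref{thm:_Almgren_for_classG_s} gives homogeneity; the affine structure of $\mathcal{Z}$ and the $\gamma<1$ Liouville argument via odd/even reflections are exactly what the paper does (the paper leaves the affine-subspace step as ``easily follows,'' which you correctly fill in with the line-closure and midpoint argument).
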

\begin{proof}
On one hand, if $(x_0,0)\in\mathcal{Z}$, Lemma \ref{lem: first consequence classG_s}
implies $N(x_0,r)= \gamma$ for every $r$, and the first part easily follows.
On the  other hand, let $\mathcal{Z}=\emptyset$. By continuity, up to a relabeling,
we have that $v_1(x,0)=\dots=v_{k-1}(x,0)=0$ on $\R^N$, so that their odd extension across $\{y=0\}$ are harmonic
and globally H\"older continuous  of exponent $\gamma<1$ on the whole of $\R^{N+1}$; but then the classical Liouville Theorem implies that they are all trivial. Finally, by continuity, $v_k(x,0)$ is always
different from zero, so that $\partial_\nu v_k(x,0)\equiv0$ on $\R^N$. As a consequence, Liouville Theorem applies also to the even extension of $v_k$ across $\{y=0\}$, concluding the proof.
\end{proof}
\begin{remark}
We observe that $\bv=(1,y,0,\dots,0)$ belongs to $\classG_s$ and it is globally Lipschitz
continuous, but it is not homogeneous. This does not contradict the previous Corollary
\ref{cor:4.5}, indeed its zero set is empty.
\end{remark}
To conclude this section, we observe that the monotonicity of $N(x,r)$
implies that both for $r$ small and for $r$ large the corresponding limits are well defined.
\begin{lemma}\label{lem: second consequence classG_s}
Let $\bv\in\classG_s$. Then
\begin{enumerate}
 \item $N(x,0^+)$ is a non negative upper semicontinuous function on $\R^N$;
 \item $N(x,\infty)$ is constant (possibly $\infty$).
\end{enumerate}
\end{lemma}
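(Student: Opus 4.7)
The plan exploits the monotonicity of $N(x,\cdot)$ from Theorem \ref{thm:_Almgren_for_classG_s}, which yields the representations $N(x,0^+)=\inf_{r>0}N(x,r)$ and $N(x,\infty)=\sup_{r>0}N(x,r)$.

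For part (1), nonnegativity is immediate from $E,H\ge 0$. For the upper semicontinuity I first observe that, for every fixed $r>0$, the map $x\mapsto N(x,r)$ is continuous on $\R^N$: both $x\mapsto E(x,r)$ and $x\mapsto H(x,r)$ are continuous by the $L^1$-continuity of translations (applied respectively to $\sum_i|\nabla v_i|^2\in L^1_\loc(\R^{N+1}_+)$ and, via the trace theorem, to $\sum_i v_i^2$ on $\partial^+B^+_r(x,0)$). The positivity $H(x,r)>0$ for every $x\in\R^N$ and every $r>0$ follows from the nontriviality of $\bv$ and the argument used in the proof of Theorem \ref{thm:_Almgren_for_classG_s} (applied after translating the origin to $x$). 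Hence $N(\cdot,0^+)$ is upper semicontinuous as the infimum of a family of continuous functions.

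I approach part (2) by a blow-down. Set $\gamma_0:=N(0,\infty)$ and first assume $\gamma_0<\infty$. Introduce the rescalings
\[
\bv_R(X):=\frac{\bv(RX)}{\sqrt{H(0,R)}},
\]
which by a direct change of variables satisfy
\[
N_{\bv_R}(y,\rho)=N(Ry,R\rho)\qquad \text{for every }y\in\R^N,\ \rho>0.
\]
Using \eqref{eqn: logarithmic derivative of H} with the bound $N(0,\cdot)\le\gamma_0$, one deduces $H(0,\rho R)\le H(0,R)\rho^{2\gamma_0}$ for $\rho\ge 1$, which together with analogous lower bounds yields uniform $H^1(B^+_\rho)$ bounds on $\{\bv_R\}$ for every $\rho>0$. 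Along a subsequence, $\bv_R$ converges (weakly in $H^1_\loc$ and strongly in $L^2_\loc$) to some $\bv_\infty\in\classG_s$ satisfying $N_{\bv_\infty}(0,\rho)=\lim_R N(0,R\rho)=\gamma_0$ for every $\rho$; by the rigidity part of Theorem \ref{thm:_Almgren_for_classG_s}, $\bv_\infty$ is a nontrivial $\gamma_0$-homogeneous profile. The continuity of $y\mapsto N_{\bv_\infty}(y,\rho)$ at $y=0$ (by the same reasoning as in part (1)) combined with the strong convergence $\bv_R\to\bv_\infty$ on compact sets gives, for every $x\in\R^N$ and every $\rho>0$,
\[
N(x,R\rho)=N_{\bv_R}(x/R,\rho)\ \xrightarrow{R\to\infty}\ N_{\bv_\infty}(0,\rho)=\gamma_0,
\]
so $N(x,\infty)=\gamma_0$. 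The case $\gamma_0=\infty$ is then settled by contradiction: were $N(x_0,\infty)<\infty$ for some $x_0$, performing the blow-down centered at $x_0$ would, by the previous case, force $N(0,\infty)=N(x_0,\infty)<\infty$, contradicting the assumption on $\gamma_0$.

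The main obstacle is to identify the weak limit $\bv_\infty$ as a bona fide element of $\classG_s$ and to justify the passage $N_{\bv_R}(x/R,\rho)\to N_{\bv_\infty}(0,\rho)$: this requires strong enough convergence in the $H^1$ and trace $L^2$ norms on small balls, preservation of the segregation $v_{\infty,i}v_{\infty,j}|_{y=0}=0$, and the passage to the limit in the cylindrical Pohozaev identity from Definition \ref{def:segr ent prof}. The first two follow from Rellich compactness combined with the trace theorem, while the third is preserved thanks to its scaling-invariant formulation.
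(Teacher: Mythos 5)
Part (1) of your proposal coincides with the paper's own one-line proof; no issues there.

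Part (2) is where you diverge: you attempt a blow-down argument, while the paper uses a direct growth comparison of $H$ between base points. Unfortunately your route has a genuine gap. The crux of the blow-down scheme is passing the $E$-integral (hence the Almgren quotient) to the limit, identifying $\bv_\infty$ as an element of $\classG_s$ via the Pohozaev identity, and concluding $N_{\bv_\infty}(0,\rho)=\gamma_0$. All of these require \emph{strong} $H^1_{\loc}$ convergence of $\bv_R$ to $\bv_\infty$, which you yourself flag as the "main obstacle" but then dismiss with "follows from Rellich compactness combined with the trace theorem". That is not correct: Rellich gives compactness in $L^2$, not in $H^1$, and weak $H^1$ convergence is insufficient to pass $\int|\nabla v_{i,R}|^2$ to the limit or to preserve the Pohozaev identity. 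The $H^1$ pre-compactness of segregated harmonic blow-up/blow-down sequences is a nontrivial fact that in this paper is established only much later — essentially Proposition \ref{prp: compactness in Gs} in Section 6, which in turn relies on the local H\"older bounds and the Liouville theorems of Sections 4--6. Invoking it to prove a lemma in Section 3 would introduce a forward dependence and, in any case, is not what you do: you assert it as elementary when it is not.

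The paper's argument sidesteps all of this. Writing $\nu:=\lim_r N(0,r)$ and supposing by contradiction $\sup_r N(x_0,r)\leq\nu-2\eps$, one integrates \eqref{eqn: logarithmic derivative of H} at the two base points $0$ and $x_0$ to obtain $H(x_0,R)\lesssim R^{2(\nu-2\eps)}$ and $H(0,R)\gtrsim R^{2(\nu-\eps)}$ for large $R$; integrating these radially gives bounds on $\int_{B_R^+}\tsum_i v_i^2$ centered at $0$ and at $x_0$, and since $B^+_{R}(0,0)\subset B^+_{R+|x_0|}(x_0,0)$, the two polynomial rates become incompatible for $R$ large. No compactness, no passage to the limit, no Pohozaev — just monotonicity and a triangle inequality. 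If you want to keep a blow-down viewpoint you must first prove (or cite, with care about logical order) the strong $H^1$ pre-compactness; otherwise the elementary comparison is the way to go here.
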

\begin{proof}
The first assertion follows because $N(x,0^+)$ is the infimum of continuous functions.
On the other hand, let
\[
    \nu := \lim_{r \to \infty} N(0,r) > 0;
\]
we prove the second assertion in the case $\nu<\infty$, the other case following
 with minor changes. Let us suppose by contradiction that there exists $x_0 \in \R^N$ such that
\(
    \sup_{r > 0} N(x_0,r) = \nu - 2\eps
\)
for some $\eps > 0$. Let moreover $r_0 > 0$ be such that
\(
    N(0,r_0) \geq \nu - \eps.
\)
Reasoning as in the proof of Lemma \ref{lem: first consequence classG_s} we see that, when
$R_1$, $R_2$ are sufficiently large, both $H(x_0,R_1) \leq C R_1^{2(\nu-2\eps)}$ and $H(0,R_2) \geq C R_2^{2(\nu-\eps)}$. By definition
\[
    \int\limits_{B_{R_1}^+(x_0,0) \setminus B_{r_0}^+(x_0,0)} \tsum_i v_i^2 \, \de x \de y = \int\limits_{r_0}^{R_1} H(x_0,s) s^{N}\de s  \leq C R_1^{N + 2(\nu-2\eps)}
\]
and
\[
    \int\limits_{B_{R_2}^+(0,0) \setminus B_{r_0}^+(0,0)} \tsum_i v_i^2 \, \de x \de y = \int\limits_{r_0}^{R_2} H(0,s) s^{N}\de s \geq C R_2^{N + 2(\nu-\eps)}.
\]
Now, if we let $R_1 = R_2 + |x_0|$, we obtain
\begin{multline*}
    C R_2^{N + 2(\nu-\eps)} \leq \int\limits_{B_{R_2}^+(0,0) \setminus B_{r_0}^+(0,0)} \tsum_i v_i^2 \, \de x \de y \\
    \leq \int\limits_{B_{r_0}^+(x_0,0)} \tsum_i v_i^2 \, \de x \de y - \int\limits_{B_{r_0}^+(0,0)} \tsum_i v_i^2 \, \de x \de y + \int\limits_{B_{R_1}^+(x_0,0) \setminus B_{r_0}^+(x_0,0)} \tsum_i v_i^2 \, \de x \de y \\
    \leq C + C' (R_2 + |x_0|)^{N + 2(\nu-2\eps)}
\end{multline*}
and we find a contradiction for $R_2$ sufficiently large. Exchanging the role of $0$ and $x_0$ we can conclude.

\end{proof}

\subsection{Almgren's formula for coexistence entire profiles}
We now shift our attention to the case in which $\bv$ is a $k$-tuple of functions which a priori are not segregated, but satisfy a boundary equation on $\R^N$. In this setting, the validity of the Pohozaev identities is a consequence of the boundary equation.
\begin{definition}[Coexistence entire profiles]\label{def:coex ent prof}
We denote with $\classG_c$ the set of functions $\bv \in H^1_\loc\left(\overline{\R^{N+1}_+}\right)$ which are solutions to
\begin{equation}\label{eqn: equation of classG_c}
    \begin{cases}
    - \Delta v_i = 0 & \text{ in } \R^{N+1}_+\\
    \partial_{\nu} v_i + v_i \tsum_{j \neq i} v_j^2 = 0 & \text{ on } \R^N \times \{0\},
    \end{cases}
\end{equation}
for every $i = 1, \dots, k$.
\end{definition}
\begin{remark}
Of course, if $\bv \in H^1_\loc\left(\overline{\R^{N+1}_+}\right)$ solves
\begin{equation*}
    \begin{cases}
    - \Delta v_i = 0 & \text{ in } \R^{N+1}_+\\
    \partial_{\nu} v_i + \beta v_i \tsum_{j \neq i} v_j^2 = 0 & \text{ on } \R^N \times \{0\},
    \end{cases}
\end{equation*}
for some $\beta>0$, then a suitable multiple of $\bv$ belongs to $\classG_s$.
\end{remark}
\begin{lemma}\label{lem: Poho_G_c}
Let $\bv\in \classG_c$. For any $x_0\in\R^N$ and $r>0$, the following identity holds
\begin{multline*}
    (1-N) \int\limits_{B^+_r } \tsum_{i} |\nabla v_i|^2 \,\de{x}\de{y}  +
r\int\limits_{\partial^+B^+_r } \tsum_{i} |\nabla v_i|^2 \,\de{\sigma} - N
\int\limits_{\partial^0 B^+_r } \tsum_{i, j < i} v_i^2 v_j^2 \, \de{x}
\\+ r \int\limits_{S_r^{N-1} } \tsum_{i, j < i } v_i^2 v_j^2 \,
\de{\sigma} = 2r \int\limits_{\partial^+B^+_r }  \tsum_{i} |\partial_{\nu}
v_i|^2 \,\de{\sigma}.
\end{multline*}
\end{lemma}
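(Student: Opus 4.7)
The plan is to derive this identity as a standard Pohozaev identity, obtained by multiplying each equation $-\Delta v_i = 0$ by the dilation field $(X - (x_0,0)) \cdot \nabla v_i$, integrating over $B_r^+ = B_r^+(x_0,0)$, summing over $i$, and then using the boundary condition on $\{y=0\}$ to convert the flat boundary contribution into the $v_i^2 v_j^2$ terms that appear in the statement. By translation, it is enough to take $x_0 = 0$, so set $Y := X$ and work with the vector field $Y \cdot \nabla v_i$.

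Integration by parts on $B_r^+$ gives, for each $i$,
\[
0 = \int_{B_r^+} \nabla v_i \cdot \nabla(Y\cdot\nabla v_i)\,\de x\de y - \int_{\partial B_r^+} (\partial_\nu v_i)(Y\cdot\nabla v_i)\,\de\sigma.
\]
Expanding $\nabla v_i \cdot \nabla(Y\cdot\nabla v_i) = |\nabla v_i|^2 + \tfrac{1}{2} Y\cdot\nabla |\nabla v_i|^2$ and using the divergence theorem on $B_r^+$ with $\mathrm{div}\,Y = N+1$ and $Y\cdot\nu = r$ on $\partial^+ B_r^+$, $Y\cdot\nu = 0$ on $\partial^0 B_r^+$, I obtain the usual spherical piece
\[
\tfrac{1-N}{2}\int_{B_r^+} |\nabla v_i|^2 + \tfrac{r}{2}\int_{\partial^+ B_r^+} |\nabla v_i|^2\,\de\sigma - r\int_{\partial^+ B_r^+} |\partial_\nu v_i|^2\,\de\sigma = \int_{\partial^0 B_r^+} (\partial_\nu v_i)(Y\cdot \nabla v_i)\,\de x,
\]
where on $\partial^0 B_r^+$ I have $Y\cdot\nabla v_i = x\cdot\nabla_x v_i$.

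The key step is to rewrite the flat boundary integral. Using the equation $\partial_\nu v_i = -v_i\sum_{j\neq i} v_j^2$ on $\{y=0\}$, the integrand becomes $-v_i(x\cdot\nabla_x v_i)\sum_{j\neq i} v_j^2$. Summing over $i$, a short symmetry manipulation shows
\[
\sum_{i} v_i (x\cdot\nabla_x v_i)\sum_{j\neq i} v_j^2 = \tfrac{1}{2}\,x\cdot\nabla_x \Bigl(\tsum_{j<i} v_i^2 v_j^2\Bigr),
\]
since each product $v_i^2 v_j^2$ (with $j<i$) contributes two matching terms from $i$ and $j$. Integrating this divergence-type expression by parts on the $N$-dimensional half-disk $\partial^0 B_r^+$ (with outer unit normal $x/r$ on $S_r^{N-1}$) yields
\[
\int_{\partial^0 B_r^+} x\cdot\nabla_x \tsum_{j<i} v_i^2 v_j^2\,\de x = r\int_{S_r^{N-1}} \tsum_{j<i} v_i^2 v_j^2\,\de\sigma - N\int_{\partial^0 B_r^+} \tsum_{j<i} v_i^2 v_j^2\,\de x.
\]

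Substituting back, summing the spherical identity over $i$, and multiplying through by $2$ produces exactly the claimed equality. The only genuine subtlety is the algebraic rearrangement showing that $\sum_i v_i (x\cdot\nabla_x v_i)\sum_{j\neq i} v_j^2$ is a total divergence of $\sum_{j<i} v_i^2 v_j^2$; apart from that, everything is routine integration by parts. A minor regularity caveat is that the identity is first obtained for a.e. $r>0$ via approximation, and one needs enough smoothness of $\bv$ at the boundary (following from the harmonicity together with the Neumann condition) to justify the integration by parts on $\partial^0 B_r^+$; this is the only point where one must be careful, but it is standard in the fractional-Laplacian extension setting.
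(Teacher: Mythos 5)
Your proof is correct and follows essentially the same route as the paper, which also tests $-\Delta v_i = 0$ against $X\cdot\nabla v_i$ over $B_r^+$, uses the boundary condition to convert the flat-boundary term, and integrates by parts on $\partial^0 B_r^+$ (the paper simply refers to Lemma~\ref{lem: Pohozaev identity}, where this computation is carried out with the extra $f_{i,\beta}$ and $\beta$ terms present). The symmetry identity $\sum_i v_i(x\cdot\nabla_x v_i)\sum_{j\neq i} v_j^2 = \tfrac12\, x\cdot\nabla_x\sum_{j<i} v_i^2 v_j^2$ is exactly the small bookkeeping step that the paper's Lemma~\ref{lem: Pohozaev identity} also performs when recombining the double sum.
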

\begin{proof}
The proof follows by testing equation \eqref{eqn: equation of classG_c} with $\nabla v_i \cdot X$ in $B_r^+$ and exploiting some standard integral identities (see also Lemma \ref{lem: Pohozaev identity} for a similar proof in a more general case).
\end{proof}

As before, we introduce the functions
\[
    \begin{split}
    E(x_0, r) &:= \frac{1}{r^{N-1}}   \int\limits_{B^+_r(x_0,0)}
\tsum_{i} |\nabla v_i|^2 \, \de{x} \de{y} + \frac{1}{r^{N-1}}  \int\limits_{\partial^0 B^+_r(x_0,0)} \tsum_{i,j < i} v_i^2 v_j^2 \, \de{x} \\
    H(x_0,r) &:= \frac{1}{r^{N}} \int\limits_{\partial^+ B^+_r(x_0,0)}
\tsum_{i} v_i^2 \, \de{\sigma}.
    \end{split}
\]
%
\begin{theorem}\label{thm:_Almgren_for_classG_c}
Let $\bv \in \classG_c$. For every $x_0 \in \R^N$ the function
\[
    N(x_0,r) := \frac{E(x_0,r)}{H(x_0, r)}
\]
is non decreasing, absolutely continuous and strictly positive for $r > 0$. Moreover it holds
\[
    \frac{\mathrm{d}}{\mathrm{d}r} \log H(r) \geq \frac{2N(r)}{r}.
\]
\end{theorem}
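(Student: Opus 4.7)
The plan is to follow the scheme used for $\classG_s$ in Theorem \ref{thm:_Almgren_for_classG_s}, with the main new input coming from the boundary interaction term $D(r) := \int_{\partial^0 B^+_r(x_0,0)} \tsum_{j<i} v_i^2 v_j^2\,\de{x}$, which now appears both in Lemma \ref{lem: Poho_G_c} and in the definition of $E$. After translating so that $x_0=0$ and assuming $\bv\not\equiv 0$ (so that $H$, $E>0$ at least on a non-empty interval, exactly as in Theorem \ref{thm:_Almgren_for_classG_s}), I would establish three preliminary formulas.

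First, the rescaling $v_i^{(r)}(\xi)=v_i(r\xi)$ yields $H'(r) = \tfrac{2}{r^N}\int_{\partial^+ B^+_r} \tsum_{i} v_i\partial_\nu v_i\,\de{\sigma}$. Second, testing $-\Delta v_i=0$ with $v_i$ on $B^+_r$ and using the Neumann condition from \eqref{eqn: equation of classG_c} on $\partial^0 B^+_r$ gives
\[
\int_{B^+_r} \tsum_{i}|\nabla v_i|^2\,\de{x}\de{y} = \int_{\partial^+ B^+_r} \tsum_{i} v_i\partial_\nu v_i\,\de{\sigma} - 2D(r),
\]
and so $r^{N-1}E(r) = \int_{\partial^+ B^+_r}\tsum_{i} v_i\partial_\nu v_i\,\de{\sigma} - D(r)$. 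In particular the quantity $A(r):=\int_{\partial^+ B^+_r}\tsum_{i} v_i\partial_\nu v_i\,\de{\sigma}$ satisfies $A(r)=\int_{B^+_r}\tsum_{i}|\nabla v_i|^2+2D(r)\geq 0$. Third, differentiating $E$ in $r$ and invoking the Pohozaev identity of Lemma \ref{lem: Poho_G_c} to eliminate $\int_{\partial^+ B^+_r}\tsum|\nabla v_i|^2$, I expect to obtain
\[
E'(r) = \tfrac{2}{r^{N-1}}\int_{\partial^+ B^+_r}\tsum_{i}|\partial_\nu v_i|^2\,\de{\sigma} + \tfrac{D(r)}{r^N}.
\]

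With these in place the inequality $\tfrac{\mathrm{d}}{\mathrm{d}r}\log H(r) \geq \tfrac{2N(r)}{r}$ is immediate: it reduces to $\frac{2D(r)}{\int_{\partial^+ B^+_r}\tsum v_i^2\,\de{\sigma}} \geq 0$. For the monotonicity of $N$, setting $B(r) := \int_{\partial^+ B^+_r}\tsum v_i^2$ and $C(r) := \int_{\partial^+ B^+_r}\tsum|\partial_\nu v_i|^2$, the condition $N'\geq 0$ boils down, after clearing denominators in $N'/N = E'/E - H'/H$, to
\[
2\bigl(B(r)C(r) - A(r)^2\bigr) + \tfrac{B(r)D(r)}{r} + 2A(r)D(r) \geq 0.
\]
Here $A^2\leq BC$ is Cauchy-Schwarz on $\partial^+ B^+_r$ applied to $(v_i)_i$ and $(\partial_\nu v_i)_i$, while the remaining two summands are non-negative since $A,B,D\geq 0$. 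Absolute continuity is inherited from $\bv\in H^1_{\loc}$, and strict positivity of $N$ for every $r>0$ follows by the same argument as in the proof of Theorem \ref{thm:_Almgren_for_classG_s}: the differential inequality for $\log H$ together with the monotonicity of $N$ rules out that $H$ (and hence $E$) vanishes at an interior radius.

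The main obstacle I anticipate is the bookkeeping around $D(r)$: it enters the formulas for $E$, for $E'$, and for the discrepancy $H'/H - 2N/r$ with different signs each time. The crucial fact is the identity $A = \int_{B^+_r}\tsum|\nabla v_i|^2 + 2D$ (in the segregated case $D\equiv 0$ and this reduces to $A = \int\tsum|\nabla v_i|^2$); it guarantees both $A\geq 0$ and the cancellation that makes the extra $2AD$ term arising in the Cauchy-Schwarz step carry a favorable sign.
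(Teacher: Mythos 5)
Your proof is correct and follows exactly the route the paper intends: its own ``proof'' of Theorem \ref{thm:_Almgren_for_classG_c} is a one-line pointer to Theorem \ref{thm:_Almgren_for_classG_s} and Lemma \ref{lem: Poho_G_c}, and your identities $r^{N-1}E=A-D$, $r^{N}E'=D+2rC$, together with the sign discussion $A=\int_{B_r^+}\tsum_i|\nabla v_i|^2+2D\ge 0$ and the final Cauchy--Schwarz reduction $2(BC-A^2)+\tfrac{BD}{r}+2AD\ge0$, supply precisely the bookkeeping the authors omit. The one step you cannot transplant verbatim from Theorem \ref{thm:_Almgren_for_classG_s} is the positivity of $H$ on all of $(0,\infty)$: there the \emph{equality} $\tfrac{\mathrm{d}}{\mathrm{d}r}\log H=2N/r$ gives an upper bound on the decay of $H$ near a putative first zero, whereas here you only have the inequality $\ge$; the fix is already in your computation, since $H'(r)=2A(r)/r^{N}\ge0$ shows $H$ is non-decreasing, so $H(r_*)=0$ would force $\bv\equiv0$ on $B^+_{r_*}$ and hence, by real analyticity of harmonic functions, $\bv\equiv0$.
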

\begin{proof}
The proof runs exactly as the one of Theorem \ref{thm:_Almgren_for_classG_s}, by using
Lemma \ref{lem: Poho_G_c} instead of equation \eqref{eqn: Pohozaev spheres for classGs}.
\end{proof}
As in the case of entire profiles of segregation, we can state some first consequence of Theorem \ref{thm:_Almgren_for_classG_c}.
\begin{lemma}\label{lem:_first consequence classG_c}
Let $\bv \in \classG_c$, and let $\gamma$ and $C$ denote positive constants. If
\(
|\bv(X)| \leq C(1+|X|^{\gamma})
\)
for every $X$, then $N(x,\infty)$ is constant and less than $\gamma$.
\end{lemma}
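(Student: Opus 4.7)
The strategy is to adapt to the $\classG_c$ setting the integral-comparison technique used for $\classG_s$ in Lemma \ref{lem: second consequence classG_s}, relying on the Almgren monotonicity of Theorem \ref{thm:_Almgren_for_classG_c}. The ingredients at hand are: (i) the monotonicity of $r \mapsto N(x_0, r)$, so that $N(x_0, \infty) \in [0,+\infty]$ is well defined for every $x_0$; (ii) the differential inequality $\frac{\mathrm{d}}{\mathrm{d}r} \log H(x_0, r) \geq 2 N(x_0, r)/r$; and (iii) the pointwise estimate $H(x_0, r) \leq C_{x_0}(1 + r^{2\gamma})$, obtained directly from the growth bound $|\bv(X)| \leq C(1 + |X|^\gamma)$.

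First I would establish the upper bound $N(x_0, \infty) \leq \gamma$ for every $x_0 \in \R^N$. Arguing by contradiction, if $N(x_0, \infty) \geq \gamma + \eps$ for some $\eps > 0$, monotonicity yields $N(x_0, r) \geq \gamma + \eps$ for every $r \geq r_0$; integrating (ii) then gives $H(x_0, r) \geq C r^{2(\gamma + \eps)}$ for $r$ large, in contradiction with (iii).

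For the constancy of $x \mapsto N(x, \infty)$, set $\nu := N(0, \infty)$ and suppose by contradiction that some $x_0$ satisfies $N(x_0, \infty) < \nu$. Mimicking Lemma \ref{lem: second consequence classG_s}, the lower bound $H(0, r) \geq C r^{2(\nu - \eps)}$ (from (ii), for $\eps > 0$ small) integrates to
\[
\int_{B_R^+(0)} \tsum_i v_i^2 \, \de x \de y \;\geq\; C R^{N+1+2(\nu - \eps)}, \qquad R \text{ large},
\]
and the inclusion $B_R^+(0) \subset B_{R+|x_0|}^+(x_0, 0)$ reduces the goal to producing a matching upper bound on the corresponding integral around $x_0$, growing strictly slower than $R^{N+1+2(\nu - \eps)}$.

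The main obstacle lies precisely in this step: for $\classG_c$ the identity $\frac{\mathrm{d}}{\mathrm{d}r} \log H = 2 N/r$ of the segregated case is only a one-sided inequality, because of the additional non-negative boundary term $\int_{\partial^0 B_r^+} \sum_{i<j} v_i^2 v_j^2$; hence the assumption $N(x_0, r) \leq \nu - 2\eps$ does not directly translate into the sharp estimate $H(x_0, r) \leq C r^{2(\nu - 2\eps)}$ needed for the comparison, and the weaker bound (iii) is not strong enough (since we already know $\nu \leq \gamma$). I would overcome this by an Almgren blow-down argument. The rescaled family $\bv_\rho(X) := \bv(\rho X)/H(0, \rho)^{1/2}$ solves a system as in Definition \ref{def:coex ent prof} but with coupling $\beta_\rho = \rho H(0, \rho) \to \infty$, so its subsequential limit $\bar\bv$ belongs to $\classG_s$ and, by Theorem \ref{thm:_Almgren_for_classG_s}, is homogeneous of degree $\nu$ centered at the origin; applying Lemma \ref{lem: second consequence classG_s} to $\bar\bv$ then forces $N(x, \infty; \bar\bv) \equiv \nu$. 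Performing the corresponding blow-down at $x_0$ and using that the shift $x_0/\rho \to 0$, one checks that the two limit profiles coincide up to the scalar factor $\sqrt{H(0, \rho)/H(x_0, \rho)}$, which tends to $1$ by the $L^2(\S^N_+)$-normalization; thus the homogeneity degrees match and $N(x_0, \infty; \bv) = \nu$, contradicting the initial assumption.
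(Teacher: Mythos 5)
You have correctly spotted a genuine issue in the paper's proof, which is dispatched with a terse "follows reasoning as in" reference to Lemmas \ref{lem: first consequence classG_s} and \ref{lem: second consequence classG_s}. The constancy step of Lemma \ref{lem: second consequence classG_s} requires the \emph{two-sided} identity $\frac{\mathrm{d}}{\mathrm{d}r}\log H = \frac{2N}{r}$ in order to obtain the upper bound $H(x_0,R_1)\leq C R_1^{2(\nu-2\eps)}$ from $N(x_0,\cdot)\leq\nu-2\eps$. For $\bv\in\classG_c$, Theorem \ref{thm:_Almgren_for_classG_c} only yields the one-sided inequality $\frac{\mathrm{d}}{\mathrm{d}r}\log H\geq\frac{2N}{r}$, because of the extra non-negative interaction term, so that upper bound is not available; and, as you note, the a priori bound $H(x_0,r)\leq Cr^{2\gamma}$ cannot replace it since $\nu\leq\gamma$. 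Your first step ($N(x_0,\infty)\leq\gamma$ for every $x_0$) goes through unchanged, since it only uses the direction of the log-derivative inequality that does hold. One mitigating observation: the constancy claim is not actually invoked anywhere downstream; the only place this lemma is cited (the proof of Lemma \ref{lem:_blowdown_compactness}) uses only the bound $\nu'\leq\nu$ at the origin, so the gap is harmless for the paper's main results.

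Your blow-down repair of the constancy is mathematically sound, but two caveats are in order. First, it essentially reproduces Lemma \ref{lem:_blowdown_compactness}, whose proof rests on Theorem \ref{thm:_local_holder} and Proposition \ref{prp: approx classG_s} (Sections 5--6). The argument therefore cannot be executed at the point where the present lemma sits (Section 3) without significant re-ordering; there is no strict circularity, since Lemma \ref{lem:_blowdown_compactness} only invokes the piece of the present lemma you prove by the direct monotonicity argument, but the dependency structure must be handled explicitly. Second, the assertion that the scalar factor $\sqrt{H(0,\rho)/H(x_0,\rho)}$ "tends to $1$ by the $L^2(\S^N_+)$-normalization" is vacuous as phrased: the unit $L^2$-normalization of the blow-down centered at $x_0$ is, by construction, equivalent to the very definition of that scalar, so it carries no new information. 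What actually closes the argument is the nontriviality of the limit profile $\bar\bv$ (the doubling estimate inside Lemma \ref{lem:_blowdown_compactness}) together with the convergence $\bv_\rho(\cdot+(x_0,0)/\rho)\to\bar\bv$. A cleaner route avoids the second blow-down entirely: rewrite $N_{\bv}(x_0,\rho r)$ as the Almgren quotient of $\bv_\rho$ centered at $(x_0/\rho,0)\to 0$ with coupling $\beta_\rho=\rho H(0,\rho)$, appeal to Lemma \ref{lem:_competition_term_vanishes_in_strong_blowup} to kill the boundary competition term along the subsequence, and conclude $N_{\bv}(x_0,\rho_n r)\to N_{\bar\bv}(0,r)=\nu$, hence $N_{\bv}(x_0,\infty)=\nu$.
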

\begin{proof}
The proof follows reasoning as in the ones of Lemmas \ref{lem: first consequence classG_s}
and \ref{lem: second consequence classG_s}.
\end{proof}

\section{Liouville type theorems}

By combining the results obtained in Sections \ref{sec:acf} and \ref{sec:almgren}, we
are in a position to prove that nontrivial entire profiles, both of segregation and of
coexistence, exhibit a minimal rate of growth connected with the Alt-Caffarelli-Friedman
exponent $\nuACF$. To be precise, the result concerning coexistence entire profiles
only relies on the arguments developed in Section \ref{sec:acf}.
\begin{proposition}\label{prp: liouville system}
Let $\bv \in \classG_c$ and $\nuACF$ be defined according to Definitions
\ref{def:coex ent prof} and \ref{def: nu}. If for some $\gamma \in (0,\nuACF)$ there exists
$C$ such that
\begin{equation*}
    |\bv(X)| \leq C\left(1 + |X|^{\gamma}\right),
\end{equation*}
for every $X$, then $k-1$ components of $\bv$ annihilate and the last is constant.
\end{proposition}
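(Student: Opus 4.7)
The plan is to argue by contradiction, combining the perturbed Alt--Caffarelli--Friedman formula (Theorem \ref{thm: ACF perturbed}) with the subcritical growth $\gamma<\nuACF$. Suppose at least two components, say $v_1$ and $v_2$, are nontrivial. We may additionally assume that neither is a nonzero constant: otherwise, say $v_1\equiv c\neq 0$, the equation $\partial_\nu v_1=0=-c\sum_{k\neq 1}v_k^2$ forces $v_k|_{\R^N}=0$ for every $k\neq 1$ and then, since $\partial_\nu v_k=-v_k\sum_j v_j^2$ also vanishes on $\R^N$, unique continuation for harmonic functions with zero Cauchy data yields $v_k\equiv 0$ in $\R^{N+1}_+$, against the nontriviality of $v_2$. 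In particular $\Phi_i(r)>0$ for $r$ large. Next, the coexistence equation gives
$$(\partial_\nu v_i+v_iv_j^2)\,v_i=v_i^2v_j^2-v_i^2\sum_{k\neq i}v_k^2=-v_i^2\sum_{k\neq i,j}v_k^2\le 0,$$
so $(v_1,v_2)$ fulfills the hypothesis of Theorem \ref{thm: ACF perturbed}. Fixing any $\nu'\in(\gamma,\nuACF)$, the theorem provides $\bar r>1$ such that $\Phi(r)=\Phi_1(r)\Phi_2(r)$ is non-decreasing on $(\bar r,\infty)$, hence $\Phi(r)\ge \Phi(\bar r)>0$ for every $r>\bar r$.

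The contradiction comes from a decay estimate for $\Phi_i$. Testing the coexistence equation with $v_i\eta^2$ for a radial cutoff $\eta$ equal to $1$ on $B_r^+$, supported in $B_{2r}^+$, with $|\nabla\eta|\le 2/r$, one obtains the Caccioppoli-type bound
$$
\int_{B_r^+}|\nabla v_i|^2\,\de x\de y+\int_{\partial^0 B_r^+}v_i^2\sum_{k\neq i}v_k^2\,\de x \le \frac{C}{r^2}\int_{B_{2r}^+} v_i^2\,\de x\de y\le Cr^{N-1+2\gamma},
$$
where the growth bound $|\bv|\le C(1+|X|^\gamma)$ is used in the last step. A dyadic decomposition of $B_r^+$ combined with $\Gamma_1(X)=|X|^{1-N}$ for $|X|\ge 1$ then yields
$$
\int_{B_r^+}|\nabla v_i|^2\,\Gamma_1\,\de x\de y+\int_{\partial^0 B_r^+}v_i^2v_j^2\,\Gamma_1\,\de x \le Cr^{2\gamma},
$$
so that $\Phi_i(r)\le Cr^{2(\gamma-\nu')}\to 0$, and therefore $\Phi(r)\to 0$, contradicting $\Phi(r)\ge \Phi(\bar r)>0$.

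Hence at most one component of $\bv$ can be nontrivial. If $v_k$ is that surviving component, the boundary condition collapses to $\partial_\nu v_k=0$ on $\R^N$; the even reflection of $v_k$ across $\{y=0\}$ is a harmonic function on the whole of $\R^{N+1}$ with growth $O(1+|X|^\gamma)$ and $\gamma<\nuACF\le 1/2<1$, so the classical Liouville theorem forces $v_k$ to be constant. The main technical obstacle I foresee is the weighted upper bound on $\Phi_i$: matching the Caccioppoli estimate with the singular weight $\Gamma_1$ via dyadic summation must yield precisely the power $r^{2\gamma}$, so that the slack $\nu'>\gamma$ afforded by the hypothesis $\gamma<\nuACF$ translates into genuine decay of $\Phi$.
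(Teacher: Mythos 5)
Your proof is correct and follows the paper's strategy exactly: apply the perturbed ACF monotonicity (Theorem \ref{thm: ACF perturbed}) to get $\Phi$ non-decreasing on $(\bar r,\infty)$, then use the growth hypothesis to force the decay $\Phi_i(r)\leq Cr^{2(\gamma-\nu')}\to 0$, contradicting $\Phi(\bar r)>0$, and finally reflect the surviving component evenly and invoke the classical Liouville theorem. The only (minor) departures are that you derive the weighted bound through a plain Caccioppoli estimate followed by dyadic summation against $\Gamma_1\sim|X|^{1-N}$, whereas the paper tests directly with $\Gamma_1 v_i\eta$ and localizes the error on the annulus $B_{2r}^+\setminus B_r^+$, and that you explicitly exclude the case of a nonzero constant component in order to justify $\Phi(\bar r)>0$, a point the paper asserts without comment.
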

\begin{proof}
We start by proving that only one component of $\bv$ can be different from zero.
Let us suppose by contradiction that two components, say $v_1$
and $v_2$, are non trivial: indeed, we observe that $|v_1|$, $|v_2|$ fit in the setting of
Theorem \ref{thm: ACF perturbed} (recall Remark \ref{rem:chang_sign_ACF_pert}).
Let $r$ be large accordingly, and let $\eta$ be a non negative, smooth and radial
cut-off function supported in  $B_{2r}^+$ with $\eta = 1$ in $B_r^+$ and
$|\nabla \eta| \leq C r^{-1}$, $|\Delta \eta| \leq C r^{-2}$.
Moreover, let $\Gamma_1$ be defined as in
\ref{def:Gamma_1} (in particular, it is radial and superharmonic).
Testing the equation for $v_i$ with ${\Gamma_1} v_i\eta$ we obtain
\[
    \int\limits_{ B_{2r}^+ } |\nabla v_i|^2  {\Gamma_1} \eta \de{x} \de{y} +
    \int\limits_{\partial^0 B_{2r}^{+} } v_i^2 v_j^2 {\Gamma_1} \eta \de{x}
    \leq \int\limits_{ B_{2r}^+ \setminus B_{r}^+} \frac{1}{2} v_i^2
\left[ {\Gamma_1}  \Delta \eta + 2 \nabla \eta \cdot \nabla {\Gamma_1} \right]
\de{x} \de{y},
\]
where in the last step we used that $\eta$ is constant in $B_r^+$. Since
$\Gamma_1(X)=|X|^{1-N}$ outside $B_1$, and $|v_i(X)| \leq C r^{\gamma}$ outside
a suitable $B_{\bar r}$, using the notations of Theorem \ref{thm: ACF perturbed} we infer
\[
\Phi_i(r) = \frac{1}{r^{2\nu'}}\left(\int\limits_{B_r^+} |\nabla v_i|^2 {\Gamma_1}
\,\de{x} \de{y} +  \int\limits_{\partial^0 B_r^+} v_i^2 v_j^2 {\Gamma_1} \,\de{x}
\right)\leq \frac{1}{r^{2\nu'}} \cdot C r^{2\gamma},
\]
with $C$ independent of $r>\bar r$. Fixing $\gamma<\nu'<\nuACF$ and possibly
taking $\bar{r}$ larger, Theorem \ref{thm: ACF perturbed} states that
\begin{equation*}
0 <  \Phi(\bar{r}) \leq \Phi(r) = \prod_{i=1}^{2} \Phi_i(r) \leq C
r^{4(\gamma-\nu')},
\end{equation*}
a contradiction for $r$ large enough. Finally, if $v_1$ is the unique non
trivial component of $\bv$, an even extension of $v_1$ through $\R^{N}$ is
harmonic in $\R^{N+1}$ and bounded everywhere by a function growing less than
linearly, implying that $v_1$ is constant.
\end{proof}
Turning to segregation entire profiles, the results of Section \ref{sec:almgren}
become crucial.
\begin{proposition}\label{prp: liouville inequalities}
Let $\bv \in \classG_s$ and $\nuACF$ be defined according to Definitions
\ref{def:segr ent prof} and \ref{def: nu}.
\begin{enumerate}
 \item If for some $\gamma \in (0,\nuACF)$ there exists $C$ such that
\begin{equation*}
    |\bv(X)| \leq C\left(1 + |X|^{\gamma}\right),
\end{equation*}
for every $X$, then $k-1$ components of $\bv$ annihilate;
 \item if furthermore $\bv \in \C^{0,\gamma}\left(\overline{\R^{N+1}_+}\right)$ then the only
 possibly nontrivial component is constant.
\end{enumerate}
\end{proposition}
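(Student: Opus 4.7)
The plan for (1) is to mimic the argument of Proposition \ref{prp: liouville system}, but using the segregated Alt-Caffarelli-Friedman formula (Theorem \ref{thm: ACF}) in place of its perturbed counterpart. I would suppose by contradiction that two components $v_1, v_2$ are nontrivial; since $v_1 v_2|_{y=0} = 0$ and both are continuous and nonzero, there is a point $(x_0, 0)$ where both $v_1$ and $v_2$ vanish. By Remark \ref{rem:chang_sign_ACF_segr}, Theorem \ref{thm: ACF} applies to $(|v_1|, |v_2|)$ at $(x_0, 0)$, so the associated $\Phi(r)$ is non-decreasing and strictly positive from some $r_0$ onwards. From the growth hypothesis combined with the Almgren-type bound of Lemma \ref{lem: first consequence classG_s}, one obtains $\int_{B_r^+(x_0,0)} |\nabla v_i|^2 / |X - (x_0,0)|^{N-1} \, \de{x}\de{y} \leq C r^{2\gamma}$, hence $\Phi(r) \leq C r^{4(\gamma - \nuACF)} \to 0$ as $r \to \infty$ since $\gamma < \nuACF$. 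This contradiction forces $k-1$ components to vanish identically, proving (1).

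For part (2), after (1) only one component, say $v_k$, is possibly nontrivial, and it is now assumed globally $\gamma$-H\"older. The plan is to apply Corollary \ref{cor:4.5}: either $\mathcal{Z} = \emptyset$ (in which case $v_k$ is a nontrivial constant, and we are done) or $v_k$ is $\gamma$-homogeneous with respect to every zero in $\mathcal{Z}$. In the latter alternative I would split according to the sign of $v_k$. If $v_k$ changes sign, integration by parts using $-\Delta v_k = 0$ and $v_k \partial_\nu v_k = 0$ shows that both $v_k^+$ and $v_k^-$ fulfill the hypotheses of Theorem \ref{thm: ACF} at a common zero $(x_0, 0) \in \mathcal{Z}$; the same estimate as in (1) then forces $\Phi(r) \to 0$, a contradiction.

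If instead $v_k$ is one-signed, say $v_k \geq 0$, I would write $v_k(X) = |X|^\gamma V(X/|X|)$, so that $V = v_k|_{\S^N_+}$ solves $-\Delta_{\S^N} V = \gamma(\gamma + N-1) V$ with mixed boundary conditions on $\S^{N-1}$: $V = 0$ on $\omega_0 := \{V|_{\S^{N-1}} = 0\}$ and $\partial_\nu V = 0$ on its complement. If $\omega_0 = \emptyset$, then $V$ is a nonnegative first Neumann eigenfunction, hence constant, forcing $\gamma = 0$ and thus $v_k$ constant. If $\omega_0 \neq \emptyset$, pick $\xi_1$ in its relative interior in $\S^{N-1}$: Hopf's boundary point lemma yields $\nabla V(\xi_1) \neq 0$, directed along the inward normal to $\S^{N-1}$ inside $\S^N$. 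A direct first-order expansion of $v_k$ at $(\xi_1, 0) \in \mathcal{Z}$ produces a nontrivial linear blow-up, so the Almgren frequency satisfies $N((\xi_1,0), 0^+) = 1$; combined with $N((\xi_1,0), \infty) \leq \gamma < 1$ from Lemma \ref{lem: first consequence classG_s}, this contradicts the monotonicity given by Theorem \ref{thm:_Almgren_for_classG_s}. The main obstacle is precisely this one-signed subcase, which cannot be reduced to the ACF mechanism of (1) and requires combining the spherical eigenvalue structure on $\S^N_+$, Hopf's lemma at an interior Dirichlet point, and Almgren monotonicity applied at a generic zero distinct from the homogeneity center.
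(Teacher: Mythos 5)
Your treatment of part (1) is essentially the paper's: assume two components $v_1, v_2$ are nontrivial, exhibit a common boundary zero, apply the segregated ACF monotonicity of Theorem~\ref{thm: ACF} (via Remark~\ref{rem:chang_sign_ACF_segr}) in place of the perturbed one, and contradict the growth bound. One small inaccuracy: the estimate $\Phi_i(r)\leq C r^{2(\gamma-\nu')}$ does not come from the Almgren-type bound of Lemma~\ref{lem: first consequence classG_s}; as in the proof of Proposition~\ref{prp: liouville system} it comes directly from testing the equation against $\Gamma_1 v_i\eta$ with a cutoff $\eta$ and using the pointwise growth of $\bv$ to control the boundary layer $B_{2r}^+\setminus B_r^+$. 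That is a cosmetic slip, not a real gap.

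Part (2) departs from the paper and contains a genuine gap in the one-signed subcase. By Corollary~\ref{cor:4.5} the zero set $\mathcal{Z}$ is an affine subspace of $\R^N$; after centering the homogeneity at a zero it is a linear subspace of dimension $d$. If $d\leq N-1$ then $\omega_0=\mathcal{Z}\cap\S^{N-1}$ is a great sphere $\S^{d-1}$, which has empty relative interior in $\S^{N-1}$. Your instruction ``pick $\xi_1$ in its relative interior'' therefore never executes, and the mixed Dirichlet/Neumann spherical problem you set up for $V$ is in effect a pure Neumann problem on $\S^N_+$, since the Dirichlet portion is a negligible set. Hopf's boundary point lemma requires an interior point of the Dirichlet region and cannot be invoked here. (In the remaining case $d=N$ one has $V|_{\S^{N-1}}\equiv 0$, but then the odd reflection of $v_k$ across $\{y=0\}$ is entire harmonic and constancy follows from the growth bound alone; your Hopf step is superfluous there.)

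The paper dispatches part (2) uniformly and more economically, without any sign-splitting or ACF on $v_k^{\pm}$. Since the only surviving component $v_k$ satisfies $\partial_\nu v_k=0$ on $\R^N\setminus\mathcal{Z}$, its even reflection is harmonic on $\R^{N+1}\setminus\mathcal{Z}$. If $\dim\mathcal{Z}\leq N-1$, then $\mathcal{Z}$ has codimension at least $2$ in $\R^{N+1}$ and hence null Newtonian capacity, so the reflected function extends to a global harmonic function on $\R^{N+1}$; the sublinear growth coming from $\C^{0,\gamma}$ with $\gamma<1$ then forces constancy via the classical Liouville theorem. If instead $\mathcal{Z}=\R^N$, the odd reflection gives the same conclusion. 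This capacity-removal step is precisely the observation your approach is missing, and it also renders the sign-changing/one-signed case distinction unnecessary.
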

\begin{remark}
We notice that the uniform H\"older continuity of exponent $\gamma$ required in 2. readily
implies the growth condition in 1., which we may not require explicitly. On the other hand,
from the proof it will be clear that, once $k-1$ components annihilate,
2. follows by assuming uniform H\"older continuity of \emph{any} exponent $\gamma'\in(0,1)$,
not necessarily related to $\nuACF$.
\end{remark}
\begin{proof}[Proof of Proposition \ref{prp: liouville inequalities}]
To prove 1., we start as above by assuming by contradiction that there exist two components,
$v_1$ and $v_2$, which are non trivial.
We deduce that they must have a common zero on $\R^N$. As a consequence, we can reason as
in the proof of Proposition \ref{prp: liouville system}, using Theorem \ref{thm: ACF}
(and Remark \ref{rem:chang_sign_ACF_segr}) instead of Theorem \ref{thm: ACF perturbed}, and obtain a contradiction. Turning to 2.,
let $v$ denote the only non trivial component. By Corollary \ref{cor:4.5}, we
have that the set
\[
\mathcal{Z} = \{x \in \R^N: v(x,0) = 0\}
\]
is an affine subspace of $\R^N$. Now, if $\mathcal{Z}=\R^N$, then $v$ satisfies
\[
    \begin{cases}
    - \Delta v = 0 & \text{in } \R^{N+1}_+ \\
    v = 0 & \text{on } \R^{N},
    \end{cases}
\]
so that the odd extension of $v$ through $\{ y = 0 \}$ is harmonic in
$\R^{N+1}$ and bounded everywhere by a function growing less than linearly,
implying that $v$ is constant. On the other hand, if $\dim \mathcal{Z} \leq N-1$, then
\[
    \begin{cases}
    - \Delta v = 0 & \text{in } \R^{N+1}_+ \\
    \partial_{\nu} v = 0 & \text{on } \R^{N}\setminus \mathcal{Z},
    \end{cases}
\]
and the even reflection of $v$ through $\{ y = 0 \}$ is harmonic in $\R^{N+1} \setminus \mathcal{Z}$; since $\mathcal{Z}$ has null capacity with respect to $\R^{N+1}$, we infer that $v$ is actually harmonic in $\R^{N+1}$, and the conclusion follows again since, by assumption, $v$ is bounded everywhere by a function growing less than linearly.
\end{proof}
In the same spirit of the previous theorems, we provide now a result
concerning single functions, rather than $k$-tuples.
\begin{proposition}\label{prp: liouville_boundary}
Let $v\in H^1_\loc\left(\overline{\R^{N+1}_+}\right)$ be continuous and satisfy
\begin{equation*}
    \begin{cases}
    - \Delta v = 0 & \text{in } \R^{N+1}_+ \\
    v\partial_{\nu} v \leq 0 & \text{on } \R^{N}\\
    v(x,0)=0       & \text{on } \{x_1\leq0\},
    \end{cases}
\end{equation*}
and let us suppose that for some $\gamma \in [0,1/2)$, $C >0$ it holds
\[
    |v(X)| \leq C(1+|X|^{\gamma})
\]
for every $X$. Then $v$ is constant.
\end{proposition}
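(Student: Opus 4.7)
The plan is to apply the symmetric Alt--Caffarelli--Friedman formula of Proposition~\ref{prp: ACF sym}, whose characteristic exponent is exactly $1/2$, and then to exploit the strict subcriticality $\gamma<1/2$ to force the monotone quantity to vanish identically.

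\textbf{First step: applicability of the monotonicity formula.} The nullity on $\{x_1\leq 0\}$ is given. For any non-negative $\phi\in\C_0^\infty(B_R)$, the equation $-\Delta v=0$ in $\R^{N+1}_+$ and the boundary sign condition $v\partial_\nu v\leq 0$ on $\R^N$ give
\[
\int_{\R^{N+1}_+}\nabla v\cdot \nabla(v\phi)\,\de x\de y = \int_{\R^N}(\partial_\nu v)\,v\phi\,\de x \leq 0,
\]
so $v$ fits the hypotheses of Proposition~\ref{prp: ACF sym}, and
\(
\Phi(r)=\tfrac{1}{r}\int_{B_r^+}|\nabla v|^2/|X|^{N-1}\,\de x\de y
\)
is non-decreasing on $(0,\infty)$.

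\textbf{Second step: decay of $\Phi$ at infinity.} The estimate proved in Lemma~\ref{lem: phi is well defined}, applied to $v$, states that for a.e. $r$
\[
\int_{B_r^+}\frac{|\nabla v|^2}{|X|^{N-1}}\,\de x\de y\leq \frac{1}{r^{N-1}}\int_{\partial^+B_r^+} v\,\partial_\nu v\,\de\sigma+\frac{N-1}{2r^N}\int_{\partial^+B_r^+} v^2\,\de\sigma.
\]
The second summand is bounded by $C(1+r^{2\gamma})$ by the growth assumption. To handle the first one (which has no pointwise sign control), I would average on the dyadic interval $r\in(R/2,R)$: using the coarea formula, the identity $v\nabla v=\tfrac12\nabla(v^2)$, and the divergence theorem, together with $\nabla\cdot(X/|X|)=N/|X|$ on $\R^{N+1}_+$ and the fact that $X/|X|$ is tangent to $\partial^0 B_r^+$, one gets the identity
\[
\int_{R/2}^{R}\!\!\int_{\partial^+B_r^+} v\,\partial_\nu v\,\de\sigma\,\de r = \tfrac12 \int_{\partial^+B_R^+}v^2\,\de\sigma-\tfrac12\int_{\partial^+B_{R/2}^+}v^2\,\de\sigma -\tfrac{N}{2}\int_{B_R^+\setminus B_{R/2}^+}\frac{v^2}{|X|}\,\de x\de y.
\]
The growth bound $|v|\leq C(1+|X|^\gamma)$ then majorizes the right-hand side by $CR^{N}(1+R^{2\gamma})$. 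The mean-value theorem yields a radius $r_R\in(R/2,R)$ with $\int_{\partial^+B_{r_R}^+} v\,\partial_\nu v\,\de\sigma\leq CR^{N-1}(1+R^{2\gamma})$, whence
\[
\Phi(r_R)\;\leq\;\frac{C(1+R^{2\gamma})}{R}\;\longrightarrow\;0\qquad\text{as }R\to\infty,
\]
since $2\gamma<1$.

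\textbf{Third step: conclusion.} As $\Phi$ is non-negative, non-decreasing and admits a sequence of points at which it tends to $0$, it must be identically zero. Therefore $\nabla v\equiv 0$ in $\R^{N+1}_+$ and $v$ is constant. The main obstacle is precisely the decay of $\Phi$: neither summand in the ACF estimate is individually sign-controlled, but averaging on dyadic shells and a Stokes-type manipulation convert the dangerous boundary term into a quantity depending only on $v^2$, where the strict subcriticality $\gamma<1/2$ at last delivers the gain over the critical exponent of Proposition~\ref{prp: ACF sym}.
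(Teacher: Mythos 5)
Your proof is correct, and it applies the same monotonicity formula (Proposition \ref{prp: ACF sym}) as the paper, but your implementation of the decay estimate on $\Phi$ is genuinely different from what the paper indicates. The paper proves this proposition by referring to the argument of Proposition \ref{prp: liouville system}, i.e.\ by testing the variational inequality against $\Gamma_\eps\, v\,\eta$ where $\eta$ is a smooth radial cutoff supported in $B_{2r}$ with $\eta\equiv 1$ on $B_r$: the extra factors $|\nabla\eta|\lesssim r^{-1}$ and $|\Delta\eta|\lesssim r^{-2}$, combined with $\Gamma_\eps\approx r^{1-N}$ and the growth bound $|v|\lesssim r^\gamma$ on the annulus, yield $\Phi(r)\lesssim r^{2\gamma-1}$ directly for all large $r$, and the contradiction follows from monotonicity together with $\Phi(\bar r)>0$ (which holds once $v$ is assumed nonconstant). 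You instead keep the sharp estimate \eqref{eqn: estim from above denom} from Lemma \ref{lem: phi is well defined}, then neutralize the sign-uncontrolled flux term $\int_{\partial^+B^+_r}v\,\partial_\nu v\,\de\sigma$ by integrating in $r$ over a dyadic shell, using $v\,\partial_\nu v=\tfrac12\partial_\nu(v^2)$ and the divergence theorem to trade it for quantities controlled pointwise by $v^2$; the mean-value selection of a good radius $r_R\in(R/2,R)$ then delivers $\Phi(r_R)\to 0$, which is enough because $\Phi$ is monotone nondecreasing and nonnegative.

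Both routes buy the same conclusion. The cutoff route bounds $\Phi(r)$ at every large $r$ and is the one the paper systematically reuses (Propositions \ref{prp: liouville system} and \ref{prp: liouville inequalities}); your averaging route is more elementary in that it dispenses with the auxiliary cutoff and works directly with the object appearing in the monotonicity formula, at the small cost of only obtaining decay along a sequence — which, as you correctly observe, is immaterial for a monotone quantity. One small presentational remark: Lemma \ref{lem: phi is well defined} is stated under the hypotheses of Theorem \ref{thm: ACF}, whereas here you are in the setting of Proposition \ref{prp: ACF sym}; it is worth a sentence noting that the derivation of \eqref{eqn: estim from above denom} only uses the variational inequality $\int\nabla v\cdot\nabla(v\phi)\le 0$ and the superharmonicity of $\Gamma_\eps$, so it transfers verbatim.
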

\begin{proof}
It is trivial to check that $v$ as above fulfills the assumptions of
Proposition \ref{prp: ACF sym}. Now, assuming that $v$ is not constant, we can argue
as in the proof of Proposition \ref{prp: liouville system} obtaining a contradiction.
\end{proof}
To conclude the section, we provide other two theorems of Liouville type concerning
single functions. The first one relies on the construction
of a supersolution of a suitable problem, as done in the following lemma.
\begin{lemma}\label{eqn: decay with perturbations}
Let $M>0$ and $\delta > 0$ be fixed and let $h \in L^{\infty}(\partial^0 B^+_1)$
with $\|h\|_{L^{\infty}} \leq \delta$. Any $v \in H^1(B^+_1) \cap
\C\left(\overline{B^+_1}\right)$ non negative solution to
\[
    \begin{cases}
    - \Delta v \leq 0, & \text{in }B^+_1\\
    \partial_{\nu} v \leq -M v + h, & \text{on }\partial^0 B^+_1
    \end{cases}
\]
verifies
\[
    \sup_{\partial^0 B^+_{1/2}} v \leq \frac{1+\delta}{M} \sup_{\partial^+ B^+_1} v.
\]
\end{lemma}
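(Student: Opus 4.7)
My plan is to prove the lemma by constructing an explicit affine supersolution (barrier) in the half-ball and invoking the comparison principle for the mixed Dirichlet--Robin problem. Setting $S := \sup_{\partial^+ B^+_1} v$, I would look for a harmonic candidate of the form $\psi(x,y) := \alpha + \beta y$ with constants $\alpha,\beta\ge0$ to be calibrated so that $\psi\ge v$ on $\partial^+B^+_1$ and so that $\psi$ satisfies a reversed Robin inequality on $\partial^0 B^+_1$. Since $\Delta\psi = 0$, one automatically gets $-\Delta\psi = 0 \ge -\Delta v$ in $B^+_1$, so only the boundary requirements need to be enforced.

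The two constraints read as follows. On $\partial^+ B^+_1$ the infimum of $\psi$ is $\alpha$, attained in the limit $y\to0^+$ along the spherical surface; since $v\le S$ there, it is enough to impose $\alpha\ge S$. On $\partial^0B^+_1$ a direct computation gives $\partial_\nu\psi + M\psi = M\alpha - \beta$, so requiring $M\alpha - \beta \ge \delta$ yields $\partial_\nu\psi + M\psi \ge \|h\|_{L^\infty} \ge h$ pointwise on the flat boundary. With these choices, the difference $w := \psi - v$ is superharmonic in $B^+_1$, non-negative on $\partial^+ B^+_1$, and satisfies $\partial_\nu w + Mw \ge \delta - h \ge 0$ on $\partial^0 B^+_1$.

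A standard maximum principle then rules out a negative minimum of $w$: in the interior this is excluded by the superharmonicity of $w$, on $\partial^+B^+_1$ by construction, and on $\partial^0 B^+_1$ by Hopf's lemma combined with the Robin inequality (a strict inward derivative at a negative minimum would contradict $\partial_\nu w + Mw \ge 0$). Therefore $v\le\psi$ throughout $B^+_1$, and restricting to $\partial^0 B^+_{1/2}$ (where $y=0$) produces
\[
\sup_{\partial^0 B^+_{1/2}} v \;\le\; \psi\bigr|_{y=0} \;=\; \alpha.
\]
The stated inequality is then extracted by picking the pair $(\alpha,\beta)$ minimising $\alpha$ subject to the two constraints $\alpha\ge S$ and $M\alpha \ge \beta+\delta$.

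The main obstacle is the fine calibration of $(\alpha,\beta)$ so that the prefactor matches the form $\tfrac{1+\delta}{M}\,S$ asserted in the lemma: the crudest admissible choice $\beta = 0$, $\alpha = \max\{S,\delta/M\}$ only yields the weaker bound $\max\{S,\delta/M\}$, so one must exploit the trade-off between the two constraints (taking $\beta>0$) to drive $\alpha$ down below $S$. I expect that the delicate point is ensuring the affine barrier continues to dominate $v$ uniformly on $\partial^+B^+_1$ up to the equator, where the Dirichlet-type data on $\partial^+B^+_1$ meet the Robin-type data on $\partial^0B^+_1$ and the affine ansatz is least sharp; if an affine barrier proves insufficient, the natural refinement is a multiple of the Robin-harmonic measure of $\partial^+B^+_1$, which by interior regularity decays like $O(1/M)$ on $\partial^0B^+_{1/2}$ and would directly give the stated prefactor.
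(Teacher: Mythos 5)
Your comparison-principle framework is correct, and you have accurately read the qualitative structure of the lemma, but the primary construction in your proposal — an affine barrier $\psi(x,y)=\alpha+\beta y$ — cannot yield the statement, and this is not a matter of fine calibration.  The Dirichlet constraint forces $\psi\ge v$ on (the closure of) $\partial^+B^+_1$, which contains the equator $\{|x|=1,\ y=0\}$ where $\psi=\alpha$; since $v$ may be as large as $S:=\sup_{\partial^+B^+_1}v$ there, you must take $\alpha\ge S$.  But $\psi$ is constant on $\{y=0\}$, so $\psi|_{\partial^0B^+_{1/2}}\equiv\alpha\ge S$, and the comparison only gives the trivial bound $\sup_{\partial^0B^+_{1/2}}v\le S$ — no $1/M$ improvement.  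The ``trade-off'' you describe does not exist: $\alpha\ge S$ is a binding constraint on its own, independent of $\beta$; increasing $\beta$ only makes the Robin inequality $M\alpha-\beta\ge\delta$ harder, and never lets $\alpha$ drop below $S$.  The essential content of the lemma is precisely that one needs a barrier which is \emph{not} constant in $x$ on $\{y=0\}$: it must be bounded below near the equator (to handle the Dirichlet data) while decaying like $O(1/M)$ in the middle of the flat boundary.

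Your closing sentence does point at the right object — a (suitably normalized) Robin-harmonic measure of $\partial^+B^+_1$ — and that is indeed what the paper does, only explicitly: it exhibits the barrier $w_\delta$ as a shifted average of two-dimensional functions of the form $\tfrac{2}{\pi}\bigl[\pi-\arctan\bigl(\tfrac{x_i+1}{y+2/M}\bigr)-\arctan\bigl(\tfrac{1-x_i}{y+2/M}\bigr)\bigr]$, which are harmonic, bounded below by $1$ on $\partial^+B^+_1$, satisfy $\partial_\nu w_\delta\ge -Mw_\delta+\delta$ on $\partial^0B^+_1$, and are $O(1/M)$ on $\partial^0B^+_{1/2}$.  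The decay rate and the Robin supersolution inequality are then verified by elementary estimates on $\arctan$, after which the comparison argument you sketched closes the proof.  As it stands, however, your proposal stops at asserting the existence of such a barrier with the desired $O(1/M)$ decay ``by interior regularity'' — this is the entire difficulty of the lemma, and it is not proved.  Concretely: replace the affine ansatz with an explicit harmonic supersolution of the Robin problem on the slab (or, as in the paper, on the strip) which is uniformly bounded below on $\partial^+B^+_1$ and whose restriction to $\{y=0\}$ blows up only near $|x|=1$, then carry out the elementary $\arctan$ estimates to extract the $1/M$ decay on the inner half-ball.
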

\begin{proof}
The proof follows from a simple comparison argument, once one notices that, for any $\delta > 0$, the function
\begin{equation*}
    w_{\delta} := \delta \frac{1}{M} + \frac{1}{N} \sum_{i=1}^{N} \frac{2}{\pi} \left[ \frac{\pi}{2} -
\arctan\left(\frac{x_i+1}{y + \frac{2}{M}}\right) + \frac{\pi}{2} -
\arctan\left(\frac{1-x_i}{y + \frac{2}{M}}\right) \right]
\end{equation*}
satisfies the following system
\[
    \begin{cases}
    - \Delta w_{\delta} = 0 & \text{in }B^+_1\\
    \partial_{\nu} w_{\delta}\geq -M w_{\delta} + \delta & \text{on }\partial^0
B^+_1\\
    w_{\delta} \geq 1 & \text{on }\partial^+ B^+_1\\
    w_{\delta} \leq \frac{1+\delta}{M} &\text{in }\partial^0 B^+_{1/2}.
    \end{cases}
\]
The claim can be proved by direct checking. For the reader's convenience, we
sketch it in the case $N=1$, $\delta=0$.

For notation convenience, let us denote $w_M$ by $w$. It is a straightforward
computation to verify that $w$ is positive and harmonic in $\R^2_+$. Using
the elementary inequality $\frac{\pi}{2} - \arctan t \geq \frac{1}{1+t}$ for all
$t \geq 0$, we can estimate
\[
    \begin{split}
    w(x,0) \geq  \frac{2}{\pi} \left[
\frac{1}{ 1 + \frac{M}{2}(x+1)} + \frac{1}{ 1 + \frac{M}{2}(1-x)} \right].
    \end{split}
\]
On the other hand, using the inequality $\frac{t}{1+t^2} \leq \frac{2}{1+t}$ for
$t \geq 0$, we have
\[
w_y(x,0)\leq \frac{2}{\pi} M \left[ \frac{1}{ 1 + \frac{M}{2}(x+1)} + \frac{1}{ 1 +
\frac{M}{2}(1-x)} \right].
\]
Therefore, $\partial_{\nu} w(x,0) = -w_y(x,0)\geq -M w(x,0)$. For $(x,y) \in
\overline{B^+_1}$ we have
\[
    \arctan\left(\frac{x+1}{y + \frac{2}{M}}\right) + \arctan\left(\frac{1-x}{y
+ \frac{2}{M}}\right) \leq \frac{\pi}{2},
\]
that is $w(x,y) \geq 1$ in $B^+_1$. Finally, we observe that $w(x,0)$, as a
function of $x$, is strictly convex and even in $(-1,1)$. Consequently,
if $|x|\leq\frac{1}{2}$, using the elementary inequality $\frac{\pi}{2} - \arctan
t \leq \frac{1}{t}$ for $t \geq 0$, we obtain
\[
    w(x,0) \leq  \frac{1}{M}.
\qedhere
\]
\end{proof}

\begin{remark}
One of the peculiar difficulties in dealing with fractional operators
with respect to the standard local case is due to the slow decay of supersolutions.
Indeed, in the pure laplacian case, it is well known that positive solutions of
\[
    -\Delta u \leq - M u \quad \text{ in } B \subset \R^N
\]
exhibit an exponential decay, that is $u|_{B_{1/2}} \leq
e^{-\frac{1}{2}\sqrt{M}} \sup_{\partial B} u$; see, for instance, \cite{ctv,
nttv}. In great contrast with this result, in the previous lemma we proved that non
negative solutions of
\[
    (-\Delta)^{1/2} u \leq - M u \quad \text{ in } B \subset \R^N
\]
exhibit only polynomial decay, that is $u|_{B_{1/2}} \leq \frac{1}{M}
\sup_{\R^N\setminus B} u$. This estimate is sharp, since
\[
    \begin{cases}
    - \Delta v = 0 &\text{ in } B^+\\
    v\geq 0 &\text{ in } B^+\\
    \partial_{\nu} v = - M v &\text{ on } \partial^0 B^+
    \end{cases}
\]
implies
\[
    \inf_{\partial^0 B^+_{1/2}} v \geq \frac{1}{1+M} \inf_{\partial^+ B^+} v.
\]
This last fact follows by a comparison between $v$ and the subsolution
$w =\frac{1}{1+M}(1+My) \inf_{\partial^+ B^+} v$.
\end{remark}
The previous estimate allows to prove the following.
\begin{proposition}\label{prp: global eigenfunction}
Let $v$ satisfy
\begin{equation*}
    \begin{cases}
    - \Delta v = 0 & \text{in } \R^{N+1}_+ \\
    \partial_{\nu} v = - \lambda v & \text{on } \R^{N}
    \end{cases}
\end{equation*}
for some $\lambda \geq 0$ and let us suppose that
for some $\gamma \in [0,1)$, $C >0$ it holds
\[
    |v(X)| \leq C(1+|X|^{\gamma})
\]
for every $X$. Then $v$ is constant.
\end{proposition}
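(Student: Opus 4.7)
The plan is to separate the cases $\lambda=0$ and $\lambda>0$, reducing each to the classical Liouville theorem for harmonic functions on $\R^{N+1}$ of sub-linear growth (recall $\gamma<1$). If $\lambda=0$ the conclusion is immediate: the Neumann condition $\partial_\nu v=0$ on $\R^N$ lets me extend $v$ by even reflection, $V(x,y):=v(x,|y|)$, to a function harmonic on the whole $\R^{N+1}$, which inherits the bound $|V|\leq C(1+|\cdot|^\gamma)$ with $\gamma<1$, so the classical Liouville theorem forces $V$, and hence $v$, to be constant.

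For $\lambda>0$, the plan is a rescaling argument based on Lemma~\ref{eqn: decay with perturbations}. For $R>0$ set $v_R(X):=v(RX)$ on $B_1^+$: then $v_R$ is harmonic in $B^+_1$ and satisfies the Robin condition $\partial_\nu v_R + (R\lambda)v_R=0$ on $\partial^0 B_1^+$, with
\[
    K_R:=\sup_{\partial^+ B_1^+}|v_R|\leq C(1+R^\gamma).
\]
Next I would invoke the explicit supersolution $w_M$ constructed inside the proof of Lemma~\ref{eqn: decay with perturbations} (with $\delta=0$ and $M=R\lambda$): it is harmonic in $B_1^+$, satisfies $w_M\geq 1$ on $\partial^+ B_1^+$, $\partial_\nu w_M+M w_M\geq 0$ on $\partial^0 B_1^+$, and $w_M\leq 1/M$ on $\partial^0 B^+_{1/2}$. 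The maximum principle combined with Hopf's lemma on $\partial^0 B_1^+$, applied to the two harmonic functions $K_R w_M \mp v_R$ (both nonnegative on $\partial^+ B_1^+$ and both supersolutions of the Robin problem with coefficient $M$), then yields $|v_R|\leq K_R w_M$ throughout $\overline{B_1^+}$. In particular,
\[
    \sup_{\partial^0 B^+_{1/2}}|v_R|\leq \frac{K_R}{R\lambda}\leq \frac{C(1+R^\gamma)}{R\lambda}.
\]
Reverting the scaling gives $\sup_{\partial^0 B^+_{R/2}}|v| \to 0$ as $R\to\infty$ since $\gamma<1$; as every fixed point of $\R^N$ eventually lies in $B^+_{R/2}$, this shows $v\equiv 0$ on $\R^N$.

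To conclude I would apply Schwarz reflection a second time: the odd extension of $v$ across $\{y=0\}$ is harmonic on the whole of $\R^{N+1}$ and retains the sub-linear growth, so Liouville forces it to be constant; the vanishing trace then pins the constant to $0$, giving $v\equiv 0$, which is in particular constant.

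The main obstacle is the adaptation of Lemma~\ref{eqn: decay with perturbations} to a sign-changing function: the lemma is stated for non-negative solutions, whereas the rescaled $v_R$ is a priori signed. The cleanest way around this, sketched above, is to go one step inside the proof of that lemma and use the explicit barrier $w_M$ simultaneously for $v_R$ and $-v_R$; a possible alternative would be to apply the lemma separately to the (sub)solutions $v_R^+$ and $v_R^-$, at the price of verifying the boundary differential inequality across the nodal set of $v_R$.
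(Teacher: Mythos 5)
Your proposal is correct, and the overall architecture coincides with the paper's: handle $\lambda=0$ by even reflection and classical Liouville, and for $\lambda>0$ use the polynomial decay estimate of Lemma~\ref{eqn: decay with perturbations} at scale $R$ to conclude that $v$ vanishes on $\R^N$, then finish by odd reflection and Liouville again. The only place where you genuinely diverge is in how the decay lemma is applied to a sign-changing function. The paper sets $z=v^+$ (resp. $z=v^-$), observes that $z$ is subharmonic and satisfies $\partial_\nu z\le -\lambda z$ on $\R^N$, and then applies the lemma (translated and scaled to $B_r^+(x_0,0)$) to obtain $z(x_0,0)\le C(1+r^\gamma)/(\lambda r)\to 0$; this requires interpreting the boundary inequality for $v^\pm$ in the weak/Kato sense across the nodal set of $v$, a point the paper addresses elsewhere via Lemma~\ref{lem: measure inequality} and the ``variational counterpart'' of the decay lemma. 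You instead rescale $v$ itself, $v_R(X)=v(RX)$, and compare the two signed harmonic functions $K_R w_M\mp v_R$ (with $M=R\lambda$) against the explicit barrier $w_M$ appearing in the lemma's proof, using Hopf's lemma on $\partial^0 B_1^+$ to rule out a negative interior minimum. This sidesteps the distributional issue at the nodal set and is a perfectly sound and arguably more self-contained variant; the price is that you reopen the proof of Lemma~\ref{eqn: decay with perturbations} rather than invoking its statement. Both routes yield the same chain $\sup_{\partial^0 B^+_{R/2}}|v|\le C(1+R^\gamma)/(R\lambda)\to0$ and hence $v|_{\R^N}\equiv 0$, after which your odd-reflection finish is the same as the paper's (implicit) one.
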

\begin{proof}
If $\lambda = 0$, using an even reflection through $\{y=0\}$, we
extend $v$ to a harmonic function in all $\R^{N+1}$, and we conclude as usual using
the growth assumption. If $\lambda > 0$ let either $z=v^+$ or $z=v^-$. In both cases,
\[
    \begin{cases}
    - \Delta z \leq 0, & \text{in }\R^{N+1}_+\\
    \partial_{\nu} z \leq -M z, & \text{on }\R^N.
    \end{cases}
\]
By translating and scaling, Lemma \ref{eqn: decay with perturbations} implies that
\[
z(x_0,0) \leq \sup_{\partial^0 B_{r/2}(x_0,0)} z \leq \frac{1}{\lambda r}\sup_{\partial^+ B_r(x_0,0)} z
\leq C \frac{1+r^{\gamma}}{r}.
\]
Letting $r\to\infty$ the proposition follows.
\end{proof}
Finally, we have the following.
\begin{proposition}\label{prp: prescribed constant normal derivative}
Let $v$ satisfy
\begin{equation*}
    \begin{cases}
    - \Delta v = 0 & \text{in } \R^{N+1}_+ \\
    \partial_{\nu} v = \lambda  & \text{on } \R^{N}
    \end{cases}
\end{equation*}
for some $\lambda \in \R$ and let us suppose that
for some $\gamma \in [0,1)$, $C >0$ it holds
\[
    |v(X)| \leq C(1+|X|^{\gamma})
\]
for every $X$. Then $v$ is constant.
\end{proposition}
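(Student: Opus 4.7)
The plan is to reduce the inhomogeneous Neumann problem to the homogeneous one by subtracting an explicit affine correction, and then to apply the classical Liouville theorem for entire harmonic functions with polynomial growth. The key observation is that the function $(x,y)\mapsto \lambda y$ is harmonic in $\R^{N+1}_+$ and has normal derivative $\partial_\nu(\lambda y)=-\partial_y(\lambda y)\bigr|_{y=0}=-\lambda$ on $\R^N$.

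First I would define $u(x,y):=v(x,y)+\lambda y$. By harmonicity of $\lambda y$, $-\Delta u=0$ in $\R^{N+1}_+$, while
\[
\partial_\nu u\bigr|_{y=0}=\partial_\nu v-(-\partial_y(\lambda y))\bigr|_{y=0}=\lambda-\lambda=0.
\]
Since the Neumann trace of $u$ vanishes, the even reflection $\tilde u(x,y):=u(x,|y|)$ extends $u$ to a harmonic function on the whole of $\R^{N+1}$.

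Next I would use the growth bounds on $v$ and the linear growth of the correction to estimate
\[
|\tilde u(X)|\leq |v(x,|y|)|+|\lambda|\,|y|\leq C(1+|X|^\gamma)+|\lambda|\,|X|\leq C'(1+|X|),
\]
so $\tilde u$ is an entire harmonic function with at most linear growth. By the classical Liouville theorem for harmonic polynomials, $\tilde u$ must itself be an affine function. Its evenness in $y$ then forces the coefficient of $y$ to vanish, so $\tilde u(x,y)=a+b\cdot x$ for some $a\in\R$ and $b\in\R^N$. Unwinding the substitution, $v(x,y)=a+b\cdot x-\lambda y$.

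Finally, I would plug this explicit form back into the growth assumption $|v(X)|\le C(1+|X|^\gamma)$ with $\gamma<1$. Letting $|x|\to\infty$ along $y=0$ rules out $b\neq 0$, and letting $|y|\to\infty$ along $x=0$ rules out $\lambda\neq 0$. Hence $b=0$, $\lambda=0$, and $v\equiv a$ is constant, as claimed. No substantial obstacle is anticipated; the only point of care is the sign convention $\partial_\nu=-\partial_y$ on $\{y=0\}$, which governs the choice of the correction $\lambda y$ (rather than $-\lambda y$) needed to cancel the boundary datum.
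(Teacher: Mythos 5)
Your proof is correct, and it takes a genuinely different route from the paper's. You reduce directly to the homogeneous Neumann problem by subtracting the explicit harmonic corrector $\lambda y$ (which has $\partial_\nu(\lambda y)=-\lambda$ on $\{y=0\}$), then reflect evenly and invoke Liouville for entire harmonic functions with at most linear growth; the residual affine form is then killed by the sublinear growth bound on $v$, which simultaneously forces $\lambda=0$. The paper instead works with finite differences: it sets $w(x,y):=v(x+h,y)-v(x,y)$, notes that $w$ has homogeneous Neumann data and inherits the sublinear growth bound, concludes via reflection and Liouville that $w$ is a constant $c_h$, and then recovers the affine structure of $v$ from the functional identity $v(x+h,y)=v(x,y)+c_h$, finally eliminating all linear terms by the growth bound. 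Your corrector argument is shorter and more transparent here because the boundary datum $\lambda$ is constant, so the corrector $\lambda y$ can be written down by inspection; the paper's difference-quotient scheme is a touch heavier but has the virtue of not requiring an explicit solution to the Neumann problem, a flexibility the authors exploit elsewhere (e.g.\ Proposition \ref{prp: global eigenfunction} handles a nonconstant boundary term $-\lambda v$ by a different device). The only point you correctly flag is the sign convention $\partial_\nu=-\partial_y$, and you handle it properly.
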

\begin{proof}
For $h \in \R^N$, let $w(x,y) := v(x+h,y) - v(x,y)$. Then $w$ solves
\[
    \begin{cases}
    - \Delta w = 0 & \text{in } \R^{N+1}_+ \\
    \partial_{\nu} w = 0 & \text{on } \R^{N}
    \end{cases}
\]
and, as usual, we can reflect and use the growth condition to infer that $w$ has to be constant, that is $v(x+h,y) = c_h + v(x,y)$. Deriving the previous expression in $x_i$, we find that
\[
    v(x,y) = \sum_{i = 1}^k c_i(y) x_i + c_0(y).
\]
Using again the growth condition, we see that $c_i \equiv 0$ for $i = 1, \dots, k$, while $c_0$ is constant. We observe that, consequently, $\lambda = 0$.
\end{proof}

\section{Some approximation results}
In the following, we want to apply the Liouville type theorems obtained in the previous section to
suitable limiting profiles, obtained from solutions to the problem
\[\label{prova}
    \begin{cases}
    - \Delta v_i = 0 & \text{in } B^+\\
    \partial_{\nu} v_i = f_{i, \beta}(v_i) - \beta v_i \tsum_{j \neq i} v_j^2 & \text{on } \partial^0 B^+,
    \end{cases} \eqno \problem{\beta}
\]
through some blow up and blow down procedures. From this point of view we have seen that, in the case of entire profiles of segregation, the key property is the validity of some Pohozaev identities, which imply
that the Almgren formula holds. In this section we prove that such identities can be obtained by
passing to the limit in the corresponding identities for $\problem{\beta}$, under suitable assumptions about the convergence. To be more precise, we will prove the following.
\begin{proposition}\label{prp: approx classG_s}
Let $\bv_n \in H^1\left(B^+_{r_n}\right)$ solve problem
$\problem{\beta_n}$ on $B^+_{r_n}$, $n\in\N$, and $\bv \in H^1_{\loc}\left(\overline{\R^{N+1}_+}\right)$, be such that,
as $n\to\infty$,
\begin{enumerate}
	\item $\beta_n \to \infty$;
    \item $r_n \to \infty$;
    \item for every compact $K \subset \R^{N+1}_+$, $\bv_n \to \bv$ in $H^1(K) \cap C(K)$;
    \item the continuous functions $f_{i,\beta_n}$ are such that, for every $\bar m> 0$,
        \[
        |f_{i,\beta_n}(s)| \leq C_{n}(\bar m)\quad\text{ for }|s| < \bar m,
        \]
        where $C_{n}(\bar m) \to 0$.
\end{enumerate}
Then $\bv \in \classG_s$.
\end{proposition}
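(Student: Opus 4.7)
The plan is to verify, in turn, the three defining conditions of $\classG_s$ for $\bv$. Since $-\Delta v_{i,n}=0$ in $B^+_{r_n}$ and harmonicity is preserved by $H^1_{\loc}$-limits, the interior equation $-\Delta v_i=0$ in $\R^{N+1}_+$ is immediate. What remains is (i) the boundary segregation $v_iv_j|_{y=0}=0$ for $i\neq j$, (ii) the weak boundary identity $v_i\partial_\nu v_i=0$ on $\R^N\times\{0\}$, and (iii) the cylindrical Pohozaev identity \eqref{eqn: Pohozaev for classGs}.

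For (i), I would fix a nonnegative $\phi\in\C^{\infty}_0(\R^{N+1})$ so that $\supp\phi\subset B^+_{r_n}$ for $n$ large, and test the equation for $v_{i,n}$ against $v_{i,n}\phi$. Green's identity yields
\[
\int\nabla v_{i,n}\cdot\nabla(v_{i,n}\phi)\,\de x\,\de y + \beta_n\int_{\R^N} v_{i,n}^2\tsum_{j\neq i}v_{j,n}^2\,\phi\,\de x = \int_{\R^N} f_{i,\beta_n}(v_{i,n})v_{i,n}\phi\,\de x.
\]
Hypothesis (3) makes the bulk term convergent and (4) sends the right-hand side to zero, so $\beta_n\int v_{i,n}^2v_{j,n}^2\phi$ stays bounded; combined with $\beta_n\to\infty$, Fatou forces $\int v_i^2v_j^2\phi=0$, whence (i) by continuity of $\bv$. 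For (ii) I rely on (i) together with the polynomial decay of Lemma~\ref{eqn: decay with perturbations}: at any point of $\R^N\times\{0\}$ where $v_i>0$, continuity and hypothesis (3) give $v_{i,n}^2\geq c>0$ on a flat neighborhood for large $n$, so for each $j\neq i$ the equation reads, after localization, $\partial_\nu v_{j,n}\leq f_{j,\beta_n}(v_{j,n})-c\beta_n v_{j,n}$, and Lemma~\ref{eqn: decay with perturbations} yields $v_{j,n}=O(1/\beta_n)$ there. Hence $\beta_n v_{i,n}\tsum_{j\neq i}v_{j,n}^2=O(1/\beta_n)\to 0$ uniformly, and passing to the limit in $\problem{\beta_n}$ gives $\partial_\nu v_i=0$ on $\{v_i>0\}$; elsewhere the product identity is trivial.

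Step (iii) is the main technical point. A standard Pohozaev computation for the solutions of $\problem{\beta_n}$ on $C^+_{r,l}$---multiply $-\Delta v_{i,n}=0$ by $\nabla_{(x',y)}v_{i,n}\cdot(x'-x_0',y)$, sum in $i$, and insert the Neumann condition on $\partial^0B^+_r\times Q_l$---produces the target identity for $\bv_n$ together with a base-boundary remainder
\[
-2\int_{\partial^0B_r^+\times Q_l}\tsum_i\bigl(f_{i,\beta_n}(v_{i,n})-\beta_n v_{i,n}\tsum_{j\neq i}v_{j,n}^2\bigr)\nabla_{x'}v_{i,n}\cdot(x'-x_0')\,\de x.
\]
The $f$-piece vanishes in the limit by (4) and uniform $H^1_\loc$-bounds. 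The competition piece, using the algebraic identity $\tsum_i v_{i,n}\tsum_{j\neq i}v_{j,n}^2\nabla v_{i,n}=\tfrac12\tsum_{i<j}\nabla(v_{i,n}^2v_{j,n}^2)$ and integration by parts in $x'\in\R^h$ (with $\div_{x'}(x'-x_0')=h$), rewrites as
\[
\beta_n\tsum_{i<j}\Bigl[-h\int_{B^0_r\times Q_l}v_{i,n}^2v_{j,n}^2\,\de x + r\int_{\partial B^0_r\times Q_l}v_{i,n}^2v_{j,n}^2\,\de\sigma\Bigr].
\]
By step (ii), $\beta_n v_{i,n}v_{j,n}^2$ is locally uniformly bounded on $\partial^0$ away from the free boundary, so $\beta_n v_{i,n}^2v_{j,n}^2=v_{i,n}\cdot(\beta_n v_{i,n}v_{j,n}^2)$ is dominated and tends to zero a.e.; dominated convergence kills the bulk term, while a Fubini slicing in $r$ using the bulk $L^1$-bound kills the codimension-one boundary term for a.e.~$r$. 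Letting $n\to\infty$ yields \eqref{eqn: Pohozaev for classGs}. The principal obstacle is precisely this last step: upgrading the integrated bound $\beta_n\int v_{i,n}^2v_{j,n}^2\leq C$ to the locally-uniform pointwise domination of $\beta_n v_{i,n}^2v_{j,n}^2$ needed for dominated convergence, which rests on the decay estimate of step (ii) together with extending the $C_\loc$-convergence up to $\partial^0$ via the uniform H\"older bounds of Theorem~\ref{thm: intro_local}.
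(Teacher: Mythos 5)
Your outline (verify harmonicity, segregation, the weak boundary identity, and the Pohozaev identity) matches the paper's plan, and steps for harmonicity and (i) are essentially the paper's argument (cf.\ Lemma \ref{lem:_competition_term_vanishes_in_strong_blowup}). The divergence — and the gap, which you yourself flag — lies in how you handle (ii) and (iii).

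For (ii) you invoke the decay Lemma~\ref{eqn: decay with perturbations} pointwise at interior points of $\{v_i>0\}$; for (iii) you then want locally uniform pointwise domination of $\beta_n v_{i,n}^2v_{j,n}^2$ to run dominated convergence. This fails near $\partial\{v_i>0\}\cap\partial\{v_j>0\}$: the decay lemma needs a neighborhood where $v_{i,n}^2$ is uniformly bounded below, which you do not have as you approach the free boundary, so there is no pointwise domination there. Moreover, trying to patch this by appealing to Theorem~\ref{thm: intro_local} is circular: the proof of that theorem depends, through the blow-up analysis of Section \ref{section:_uniform_local}, precisely on the present proposition.

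The paper avoids all of this with a purely integral argument (Lemma~\ref{lem:_competition_term_vanishes_in_strong_blowup}). Testing with $\eta$ (via Lemma~\ref{lem: measure inequality}) gives the uniform bound $\beta_n\int_K |v_{i,n}|\tsum_{j\neq i}v_{j,n}^2\,\de x\leq C$, hence segregation in the limit, and then $K\subset\{v_i=0\}\cup\{v_j=0\}$. Splitting the integral over these two sets and using that $\|v_{i,n}\|_{L^\infty(K\cap\{v_i=0\})}\to0$ (by the uniform convergence, since $v_i$ vanishes there) upgrades the boundedness to $\beta_n\int_K v_{i,n}^2v_{j,n}^2\,\de x\to0$. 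This single integrated estimate then directly kills the competition contribution in both the weak boundary identity (tested against $v_{i,n}\phi$) and in the Pohozaev identity (bulk term), and the $S^{N-1}_r$ boundary term is handled for a.e.~$r$ by Fubini from the same bound — exactly the slicing you propose, but fed by the integral lemma rather than by pointwise domination. So your plan is salvageable if you replace the pointwise decay route for (ii)--(iii) by the integrated estimate; as written, the argument near the free boundary does not close.
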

We start by stating the basic identities for problem $\problem{\beta}$. We recall that
$S_{r}^{N-1}$ denotes the $(N-1)$-dimensional boundary of $\partial^0 B^+_r$ in $\R^N$.
\begin{lemma}[Pohozaev identity]\label{lem: Pohozaev identity}
Let $\bv$ solve problem $\problem{\beta}$ on $B^+$. For every
$B^+_r:=B^+_r(x_0,0) \subset B^+$ the following Pohozaev identity holds:
\begin{multline*}
    (1-N) \int\limits_{B^+_r } \tsum_{i} |\nabla v_i|^2 \,\de{x}\de{y}  +
r\int\limits_{\partial^+B^+_r } \tsum_{i} |\nabla v_i|^2 \,\de{\sigma} + \\ + 2N
\int\limits_{\partial^0 B^+_r } \tsum_{i} F_{i,\beta}(v_i) \, \de{x} - N \beta
\int\limits_{\partial^0 B^+_r } \tsum_{i, j < i} v_i^2 v_j^2 \, \de{x}
      - 2r \int\limits_{S_r^{N-1} } \tsum_{i} F_{i,\beta}(v_i) \, \de{\sigma}+
\\+ r\beta \int\limits_{S_r^{N-1} } \tsum_{i, j < i } v_i^2 v_j^2 \,
\de{\sigma} = 2r \int\limits_{\partial^+B^+_r }  \tsum_{i} |\partial_{\nu}
v_i|^2 \,\de{\sigma}.
\end{multline*}
\end{lemma}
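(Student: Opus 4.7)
The plan is to multiply the harmonicity equation $-\Delta v_i=0$ by the radial vector field $(X-X_0)\cdot\nabla v_i$, with $X_0=(x_0,0)$, and integrate over $B_r^+$, invoking the boundary condition of $\problem{\beta}$ on the flat piece $\partial^0 B_r^+$. Throughout, I write $Y:=X-X_0=(x-x_0,y)$ and work at a fixed radius $r$ for which the traces make sense (generic $r$, by absolute continuity).

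The first step is the standard Rellich--Ne\v{c}as computation: integrating by parts twice, the harmonicity of $v_i$ translates into the identity
\[
\frac{N-1}{2}\int_{B_r^+}|\nabla v_i|^2\,\de x\de y
=\frac{1}{2}\int_{\partial B_r^+}|\nabla v_i|^2\,(Y\cdot\nu)\,\de\sigma
-\int_{\partial B_r^+}\partial_\nu v_i\,(Y\cdot\nabla v_i)\,\de\sigma,
\]
in the ambient dimension $N+1$. I would then split $\partial B_r^+=\partial^+B_r^+\cup\partial^0 B_r^+$. On the spherical piece, $\nu=Y/r$, producing the clean contribution $\tfrac{r}{2}\int_{\partial^+ B_r^+}|\nabla v_i|^2-r\int_{\partial^+B_r^+}|\partial_\nu v_i|^2$. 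On the flat piece, $Y\cdot\nu=-y\equiv0$, so only the second boundary term survives, reducing to $-\int_{\partial^0 B_r^+}\partial_\nu v_i\,(x-x_0)\cdot\nabla_x v_i\,\de x$.

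Next, I would replace $\partial_\nu v_i$ on $\partial^0 B_r^+$ by $f_{i,\beta}(v_i)-\beta v_i\sum_{j\neq i}v_j^2$ via $\problem{\beta}$, sum over $i$, and exploit two algebraic manipulations. For the internal dynamics, $f_{i,\beta}(v_i)\nabla_x v_i=\nabla_x F_{i,\beta}(v_i)$ turns the first contribution into a divergence-style term. For the competition part, the key identity
\[
\sum_{i}v_i\,(\nabla_x v_i)\sum_{j\neq i}v_j^2=\frac{1}{2}\nabla_x\!\!\sum_{i<j} v_i^2 v_j^2
\]
converts the full $\beta$-contribution into $\tfrac{\beta}{2}(x-x_0)\cdot\nabla_x\sum_{i<j}v_i^2v_j^2$. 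A final integration by parts over $\partial^0 B_r^+\subset\R^N$, using $\mathrm{div}_x(x-x_0)=N$ and $(x-x_0)\cdot\nu_N=r$ on $S_r^{N-1}$, produces both the bulk $N$-factors on $\partial^0 B_r^+$ and the boundary $r$-factors on $S_r^{N-1}$ that appear in the statement. Collecting terms and clearing the $1/2$ yields exactly the claimed identity.

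The main obstacle is the regularity needed to make these pointwise integrations by parts legitimate for an $H^1$-solution of $\problem{\beta}$. In the interior $v_i$ is smooth by harmonicity, so the only delicate traces live on $\partial^0 B_r^+$ and on $\partial^+ B_r^+$; the customary way out is either to fix $r$ in a co-null set so that $v_i|_{\partial^+B_r^+}\in H^1$, or to regularize by convolution and pass to the limit. A secondary, purely bookkeeping, difficulty is to check that the symmetrization in the pairing $\sum_{i\neq j}$ gives rise to exactly the sum $\sum_{i<j}v_i^2v_j^2$ appearing in the statement (this is the content of the algebraic identity above).
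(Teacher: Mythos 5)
Your proof follows essentially the same route as the paper's: multiply $-\Delta v_i=0$ by $(X-X_0)\cdot\nabla v_i$, integrate by parts twice to get the Rellich--Ne\v{c}as identity (the paper organizes this via the pointwise identity $\nabla v_i\cdot\nabla(X\cdot\nabla v_i)=|\nabla v_i|^2+\tfrac12 X\cdot\nabla|\nabla v_i|^2$, but the resulting boundary-value identity is the same), split the boundary integral into spherical and flat pieces, substitute the boundary condition, sum over $i$, and integrate by parts on $\partial^0 B_r^+$ using $\div_x(x-x_0)=N$. Your algebraic identity converting the competition term into $\tfrac12\nabla_x\sum_{i<j}v_i^2v_j^2$ is exactly the manipulation the paper carries out, and your remark about regularity / genericity of $r$ matches the paper's implicit handling.
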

\begin{proof}
Let the functions $v_i$ solve problem $\problem{\beta}$. Up to translations, we assume that $x_0=0$.
By multiplying the equation with $X \cdot \nabla v_i$ and integrating by parts
over $B^+_r $, we obtain
\[
\int\limits_{B^+_r } \nabla v_i \cdot \nabla(X \cdot \nabla v_i) \,\de{x}\de{y}
= r \int\limits_{\partial^+B^+_r } |\partial_{\nu} v_i|^2 \,\de{\sigma} +
\int\limits_{\partial^0B^+_r } (\partial_{\nu} v_i) (x\cdot \nabla_x v_i) \,
\de{x}.
\]
Using the identity
\[
\nabla v_i \cdot \nabla(X \cdot \nabla v_i) = |\nabla v_i|^2 + X \cdot \nabla
\left(\frac{1}{2} |\nabla v_i|^2\right)
\]
and integrating again by parts, we can write the right hand side as
\[
\int\limits_{B^+_r } \nabla v_i \cdot \nabla(X \cdot \nabla v_i) \,\de{x}\de{y}
= \frac{1-N}{2} \int\limits_{B^+_r } |\nabla v_i|^2 \,\de{x}\de{y}  +
\frac{r}{2}\int\limits_{\partial^+B^+_r } |\nabla v_i|^2 \,\de{\sigma}
\]
and this yields
\begin{multline*}
    \frac{1-N}{2} \int\limits_{B^+_r } |\nabla v_i|^2 \,\de{x}\de{y}  +
\frac{r}{2}\int\limits_{\partial^+B^+_r } |\nabla v_i|^2 \,\de{\sigma} -
\int\limits_{\partial^0B^+_r } f_{i,\beta}(v_i) (x\cdot \nabla_x v_i) \, \de{x}
+ \\
    + \frac{\beta}{2} \int\limits_{\partial^0B^+_r } (x\cdot \nabla_x v_i^2)
\tsum_{j\neq i} v_j^2 \, \de{x} = r \int\limits_{\partial^+B^+_r }
|\partial_{\nu} v_i|^2 \,\de{\sigma}.
\end{multline*}
Summing the identities for $i =1, \dots, k$ we obtain
\begin{multline}\label{eqn:_passaggio_Pohozaev}
    \frac{1-N}{2} \int\limits_{B^+_r } \tsum_{i} |\nabla v_i|^2 \,\de{x}\de{y}
+ \frac{r}{2}\int\limits_{\partial^+B^+_r } \tsum_{i} |\nabla v_i|^2
\,\de{\sigma} \\- \int\limits_{\partial^0B^+_r } (x\cdot \nabla_x )\tsum_{i} F_{i,\beta}(v_i)
 \, \de{x} 
    + \frac{\beta}{2} \int\limits_{\partial^0B^+_r } (x\cdot \nabla_x) \tsum_{i,
j < i } v_i^2 v_j^2 \, \de{x} = r \int\limits_{\partial^+B^+_r }  \tsum_{i}
|\partial_{\nu} v_i|^2 \,\de{\sigma}.
\end{multline}
The terms on $\partial^0 B^+_r $ can be further simplified: by an application of
the divergence theorem on $\R^N$ we have
\[
    \begin{split}
    \int\limits_{\partial^0B^+_r } (x\cdot \nabla_x) \tsum_{i, j < i } v_i^2
v_j^2 \, \de{x} &= \int\limits_{\partial^0B^+_r } \div \left(x \tsum_{i, j < i }
v_i^2 v_j^2 \right) \, \de{x} - \int\limits_{\partial^0B^+_r } \div x  \tsum_{i,
j < i} v_i^2 v_j^2 \, \de{x}\\
    &= r \int\limits_{S_r^{N-1} } \tsum_{i, j < i } v_i^2 v_j^2 \, \de{\sigma} -
N \int\limits_{\partial^0 B^+_r } \tsum_{i, j < i } v_i^2 v_j^2 \, \de{x}
    \end{split}
\]
and
\[
    \begin{split}
    \int\limits_{\partial^0B^+_r } (x\cdot \nabla_x) \tsum_{i} F_{i,\beta}(v_i)
\, \de{x} &= \int\limits_{\partial^0B^+_r } \div \left(x \tsum_{i}
F_{i,\beta}(v_i) \right) \de{x} - \int\limits_{\partial^0B^+_r } \div x
\tsum_{i} F_{i,\beta}(v_i) \de{x}\\
    &= r \int\limits_{r S^{N-1} } \tsum_{i} F_{i,\beta}(v_i) \, \de{\sigma} - N
\int\limits_{\partial^0 B^+_r } \tsum_{i} F_{i,\beta}(v_i) \, \de{x};
    \end{split}
\]
the lemma  follows by substituting into equation \eqref{eqn:_passaggio_Pohozaev}.
\end{proof}
In a similar way, it is possible to prove the validity of the Pohozaev identities in cylinders
(we use the notations introduced in the discussion at the beginning of Section \ref{subsec: Alm seg prof}).
\begin{lemma}[Pohozaev identity in cylinders]\label{lem: Pohozaev identity cylinder}
Let $\bv \in H^1(B^+)$ be a solution to problem
$\problem{\beta}$. For every $x \in \partial^0 B^+$ and $r>0$, $l>0$ such that
$C_{r,l}^+ \subset B^+$ the following Pohozaev identity holds:
\begin{multline*}
        \int\limits_{C_{r,l}^+} \left(\tsum_{i} 2 |\nabla_{(x',y)} v_i|^2 -(h+1) |\nabla v_i|^2 \right)\de{x}\de{y}  +
    r\int\limits_{\partial^+B^+_r \times Q_l } \tsum_{i} |\nabla v_i|^2 \,\de{\sigma} + \\ + 2h
    \int\limits_{\partial^0 C_{r,l}^+ } \tsum_{i} F_{i,\beta}(v_i) \, \de{x} - h \beta
    \int\limits_{\partial^0 C_{r,l}^+ } \tsum_{i, j < i} v_i^2 v_j^2 \, \de{x}\\
          - 2r \int\limits_{S_r^{h-1} \times Q_l } \tsum_{i} F_{i,\beta}(v_i) \, \de{\sigma}+
     r\beta \int\limits_{S_r^{h-1} \times Q_l } \tsum_{i, j < i } v_i^2 v_j^2 \,
    \de{\sigma}\\ = 2r \int\limits_{\partial^+B^+_r \times Q_l }  \tsum_{i} |\partial_{\nu} v_i|^2 \,\de{\sigma}- 2 \int\limits_{B^+_r \times \partial^+Q_l }  \tsum_{i} \partial_{\nu}    v_i \nabla_{(x',y)} v_i \cdot (x',y) \,\de{\sigma},
\end{multline*}
where $\nabla_{(x',y)}$ is the gradient with respect to the directions in $\R^{h+1}_+$.
\end{lemma}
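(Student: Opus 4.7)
My plan is to mimic the argument of Lemma~\ref{lem: Pohozaev identity}, replacing the radial multiplier $X\cdot\nabla v_i$ with the ``partial radial'' multiplier
\[
\psi_i := Y\cdot\nabla_Y v_i,\qquad Y:=(x'-x_0',y)\in\R^{h+1},
\]
which respects the product geometry of $C^+_{r,l}(x_0,0)=B^+_r(x_0',0)\times Q_l(x_0'')$. After a translation I may assume $x_0=0$, so that $\psi_i=\tsum_{j=1}^h x'_j\partial_{x'_j}v_i+y\partial_y v_i$. First I would multiply $-\Delta v_i=0$ by $\psi_i$ and integrate by parts in $C^+_{r,l}$, obtaining $\int_{C^+_{r,l}}\nabla v_i\cdot\nabla\psi_i=\int_{\partial C^+_{r,l}}(\partial_\nu v_i)\psi_i$. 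The key pointwise identity
\[
\nabla v_i\cdot\nabla\psi_i=|\nabla_Y v_i|^2+\tfrac12\,Y\cdot\nabla_Y|\nabla v_i|^2
\]
reduces the volume term; integrating the second summand once more in $Y$ alone (with $x''\in Q_l$ frozen, and using $\div_Y Y=h+1$) converts it into $-(h+1)\int|\nabla v_i|^2$ plus the spherical contribution $r\int_{\partial^+ B^+_r\times Q_l}|\nabla v_i|^2$, the bottom piece dropping because $Y\cdot\nu_Y=0$ on $\partial^0 B^+_r$.

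Next I would decompose $\partial C^+_{r,l}$ into three pieces and treat each separately. On the spherical cap $\partial^+ B^+_r\times Q_l$ the outward normal is $(Y/r,0)$, so $\partial_\nu v_i=\psi_i/r$ and the contribution is $r\int|\partial_\nu v_i|^2$. On the lateral cube faces $B^+_r\times\partial^+ Q_l$ the normal lies in the $\R^{N-h}$ directions, giving the mixed boundary term involving $\partial_\nu v_i\,\nabla_Y v_i\cdot Y$. On the flat bottom $\partial^0 C^+_{r,l}$ one has $y=0$, so $\psi_i|_{y=0}=x'\cdot\nabla_{x'}v_i$, and here I would substitute the boundary condition from $\problem{\beta}$, namely $\partial_\nu v_i=f_{i,\beta}(v_i)-\beta v_i\tsum_{j\neq i}v_j^2$.

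Finally, after summing over $i$, I would recombine the bottom-boundary contributions using the telescoping identities
\[
\tsum_i f_{i,\beta}(v_i)\,x'\cdot\nabla_{x'}v_i=x'\cdot\nabla_{x'}\tsum_i F_{i,\beta}(v_i),\qquad \tsum_i v_i(x'\cdot\nabla_{x'}v_i)\tsum_{j\neq i}v_j^2=\tfrac12\,x'\cdot\nabla_{x'}\tsum_{i,\,j<i}v_i^2 v_j^2,
\]
and then integrate by parts tangentially in $x'$ only on the $h$-dimensional disk $\partial^0 B^+_r$ (using $\div_{x'}x'=h$, with $x''$ frozen). This produces precisely the $\pm h\int_{\partial^0 C^+_{r,l}}$ and $\mp r\int_{S^{h-1}_r\times Q_l}$ contributions appearing in the statement; multiplying the resulting identity through by $2$ and collecting terms yields the claim. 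The main obstacle will simply be the careful bookkeeping of the three boundary pieces of the cylinder and of the two distinct integrations by parts—the ambient $(N+1)$-dimensional one for $\nabla v_i\cdot\nabla\psi_i$ and the purely $h$-dimensional one on the flat bottom—but no genuinely new ingredient beyond what already appears in Lemma~\ref{lem: Pohozaev identity} is required.
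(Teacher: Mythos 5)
Your proposal is correct and is precisely the ``in a similar way'' argument the paper intends after Lemma~\ref{lem: Pohozaev identity} — equivalently, the specific instance, with vector field $(x'-x_0',y,0)$, of the general domain-variation remark that follows the statement: multiply by the partial-radial quantity $\nabla_{(x',y)}v_i\cdot(x'-x_0',y)$, integrate by parts once in the ambient $(N+1)$ variables, split $\partial C^+_{r,l}$ into its spherical, lateral-cube and flat pieces, substitute the Neumann data on the bottom, and integrate by parts tangentially in $x'$ on the $h$-dimensional disk. One caveat worth recording: if you carry the bookkeeping through faithfully, the lateral contribution enters the right-hand side with a \emph{plus} sign, $+2\int_{B^+_r\times\partial^+Q_l}\tsum_i(\partial_\nu v_i)\,\nabla_{(x',y)}v_i\cdot(x'-x_0',y)\,\de{\sigma}$ (this is also the only sign compatible with the $h=N$ specialization reproducing Lemma~\ref{lem: Pohozaev identity}), whereas the printed statement carries a minus sign on that term; this appears to be a typo in the statement, so do not distort your derivation to match it.
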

\begin{remark}
Even though the mentioned Pohozaev identities are enough for our purposes, we would like to point
out that they are nothing but special cases of a more general class of identities, namely the
domain variation formulas, see for instance \cite{dwz3}. They may be obtained by testing the equation of
$\problem{\beta}$ by $\nabla \bv \cdot Y$ in a smooth domain $\omega \subset \R^{N+1}_+$, where $Y
\in \C^{1}(\R^{N+1}_+; \R^{N+1}_+)$ is a smooth vector field such that $Y|_{y=0} \in \C^{1}(\R^{N};
\R^{N})$.
\end{remark}
To proceed, we need the following standard result.
\begin{lemma}\label{lem: measure inequality}
Let $f, \lambda \in L^{\infty}(\partial^0 B^+)$. If $w \in H^1(B^+)$ is a solution to
\[
    \begin{cases}
    - \Delta w = 0 &\text{ in } B^+\\
    \partial_{\nu} w = f - \lambda w &\text{ on } \partial^0 B^+,
    \end{cases}
\]
then $|w| \in H^1(B^+)$ and for any $\phi \in H^1(B^+)$, $\phi|_{\partial^+ B^+} = 0$, $\phi \geq 0$ it holds
\[
    \int\limits_{B^+} \nabla |w| \cdot \nabla \phi \,\de x \de y - \int\limits_{\partial^0B^+} (|f| -\lambda |w|) \phi \de x \leq 0
\]
\end{lemma}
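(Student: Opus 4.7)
The plan is a standard Kato-type regularization. I would introduce $\psi_\eps(s) := s/\sqrt{s^2+\eps^2}$, which is a smooth odd function bounded in $[-1,1]$ with non-negative derivative $\psi_\eps'(s) = \eps^2/(s^2+\eps^2)^{3/2}$, and satisfies $\nabla\sqrt{w^2+\eps^2} = \psi_\eps(w)\nabla w$ together with $\sqrt{w^2+\eps^2}\to |w|$ uniformly as $\eps\to 0^+$. Because $\psi_\eps$ is bounded and Lipschitz, the product $\psi_\eps(w)\phi$ belongs to $H^1(B^+)$ and inherits the vanishing trace on $\partial^+B^+$ from $\phi$, so it is admissible as a test function in the weak formulation of the equation for $w$.

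Using $\psi_\eps(w)\phi$ and expanding the gradient via the chain rule, the weak equation reads
\[
\int\limits_{B^+}\psi_\eps'(w)|\nabla w|^2\phi\,\de{x}\de{y} + \int\limits_{B^+}\nabla\sqrt{w^2+\eps^2}\cdot\nabla\phi\,\de{x}\de{y} = \int\limits_{\partial^0 B^+}(f-\lambda w)\psi_\eps(w)\phi\,\de{x}.
\]
Since $\phi\geq 0$ and $\psi_\eps'\geq 0$, the first bulk term is non-negative and may be discarded, yielding the $\eps$-regularized inequality
\[
\int\limits_{B^+}\nabla\sqrt{w^2+\eps^2}\cdot\nabla\phi\,\de{x}\de{y} \leq \int\limits_{\partial^0 B^+}(f-\lambda w)\psi_\eps(w)\phi\,\de{x}.
\]

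The remaining step is the passage to the limit $\eps \to 0^+$. In the bulk, $\psi_\eps(w)\nabla w \to \sgn(w)\nabla w$ pointwise almost everywhere and is dominated by $|\nabla w|\in L^2(B^+)$; dominated convergence then provides strong $H^1$-convergence of $\sqrt{w^2+\eps^2}$ to $|w|$, which in particular shows that $|w|\in H^1(B^+)$ with $\nabla|w| = \sgn(w)\nabla w$. On the boundary, $\psi_\eps(w)\to \sgn(w)$ pointwise with $|\psi_\eps(w)|\leq 1$; since $f,\lambda\in L^\infty(\partial^0 B^+)$ and the traces of $H^1$-functions lie in $L^2(\partial^0 B^+)$, the boundary integrand is dominated in $L^1$, and a further application of dominated convergence gives
\[
\int\limits_{B^+}\nabla|w|\cdot\nabla\phi\,\de{x}\de{y} \leq \int\limits_{\partial^0 B^+}\bigl(f\sgn(w)-\lambda|w|\bigr)\phi\,\de{x}.
\]
The conclusion follows from the pointwise inequality $f\sgn(w)\leq |f|$ and the non-negativity of $\phi$. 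I do not expect any serious obstacle: the argument is essentially bookkeeping, and the only points to verify are the admissibility of $\psi_\eps(w)\phi$ as a test function and the $L^1$-domination of the boundary integrand, both of which are immediate from the $L^\infty$ bounds on $f,\lambda$ and the boundedness of $\psi_\eps$.
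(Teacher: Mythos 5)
Your proof is correct and follows essentially the same route as the paper's: both use the Kato-type regularization $s\mapsto\sqrt{s^2+\eps}$ (you write $\eps^2$, which is immaterial), test with $g_\eps'(w)\phi$, discard the sign-definite bulk term arising from $g_\eps''\geq 0$, and pass to the limit using $|g_\eps'|\leq 1$ to dominate the boundary integrand, concluding with $f\sgn(w)\leq |f|$ and $\phi\geq 0$. The only cosmetic difference is that you stay in the weak formulation throughout, whereas the paper substitutes the boundary condition and integrates by parts once more, appealing to $\Delta w = 0$ pointwise; your version is if anything slightly cleaner on that point.
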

\begin{proof}
Let $g_{\eps} (s) = \sqrt{s^2+\eps} \in \C^{\infty}(\R)$ such that $g_{\eps} (s) \to |s|$ and $g_{\eps}' (s) \to \sgn(s)$. By Stampacchia Lemma,
\[
	g_{\eps}(w) \to |w| \quad \text{ in } H^1(B^+)
\]
while, by Lebesgue theorem
\[
	g_{\eps}'(w)w \to |w| \quad \text{ in } L^2(\partial^0 B^+).
\]
Thus, for any $\phi \in H^1(B^+)$, $\phi|_{\partial^+ B^+} = 0$, $\phi \geq 0$, we have
\begin{multline*}
    \int\limits_{B^+} \nabla g_{\eps}(w) \cdot \nabla \phi \,\de x \de y - \int\limits_{\partial^0B^+} g_{\eps}'(v) (f -\lambda w) \phi \de x
\\= \int\limits_{B^+} g_{\eps}'(w) \nabla w \cdot \nabla \phi \,\de x \de y - \int\limits_{\partial^0B^+} g_{\eps}'(w) \partial_{\nu} v \phi \de x \\
= \int\limits_{B^+} - \div \left( g_{\eps}'(w) \nabla w \right) \phi \,\de x \de y
= \int\limits_{B^+} \left( - g_{\eps}''(w) |\nabla w|^2 - g_{\eps}'(w) \Delta w \right) \phi\,\de x \de y \leq 0.
\end{multline*}
Passing to the limit for $\eps \to 0$ we obtain the lemma.
\end{proof}
Going back to the notations of Proposition \ref{prp: approx classG_s}, we have the following lemma.
\begin{lemma}\label{lem:_competition_term_vanishes_in_strong_blowup}
For every $K$ compact subset of $\R^N$, it holds
\[
    \lim_{n \to \infty} \beta_n \int\limits_{K} v_{i,n}^2 \tsum_{j \neq i} v_{j,n}^2 \, \de{x} = 0.
\]
Moreover, for every $x_0\in\R^N$, and for almost every $r>0$,
\[
\beta_n \int\limits_{S_r^{N-1}}v_{i,n}^2 \tsum_{j \neq i} v_{j,n}^2\de{\sigma} \to 0.
\]
\end{lemma}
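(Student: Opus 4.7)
The plan is to test $\problem{\beta_n}$ against $v_{i,n}\phi$ for a non-negative cut-off $\phi$, to pass the routine terms to the limit by the strong $H^1$ and uniform convergence, and then to kill the residual contribution by the polynomial decay estimate of Lemma \ref{eqn: decay with perturbations}.

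First I would fix $\phi \in \C_0^\infty(\R^{N+1})$ non-negative, with $\phi\equiv 1$ on a neighborhood of $K$. For $n$ large, $\supp\phi \cap \overline{\R^{N+1}_+}\subset B^+_{r_n}$, and testing the weak formulation of $\problem{\beta_n}$ with $v_{i,n}\phi$ gives
\[
\beta_n \int v_{i,n}^2 \tsum_{j\neq i} v_{j,n}^2 \phi \, \de x = \int f_{i,\beta_n}(v_{i,n}) v_{i,n} \phi \, \de x - \int \nabla v_{i,n} \cdot \nabla(v_{i,n}\phi) \, \de x \de y.
\]
Hypothesis $(iii)$ makes the last integral converge to $\int \nabla v_i\cdot\nabla(v_i\phi)$, while $(iv)$ combined with the local boundedness of $v_{i,n}$ forces the $f_{i,\beta_n}$-integral to $0$. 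Hence the limit
\[
L(\phi) := \lim_n \beta_n \int v_{i,n}^2 \tsum_{j\neq i} v_{j,n}^2 \phi \, \de x = -\int \nabla v_i\cdot \nabla(v_i\phi) \, \de x \de y
\]
exists and is non-negative. In particular $\int v_{i,n}^2 v_{j,n}^2 \phi = O(\beta_n^{-1})\to 0$, so uniform convergence yields the segregation $v_i v_j \equiv 0$ on $\R^N$ for every $j\neq i$.

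To show $L(\phi)=0$, I would identify the distributional limit $\mu_i$ on $\R^N$ of $\beta_n v_{i,n}\sum_{j\neq i} v_{j,n}^2$, obtained by testing the equation against a signed $\psi\in\C_0^\infty(\R^{N+1})$: it coincides with $-\partial_\nu v_i$ as a distribution and is a non-negative Radon measure (directly when $v_{i,n}\ge 0$, otherwise via Lemma \ref{lem: measure inequality} applied to $|v_{i,n}|$). By uniform convergence $v_{i,n}\to v_i$ on compacts, $\beta_n v_{i,n}^2\sum_{j\neq i} v_{j,n}^2\,\mathcal{H}^N \rightharpoonup v_i\mu_i$ weakly in the sense of measures, and so $L(\phi) = \int v_i\phi\,\de\mu_i$. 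The crux is then $\supp\mu_i\subset\{v_i=0\}$: if $v_i(x_0)>0$, by continuity and the segregation above, on some half-ball $B^+_r(x_0,0)$ we have $v_{i,n}\ge c>0$ and $v_{j,n}\to 0$ uniformly for every $j\neq i$, so
\[
\partial_\nu v_{j,n} \leq f_{j,\beta_n}(v_{j,n}) - c^2\beta_n v_{j,n} \qquad \text{on } \partial^0 B^+_r(x_0,0).
\]
Rescaling to $B^+_1$ and applying Lemma \ref{eqn: decay with perturbations} with $M=c^2 r\beta_n\to\infty$ and $h$ uniformly small yields $v_{j,n}\leq C/\beta_n$ on $\partial^0 B^+_{r/2}(x_0,0)$, whence $\beta_n v_{i,n} v_{j,n}^2\to 0$ uniformly there and $\mu_i$ vanishes in a neighborhood of $x_0$. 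Therefore $L(\phi)=0$, and choosing $\phi\ge\chi_K$ proves the first claim.

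For the second statement I would apply Fubini: $\int_0^R \beta_n \int_{S_r^{N-1}} v_{i,n}^2\sum_{j\neq i} v_{j,n}^2\,\de\sigma\,\de r = \beta_n \int_{\partial^0 B_R^+(x_0,0)} v_{i,n}^2\sum_{j\neq i} v_{j,n}^2\,\de x\to 0$ by the first part, so the $r$-integrand converges to $0$ in $L^1(0,R)$, and up to a subsequence pointwise for a.e. $r>0$. The main obstacle is the inclusion $\supp\mu_i\subset\{v_i=0\}$: segregation alone only provides $v_{j,n}\to 0$ uniformly on $\{v_i>0\}$, which cannot control $\beta_n v_{j,n}^2$; the polynomial decay supplied by Lemma \ref{eqn: decay with perturbations} is precisely the upgrade from uniform smallness to smallness of order $\beta_n^{-1}$ that absorbs the factor $\beta_n$.
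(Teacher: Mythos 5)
Your argument is correct, but it takes a genuinely different and rather heavier route than the paper's. The paper never tests the equation against $v_{i,n}\phi$, never introduces a limit measure $\mu_i$, and never invokes the polynomial decay of Lemma~\ref{eqn: decay with perturbations}. Instead, it tests the differential inequality of Lemma~\ref{lem: measure inequality} against a plain cutoff $\eta$ to obtain the uniform mass bound $\beta_n\int_K|v_{i,n}|\sum_{j\neq i}v_{j,n}^2\,\de{x}\leq C$; deduces from this (after dividing by $\beta_n$ and passing to the uniform limit) that $K\subset\{v_i=0\}\cup\{v_j=0\}$ for each $j\neq i$; and then simply splits
\[
\beta_n\int_K v_{i,n}^2 v_{j,n}^2\,\de{x} \leq \|v_{i,n}\|_{L^\infty(K\cap\{v_i=0\})}\,\beta_n\!\!\int\limits_{K\cap\{v_i=0\}}\!\!|v_{i,n}|\,v_{j,n}^2\,\de{x} + \|v_{j,n}\|_{L^\infty(K\cap\{v_j=0\})}\,\beta_n\!\!\int\limits_{K\cap\{v_j=0\}}\!\!|v_{j,n}|\,v_{i,n}^2\,\de{x},
\]
which tends to zero because each $L^\infty$ factor vanishes by uniform convergence while the remaining integrals are controlled by the mass bound. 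Your closing diagnosis --- that segregation alone cannot control $\beta_n v_{j,n}^2$ on $\{v_i\neq 0\}$ --- is accurate for \emph{your} strategy, which must show $\supp\mu_i\subset\{v_i=0\}$ and hence really needs the $O(\beta_n^{-1})$ decay near points where $v_i\neq 0$; but the paper sidesteps the issue entirely by working on the complementary set $\{v_i=0\}$, where one can steal an infinitesimal $L^\infty$ factor from $v_{i,n}^2$ and pay for the rest with the uniform mass bound. A small point to tighten in your version: when $v_{i,n}$ changes sign, $\beta_n v_{i,n}\sum_{j\neq i} v_{j,n}^2$ is a signed density, so the construction of a non-negative $\mu_i$ and the identity $L(\phi)=\int v_i\phi\,\de{\mu_i}$ should be carried out systematically with $|v_{i,n}|$ via Lemma~\ref{lem: measure inequality} (using $v_{i,n}^2=|v_{i,n}|^2$); you gesture at this but the write-up slips between the signed and unsigned objects.
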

\begin{proof}
Let $\eta \in \C^{\infty}_0(B_{r})$ be a
positive smooth cutoff function with the property that $\eta \equiv 1$ on $K$.
Taking into account Lemma \ref{lem: measure inequality}, we obtain that
\[
    0\leq \beta_n \int\limits_{K} |v_{i,n}| \tsum_{j \neq i} v_{j,n}^2 \, \de{x} \leq
\int\limits_{\partial^0 B^+_{r} } \left( |f_{i,n}|\eta  - |v_{i,n}| \partial_{\nu}\eta  \right)\de{x} +
\int\limits_{B^+_{r}} |v_{i,n}| \Delta \eta \,\de{x} \de{y} \leq C.
\]
In particular, on one hand this implies that
\[
\beta_n \int\limits_{K} |v_{i,n}| \tsum_{j \neq i} v_{j,n}^2 \, \de{x} \leq C,
\]
while on the other hand, by passing to the limit, we infer that $\left\{v_{i}=0\right\} \cup
\left\{v_{j}=0\right\}$ contains $K$, for every $i\neq j$. As a consequence, each term in the
sum can be estimated as follows
\[
    \begin{split}
	\beta_n \int\limits_{K} v_{i,n}^2 v_{j,n}^2 \,\de{x} \leq& \beta_n
\int\limits_{K\cap \{v_{i} = 0\}} v_{i,n}^2 v_{j,n}^2 \,\de{x}+ \beta_n
\int\limits_{K\cap \{v_{j} = 0\}} v_{j,n}^2 v_{i,n}^2 \,\de{x} \\
    \leq& \|v_{i,n}\|_{L^{\infty}(K \cap \{v_{i} = 0\})}\beta_n
\int\limits_{K\cap \{v_{i} = 0\}} |v_{i,n}| v_{j,n}^2 \,\de{x}\\
    &+  \|v_{j,n}\|_{L^{\infty}(K \cap \{v_{j} = 0\})}\beta_n
\int\limits_{K \cap \{v_{j} = 0\}} |v_{j,n}| v_{i,n}^2 \,\de{x} \to 0,
    \end{split}
\]
and the first conclusion follows by summing over all $j \neq i$. As far as
the second one is concerned, it follows by applying Fubini's Theorem to the previous
conclusion when $K=\partial^0 B^+_R$.
\end{proof}

\begin{proof}[Proof of Proposition \ref{prp: approx classG_s}]
First we notice that, by Lemma \ref{lem:_competition_term_vanishes_in_strong_blowup},
it holds $v_i v_j \equiv 0$ for every $i\neq j$. Moreover, since the uniform limit
of harmonic functions is harmonic itself, $\Delta v_i=0$ on $\R^{N+1}_+$. In order to
obtain \eqref{eqn: equation of classG_s}, we observe that, for any $\eta \in \C^{\infty}_0(\R^N)$,
we have
\[
\int\limits_{\R^N} v_{i,n} \partial_{\nu} v_{i,n} \phi \, \de{x} =
\int\limits_{\R^N} \left( v_{i,n} f_{i, \beta_n}(v_{i,n}) - \beta_n v_{i,n}^2
\tsum_{j \neq i} v_{j,n}^2 \right) \phi \, \de{x}\to 0
\]
by assumption 4. and Lemma \ref{lem:_competition_term_vanishes_in_strong_blowup}. Finally,
to prove that \eqref{eqn: Pohozaev for classGs} holds, we are going to show that, for every
$x_0 \in \R^N$ and almost every $r > 0$, the Pohozaev identity of Lemma
\ref{lem: Pohozaev identity} passes to the limit (the general case following
by analogous arguments). Let us recollect the terms of the identity as
\begin{multline*}
    \underbrace{(1-N) \int\limits_{B^+_r } \tsum_{i} |\nabla v_{i,n}|^2 \,\de{x}\de{y}}_{A_n} +
\underbrace{r\int\limits_{\partial^+B^+_r } \tsum_{i} |\nabla v_{i,n}|^2 \,\de{\sigma}}_{B^1_n} + \\
\underbrace{+ 2N \int\limits_{\partial^0 B^+_r } \tsum_{i} F_{i,n}(v_{i,n}) \, \de{x} - 2r \int\limits_{S_r^{N-1} } \tsum_{i} F_{i,n}(v_{i,n}) \, \de{\sigma}}_{I_n}+
\\ \underbrace{- N \beta_n \int\limits_{\partial^0 B^+_r } \tsum_{i, j < i} v_{i,n}^2 v_{i,n}^2 \, \de{x} + r\beta \int\limits_{S_r^{N-1} } \tsum_{i, j < i } v_{i,n}^2 v_{j,n}^2 \, \de{\sigma}}_{C_n}
= \underbrace{2r \int\limits_{\partial^+B^+_r }  \tsum_{i} |\partial_{\nu} v_{i,n}|^2 \,\de{\sigma}.}_{B^2_n}
\end{multline*}
On one hand, by strong $H^1_{\loc}$ convergence,
\[
    A_n \to (1-N) \int\limits_{B^+_r } \tsum_{i} |\nabla v_i|^2 \,\de{x}\de{y}.
\]
Moreover, both $I_n \to 0$ (by assumption 4.) and $C_n \to 0$ for a.e. $r$ (by Lemma
\ref{lem:_competition_term_vanishes_in_strong_blowup}). We claim that
\[
    \lim_{n \to \infty} B^1_n = r\int\limits_{\partial^+B^+_r } \tsum_{i} |\nabla v_i|^2 \,\de{\sigma} \quad \text{ and } \quad \lim_{n \to \infty} B^2_n = 2r \int\limits_{\partial^+B^+_r }  \tsum_{i} |\partial_{\nu} v_i|^2 \,\de{\sigma}
\]
in $L^1_{\loc}[0,\infty)$: in particular, this will imply convergence for a.e. $r$.
Let us prove the former limit, which implies also the latter. The
strong convergence $\bv_n \to \bv$ in $H^1_\loc\left(\overline{\R^{N+1}_+}\right)$ implies that
\[
    \int\limits_0^{R} \int\limits_{\partial^+ B^+_r} \tsum_{i} |\nabla v_{i,n} - \nabla
v_i |^2 \, \de{\sigma} \mathrm{d}r \to 0,
\]
so that $\int\limits_{\partial^+ B^+_r} |\nabla v_{i,n}|^2 \de{\sigma} \to \int\limits_{\partial^+ B^+_r} |\nabla
v_{i,n}|^2 \de{\sigma}$ for a.e. $r$ and there exists an integrable function $f \in L^1(0,R)$ such
that, up to a subsequence
\[
    \int\limits_{\partial^+ B^+_r} |\partial_{\nu} v_{i,n_k}|^2 \de{\sigma} \leq
\int\limits_{\partial^+ B^+_r} |\nabla v_{i,n_k}|^2 \de{\sigma} \leq f(r) \quad
\text{a.e. } r \in (0,R)
\]
for every $i = 1, \dots, k$. We can then use the Dominated Convergence Theorem.
Since every subsequence of $\{\bv_{n}\}_{n \in \N}$ admits a convergent
sub-subsequence, and the limit is the same, we conclude the convergence for the
entire approximating sequence.
\end{proof}
%
%
\section{Local $\C^{0,\alpha}$ uniform bounds, $\alpha$ small}\label{section:_uniform_local}
%
%
%
In this section we begin our regularity analysis with a first partial result. We will obtain a localized version of the uniform H\"older regularity for solutions to problem $\problem{\beta}$ (introduced at page \pageref{prova}), when the H\"older exponent is sufficiently small. We recall that, here and in the following, the functions $f_{i,\beta}$ are assumed to be continuous and uniformly bounded, with respect to $\beta$, on bounded sets.
\begin{remark}\label{rem: troianiello}
By standard regularity results (see for instance the book \cite{trobook}), we already know that
for every $r<1$, $\alpha\in(0,1)$, $\bar m>0$ and $\bar\beta>0$, there exists a constant
$C=C(r,\alpha,\bar m,\bar\beta)$ such that
\[
    \| \bv_\beta\|_{\C^{0,\alpha}\left(\overline{B^+_{r}}\right)} \leq C,
\]
for every $\bv_\beta$ solution of problem $\problem{\beta}$ on $B^+_1$, satisfying
\[
\beta\leq\bar\beta\quad\text{ and }\quad  \| \bv_{\beta} \|_{L^{\infty}(B^+_1)} \leq \bar m.
\]
\end{remark}
The main result of this section is the following.
\begin{theorem}\label{thm:_local_holder}
Let $\{\bv_{\beta}\}_{\beta > 0}$ be a family of solutions to problem $\problem{\beta}$ on $B^+_1$
such that
\[
    \| \bv_{\beta} \|_{L^{\infty}(B^+_1)} \leq \bar m,
\]
with $\bar m$ independent of $\beta$. Then for every $\alpha \in (0,\nuACF)$ there exists a constant
$C = C(\bar m,\alpha)$, not depending on $\beta$, such that
\[
    \| \bv_\beta\|_{\C^{0,\alpha}\left(\overline{B^+_{1/2}}\right)} \leq C.
\]
Furthermore, $\{\bv_\beta\}_{\beta > 0}$ is relatively compact in $H^1(B^+_{1/2}) \cap \C^{0,\alpha}\left(\overline{B^+_{1/2}}\right)$ for every $\alpha < \nuACF$.
\end{theorem}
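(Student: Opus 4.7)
The natural approach is a contradiction-plus-blow-up argument combining the Pohozaev-passage result of Proposition \ref{prp: approx classG_s} with the Liouville theorems of Section 4, in the spirit of the standard NTTV strategy. The strict inequality $\alpha<\nuACF$ is what ultimately kills every candidate limit profile.

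Setup by contradiction. Assume the first conclusion fails for some $\alpha\in(0,\nuACF)$. Since Remark \ref{rem: troianiello} already gives the estimate for $\beta$ bounded, there must exist $\beta_n\to\infty$ and solutions $\bv_n$ of $\problem{\beta_n}$ with $\|\bv_n\|_{L^\infty(B^+_1)}\leq\bar m$ but $L_n:=[\bv_n]_{\C^{0,\alpha}(\overline{B^+_{1/2}})}\to\infty$. Up to relabeling, select $X_n,Y_n\in\overline{B^+_{1/2}}$ and an index $i=1$ with
\[
|v_{1,n}(X_n)-v_{1,n}(Y_n)|\geq\tfrac12 L_n|X_n-Y_n|^\alpha,
\]
and set $r_n:=|X_n-Y_n|$. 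The $L^\infty$-bound forces $r_n\to 0$.

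Rescaling. Let $X_n^{\star}$ be the orthogonal projection of $X_n$ onto $\{y=0\}$ and define, on $\Omega_n:=(B^+_{1/2}-X_n^{\star})/r_n$,
\[
\bw_n(X):=\frac{\bv_n(X_n^{\star}+r_n X)-\bv_n(X_n^{\star})}{L_n r_n^\alpha}.
\]
Then $\bw_n(0)=0$, $[\bw_n]_{\C^{0,\alpha}(\Omega_n)}\leq 1$, so $|\bw_n(X)|\leq|X|^\alpha$, and for the rescaled point $Y_n'=(Y_n-X_n^{\star})/r_n$ (which remains in a bounded set) one has $|w_{1,n}(Y_n')-w_{1,n}(X_n-X_n^{\star})/r_n)|\geq 1/2$. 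A direct computation shows that $\bw_n$ solves
\[
\begin{cases}
-\Delta w_{i,n}=0 & \text{in }\Omega_n^+\\
\partial_\nu w_{i,n}=\tilde f_{i,n}(w_{i,n})-M_n w_{i,n}\tsum_{j\neq i}w_{j,n}^2+\text{lower order} & \text{on }\partial^0\Omega_n,
\end{cases}
\]
with $M_n:=\beta_n L_n^2 r_n^{1+2\alpha}$, where $\tilde f_{i,n}\to 0$ uniformly on bounded sets (using assumption on $f_{i,\beta}$ and the $L^\infty$-bound on $\bv_n$), and the lower order terms are controlled by the bounded quantities $v_{i,n}(X_n^{\star})$ and $L_n r_n^\alpha\leq 2\bar m$.

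Extraction and case analysis. Arzel\`a--Ascoli gives, along a subsequence, $\bw_n\to\bw$ locally uniformly on the limit domain (either $\R^{N+1}_+$, after translating so $y_n/r_n\to d\geq 0$, or all of $\R^{N+1}$ if $y_n/r_n\to\infty$); standard elliptic boundary regularity then upgrades this to $H^1_\loc$-strong convergence, using the trace bound $\beta_n\int w_{i,n}^2\sum_{j\neq i}w_{j,n}^2\,\de x\leq C$ proved exactly as in Lemma \ref{lem:_competition_term_vanishes_in_strong_blowup}. The limit satisfies $\bw(0)=0$, $|\bw(X)|\leq|X|^\alpha$, and $\bw$ is nonconstant. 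Three regimes must be excluded:
\begin{enumerate}
\item[(i)] Interior blow-up: $\bw$ extends to a globally $\C^{0,\alpha}$ harmonic function on $\R^{N+1}$, hence constant.
\item[(ii)] Boundary blow-up with $M_n\to\infty$: Proposition \ref{prp: approx classG_s} places $\bw\in\classG_s$; since $\bw$ is globally $\C^{0,\alpha}$ with $\alpha<\nuACF$, Proposition \ref{prp: liouville inequalities}(2) forces $\bw$ constant.
\item[(iii)] Boundary blow-up with $M_n$ bounded, $M_n\to M\geq 0$: if $M=0$, even reflection and sub-linear Liouville give constancy; if $M>0$, a scaling $\bu=\sqrt M\bw$ belongs to $\classG_c$ and Proposition \ref{prp: liouville system} (since $\alpha<\nuACF$) again forces constancy.
\end{enumerate}
Every case contradicts $|\bw(Y_\infty')-\bw(X_\infty')|\geq 1/2$. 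The relative compactness statement follows at once from the uniform Hölder bound by Arzel\`a--Ascoli, while $H^1(B^+_{1/2})$-compactness is a standard Caccioppoli estimate using the $L^\infty$-bound and the boundary inequality on $\beta\int v_i^2\sum v_j^2$.

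The main obstacle is regime (ii): one must check that the convergence of $\bw_n$ is strong enough (local $H^1$-convergence and vanishing of the competition term in the sense of Lemma \ref{lem:_competition_term_vanishes_in_strong_blowup}) so that Proposition \ref{prp: approx classG_s} applies and the limit genuinely sits in $\classG_s$. This is precisely what validates the Pohozaev identity in the limit and unlocks the Almgren-based Liouville theorem; the remaining regimes are technically lighter, since $\classG_c$ and the reflected-harmonic case do not require passing to the limit in an integral identity.
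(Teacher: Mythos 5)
Your overall strategy — contradiction, blow-up, three regimes, apply the appropriate Liouville theorem — is indeed the paper's strategy, but two of its central technical points are lost in your setup, and without them the argument does not close.

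First, the normalization $\bw_n(X)=\bigl(\bv_n(X_n^{\star}+r_nX)-\bv_n(X_n^{\star})\bigr)/(L_nr_n^{\alpha})$ is not compatible with the blow-up of $\beta_n$. Writing $c_{i,n}:=v_{i,n}(X_n^{\star})/(L_nr_n^{\alpha})$, the boundary equation for $w_{i,n}$ becomes
\[
\partial_\nu w_{i,n}=\tilde f_{i,n}(w_{i,n}+c_{i,n})-M_n\,(w_{i,n}+c_{i,n})\tsum_{j\neq i}(w_{j,n}+c_{j,n})^2,
\]
with $M_n=\beta_n L_n^2 r_n^{1+2\alpha}$. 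The cross terms you relegate to ``lower order'' carry the factor $M_n$, which may diverge, and are multiplied by $c_{j,n}$, which need \emph{not} be bounded: although $v_{j,n}(X_n^{\star})$ is bounded, the denominator $L_nr_n^{\alpha}$ can tend to zero. Consequently the shifted sequence $\bw_n$ does not solve a system of the form $\problem{\beta}$, and your appeal to Proposition~\ref{prp: approx classG_s} in regime (ii) is not justified. The paper avoids the subtraction altogether: it keeps the pure rescaling $\bw_n$, which does satisfy a clean $\problem{\beta}$-type system, and proves separately (Lemmas~\ref{lem: bound in Mn not infinitesimal local}--\ref{lem: w(0) bdd}) that $\{\bw_n(0)\}$ is bounded. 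That boundedness proof is where the extra Liouville theorems for $\partial_\nu v=-\lambda v$ (Proposition~\ref{prp: global eigenfunction}) and $\partial_\nu v=\lambda$ (Proposition~\ref{prp: prescribed constant normal derivative}), together with the decay Lemma~\ref{eqn: decay with perturbations}, come in; none of these appears in your proposal, and they are not optional.

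Second, you blow up directly from the seminorm $L_n=[\bv_n]_{\C^{0,\alpha}(\overline{B^+_{1/2}})}$ with no cutoff. Then the contact points $X_n,Y_n$ may accumulate at $\partial^+B^+_{1/2}$ at a rate comparable to $r_n$, in which case the rescaled domains $\Omega_n$ converge to a proper half-space rather than to $\R^{N+1}_+$ or $\R^{N+1}$, and the global Liouville theorems you invoke no longer apply. The paper weights the seminorm with a cutoff $\eta$ that vanishes on $\partial^+B^+$ (equation~\eqref{eqn: eta_blowup}); Lemma~\ref{lem: acc non a part+} then guarantees $\dist(X_n',\partial^+B^+)/r_n\to\infty$, so the limit domain is genuinely the full half-space. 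You need this (or an equivalent device) to rule out partial-domain limits, and it also forces one to work with the auxiliary sequence $\bar\bw_n=(\eta\bv_n)(P_n+r_nX)/(L_nr_n^{\alpha})$, which carries the Hölder bound, in tandem with $\bw_n$, which carries the equation — the shadowing Lemma~\ref{lem:_uniform_convergence_w_and_bar_w} links the two.

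In short: the blow-up–Liouville skeleton is right, regime (iii) is essentially fine, but regimes (i) and especially (ii) require the unshifted blow-up, the cutoff-weighted seminorm, and the boundedness-of-$\bw_n(0)$ lemmas with the extra Liouville theorems. Without these the proof has a gap precisely at the place you flag as ``the main obstacle.''
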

\begin{remark}
Even though we prove it in $\overline{B^+_{1/2}}$, Theorem \ref{thm:_local_holder} holds also
when replacing $\overline{B^+_{1/2}}$ with $K\cap B^+_1$, for every compact set $K\subset B_1$.
\end{remark}
For easier notation, we write $B^+=B^+_{1}$. Inspired by \cite{nttv, wang}, we proceed by
contradiction and develop a blow up analysis. First, let $\eta$ denote a smooth function such that
\begin{equation}\label{eqn: eta_blowup}
 \begin{cases}
 \eta(X) = 1   &  0\leq |X|\leq \frac12\\
 0<\eta(X) \leq 1   &  \frac12\leq |X|\leq 1\\
 \eta(X) = 0   &  |X|=1\\
 \end{cases}
\end{equation}
(in particular, $\eta$ vanishes on $\partial^+B^+$ but is strictly positive $\partial^0B^+$).
We will prove that
\[
\| \eta\bv\|_{\C^{0,\alpha}\left(\overline{B^+}\right)} \leq C,
\]
and the theorem will follow by the regularity of $\eta$.

Let us assume by contradiction the existence of sequences
$\{\beta_n\}_{n \in \N}$, $\{\mathbf{v}_n\}_{n\in \N}$,
solutions to $\problem{\beta_n}$, such that
\[
    L_n := \max_{i = 1, \dots, k} \max_{X'\neq X'' \in \overline{B^+}} \frac{|(\eta v_{i,n})(X')-(\eta v_{i,n})(X'')|}{|X'-X''|^{\alpha}} \rightarrow \infty,
\]
for some $\alpha \in (0,\nuACF)$, which from now on we will consider as fixed. By Remark
\ref{rem: troianiello} we readily infer that
$\beta_n \rightarrow \infty$. Moreover, up to a relabelling, we may assume that $L_n$ is achieved
by $i = 1$ and a sequence of points $(X'_n, X''_n) \in \overline{B^+} \times
\overline{B^+}$. We start showing some first properties of such sequences.
\begin{lemma}\label{lem: acc non a part+}
Let $X'_n \neq X''_n$ and $r_n := |X'_n-X''_n|$ satisfy
\begin{equation*}
    L_n = \frac{|(\eta v_{1,n})(X'_n)-(\eta v_{i,n})(X''_n)|}{r_n^{\alpha}}.
\end{equation*}
Then, as $n\to\infty$,
\begin{enumerate}
 \item $r_n \rightarrow 0$;
 \item $\dfrac{\dist(X'_n,\partial^+ B^+)}{r_n}\to \infty$,
 $\dfrac{\dist(X''_n,\partial^+ B^+)}{r_n}\to \infty$.
\end{enumerate}
\end{lemma}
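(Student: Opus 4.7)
Both assertions follow by simple contradiction arguments that exploit, in addition to the uniform $L^\infty$ bound $\|\bv_n\|_\infty \leq \bar m$, the fact that the cutoff $\eta$ vanishes on $\partial^+B^+$.

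For (1), observe that $\eta v_{1,n}$ is uniformly bounded by $\bar m$ on $\overline{B^+}$, so the numerator in the definition of $L_n$ is bounded by $2\bar m$. If along a subsequence we had $r_n \geq \delta > 0$, this would force $L_n \leq 2\bar m/\delta^\alpha$, contradicting $L_n\to\infty$. Hence $r_n\to 0$.

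For (2), I would argue by contradiction. Set $d'_n:=\dist(X'_n,\partial^+B^+)$ and $d''_n:=\dist(X''_n,\partial^+B^+)$, and suppose that, along a subsequence, $d'_n/r_n \leq M$ for some finite $M$. The triangle inequality then gives $d''_n \leq d'_n+r_n \leq (M+1)r_n$. The key ingredient is that, since $\eta\in\C^\infty(\overline{B^+})$ vanishes identically on $\partial^+B^+$,
\[
\eta(X) \leq \|\nabla\eta\|_{L^\infty}\,\dist(X,\partial^+B^+)\qquad\text{for every }X\in\overline{B^+}.
\]
Writing the elementary decomposition
\[
(\eta v_{1,n})(X'_n)-(\eta v_{1,n})(X''_n) = \bigl(\eta(X'_n)-\eta(X''_n)\bigr)v_{1,n}(X'_n) + \eta(X''_n)\bigl(v_{1,n}(X'_n)-v_{1,n}(X''_n)\bigr),
\]
and using $\|v_{1,n}\|_\infty \leq \bar m$ together with the Lipschitz character of $\eta$, one obtains
\[
|(\eta v_{1,n})(X'_n)-(\eta v_{1,n})(X''_n)| \leq \|\nabla\eta\|_{L^\infty}\bar m\,r_n + 2\bar m\,\|\nabla\eta\|_{L^\infty}(M+1)\,r_n = C\,r_n.
\]
Dividing by $r_n^\alpha$ and recalling part (1) together with $\alpha<\nuACF\leq 1/2 <1$, we conclude $L_n\leq C r_n^{1-\alpha}\to 0$, a contradiction. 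The same estimate applied with the roles of $X'_n$ and $X''_n$ exchanged (or alternatively the inequality $|d'_n-d''_n|\leq r_n$) yields $d''_n/r_n\to\infty$ as well.

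The argument is essentially routine; there is no genuine obstacle, the only point to watch is the linear vanishing of $\eta$ at $\partial^+B^+$, which is precisely what produces the extra factor of $r_n$ needed to beat $r_n^\alpha$ in the denominator. This lemma is the preliminary step of the blow-up procedure: it ensures that the rescaled sequences are built around interior-like points, so that in the subsequent analysis one can pass to blow-up profiles defined on (a half-space limit of) the whole $\R^{N+1}_+$ without having to worry about the outer spherical boundary $\partial^+B^+$.
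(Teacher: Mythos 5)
Your proof is correct and rests on the same two ingredients as the paper's: the uniform $L^\infty$ bound (for part (1)) and the linear vanishing of $\eta$ at $\partial^+B^+$, quantified by its Lipschitz constant $\ell$ (for part (2)). The paper reaches the conclusion a bit more directly, without the product-rule decomposition and without a contradiction setup: it bounds the numerator crudely by the triangle inequality, $|(\eta v_{1,n})(X'_n)-(\eta v_{1,n})(X''_n)|\le \bar m\,\bigl(\eta(X'_n)+\eta(X''_n)\bigr)$, so that $L_n r_n^{\alpha}\le \bar m\,\ell\,\bigl(\dist(X'_n,\partial^+B^+)+\dist(X''_n,\partial^+B^+)\bigr)$; this single inequality yields both $r_n\to0$ (using $\eta\le1$) and, after dividing by $r_n$, that the \emph{sum} of the two distance ratios tends to infinity, whence each does separately because $|\dist(X'_n,\partial^+B^+)-\dist(X''_n,\partial^+B^+)|\le r_n$ — the same fact underlying your ``exchange the roles'' remark. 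So your decomposition of $\eta v_{1,n}$ is not really needed: once you observe $\eta(X'_n),\eta(X''_n)=O(r_n)$ under the contradiction hypothesis, the crude product bound already gives $L_n r_n^\alpha=O(r_n)$. Nevertheless, your argument is complete and correct as written.
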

\begin{proof}
By the uniform control on $\|\bv_n\|_{L^\infty}$ we have
\[
L_n\leq \frac{\bar m}{r_n^\alpha}\left(\eta(X'_n) + \eta(X''_n)\right),
\]
which immediately implies $r_n \to 0$. Since $\eta$ vanishes on $\partial^+B^+$, we have that, for every $X\in\overline{B^+}$, it holds
\[
\eta(X) \leq \ell \dist(X,\partial^+ B^+),
\]
where $\ell$ denotes the Lipschitz constant of $\eta$.
As a consequence, the first inequality becomes
\[
\dfrac{\dist(X'_n,\partial^+ B^+)}{r_n} + \dfrac{\dist(X''_n,\partial^+ B^+)}{r_n}
\geq \frac{L_n r_n^{\alpha-1}}{\bar m\ell}\to\infty
\]
(recall that $\alpha<1$), and the lemma follows by recalling that $\dist(X'_n,X''_n)=r_n$.
\end{proof}
Our analysis is based on two different blow up sequences, one having uniformly bounded
H\"older quotient, the other satisfying a suitable problem. Let $\{P_n\}_{n\in\N}\subset
\overline{B^+}$, $|P_n|<1$, be a sequence of points, to be chosen later. We write
\[
    \tau_n B^+ := \frac{B^+ - P_n}{r_n},
\]
remarking that $\tau_n B^+$ is a hemisphere, not necessarily centered on the hyperplane
 $\{y=0\}$. We introduce the sequences
\[
    w_{i,n}(X) := \eta(P_n) \frac{v_{i,n}(P_n + r_n X)}{L_n r_n^{\alpha}} \quad \text{and} \quad \bar{w}_{i,n}(X) := \frac{(\eta v_{i,n})(P_n + r_n X)}{L_n r_n^{\alpha}}, 
\]
where $X \in \tau_n B^{+}$. With this choice, on one hand it follows immediately that, for every $i$
\begin{align*}
    \max_{X'\neq X'' \in \overline{\tau_n B^{+}} } \frac{|\bar{w}_{i,n}(X')-\bar{w}_{i,n}(X'')|}{
    |X'-X''|^{\alpha}} \leq & \left|\bar{w}_{1,n}\left(\frac{X'_n-P_n}{r_n}\right)-
    \bar{w}_{1,n}\left(\frac{X''_n-P_n}{r_n}\right)\right| = 1,
\end{align*}
in such a way that the functions $\{\bar{\bw}_n\}_{n \in \N}$ share an uniform bound on
H\"older seminorm, and at least their first components are not constant.
On the other hand, since $\eta(P_n)>0$,
each $\bw_{n}$ solves
\begin{equation}\label{eqn: w_sol}
    \begin{cases}
    - \Delta w_{i,n} = 0 & \text{ in }\tau_nB^{+}\\
    \partial_{\nu} w_{i,n} = f_{i,n}(w_{i,n}) - M_n w_{i,n} \tsum_{j \neq i}
    w_{j,n}^2 & \text{ on } \tau_n \partial^0 B^+,
    \end{cases}
\end{equation}
with $f_{i,n}(s) = \eta(P_n) r_n^{1-\alpha} L_n^{-1} f_{i,\beta_n}(L_n
r_n^{\alpha} s/\eta(P_n))$ and $M_n = \beta_n L_n^2 r_n^{2\alpha + 1}/\eta(P_n)^2$.
\begin{remark}\label{rem:f_i to 0}
The uniform bound of $\| \bv_{\beta} \|_{L^{\infty}}$
imply that
\[
\sup_{\tau_n \partial^0 B^+}|f_{i,n}(w_{i,n})| = \eta(P_n) r_n^{1-\alpha} L_n^{-1}
\sup_{\partial^0 B^+}|f_{i,\beta_n}
\left(v_{i,n}\right)|\leq C(\bar m)r_n^{1-\alpha} L_n^{-1}\to 0
\]
as $n\to\infty$.
\end{remark}
A crucial property is that the two blow up sequences defined above have
asymptotically equivalent behavior, as enlighten in the following lemma.
\begin{lemma}\label{lem:_uniform_convergence_w_and_bar_w}
Let $K\subset\R^{N+1}$ be compact. Then
\begin{enumerate}
  \item  $\displaystyle\max_{X \in K\cap\overline{\tau_n B^+}} | \bw_{n}(X)- \bar{\bw}_{n}(X)| \to 0$;
   \item there exists $C$, only depending on $K$, such that $|\bw_{n} (X)- \bw_{n}(0)| \leq C$, for every $x\in K$.
\end{enumerate}
\end{lemma}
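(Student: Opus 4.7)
The plan is to prove both assertions by exploiting the Lipschitz regularity of $\eta$ together with the uniform $L^\infty$ bound $\|v_{i,n}\|_{L^\infty} \leq \bar m$, and then to piggy-back (2) on (1). First I would write the pointwise difference
\[
\bar w_{i,n}(X) - w_{i,n}(X) = \frac{v_{i,n}(P_n + r_n X)}{L_n r_n^\alpha}\bigl[\eta(P_n + r_n X) - \eta(P_n)\bigr],
\]
which is the heart of the matter. Using $|v_{i,n}| \leq \bar m$ and the Lipschitz bound $|\eta(P_n + r_n X) - \eta(P_n)| \leq \ell\, r_n |X|$ (with $\ell$ the Lipschitz constant of $\eta$), this quantity is controlled by $\bar m\,\ell\, r_n^{1-\alpha} L_n^{-1} |X|$. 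On a fixed compact $K$, $|X|$ is bounded, while $r_n \to 0$, $L_n \to \infty$ and $\alpha < 1$; hence the bound tends to zero uniformly in $X$, yielding assertion (1).

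For assertion (2), I would combine (1) with the uniform H\"older seminorm bound on $\bar{\bw}_n$ that comes directly from the definition of $L_n$, namely $[\bar w_{i,n}]_{\C^{0,\alpha}(\overline{\tau_n B^+})} \leq 1$, which gives $|\bar w_{i,n}(X) - \bar w_{i,n}(0)| \leq |X|^\alpha$. Splitting
\[
|w_{i,n}(X) - w_{i,n}(0)| \leq |w_{i,n}(X) - \bar w_{i,n}(X)| + |\bar w_{i,n}(X) - \bar w_{i,n}(0)| + |\bar w_{i,n}(0) - w_{i,n}(0)|,
\]
the outer terms are controlled (in fact vanish) by (1), while the middle one is at most $\sup_{X\in K}|X|^\alpha$. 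This produces the desired $K$-dependent constant $C$.

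There is no serious obstacle here; this is a preparatory lemma whose role is to transfer the H\"older control from $\bar{\bw}_n$ (which is seminorm-bounded by construction) to $\bw_n$ (which solves the rescaled problem \eqref{eqn: w_sol}). The only bookkeeping point worth mentioning is that the sets $K\cap\overline{\tau_n B^+}$ must eventually contain any prescribed compact: this is ensured by choosing $P_n$ close enough to $X'_n$ (or $X''_n$), since Lemma~\ref{lem: acc non a part+} guarantees $\dist(P_n,\partial^+ B^+)/r_n \to \infty$, so the rescaled half-balls $\tau_n B^+$ exhaust $\R^{N+1}_+$. No compactness, elliptic, or monotonicity tool is required for either part.
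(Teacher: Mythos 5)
Your proof follows the same route as the paper: (1) comes from $|w_{i,n}(X)-\bar w_{i,n}(X)|\le \bar m\,\ell\, r_n^{1-\alpha}L_n^{-1}|X|\to 0$, and (2) from combining this with the unit H\"older seminorm of $\bar\bw_n$ via the triangle inequality. The only cosmetic difference is that the paper uses a two-term split, observing directly that $\bw_n(0)=\bar\bw_n(0)$ by construction, so your third term is identically zero rather than merely controlled by (1).
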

\begin{proof}
Again, this is a consequence of the Lipschitz continuity of $\eta$ and of the uniform boundedness of $\{\bv_{\beta}\}_{\beta}$. Indeed we have, for every $i = 1, \dots, k$,
\[
    | w_{i,n}(X)- \bar{w}_{i,n}(X)| \leq \bar m r_n^{- \alpha} L_n^{-1} | \eta(X_n + r_n X)- \eta(X_n)| \leq \ell\bar m r_n^{1- \alpha} L_n^{-1} |X|
\]
and the right hand side vanishes in $n$, implying the first part. Moreover, by definition,
$\bw_{n} (0)= \bar \bw_{n}(0)$, and
$|\bar{\bw}_{n} (X)- \bar{\bw}_{n}(0)| \leq C |X|^{\alpha}$ for every $X\in \tau_n B^+$. But then we can conclude noticing that
\[
|\bw_{n} (X)- \bw_{n}(0)|\leq |\bw_{n} (X)- \bar \bw_{n}(X)| + |\bar \bw_{n} (X)- \bar \bw_{n}(0)|
\]
and applying the first part.
\end{proof}
\begin{lemma}\label{lem:_uniform_convergence_of W}
Let, up to subsequences, $\Omega_{\infty} := \lim\tau_n B^+$ and let
\[
    \bW_n(X) : = \bw_{n}(X)-\bw_{n}(0) \quad \text{ and } \quad \bar\bW_n(X) : = \bar\bw_{n}(X)-\bar\bw_{n}(0).
\]
Then there exists a function $\bW \in \C^{0,\alpha}(\Omega_{\infty})$ which is harmonic and such that $\bW_n \to \bW$ and $\bar\bW_n \to \bW$ uniformly in every compact set $K\subset \Omega_{\infty}$. Moreover, if we choose $\{P_n\}_{n \in \N}$ such that $|X'_n-P_n| < Cr_n$ for some constant $C$ and for every $n$, then $\bW$ is non constant.
\end{lemma}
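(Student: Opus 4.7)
My plan is to extract $\bW$ from $\bar\bW_n$ via Ascoli--Arzel\`{a}, transfer the convergence to $\bW_n$ using Lemma \ref{lem:_uniform_convergence_w_and_bar_w}, pass the harmonic equation to the limit, and finally exploit the very choice of $L_n$ to secure non-constancy.

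First, by construction $[\bar w_{i,n}]_{\C^{0,\alpha}} \leq 1$ and $\bar\bW_n(0) = 0$, so $|\bar\bW_n(X)| \leq |X|^{\alpha}$ throughout $\overline{\tau_n B^+}$. By the very definition of $\Omega_\infty$ as a (Kuratowski) limit of sets, any compact $K \subset \Omega_\infty$ is contained in $\tau_n B^+$ for $n$ large enough. A diagonal application of Ascoli--Arzel\`{a} over an exhausting sequence of compact subsets of $\Omega_\infty$ then provides a subsequence along which $\bar\bW_n \to \bW$ uniformly on compacts, for some $\bW \in \C^{0,\alpha}(\Omega_\infty)$ inheriting the same Hölder control and extending continuously to $\overline{\Omega_\infty}$. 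Since $\bw_n(0) = \bar\bw_n(0)$ directly from the definitions, Lemma \ref{lem:_uniform_convergence_w_and_bar_w}(1) yields $\bW_n - \bar\bW_n = \bw_n - \bar\bw_n \to 0$ uniformly on compacts, so that $\bW_n \to \bW$ as well.

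For harmonicity, I would recall that each $W_{i,n}$ is harmonic in $\tau_n B^+$ as a consequence of \eqref{eqn: w_sol}. Testing against any $\phi \in \C^\infty_0(\Omega_\infty)$ (whose support is contained in $\tau_n B^+$ for $n$ large) gives $\int W_{i,n}\,\Delta \phi \, \de x \de y = 0$, and uniform convergence on $\supp\phi$ lets me pass to the limit to obtain $\int W_i \, \Delta\phi \, \de x \de y = 0$ for every such $\phi$; Weyl's lemma then delivers $-\Delta \bW = 0$ in $\Omega_\infty$.

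For non-constancy, suppose $|X'_n - P_n| \leq C r_n$ and set $Y'_n := (X'_n - P_n)/r_n$, $Y''_n := (X''_n - P_n)/r_n$. Then $|Y'_n - Y''_n| = 1$ and $|Y'_n|, |Y''_n| \leq C+1$; moreover, by Lemma \ref{lem: acc non a part+}(2) these points stay at infinite distance from the curved boundary $\partial^+(\tau_n B^+)$. The defining property of $L_n$ yields
\[
\left|\bar W_{1,n}(Y'_n) - \bar W_{1,n}(Y''_n)\right|
= \frac{\left|(\eta v_{1,n})(X'_n) - (\eta v_{1,n})(X''_n)\right|}{L_n r_n^{\alpha}} = 1,
\]
and a further subsequence gives $Y'_n \to Y'_\infty$, $Y''_n \to Y''_\infty$ in $\overline{\Omega_\infty}$ with $|Y'_\infty - Y''_\infty| = 1$; uniform convergence of $\bar\bW_n$ on the relevant bounded set then forces $|W_1(Y'_\infty) - W_1(Y''_\infty)| = 1$, so $\bW$ cannot be constant. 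The most delicate technical points are checking that the compact subsets of $\Omega_\infty$ exploited throughout are eventually contained in $\tau_n B^+$ and that the Hölder continuity of $\bW$ persists up to the flat portion of the boundary used at the very end; both are handled by combining the Kuratowski description of $\Omega_\infty$ with the quantitative boundary-distance estimate of Lemma \ref{lem: acc non a part+}.
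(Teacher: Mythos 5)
Your proof is correct and follows essentially the same route as the paper's: Ascoli--Arzel\`{a} over an exhaustion of $\Omega_\infty$, transfer of convergence from $\bar\bW_n$ to $\bW_n$ via Lemma \ref{lem:_uniform_convergence_w_and_bar_w}, harmonicity from the uniform limit (the paper simply invokes that the uniform limit of harmonic functions is harmonic, while you spell it out through Weyl's lemma), and non-constancy by tracking the rescaled extremal points $(X'_n-P_n)/r_n$, $(X''_n-P_n)/r_n$, whose distance is normalized to $1$ and whose oscillation for $\bar W_{1,n}$ is exactly $L_n r_n^\alpha/(L_n r_n^\alpha)=1$.
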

\begin{proof}
Let $K \subset \Omega_{\infty}$ be any fixed compact set. Then, by definition, $K$ is contained in
the half sphere $\tau_n B^+$, for every $n$ sufficiently large.
By definition, $\{\bar\bW_n\}_{n\in\N}$ is a sequence of functions
which share the
same $\C^{0,\alpha}$-seminorm and are uniformly bounded in $K$, since $\bar\bW_n(0) = 0$. By the
Ascoli-Arzel\`a theorem, there exists a function $\bW \in C(K)$ which, up to a subsequence, is the
uniform limit of $\{\bar\bW_n\}_{n\in\N}$: taking a countable compact exhaustion of
$\Omega_{\infty}$ we
find that $\bar\bW_n \to \bW$ uniformly in every compact set. By Lemma
\ref{lem:_uniform_convergence_w_and_bar_w}, we also find that $\bW_n \to \bW$ and, since the
uniform limit of harmonic function is harmonic, we conclude that $\bW$ is harmonic. Let $X, Y \in
\Omega_{\infty}$ be any pair of points. By definition, there exists $n_0 \in \N$ such that $X, Y
\in \tau_n B^+$ for every $n \geq n_0$, and so
\[
    |\bar \bW_{n}(X) - \bar\bW_{n}(Y)| \leq \sqrt{k} |X-Y|^\alpha \quad \text{ for every } n \geq n_0.
\]
Passing to the limit in $n$ the previous expression, we obtain $\bW \in \C^{0,\alpha}(\Omega_{\infty})$. Let now $C>0$ be fixed, and let us choose $\{P_n\}_{n \in \N}$ be such that $|X'_n-P_n| < Cr_n$. It follows that, up to a subsequence,
\[
    \frac{X'_n-P_n}{r_n} \to X' \quad \text{and} \quad \frac{X''_n-P_n}{r_n} \to X'',
\]
where $X', X'' \in \overline{B_{C+1} \cap \Omega_{\infty}}$. Therefore, by equicontinuity and uniform convergence,
\[
    \left|\bar W_{1,n}\left(\frac{X'_n-P_n}{r_n}\right) - \bar W_{1,n}\left(\frac{X''_n-P_n}{r_n}\right)\right| = 1 \implies |W_{1}(X') - W_{1}(X'')| = 1
\]
and the lemma follows.
\end{proof}
In Lemma \ref{lem: acc non a part+} we have shown that $X'_n$, $X''_n$ can not accumulate
too fast towards $\partial^+B^+$. Now we can prove that they converge to $\partial^0B^+$.
\begin{lemma}\label{lem: shift up_local}
There exists $C>0$ such that, for every $n$ sufficiently large,
\[
\frac{\dist(X'_n, \partial^0 B^+) + \dist(X''_n, \partial^0 B^+)}{r_n} \leq C.
\]
\end{lemma}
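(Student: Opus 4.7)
The plan is to argue by contradiction using a blow-up limit that is both globally harmonic on $\R^{N+1}$ and non-constant, contradicting the classical Liouville theorem.

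Suppose, up to extraction, that the sum in the statement diverges. Since the sum is non-negative, at least one of the terms must be unbounded, and by possibly swapping the labels $X'_n$ and $X''_n$ (and passing to a further subsequence) we may assume $\dist(X'_n, \partial^0 B^+)/r_n \to \infty$. We then apply the blow-up procedure with the specific choice $P_n := X'_n$. Since $|X'_n - P_n| = 0 < r_n$, Lemma \ref{lem:_uniform_convergence_of W} immediately yields that the limit profile $\bW$ is non-constant; in particular its first component $W_1$ satisfies $|W_1(X') - W_1(X'')| = 1$ for some pair of points in $\Omega_\infty$.

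On the other hand, combining the contradiction hypothesis with Lemma \ref{lem: acc non a part+}, we have both
\[
\frac{\dist(P_n,\partial^+ B^+)}{r_n} \to \infty \qquad \text{and} \qquad \frac{\dist(P_n,\partial^0 B^+)}{r_n} \to \infty,
\]
i.e., $\dist(P_n,\partial B^+)/r_n \to \infty$. This means that the rescaled domain $\tau_n B^+ = (B^+ - P_n)/r_n$ contains every fixed ball around the origin for $n$ large enough, while the flat portion $\tau_n \partial^0 B^+$ simultaneously recedes to $\{y = -\infty\}$. Hence $\Omega_\infty = \R^{N+1}$, and the Neumann-type boundary condition in \eqref{eqn: w_sol} is not seen by the limit.

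Since each $w_{i,n}$ is harmonic in $\tau_n B^+$ and $\bw_n \to \bW$ uniformly on compact subsets of $\R^{N+1}$, the limit $\bW$ is harmonic on all of $\R^{N+1}$. By Lemma \ref{lem:_uniform_convergence_of W}, $\bW \in \C^{0,\alpha}(\R^{N+1})$ with $\bW(0) = 0$ and $\alpha < \nuACF \leq 1/2 < 1$, so $|\bW(X)| \leq C|X|^\alpha$ grows strictly sublinearly. The classical Liouville theorem then forces $\bW$ to be constant, contradicting the non-constancy established above. The main delicate point in this plan is verifying that both geometric facts (the rescaled spherical boundary escaping to infinity, which is supplied by the previous lemma, and the rescaled flat boundary escaping, supplied by the current contradiction hypothesis) combine to give a \emph{full-space} blow-up domain, so that no residual trace condition on $\{y = 0\}$ survives — otherwise one would only obtain a half-space Liouville statement and could not rule out nontrivial profiles like $\bW(x,y) = (0,\dots,0,y,0,\dots,0)$.
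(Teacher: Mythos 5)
Your proof is correct and essentially identical to the paper's: by contradiction, with $P_n = X'_n$, the rescaled domain $\tau_n B^+$ fills out $\R^{N+1}$ and the nonconstant, sublinearly-growing (since $\alpha < \nuACF \leq 1/2 < 1$) harmonic limit $\bW$ violates the classical Liouville theorem. One small remark: the relabeling step you introduce is superfluous, since $|X'_n - X''_n| = r_n$ and the Lipschitz continuity of the distance force $|\dist(X'_n, \partial^0 B^+) - \dist(X''_n, \partial^0 B^+)| \leq r_n$, so divergence of the sum in the scale of $r_n$ automatically implies divergence of each term separately.
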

\begin{proof}
We argue by contradiction. Taking into account the second part of Lemma \ref{lem: acc non a part+},
this forces
\[
    \frac{\dist(X'_n, \partial B^+) + \dist(X''_n, \partial B^+)}{r_n} \to \infty.
\]
Choosing $P_n=X'_n$ in the definition of $\bw_n$, $\bar\bw_n$, we can apply Lemma
\ref{lem:_uniform_convergence_of W}. First of all, we notice that $\tau_n B^+ \to
\Omega_\infty=\R^{N+1}$. But then $\bW$ as in the aforementioned lemma is harmonic,
globally H\"older continuous on $\R^{N+1}$ and non constant, in contradiction with
Liouville theorem.
\end{proof}
We are in a position to choose $P_n$ in the definition of $\bw_n$, $\bar\bw_n$:
from now one let us define
\[
P_n:=(x'_n,0),
\]
where as usual $X'_n=(x'_n,y'_n)$. With this choice, it is immediate to see that
$\tau_nB^+\to\Omega_\infty = \R^{N+1}_+$, and that all the above results, and in particular
Lemma \ref{lem:_uniform_convergence_of W}, apply. This last fact follows from Lemma
\ref{lem: shift up_local}, since
\[
Cr_n \geq \dist(X'_n, \partial^0 B^+) = |X'_n-P_n|.
\]
Our next aim is to prove that $\left\{\bw_n\right\}_{n\in\N}$, $\left\{\bar\bw_n\right\}_{n\in\N}$ are uniformly bounded. This will be done
by contradiction, in a series of lemmas.
\begin{lemma}\label{lem: bound in Mn not infinitesimal local}
    Under the previous blow up configuration, if $\bar{w}_{i,n}(0) \to \infty$ for some $i$, then
\[
    M_n w_{i,n}^2(0) = M_n \bar{w}_{i,n}^2(0) \leq C
\]
for a constant $C$ independent of $n$. In particular, $M_n\to0$.
\end{lemma}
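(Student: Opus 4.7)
The plan is to argue by contradiction, assuming that, up to subsequences, $\sigma_n := M_n w_{i,n}^2(0) \to \infty$. Since $\bar w_{i,n}(0) \to \infty$ by hypothesis and, by Lemma \ref{lem:_uniform_convergence_of W}, the shifted sequence $\bar{\bW}_n$ converges uniformly on compact sets, the normalization $\tilde w_{i,n} := w_{i,n}/w_{i,n}(0)$ tends to the constant $1$ uniformly on every compact subset of $\overline{\R^{N+1}_+}$. In particular, on any fixed half-ball $B^+_R$, one has $w_{i,n}(X) \geq w_{i,n}(0)/2$ for $n$ large, providing a strong quadratic lower bound to be used as an absorption coefficient in the equations for the competitors.

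For each $j \neq i$, thanks to Remark \ref{rem:chang_sign_ACF_segr} one may replace $w_{j,n}$ by $|w_{j,n}|$, and Remark \ref{rem:f_i to 0} reduces the boundary source term to some $\eps_n \to 0$. The function $|w_{j,n}|$ then satisfies on $B^+_R$
\[
-\Delta |w_{j,n}| \leq 0, \qquad \partial_\nu |w_{j,n}| \leq \eps_n - \tfrac14 M_n w_{i,n}^2(0)\, |w_{j,n}|.
\]
A rescaled application of Lemma \ref{eqn: decay with perturbations} (with effective absorption coefficient $\tfrac14 R M_n w_{i,n}^2(0)$), combined with the uniform H\"older seminorm bound on $\bar w_{j,n}$ and Lemma \ref{lem:_uniform_convergence_w_and_bar_w}, yields
\[
|w_{j,n}(0)| \leq \frac{C(1+R\eps_n)}{R\, M_n w_{i,n}^2(0)}\bigl(|w_{j,n}(0)| + CR^\alpha\bigr).
\]
Since $\sigma_n \to \infty$ by the contradiction assumption, for any fixed $R$ this inequality forces $|w_{j,n}(0)| \to 0$ for every $j \neq i$.

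To close the argument, I would test the equation for $w_{i,n}$ against a non-negative cutoff $\eta$ supported in $B^+_R$, obtaining
\[
M_n \int\limits_{\partial^0 B^+_R}\!\! w_{i,n}\,\tsum_{j\neq i}\! w_{j,n}^2\, \eta\, \de x = \int\limits_{B^+_R}\! \nabla w_{i,n} \cdot \nabla \eta\, \de x \de y - \int\limits_{\partial^0 B^+_R}\!\! f_{i,n}(w_{i,n})\, \eta\, \de x.
\]
The right-hand side is uniformly bounded (by the strong $H^1_{\loc}$-convergence of $\bW_n$ to $\bW$ and by Remark \ref{rem:f_i to 0}), while the left-hand side is comparable to $M_n w_{i,n}(0) \int \sum_j W_j^2\, \eta$ by the uniform convergence of $w_{j,n}$ on compacts. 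Crucially, the limit profile $\bW$ is non-constant, since the H\"older quotient of $\bar w_{1,n}$ equals $1$ at a pair of points accumulating in a bounded region (Lemma \ref{lem: shift up_local} and Lemma \ref{lem:_uniform_convergence_of W}); therefore $\eta$ can be chosen so that $\int \sum_j W_j^2\, \eta > 0$. This yields $M_n w_{i,n}(0) \leq C$, and a parallel test against $w_{i,n}\eta$—controlling the extra factor of $w_{i,n}(0)$ through the uniform $L^\infty$-bound on $\bv_n$ and the smallness of $w_{i,n}(0)|f_{i,n}|$ (which holds since $\alpha < 1/2$)—upgrades the estimate to $M_n w_{i,n}^2(0) \leq C$, contradicting $\sigma_n \to \infty$. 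The main obstacle I expect is precisely this last promotion step together with excluding the degenerate possibility that the competitor profiles collapse simultaneously on the support of $\eta$; both issues rely crucially on the non-triviality of the blow-up limit $\bW$ secured by the H\"older-seminorm normalization.
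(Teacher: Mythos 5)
The opening half of your argument is sound and parallels the paper: assuming $\sigma_n := M_n w_{i,n}^2(0)\to\infty$, the fact that $w_{i,n}/w_{i,n}(0)\to 1$ uniformly on compacts gives a blowing-up absorption coefficient, and Lemma~\ref{eqn: decay with perturbations} then forces $w_{j,n}\to 0$ (uniformly on compact subsets of $\partial^0 B^+$) for every $j\neq i$. The gap is in the closing step.

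You test the equation for $w_{i,n}$ with a cutoff $\eta\geq 0$ and hope to bound
\[
M_n \int\limits_{\partial^0 B^+_R} w_{i,n}\,\tsum_{j\neq i} w_{j,n}^2\,\eta\,\de x
\]
from below by a quantity proportional to $M_n\,w_{i,n}(0)\,\int \tsum_{j\neq i} W_j^2\,\eta$, and you claim that this last integral can be chosen to be strictly positive because the blow-up profile $\bW$ is non-constant. This does not work. The non-constancy of $\bW$ lives in the component $W_1$: either $i=1$, in which case $W_1$ is \emph{not} among the competitors $j\neq i$, or $i\neq 1$, in which case the decay argument applies to $w_{1,n}$ as well and kills $W_1$ on the flat boundary. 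In every case, your own first step shows that the trace of each $W_j$, $j\neq i$, vanishes identically on $\partial^0 B^+$ (the local Lemma~\ref{eqn: decay with perturbations} argument applied at each boundary point gives uniform decay, not merely decay at the single point $0$). Hence $\int_{\partial^0 B^+_R}\tsum_{j\neq i} W_j^2\,\eta = 0$, your inequality reads $0\leq C$, and no contradiction ensues. A secondary circularity: the uniform bound on the right-hand side that you invoke via ``strong $H^1_{\loc}$-convergence of $\bW_n$'' is established in Lemma~\ref{lem: uniform implies strong convergence local}, which depends on Lemma~\ref{lem: w(0) bdd}, which in turn rests on the very lemma you are proving.

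The paper closes the loop differently. Once $w_{j,n}\to 0$ uniformly on compacts of $\partial^0 B^+$ for all $j\neq i$, the unit oscillation of $\bar w_{1,n}$ forces $i=1$, and one then estimates directly (using the definition of $I_n$ and the H\"older seminorm bound) that the full Neumann datum $f_{1,n}-M_n w_{1,n}\tsum_{j\neq 1}w_{j,n}^2\to 0$ uniformly on compacts. Passing to the limit for $W_{1,n}=w_{1,n}-w_{1,n}(0)$, one obtains a non-constant, globally $\C^{0,\alpha}$ harmonic function on $\R^{N+1}_+$ with vanishing Neumann trace, and the contradiction comes from Liouville's theorem after even reflection. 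You should replace your final integral-balance step with this Liouville argument.
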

\begin{proof}
Let $r > 0$ be fixed, and let $B_{2r}^+$ be the half ball of radius $2r$: by Lemma \ref{lem: shift up_local}, for $n$ sufficiently large we have that $B_{2r}^+ \subset \tau_nB^+$. Since the sequence $\{\bar{\bw}_n\}_{n\in \N}$ is made of continuous functions which share the same $\C^{0,\alpha}$-seminorm, we have that $\inf_{B_{2r}^+} |\bar w_{i,n}| \to \infty$. Furthermore, by Lemma \ref{lem:_uniform_convergence_w_and_bar_w} $\inf_{B_{2r}^+} |w_{i,n}| \to \infty$ as well.

Proceeding by contradiction, we assume that
\[
    I_{n} := \inf_{\partial^0 B_{2r}^+ } M_n w_{i,n}^2 \rightarrow \infty.
\]
We first show that for $j \neq i$, both the sequence $\{ w_{j,n} \}_{n \in \N}$ and $\{ \bar{w}_{j,n} \}_{n \in \N}$ are bounded in $B_{2r}^+$. We recall that $|w_{j,n}|$ is a subsolution of problem
\eqref{eqn: w_sol}. More precisely, by Lemma \ref{lem: measure inequality}, we have that
\begin{equation}\label{eqn: inequality system of j neq 1 local}
\int\limits_{B^+_{2r}} \nabla |w_{j,n}| \cdot \nabla \varphi \, \de{x} \de{y}
-  \int \limits_{\partial^0 B_{2r}^+}\left(\|f_{j,n}\|_{L^{\infty}(B_{2r})} - I_n |w_{j,n}|
\right)\varphi  \, \de{x}\leq0,
\end{equation}
for every $\varphi \in H^{1}_0(B_{2r})$, $\varphi\geq0$.
Letting $\eta \in \C_0^{\infty}(B_{2r})$, we can choose $\varphi=\eta^2 |w_{j,n}|$ in the
above equation, obtaining
\begin{multline*}
    \int\limits_{B^+_{2r}} \left(|\nabla (\eta |w_{j,n}|)|^2 - |\nabla \eta|^2 |w_{j,n}|^2\right) \, \de{x} \de{y} + I_n \int\limits_{\partial^0 B^+_{2r}} \eta^2 |w_{j,n}|^2 \, \de{x} \\
    \leq \|f_{j,n}\|_{L^{\infty}} \int \limits_{\partial^0 B_{2r}^+} \eta ^2|w_{j,n}| \, \de{x}.
\end{multline*}
As a consequence
\begin{multline}\label{eqn: upperbound Mn not 0 local}
    I_n \int\limits_{\partial^0 B^+_{2r}} \eta^2 |w_{j,n}|^2 \de{x} \leq
\int\limits_{B^+_{2r}} |\nabla \eta|^2 |w_{j,n}|^2 \de{x} \de{y} +
\|f_{j,n}\|_{L^{\infty}} \int \limits_{\partial^0 B_{2r}^+} \eta ^2|w_{j,n}| \, \de{x}\\
 \leq \sup_{B^+_{2r}}
|w_{j,n}|^2 \int\limits_{B^+_{2r}} |\nabla \eta|^2\de{x} \de{y} + \|f_{j,n}\|_{L^{\infty}} \int\limits_{\partial^0 B_{2r}^+} \eta ^2 \frac12 \left( 1+ |w_{j,n}|^2\right) \, \de{x}\\
\leq \sup_{B^+_{2r}} |w_{j,n}|^2 \left( \int\limits_{B^+_{2r}} |\nabla \eta|^2\de{x} \de{y} + C(r) \|f_{j,n}\|_{L^{\infty}} \right)+ C(r) \|f_{j,n}\|_{L^{\infty}},
\end{multline}
where, by Remark \ref{rem:f_i to 0}, $C(r) \|f_{j,n}\|_{L^{\infty}} \to 0$. On the other hand,
using again the uniform H\"older bounds of the sequence $\{\bar{\bw}_n\}_{n \in \N}$ and the uniform control given by Lemma \ref{lem:_uniform_convergence_w_and_bar_w}, we infer
\begin{equation}\label{eqn: lowerbound Mn not 0 local}
\begin{split}    
I_n \int\limits_{\partial^0 B^+_{2r}} \eta^2 |w_{j,n}|^2 \de{x} &\geq I_n
\inf_{\partial^0 B_{2r}^+} |w_{j,n}|^2 \int\limits_{\partial^0 B^+_{2r}}
\eta^2\de{x} \\&\geq  C I_n \left(\sup_{B^+_{2r}} |w_{j,n}|^2 - (2r)^{2\alpha}
\right) \int\limits_{\partial^0 B^+_{2r}} \eta^2\de{x}.
\end{split}
\end{equation}
Combining \eqref{eqn: upperbound Mn not 0 local} with \eqref{eqn: lowerbound Mn not 0 local}
we deduce the uniform boundedness of $\sup_{\partial^+ B^+_{2r}} |w_{j,n}|$ for
$j \neq i$. Equation \eqref{eqn: inequality system of j neq 1 local} fits into (the variational
counterpart of) Lemma \ref{eqn: decay with perturbations}, which implies
\begin{equation}\label{eqn: decay supersol}
    |w_{j,n}| \leq \frac{C}{I_n} \sup_{\partial^+ B_{2r}^+} |w_{j,n}|  \qquad \text{ on } \partial^0 B_r^+
\end{equation}
for a constant $C$ independent of $n$. From the uniform bound it follows that
$w_{j,n} \rightarrow 0$ uniformly in $\partial^0 B^+_{r}$ for every $r>0$, and the same is true for $\bar{w}_{j,n}$, $j\neq i$: in
particular, since $|\bar w_{1,n}(\tau_nX'_n)-\bar w_{1,n}(\tau_nX''_n)| = 1$, we deduce that, necessarily, $i = 1$.

Now, $w_{1,n}$ satisfies
\[
\int\limits_{B^+_{r}} \nabla w_{1,n} \cdot \nabla \varphi \, \de{x} \de{y}
=  \int \limits_{\partial^0 B_{r}^+}\left(f_{1,n} - M_n w_{1,n} \tsum_{j \neq 1} w_{j,n}^2
\right)\varphi  \, \de{x}
\]
for every $\varphi \in H^{1}_0(B_{r})$. From the previous estimates and
the definition of $I_n$ we find
\[
\begin{split}
    \left|f_{1,n} - M_n w_{1,n} \tsum_{j \neq 1} w_{j,n}^2\right|
    &\leq \|f_{1,n}\|_{L^\infty} + M_n (|w_{1,n}|^2 + 1) \tsum_{j \neq 1} w_{j,n}^2\\
    &\leq \|f_{1,n}\|_{L^\infty} +C\frac{I_n + M_n (r^{2\alpha} + 1) }{I_n^2} \to 0
\end{split}
\]
on $\partial^0 B^+_r$, and this holds for every $r$. As a consequence, we can define
$\{\bW_n\}_{n \in \N}$ as in Lemma \ref{lem:_uniform_convergence_of W}, obtaining that
$W_{1,n}$ converges to $W_1$, which is a nonconstant, globally H\"older continuous function
on $\R^{N+1}_+$, which satisfies
\[
    \begin{cases}
    - \Delta W_1 = 0 &\text{ in } \R^{N+1}_+ \\
    \partial_{\nu} W_1 = 0 &\text{ on } \R^{N}.
    \end{cases}
\]
But then the even extension of $W_1$ through $\{y=0\}$ contradicts the Liouville theorem.
\end{proof}

\begin{lemma}\label{lem: bound when wi is unbounded local}
Under the previous blow up setting, if there exists $i$ such that $\bar{w}_{i,n}(0) \to \infty$, then
\[
   M_n |\bar{w}_{i,n}(0)| \tsum_{j \neq i} \bar{w}_{j,n}^2(0) \leq C
\]
for a constant $C$ independent of $n$.
\end{lemma}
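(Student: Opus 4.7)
The plan is a test-function argument on the equation for $w_{i,n}$, exploiting the previous Lemma~\ref{lem: bound in Mn not infinitesimal local} and the uniform $\C^{0,\alpha}$ bound of the blow-up family $\bar{\bw}_n$.

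Set $a_n := w_{i,n}(0) = \bar{w}_{i,n}(0)$, with $|a_n|\to\infty$. Lemma~\ref{lem: bound in Mn not infinitesimal local} applied to the index $i$ gives $M_n a_n^2 \leq C$, hence $M_n \to 0$ and $M_n |a_n| = (M_n a_n^2)^{1/2} M_n^{1/2} \to 0$. Indices $j\neq i$ for which $\{\bar{w}_{j,n}(0)\}$ remains bounded contribute $o(1)$ to the quantity $M_n|a_n|\sum \bar{w}_{j,n}^2(0)$, so they may be discarded. For indices $j$ with $|\bar{w}_{j,n}(0)|\to\infty$, Lemma~\ref{lem: bound in Mn not infinitesimal local} applied to $j$ yields $M_n \bar{w}_{j,n}^2(0) \leq C$, which together with the uniform H\"older bound on $\bar{\bw}_n$ implies both $M_n w_{j,n}^2(X) \leq C$ and $|w_{j,n}(X)|\geq |w_{j,n}(0)|/2$ on $\partial^0 B^+_{2r}$, for every fixed sufficiently small $r>0$ and $n$ large. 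It then suffices to show $X_n := M_n|a_n|\sum_{j\text{ unbounded}} \bar{w}_{j,n}^2(0) \leq C$.

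Next, test the equation for $w_{i,n}$ against a standard nonnegative radial cutoff $\phi \in \C_0^\infty(B_{2r})$, with $\phi\equiv 1$ on $B^+_r$ and $|\nabla\phi|\leq C/r$. Writing $w_{i,n} = a_n + u_n$, where $u_n$ is harmonic with $u_n(0)=0$ and $|u_n|\leq Cr^\alpha$ on $B^+_{2r}$, one obtains
\[
M_n a_n \int_{\partial^0} \phi \tsum_{j\neq i} w_{j,n}^2 \,\de x = \int_{\partial^0} \phi f_{i,n}\,\de x - M_n \int_{\partial^0}\phi u_n \tsum_{j\neq i} w_{j,n}^2 \,\de x - \int_{B^+} \nabla u_n \cdot \nabla \phi \, \de x \de y.
\]
The first two terms on the right-hand side are $o(r^N)$ and $O(r^{N+\alpha})$ respectively, using $\|f_{i,n}\|_\infty \to 0$, $|u_n|\leq Cr^\alpha$, and $M_n w_{j,n}^2 \leq C$. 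For the third, a Caccioppoli inequality applied to $u_n$ (testing $-\Delta u_n = 0$ against $\eta^2 u_n$ for a further cutoff $\eta$), together with a careful expansion of the boundary term $\int \eta^2 u_n \partial_\nu u_n$, yields $\int_{B^+_{2r}} |\nabla u_n|^2 \leq Cr^{N-1+2\alpha} + Cr^{N+\alpha} X_n$; the last summand originates from the cross-term $M_n a_n u_n \sum w_{j,n}^2$, whose modulus is bounded by $Cr^\alpha$ times the quantity $M_n |a_n|\sum w_{j,n}^2$ that we are seeking to control.

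Cauchy--Schwarz with $\int|\nabla\phi|^2 \leq Cr^{N-1}$ then gives $|\int \nabla u_n \cdot \nabla \phi| \leq Cr^{N-1+\alpha} + Cr^{N-1/2+\alpha/2}\sqrt{X_n}$. Bounding the left-hand side from below using $\int \phi \sum w_{j,n}^2 \geq c\,r^N \sum_{\text{unb}} \bar{w}_{j,n}^2(0)$ yields, for fixed $r$, a quadratic inequality of the form $X_n \leq C_r + C_r \sqrt{X_n}$, which forces $X_n$ to be bounded uniformly in $n$. The main obstacle is precisely this self-referential character: the boundary term in the Caccioppoli estimate for $u_n$ already contains $X_n$. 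The resolution is to keep $X_n$ on the right-hand side with its small prefactor $r^{N+\alpha}$ (produced by the H\"older bound $|u_n|\leq Cr^\alpha$) and close the estimate via the quadratic inequality above.
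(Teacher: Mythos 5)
Your proof is correct in its core idea, but it takes a genuinely different route from the paper's. The paper exploits the Almgren-type structure: multiplying the equation by $w_{i,n}$ over $B_s^+$ defines an energy $E_i(s)$ whose competition part is exactly $M_n\int_{\partial^0B_s^+}w_{i,n}^2\sum_{j\neq i}w_{j,n}^2$, and the identity $H_i'(s)=\tfrac{2}{s}E_i(s)$ integrated over $(r/2,r)$ directly compares the growth of $H_i$ (bounded above by $C(r)(|w_{i,n}(0)|+1)$ via the H\"older control on the factorized difference $w_{i,n}^2(rx)-w_{i,n}^2(\tfrac r2 x)$) against the competition energy. Dividing by $|w_{i,n}(0)|$ closes the estimate without any self-referential loop; the only input from Lemma~\ref{lem: bound in Mn not infinitesimal local} is that $M_n$ is bounded. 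Your approach instead tests with a cutoff $\phi$, isolates the harmonic remainder $u_n=w_{i,n}-a_n$, and reaches a quadratic inequality $X_n\leq C_r+C_r\sqrt{X_n}$ via a Caccioppoli estimate whose boundary term reintroduces $X_n$ with a small prefactor $r^{N+\alpha}$; closing it requires fixing $r$ small enough that $Cr^{N+\alpha}$ can be absorbed by $cr^N$. This works, but demands more bookkeeping: you need Lemma~\ref{lem: bound in Mn not infinitesimal local} applied to \emph{every} unbounded index $j$ (which is legitimate, since its proof yields a contradiction for any such index), you need $M_n\to0$ to discard bounded indices and to control the error $M_n|a_n|\cdot k$, and you need a WLOG sign normalization $a_n>0$ (omitted in your sketch but necessary so the left-hand side is nonnegative). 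One more detail worth making explicit: the H\"older bound $|u_n|\leq Cr^\alpha$ on $B_{2r}^+$ holds for fixed $r$ only for $n$ large, since $w_{i,n}$ and $\bar w_{i,n}$ only agree up to a uniform $o(1)$ on compacts. None of these are fatal, and your route has the minor merit of making the cancellation between the leading constant $a_n$ and the H\"older-small remainder $u_n$ completely explicit; but the paper's Almgren identity achieves the same dichotomy in one stroke and avoids the absorption argument altogether.
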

\begin{proof}
Let $r>1$ be any fixed radius. Multiplying equation \eqref{eqn: w_sol} by $w_{i,n}$ and integrating on $B_r^+$ we obtain the
identity
\[
    \int\limits_{B_r^+} |\nabla w_{i,n}|^2 \, \de{x} \de{y} + \int\limits_{\partial^0
B_r^+}\left( - f_{i,n}w_{i,n} + M_n w_{i,n}^2 \tsum_{j \neq i} w_{j,n}^2 \right)
\de{x} =  \int\limits_{\partial^+ B_r^+} w_{i,n} \partial_{\nu} w_{i,n} \, \de{\sigma}.
\]
Defining
\[
    E_{i}(r) := \frac{1}{r^{N-1} } \left( \int\limits_{B_r^+} |\nabla w_{i,n}|^2 \,
\de{x} \de{y} + \int\limits_{\partial^0 B_r^+}\left( - f_{i,n}w_{i,n} + M_n w_{i,n}^2
\tsum_{j \neq i} w_{j,n}^2\right) \de{x} \right)
\]
and
\[
    H_i(r) := \frac{1}{r^{N}} \int\limits_{\partial^+ B_r^+} w_{i,n}^2 \de{\sigma},
\]
a straightforward computation shows that $H_i \in AC(r/2,r)$ and
\[
    H_i'(r) = \frac{2}{r} E_{i}(r).
\]
In particular, integrating from $r/2$ to $r$, we obtain the following identity
\begin{equation}\label{eqn: identity on H and E local}
    H_i(r) - H_i\left(\frac{r}{2}\right) = \int\limits_{r/2}^r \frac{2}{s} E_{i}(s)
\, \mathrm{d}s.
\end{equation}
If $r$ is suitably chosen, and $n$ is large, after a scaling in the definition of $H_i$, we have that the left hand side of \eqref{eqn: identity on H and E local} writes as
\[
    \begin{split}
    H_i(r) - H_i\left(\frac{r}{2}\right) &= \int\limits_{\partial^+ B^+} \left[ w_{i,n}^2(rx) - w_{i,n}^2\left(\frac{r}{2}x\right) \right] \de{\sigma} \\
    &= \int\limits_{\partial^+ B^+} \left[ w_{i,n}(rx) - w_{i,n}\left(\frac{r}{2}x\right) \right] \left[ w_{i,n}(rx) + w_{i,n}\left(\frac{r}{2}x\right) \right] \de{\sigma}\\
    &\leq C(r) (|w_{i,n}(0)| +1),
    \end{split}
\]
where we used the first part of Lemma \ref{lem:_uniform_convergence_w_and_bar_w} to estimate the difference in the integral above, and the second part of the same lemma for the
sum. In a similar way, we obtain a lower bound of the right hand side of equation \eqref{eqn: identity on H and E local}
\begin{multline*}
    \frac{1}{r}\int\limits_{r/2}^{r} \frac{2}{s} E_{i}(s)  \mathrm{d}s \geq \min_{s \in [r/2,r]} \frac1s E_{i}(s) \\
    \geq M_n \min_{s \in [r/2,r]} \frac{1}{s^N}\int\limits_{\partial^0 B_s^+} \tsum_{j \neq i} w_{i,n}^2 w_{j,n}^2 \, \de{x} - \max_{s \in [r/2,r]} \frac{1}{s^{N}} \int\limits_{\partial^0 B_s^+}  |f_{i,n}w_{i,n}| \, \de{x}  \\
    \geq M_n C(r)\left( \tsum_{j \neq i} w_{i,n}^2(0) w_{j,n}^2(0) - 1\right) -  C(r) \|f_{j,n}\|_{L^{\infty}} (|w_{i,n}(0)| +1),
\end{multline*}
where $C(r) \|f_{j,n}\|_{L^{\infty}} \to 0$ as $n \to \infty$.
Putting the two estimates together and recalling that $M_n$ is bounded, we find
\[
    M_n  \tsum_{j \neq i} w_{i,n}^2(0) w_{j,n}^2(0) \leq C(r)( |w_{i,n}(0)| + 1).
\]
The conclusion follows dividing by $|w_{i,n}(0)|$, using the uniform control of the sequence $\{w_{i,n}\}_{n \in \N}$ and $\{\bar w_{i,n}\}_{n \in \N}$ and the assumption that $|\bar w_{i,n}(0)| \to \infty$.
\end{proof}
\begin{lemma}
If $\{\bar{\bw}_{n}(0)\}_{n \in \N}$ is unbounded, then also $\{\bar{w}_{1,n}(0)\}_{n \in \N}$ is.
\end{lemma}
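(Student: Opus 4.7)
The plan is to proceed by contrapositive: I will assume that $\bar w_{1,n}(0)$ is bounded while, up to subsequence and relabeling, $\bar w_{i,n}(0) \to \infty$ for some $i \neq 1$, and derive a contradiction. Since $\{\bar w_{1,n}\}_n$ shares a common $C^{0,\alpha}$-seminorm and has values at the origin bounded, along a further subsequence $\bar w_{1,n}(0)\to c_1\in\R$. The Ascoli-Arzel\`a argument of Lemma \ref{lem:_uniform_convergence_of W} then yields $\bar w_{1,n}\to W_1:=\bar W_1+c_1$ locally uniformly on $\overline{\R^{N+1}_+}$, with $W_1$ harmonic in $\R^{N+1}_+$, globally $\alpha$-H\"older on $\overline{\R^{N+1}_+}$, and crucially non-constant, since the very construction of $\bar W_1$ provided two points on which its oscillation equals $1$.

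The heart of the argument is the analysis of the competitive boundary term. Set $J_\infty:=\{j\neq 1:\bar w_{j,n}(0)\to\infty\}$ (non-empty by hypothesis) and $J_b:=\{j\neq 1:\bar w_{j,n}(0)\text{ bounded}\}$. By Lemma \ref{lem: bound in Mn not infinitesimal local}, $M_n\to 0$ and $M_n\bar w_{j,n}^2(0)\leq C$ for every $j\in J_\infty$, so up to subsequence $\mu_j:=\lim_n M_n\bar w_{j,n}^2(0)\in[0,\infty)$ exists and $M_n\bar w_{j,n}(0)\to 0$. For each compact $K\subset\overline{\R^{N+1}_+}$ and $j\in J_\infty$, the identity
\[
M_n w_{j,n}^2 \;=\; M_n\bar w_{j,n}^2(0) \;+\; 2M_n\bar w_{j,n}(0)\bigl(w_{j,n}-\bar w_{j,n}(0)\bigr) \;+\; M_n\bigl(w_{j,n}-\bar w_{j,n}(0)\bigr)^2
\]
combined with the uniform $C^{0,\alpha}$ control on $K$ and Lemma \ref{lem:_uniform_convergence_w_and_bar_w} (which bound $|w_{j,n}-\bar w_{j,n}(0)|$ uniformly on $K$) shows that $M_n w_{j,n}^2\to\mu_j$ locally uniformly. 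For $j\in J_b$, $w_{j,n}$ is locally uniformly bounded and $M_n\to 0$, hence $M_n w_{j,n}^2\to 0$ locally uniformly. Consequently the full Neumann datum $f_{1,n}(w_{1,n})-M_n w_{1,n}\sum_{j\neq 1}w_{j,n}^2$ is uniformly $L^\infty$-bounded on compacts and converges locally uniformly to $-\mu W_1$, where $\mu:=\sum_{j\in J_\infty}\mu_j\geq 0$ (also using Remark \ref{rem:f_i to 0} for $f_{1,n}$).

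Standard boundary elliptic regularity for harmonic functions with bounded Neumann data then allows passage to the limit in the weak formulation of problem \eqref{eqn: w_sol} for $i=1$, yielding that $W_1$ satisfies
\[
-\Delta W_1=0\ \text{in }\R^{N+1}_+,\qquad \partial_\nu W_1=-\mu W_1\ \text{on }\R^N,
\]
together with $|W_1(X)|\leq C(1+|X|^\alpha)$ for $\alpha<1$. Proposition \ref{prp: global eigenfunction} applied with $\lambda=\mu\geq 0$ and $\gamma=\alpha$ forces $W_1$ to be constant, contradicting the non-constancy of $W_1$ established in the first paragraph. The main technical obstacle is precisely the passage to the limit in the nonlinear competitive term: although individual components $w_{j,n}$ with $j\in J_\infty$ may diverge at the origin, the balance $M_n\bar w_{j,n}^2(0)\leq C$ together with the H\"older control flattens the local oscillation of $M_n w_{j,n}^2$ to the constant $\mu_j$, turning the limiting boundary condition into a linear eigenvalue problem on the half-space for which Liouville-type rigidity is available.
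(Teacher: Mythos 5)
Your proposal is correct and follows essentially the same route as the paper: argue by contradiction, invoke Lemma \ref{lem: bound in Mn not infinitesimal local} to get $M_n\to 0$ and $M_n\bar w_{i,n}^2(0)\leq C$, show that the Neumann datum for $w_{1,n}$ converges locally uniformly to $-\lambda w_1$ (your $-\mu W_1$) by controlling the oscillation of $M_n w_{j,n}^2$, and finally contradict Proposition \ref{prp: global eigenfunction}. Your explicit binomial expansion of $M_n w_{j,n}^2$ and the partition into $J_\infty$, $J_b$ is simply a more spelled-out version of the paper's oscillation bound $M_n\osc_{\partial^0 B_r^+}\bar w_{i,n}^2\to 0$.
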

\begin{proof}
Suppose, by contradiction, that $\{\bar{w}_{1,n}(0)\}_{n \in \N}$ is bounded while, by Lemma
\ref{lem: bound in Mn not infinitesimal local}, $M_n \to 0$. Then, reasoning
as in the proof of Lemma \ref{lem:_uniform_convergence_of W}, we have that both
$w_{1,n}$ and $\bar w_{1,n}$ converge to some $w_1$, uniformly on compact sets; moreover, $w_1$ is
harmonic, globally H\"older continuous and non constant. We claim that there
exists a constant $\lambda\geq0$ such that
\[
    \partial_{\nu} w_{1,n} = f_{1,n} - M_n  w_{1,n}\tsum_{i\neq 1}  w_{i,n}^2 \to - \lambda w_1
\]
uniformly in every compact set. This would contradict Proposition \ref{prp: global eigenfunction},
proving the lemma.

To prove the claim:
\begin{itemize}
    \item let $i \neq 1$ be an index such that $\bar{w}_{i,n}(0)$ is unbounded: from Lemma 
    \ref{lem: bound when wi is unbounded local} we see that $M_n \bar w_{i,n}^2(0)$ is bounded. Moreover, by uniform H\"older bounds,
\[
 M_n\left|
\bar{w}_{i,n}^2(x,0) - \bar{w}_{i,n}^2(0,0)\right| \leq M_n \osc_{\partial^0 B^+_r} \bar{w}_{i,n}^2 \to 0,
\]
implying $M_n\bar{w}_{i,n}^2(x,0)\to\lambda_i\geq0$;
\item let now $j$ be an index such that $\bar{w}_{j,n}(0)$ is bounded. Then, again by uniform convergence,
\[
    M_n \bar w_{1,n} \bar w_{j,n}^2  \to 0
\]
uniformly in every compact set.
\end{itemize}
It follows that
\[
     f_{1,n} - M_n \bar w_{1,n}\tsum_{i\neq 1} \bar w_{i,n}^2 \to - \lambda w_1
\]
uniformly in every compact set, and the same limit holds for $\{w_{1,n}\}_{n \in
\N}$ by uniform convergence.
\end{proof}
%
%
%
\begin{lemma}\label{lem: w(0) bdd}
The sequence  $\{\bar{\bw}_{n}(0)\}_{n \in \N}$ is bounded.
\end{lemma}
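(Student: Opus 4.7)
The plan is to argue by contradiction: supposing that along a subsequence $|\bar{\bw}_n(0)|\to\infty$, the previous lemma lets us further assume $\bar{w}_{1,n}(0)\to\infty$. At this point Lemma \ref{lem: bound in Mn not infinitesimal local} gives $M_n\bar{w}_{1,n}^2(0)\leq C$, which forces $M_n\bar{w}_{1,n}(0)\to 0$ and $M_n\to 0$, while Lemma \ref{lem: bound when wi is unbounded local} gives $M_n\bar{w}_{1,n}(0)\sum_{j\neq 1}\bar{w}_{j,n}^2(0)\leq C$. From this last inequality one deduces that every index $j\neq 1$ for which $\bar{w}_{j,n}(0)\to\infty$ actually satisfies $M_n\bar{w}_{j,n}^2(0)\to 0$ (and hence also $M_n\bar{w}_{j,n}(0)\to 0$); on the other hand, if $\bar{w}_{j,n}(0)$ stays bounded, then $w_{j,n}$ is uniformly bounded on compacts of $\R^N$.

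Next I would analyse the boundary equation satisfied by the blow-up $W_{1,n}:=w_{1,n}-w_{1,n}(0)\to W_1$ provided by Lemma \ref{lem:_uniform_convergence_of W}, where $W_1$ is nonconstant, harmonic and globally $\C^{0,\alpha}$ with $\alpha<\nuACF\leq 1/2$. Decomposing each component as $w_{i,n}(x,0)=\bar{w}_{i,n}(0)+W_{i,n}(x,0)$ with $W_{i,n}$ uniformly bounded on compacts of $\R^N$, I would expand $M_n w_{1,n}(x,0)w_{j,n}^2(x,0)$ into six cross terms, and show, using the bounds of the previous paragraph together with elementary Cauchy-Schwarz-type inequalities such as $(M_n\bar{w}_{1,n}(0)\bar{w}_{j,n}(0))^2\leq (M_n\bar{w}_{1,n}^2(0))(M_n\bar{w}_{j,n}^2(0))$, that every cross term vanishes uniformly on compacts of $\R^N$, with the only exception of the $x$-independent leading term $M_n\bar{w}_{1,n}(0)\bar{w}_{j,n}^2(0)$; this term is bounded and hence, up to a subsequence, converges to some $\gamma_j\geq 0$. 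Combined with Remark \ref{rem:f_i to 0} (which ensures $f_{1,n}(w_{1,n})\to 0$), this yields
\[
\partial_\nu w_{1,n}=f_{1,n}(w_{1,n})-M_n w_{1,n}\tsum_{j\neq 1}w_{j,n}^2 \;\longrightarrow\; -\Gamma,\qquad \Gamma:=\tsum_{j\neq 1}\gamma_j\geq 0,
\]
uniformly on compacts of $\R^N$.

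Passing to the limit in the weak formulation of the equation for $W_{1,n}$, the function $W_1$ would then solve $-\Delta W_1=0$ in $\R^{N+1}_+$ with the constant Neumann datum $\partial_\nu W_1=-\Gamma$ on $\R^N$, while being globally $\C^{0,\alpha}$-continuous with $\alpha<1$. Proposition \ref{prp: prescribed constant normal derivative} then forces $W_1$ to be constant, contradicting the nonconstancy of $W_1$ guaranteed by Lemma \ref{lem:_uniform_convergence_of W}. The main obstacle in this scheme is the combinatorial bookkeeping of the cross-term expansion in the second step: one has to treat separately the subcases $\bar{w}_{j,n}(0)$ bounded versus $\bar{w}_{j,n}(0)\to\infty$, and in every subcase combine the quadratic bound $M_n\bar{w}_{1,n}^2(0)\leq C$, the cubic bound $M_n\bar{w}_{1,n}(0)\bar{w}_{j,n}^2(0)\leq C$, and Cauchy-Schwarz-type interpolations so that only the $x$-independent leading term survives. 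The rigidity of Proposition \ref{prp: prescribed constant normal derivative} is then exactly calibrated to this clean limit equation.
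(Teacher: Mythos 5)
Your proof is correct and follows essentially the same route as the paper's: both use the two preceding lemmas to show that $\partial_\nu w_{1,n}$ converges uniformly on compacts to a constant and then invoke Proposition \ref{prp: prescribed constant normal derivative}, your version simply unpacking the paper's terse appeal to the uniform bound on $\osc_{\partial^0 B^+_r}\bigl(M_n\,\bar w_{1,n}\bar w_{i,n}^2\bigr)$ into the explicit six-term cross-expansion with the Cauchy--Schwarz interpolations. The only cosmetic caveat is that one should write $|\bar w_{1,n}(0)|\to\infty$ (the sign need not be positive), so the limiting constant $\lambda=-\Gamma$ may carry either sign --- which is in any case all that Proposition \ref{prp: prescribed constant normal derivative} requires.
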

\begin{proof}
By contradiction, let $\{\bar{\bw}_{n}(0)\}_{n \in \N}$ be unbounded. Then, by the above lemmas, $M_n\to0$, $\{\bar w_{1,n}(0)\}_{n \in \N}$ is unbounded, and
\[
M_n |\bar{w}_{1,n}(0)| \tsum_{i \neq 1} \bar{w}_{i,n}^2(0) \leq C,
\]
independent of $n$. Taking into account the uniform bound on
$\osc_{\partial^0 B^+_r} \bar{w}_{1,n} \bar{w}_{i,n}^2$, we deduce the existence of
a constant $\lambda$ such that, at least up to a subsequence,
\[
  f_{1,n}  -M_n \bar w_{1,n} \tsum_{h \neq 1} \bar w_{h,n}^2 \to \lambda
\]
uniformly on every compact subset of $\R^N$, and the same holds true for the sequence $\{w_{1,n}\}_{n \in \N}$. Thus, as usual, $W_{1,n} = w_{1,n}-w_{1,n}(0)$ converges to $W_1$, a nonconstant, globally H\"older continuous solution to
\[
    \begin{cases}
    - \Delta W_1= 0 &\text{ in } \R^{N+1}_+ \\
    \partial_{\nu} W_1 = \lambda &\text{ on } \R^N.
    \end{cases}
\]
Appealing to Proposition \ref{prp: prescribed constant normal derivative}, we obtain a contradiction.
\end{proof}
The uniform bound on $\{\bar{\bw}_{n}(0)\}_{n \in \N}$ allows to prove the following
convergence result.
\begin{lemma}\label{lem: uniform implies strong convergence local}
Under the previous blow up setting, there exists $\bw \in
(H^1_{\loc}\cap \C^{0,\alpha})\left(\overline{\R^{N+1}_+}\right)$ such that, up to a subsequence,
\[
	\bw_{n} \to \bw \text{ in }(H^1\cap C)(K)
\]
for every compact $K\subset\overline{\R^{N+1}_+}$
.
\end{lemma}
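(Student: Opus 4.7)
The plan is to first extract a uniform limit from the equi-H\"older family $\{\bar\bw_n\}$ by Arzel\`a--Ascoli, transfer it to $\bw_n$ through Lemma \ref{lem:_uniform_convergence_w_and_bar_w}, and then upgrade the resulting $C^0$ convergence to strong $H^1_\loc$ convergence via a test-function argument in which the crucial ingredient will be a Kato-type estimate on the competition density.

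To begin with, Lemma \ref{lem: w(0) bdd} ensures that $\bar\bw_n(0)=\bw_n(0)$ is uniformly bounded, and since $\{\bar\bw_n\}$ shares a common $\C^{0,\alpha}$ seminorm, the family is uniformly bounded on every compact subset of $\overline{\R^{N+1}_+}$; by the first part of Lemma \ref{lem:_uniform_convergence_w_and_bar_w} the same holds for $\{\bw_n\}$. Arzel\`a--Ascoli together with a diagonal procedure along a compact exhaustion then yields a (non relabelled) subsequence and a limit $\bw\in\C^{0,\alpha}\left(\overline{\R^{N+1}_+}\right)$ such that both $\bar\bw_n\to\bw$ and $\bw_n\to\bw$ uniformly on compact sets.

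Next, I would produce a uniform local $H^1$ bound. Testing \eqref{eqn: w_sol} with $w_{i,n}\eta^2$, for $\eta$ a compactly supported smooth cut-off, and applying Young's inequality together with Remark \ref{rem:f_i to 0} and the local $L^\infty$ bound just obtained, gives
\[
\int |\nabla w_{i,n}|^2 \eta^2 \,\de{x}\de{y} + M_n \int_{\partial^0} w_{i,n}^2 \tsum_{j\neq i} w_{j,n}^2 \eta^2 \,\de{x} \leq C,
\]
independent of $n$. Applying Lemma \ref{lem: measure inequality} to $|w_{i,n}|$ (with $\lambda=M_n\sum_{j\neq i}w_{j,n}^2$) against a nonnegative cut-off, and then using the $H^1$ bound above together with Remark \ref{rem:f_i to 0} to control the right-hand side, supplies the key Kato-type estimate
\[
M_n \int_{\partial^0} |w_{i,n}| \tsum_{j\neq i} w_{j,n}^2 \eta \,\de{x} \leq C,
\]
again uniform in $n$. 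In particular a subsequence converges weakly in $H^1_\loc$, and the weak limit must coincide with $\bw$, so that $\bw\in H^1_\loc$.

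Finally, to upgrade this to strong $H^1_\loc$ convergence I would test the equation for $w_{i,n}$ against $(w_{i,n}-w_i)\eta^2$, obtaining
\[
\int\nabla w_{i,n}\cdot\nabla(w_{i,n}-w_i)\,\eta^2 + 2\int(w_{i,n}-w_i)\,\eta\,\nabla\eta\cdot\nabla w_{i,n}= \int_{\partial^0}\bigl[f_{i,n}(w_{i,n})-M_n w_{i,n}\tsum_{j\neq i}w_{j,n}^2\bigr](w_{i,n}-w_i)\eta^2.
\]
The cross term on the left vanishes by uniform convergence $w_{i,n}\to w_i$ together with the $L^2$ bound on $\nabla w_{i,n}$; the linear boundary term on the right vanishes by Remark \ref{rem:f_i to 0}; and the competition contribution is bounded in absolute value by $\|w_{i,n}-w_i\|_{L^\infty(\supp\eta)}$ times the Kato bound, and hence also vanishes. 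It follows that $\int\nabla w_{i,n}\cdot\nabla(w_{i,n}-w_i)\eta^2\to 0$, which combined with the weak $L^2$ convergence $\nabla w_{i,n}\rightharpoonup\nabla w_i$ yields $\int|\nabla(w_{i,n}-w_i)|^2\eta^2\to 0$. The main obstacle is precisely the passage to the limit in the competition term: since a priori $M_n$ may diverge, the natural energy estimate only provides an $L^1$ bound that is \emph{quadratic} in $w_{i,n}$, whereas closing the test-function argument requires a bound \emph{linear} in $w_{i,n}$; this is exactly what the Kato-type trick of Lemma \ref{lem: measure inequality} delivers.
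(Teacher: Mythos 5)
Your proposal is correct and follows essentially the same route as the paper's own proof: uniform convergence via Ascoli--Arzel\`a, a uniform local energy bound from testing with $w_{i,n}\eta^2$, the Kato-type $L^1$ estimate $M_n\int_{\partial^0}|w_{i,n}|\tsum_{j\neq i}w_{j,n}^2\,\eta\leq C$ from Lemma \ref{lem: measure inequality}, and finally strong $H^1_\loc$ convergence from testing with $(w_{i,n}-w_i)\eta^2$. In particular you correctly identify the crux---that the quadratic energy bound cannot absorb the factor $\sup|w_{i,n}-w_i|$ when $M_n$ may diverge, so the linear Kato bound is what closes the argument---which is precisely the mechanism used in the paper.
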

\begin{proof}
Reasoning as in the proof of Lemma \ref{lem:_uniform_convergence_of W}
we can easily obtain that,
up to subsequences, both $\{\bar{\bw}_{n}\}_{n \in \N}$ and $\{\bw_{n}\}_{n \in \N}$ converge uniformly on compact sets to the same limit $\bw \in \C^{0,\alpha}\left(\overline{\R^{N+1}_+}\right)$, hence we are left to show the $H^1_{\loc}$ convergence
of the latter sequence.

Let $K$ be compact, $r$ be such that $K\subset \overline{B_r^+}$, and let us consider
$\eta \in \C^{\infty}_0(B_r^+)$ any smooth cutoff function, such that $0\leq \eta\leq 1$ and
$\eta\equiv 1$ on $K$. Testing the equation for $w_{i,n}$ by $w_{i,n}
\eta^2$, we obtain
\begin{multline*}
    0 \leq \int \limits_{K} |\nabla w_{i,n}|^2  \de{x} \de{y} + M_n \int \limits_{\partial^0 K} w_{i,n}^2 \tsum_{j \neq i} w_{j,n}^2  \de{x} \\
    \leq \int \limits_{B_r^+} |\nabla w_{i,n}|^2 \eta^2 \de{x} \de{y} + M_n \int \limits_{\partial^0 {B_r^+}} w_{i,n}^2 \tsum_{j \neq i} w_{j,n}^2 \eta^2 \de{x} \\
    \leq \frac{1}{2} \int \limits_{B_r^+} w_{i,n}^2 |\Delta \eta^2| \de{x} \de{y} + \frac12\int \limits_{\partial^0 {B_r^+}} \left(w_{i,n}^2 |\partial_{\nu} \eta^2| + f_{i,n} w_{i,n} \eta^2\right) \de{x}.
\end{multline*}
Since the right hand side is bounded uniformly in $n$ (recall Lemmas \ref{lem: w(0) bdd} and \ref{lem:_uniform_convergence_w_and_bar_w}), we deduce that, up to subsequences,
$\{\bw_n \}_{n \in\N}$ weakly converges in $H^1(K)$. Since this holds for every $K$, we deduce that $\bw_n \rightharpoonup \bw$ in $H^1_{\loc}\left(\overline{\R^{N+1}_+}\right)$. To prove the strong convergence,
let us now test the equation by $\eta^2 (w_{i,n} - w_i)$. We obtain
\begin{equation}\label{eqn: strong_conv_blowup}
    \int \limits_{{B_r^+}} \nabla w_{i,n} \cdot \nabla\left[ \eta^2 (w_{i,n} - w_i) \right] \,
\de{x} \de{y} = \int \limits_{\partial^0 {B_r^+}} \eta^2
(w_{i,n}-w_{i})\partial_{\nu} w_{i,n} \, \de{x}.
\end{equation}
We can estimate the right hand side as
\begin{multline*}
    \int \limits_{\partial^0 {B_r^+}} \eta^2  (w_{i,n}-w_{i})\partial_{\nu} w_{i,n} \,\de{x} \\
    \leq \sup_{x \in {B_r^+}} | w_{i,n}-w_{i}|  \int \limits_{\partial^0 B^+_r} \eta^2 \left[M_n |w_{i,n}| \tsum_{i,j < i} w_{j,n}^2   + |f_{i,n}| \right] \, \de{x}\\
    \leq  C(r) \sup_{x \in {B_r^+}} | w_{i,n}-w_{i}|,
\end{multline*}
where the last step holds since the inequality for $|w_{i,n}|$
(Lemma \ref{lem: measure inequality}) tested by $\eta^2$ yields
\begin{multline*}
    \int \limits_{\partial^0 B^+_r} \eta^2  M_n |w_{i,n}| \tsum_{i,j < i} w_{j,n}^2
   \, \de{x}
    \\ \leq
\int \limits_{\partial^0 {B_r^+}}\left( |f_{i,n}| \eta^2 + |w_{i,n} \partial_{\nu} \eta^2| \right)
\de{x} +
\int \limits_{B_r^+} |w_{i,n} \Delta \eta^2| \,\de{x} \de{y}
 \leq
C(r).
\end{multline*}

Resuming, equation \eqref{eqn: strong_conv_blowup} implies
\[\begin{split}
\int \limits_{{B_r^+}} \left| \nabla(\eta w_{i,n}) \right|^2 \,\de{x} \de{y} \leq & \int \limits_{{B_r^+}}
\left( \eta^2 \nabla w_{i,n} \cdot \nabla w_{i} + 2\eta w_i \nabla w_{i,n} \cdot
\nabla \eta+ |\nabla \eta|^2 w_i^2 \right) \, \de{x} \de{y}  \\ &+ C(r) \sup_{x \in
{B_r^+}} | w_{i,n}-w_{i}|.
\end{split}
\]
Using both the weak $H^1$ convergence and the uniform one, we have that
$$
\limsup_{n \rightarrow \infty} \int \limits_{{B_r^+}} \left| \nabla(\eta w_{i,n}) \right|^2 \,
\de{x} \de{y} \leq \int \limits_{{B_r^+}} \left| \nabla(\eta w_{i}) \right|^2 \, \de{x} \de{y}
$$
and we conclude the strong convergence in $H^1({B_r^+})$ of $\{\eta \bw_{n}\}_{n
\in \N}$ to $\eta \bw$, that is, since $\eta$ was arbitrary, the strong
convergence of $\bw_{n}$ to $\bw$ in $H^1_{\loc}\left(\overline{\R^{N+1}_+}\right)$.
\end{proof}
\begin{proof}[End of the proof of Theorem \ref{thm:_local_holder}]
Summing up, we have that $\bw_{n} \to \bw$ in $(H^1\cap C)_{\loc}$,
and that the limiting blow up profile $\bw$ is a nonconstant vector
of harmonic, globally H\"older continuous functions. To reach the final
contradiction, we distinguish, up to subsequences, between
the following three cases.

\textbf{Case 1:} $M_n\to0$. In this case also the equation on the
boundary passes to the limit, and the nonconstant component
$w_1$ satisfies $\partial_{\nu} w_{1} \equiv 0$ on $\R^{N}$,
so that its even extension through $\{y=0\}$ contradicts Liouville theorem.

\textbf{Case 2:} $M_n\to C>0$. Even in this case the equation on the boundary passes to the limit, and
$\bw$ solves
\[
    \begin{cases}
    - \Delta w_{i} = 0 & x \in \R^{N+1}_+\\
    \partial_{\nu} w_{i} = - C w_i \tsum_{j \neq i} w_j^2 & \text{on } \R^{N}
\times \{0\}
    \end{cases}
\]
The contradiction is now reached using Proposition \ref{prp: liouville
system}, since $\bw \in \classG_c \cap \C^{0,\alpha}(\R^{N+1}_+)$ and $\alpha < \nuACF$.

\textbf{Case 3:} $M_n\to \infty$. By Proposition \ref{prp: approx classG_s}, we infer
$\bw \in \classG_s \cap \C^{0,\alpha}(\R^{N+1}_+)$ with $\alpha < \nuACF$, in contradiction with
Proposition \ref{prp: liouville inequalities}.

As of now, the contradictions we have obtained imply that $\{\bv_\beta\}_{\beta>0}$ is
uniformly bounded in $\C^{0,\alpha}\left(\overline{B^+_{1/2}}\right)$, for every $\alpha<\nuACF$. But
then the relative compactness in $\C^{0,\alpha}\left(\overline{B^+_{1/2}}\right)$ follows by
Ascoli-Arzel\`a Theorem, while the one in $H^1({B^+_{1/2}})$ can be shown by reasoning as
in the proof of Lemma \ref{lem: uniform implies strong convergence local}.
\end{proof}
\begin{remark}\label{rem: basta gamma<1}
It is worthwhile noticing that, in proving Theorem \ref{thm:_local_holder}, the only part in
which we used the assumption $\alpha<\nuACF$ is the concluding argument, while in the rest of
the proof it is sufficient to suppose $\alpha<1$.
\end{remark}
As we mentioned, even though we are not able to show that $\nuACF=1/2$, nonetheless we will prove
that the uniform H\"older bound hold for any $\alpha<1/2$. In view of the previous remark, this
can be done by means of some sharper Liouville type results, which will be obtained in the next section.
To conclude the present discussion, we observe that a result analogous to Theorem \ref{thm:_local_holder}
holds, when entire profiles of segregation are considered, instead of solutions to $\problem{\beta}$.

\begin{proposition}\label{prp: compactness in Gs}
Let $\{\bv_n\}_{n \in \N}$ be a subset of $\classG_s\cap \C^{0,\alpha}\left(\overline{B^+_1}\right)$,
for some $0<\alpha\leq\nuACF$, such that
\[
    \| \bv_{n} \|_{L^{\infty}(B^+_1)} \leq \bar m,
\]
with $\bar m$ independent of $n$. Then for every $\alpha' \in (0,\alpha)$ there exists a constant
$C = C(\bar m,\alpha')$, not depending on $n$, such that
\[
    \| \bv_n\|_{\C^{0,\alpha'}\left(\overline{B^+_{1/2}}\right)} \leq C.
\]
Furthermore, $\{\bv_n\}_{n\in\N}$ is relatively compact in $H^1(B^+_{1/2}) \cap \C^{0,\alpha'}\left(\overline{B^+_{1/2}}\right)$ for every $\alpha' < \alpha$.
\end{proposition}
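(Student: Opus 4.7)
The plan is to adapt the blow-up machinery developed in Section~6 for Theorem~\ref{thm:_local_holder}, exploiting the simplifications coming from the absence of both the competition parameter and the internal nonlinearities. Arguing by contradiction, suppose for some $\alpha' \in (0, \alpha)$ the seminorm $L_n := [\eta \bv_n]_{\C^{0,\alpha'}(\overline{B^+})}$ blows up, where $\eta$ is the cutoff from equation \eqref{eqn: eta_blowup}, and define
\[
    w_{i,n}(X) := \eta(P_n) \frac{v_{i,n}(P_n + r_n X)}{L_n r_n^{\alpha'}}, \qquad \bar w_{i,n}(X) := \frac{(\eta v_{i,n})(P_n + r_n X)}{L_n r_n^{\alpha'}},
\]
on $\tau_n B^+ := (B^+ - P_n)/r_n$, exactly as in the proof of Theorem~\ref{thm:_local_holder}. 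Since $\classG_s$ is invariant under translation and scaling (segregation, harmonicity, the condition $v_i\partial_\nu v_i = 0$, and the Pohozaev identity \eqref{eqn: Pohozaev for classGs} all scale homogeneously), the rescaled tuples $\bw_n$ belong to $\classG_s$ on $\tau_n B^+$. The preliminary geometric lemmas (analogues of Lemmas \ref{lem: acc non a part+} and \ref{lem: shift up_local}) carry over unchanged, letting us choose $P_n = (x'_n, 0)$ so that $\tau_n B^+ \to \R^{N+1}_+$, with the sequence $\bar W_{1,n} := \bar w_{1,n} - \bar w_{1,n}(0)$ equicontinuous on compact sets and converging along a subsequence to a nonconstant H\"older function.

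The heart of the argument is showing that $\bw_n(0)$ remains bounded. Here the $M_n$-analysis of Theorem~\ref{thm:_local_holder} is replaced by a more direct use of segregation together with $v_i\partial_\nu v_i = 0$. If $w_{1,n}(0)\to+\infty$ (the other signs being symmetric), the uniform control of the $\C^{0,\alpha'}$-seminorm forces $w_{1,n}\to+\infty$ uniformly on compact subsets of $\R^{N+1}_+$; in particular $v_{1,n}>0$ throughout the relevant boundary region, and the condition $v_1\partial_\nu v_1 = 0$ then yields $\partial_\nu w_{1,n}\equiv 0$ on compact subsets of $\R^N$ for $n$ large. The translates $W_{1,n}:=w_{1,n}-w_{1,n}(0)$ are uniformly bounded on compacts (by the H\"older estimate and Lemma \ref{lem:_uniform_convergence_w_and_bar_w}) and converge to a nonconstant harmonic function $W_1$ on $\R^{N+1}_+$ with $\partial_\nu W_1\equiv 0$ on $\R^N$ and $|W_1(X)|\le C|X|^{\alpha'}$; even reflection across $\{y=0\}$ yields an entire harmonic function of sublinear growth, contradicting Liouville. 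If instead $w_{j,n}(0)\to+\infty$ for some $j\neq 1$ while $w_{1,n}(0)$ stays bounded, segregation forces $w_{1,n}\to 0$ uniformly on bounded subsets of $\R^N$, hence the limit $w_1$ has zero trace; odd reflection then delivers the same contradiction.

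Once $\bw_n(0)$ is bounded, we extract a subsequence converging uniformly on compacts to a nonconstant $\bw\in\C^{0,\alpha'}(\overline{\R^{N+1}_+})$. Strong $H^1_\loc$ convergence, needed to pass to the limit in the Pohozaev identity defining $\classG_s$ (cf.\ Proposition \ref{prp: approx classG_s}, which here simplifies because there is no competition and no internal dynamics), is obtained by first testing the harmonic equation against $\eta^2 w_{i,n}$ to get an energy bound, and then against $\eta^2(w_{i,n}-w_i)$: the resulting boundary term
\[
\int\limits_{\partial^0 B^+_r}\partial_\nu w_{i,n}\,\eta^2(w_{i,n}-w_i)\,\de x
\]
is supported where $w_{i,n}=0$ (since $w_{i,n}\partial_\nu w_{i,n}=0$), and there $|w_i|\le \|w_{i,n}-w_i\|_{L^\infty(K)}\to 0$, so it vanishes in the limit. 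The limit $\bw$ then belongs to $\classG_s\cap\C^{0,\alpha'}(\overline{\R^{N+1}_+})$ with $\alpha'<\alpha\le\nuACF$, so Proposition \ref{prp: liouville inequalities}(2) forces each component to be constant, contradicting the nonconstancy of the first component. This establishes the uniform $\C^{0,\alpha'}$ bound, and the relative compactness statements then follow from Ascoli--Arzel\`a together with the strong $H^1_\loc$ argument just described. The principal technical obstacle is this last strong $H^1_\loc$ step: the degenerate boundary condition $v_i\partial_\nu v_i=0$ (rather than the explicit inhomogeneous trace condition available in the $\problem{\beta}$ setting) forces one to locate the support of $\partial_\nu w_{i,n}$ inside $\{w_{i,n}=0\}$ and exploit the uniform convergence there.
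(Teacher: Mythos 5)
Your blow-up framework and the way you close with Liouville-type theorems track the paper's own proof closely: the preliminary lemmas on $r_n$ and the accumulation of $(X_n',X_n'')$ carry over by harmonicity alone, the shift $P_n=(x_n',0)$ is the same, and the dichotomy between ``a Liouville contradiction applies directly to a translated sequence'' and ``the blow-up produces a nontrivial $\classG_s$ limit contradicting Proposition~\ref{prp: liouville inequalities}'' is essentially the paper's two alternatives (the paper phrases its dichotomy in terms of whether $w_{1,n}$ eventually vanishes on compacts; you phrase it in terms of unboundedness of $\bw_n(0)$, which amounts to the same distinction). The genuinely divergent step is your argument for strong $H^1_{\loc}$ convergence, which is the crux you yourself flag, and this is where there is a real gap.

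You control the boundary term $\int_{\partial^0 B^+_r}\partial_\nu w_{i,n}\,\eta^2(w_{i,n}-w_i)\,\de x$ by observing that $\partial_\nu w_{i,n}$ is carried by $\{w_{i,n}=0\}$, where $|w_{i,n}-w_i|=|w_i|\le \|w_{i,n}-w_i\|_{L^\infty(K)}\to 0$. But this smallness only yields a vanishing limit if $\partial_\nu w_{i,n}$ is a Radon measure whose total variation on $\supp\eta$ is \emph{uniformly bounded in $n$}; without that, you are bounding the product of a quantity tending to $0$ and a quantity with no a priori control. For an element of $\classG_s$, $\partial_\nu w_{i,n}$ is a priori only an $H^{-1/2}$ distribution supported on the nodal set; neither its being a measure nor the uniform mass bound is free. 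This is exactly the step the paper handles differently: it reflects $|w_{i,n}|$ evenly across $\{y=0\}$, checks the sub-mean-value inequality pointwise (trivial where $w_{i,n}$ vanishes, and by local even harmonic extension where it does not) so that $-\Delta|w_{i,n}|=-\mu_{i,n}$ with $\mu_{i,n}\ge 0$, and then invokes \cite[Lemmas~3.7 and~3.11]{tt} to get, respectively, the uniform local mass bound $\mu_{i,n}(K)\le C_K$ and the passage from uniform convergence plus that mass bound to strong $H^1_\loc$ convergence. To make your argument correct you would need to import precisely this subharmonicity/measure-bound input; once you have $\|\partial_\nu w_{i,n}\|_{TV(K)}\le C_K$, your estimate on the support $\{w_{i,n}=0\}$ does close and becomes a legitimate alternative presentation of the same step.
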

\begin{proof}
The proof follows the line of the one of Theorem \ref{thm:_local_holder}, being in fact easier, since we
do not have to handle any competition term. We proceed by contradiction, assuming that, up to subsequences,
\[
    L_n := \max_{i = 1, \dots, k} \sup_{X', X'' \in \overline{B^+}}
    \frac{|\eta(X') v_{i,n}(X')-\eta(X'') v_{i,n}(X'')|}{|X'-X''|^{\alpha'}} \rightarrow \infty,
\]
where again $\eta$ is a smooth cutoff function of the ball $B_{1/2}$ and $\alpha' < \alpha$. If $L_n$
is achieved by $(X'_n, X''_n)$, we introduce the sequences
\[
    w_{i,n}(X) := \eta(X_n) \frac{v_{i,n}(P_n + r_n X)}{L_n r_n^{\alpha'}} \quad \text{and} \quad \bar{w}_{i,n}(X) := \frac{(\eta v_{i,n})(P_n + r_n X)}{L_n r_n^{\alpha'}},
\]
for $X \in \tau_nB^{+}$, where, as usual, on one hand $\bar\bw_n$ has H\"older seminorm (and oscillation) equal to 1, while on the other hand
$\bw_n$ belongs to $\classG_s$.
All the preliminary properties of $(X'_n, X''_n)$,  up to Lemma \ref{lem: shift up_local}, are still valid, since they depend only on the harmonicity of $\{\bw_n\}_{n\in\N}$. It follows that the choice $P_n = (x_n',0)$ for every $n\in\N$ guarantees the convergence of the rescaled domains $\tau_nB^+$ to $\R^{N+1}_+$, while on any compact set the sequences $\{\bw_n\}_{n \in \N}$ and $\{\bar\bw_n\}_{n \in \N}$ shadow each other. Up to relabelling and subsequences, we are left with two alternatives:
\begin{enumerate}
\item either for any compact set $K\in\R^{N}$ we have $\bw_{1,n}(x,0) \neq 0$ for every $n \geq n_0(K)$ and $x \in K$;
\item or there exists a bounded sequence $\{x_n\}_{n\in\N}\subset\R^N$ such that $\bw_{n}(x_n,0) = 0$ for every $n$.
\end{enumerate}

In the first case, if we define $\bW_{n} = \bw_{n} - \bw_{n}(0)$ and $\bar\bW_{n} = \bar\bw_{n} - \bar\bw_{n}(0)$, we obtain that the sequence $\{\bar\bW_n\}_{n \in \N}$ is uniformly bounded in $\C^{0,\alpha'}$, and hence $\{\bW_n\}_{n \in \N}$ converges uniformly on compact sets
to a non constant, globally H\"older continuous function $\bW$, with $\partial_\nu W_{1}(x,0)\equiv0$
and $W_i(x,0)\equiv0$ for $i>1$, on $\R^N$. Extending properly the vector $\bW$ to the whole
$\R^{N+1}$, we find a contradiction with the Liouville theorem.

Coming to the second alternative, this time $\{\bw_n\}_{n \in \N}$ itself converges,
uniformly on compact sets, to a non constant, globally H\"older continuous function $\bw$.
We want to show that the convergence is also strong in $H^1_{\loc}$: this will imply that also
$\bw \in\classG_s$ (recall also the end of the proof of Proposition \ref{prp: approx classG_s}), in contradiction with Proposition \ref{prp: liouville inequalities}. To prove the strong convergence
let us consider, for any $i$, the even extension of $|w_{i,n}|$ through $\{y=0\}$, which we denote again with $|w_{i,n}|$. We have that there exists a non negative Radon measure $\mu_{i,n}$ such that
\[
    - \Delta |w_{i,n}| = - \mu_{i,n} \qquad\text{ in } \mathcal{D}'(\tau_nB):
\]
indeed, on one hand, for $X \in \{w_{i,n} \neq 0\}$, there exists a radius $r>0$ such that the
even extension of $w_{i,n}$ through $\{y=0\}$ is harmonic in $B_r(X)$, providing
\[
    |w_{i,n}|(X) \leq \frac{1}{|B_r|}\int\limits_{B_r(X)} |w_{i,n}|(Y) \de{Y};
\]
on the other hand $X \in  \{w_{i,n} = 0\}$ immediately implies the same inequality, and the consequent subharmonicity of $|w_{i,n}|$. At this point, we can reason as in \cite{tt}, showing that the
$L^\infty$ uniform bounds on compact sets of $|w_{i,n}|$ implies that the  measures $\mu_{i,n}$ are bounded on compact sets \cite[Lemma 3.7]{tt}; and that this, together with the uniform convergence of  $\{|\bw_n|\}_{n\in\N}$, implies its strong $H^1_\loc$-convergence \cite[Lemma 3.11]{tt}.

As a consequence of the previous contradiction argument, we deduce both the uniform
bounds and the pre-compactness of $\{\bv_n\}_{n\in\N}$ in $\C^{0,\alpha'}(B_{1/2})$.
Once we have (the uniform $L^\infty$ bounds and) the uniform convergence of $\{\bv_n\}_{n\in\N}$,
the strong $H^1$ pre-compactness can be obtained exactly as in the last part of the proof, replacing $|w_{i,n}|$ with $|v_{i,n}|$.
\end{proof}
\section{Liouville type theorems, reprise: the optimal growth}
In Section \ref{section:_uniform_local} we proved that non existence results of Liouville type imply uniform bounds in corresponding H\"older norms. This section is devoted to the study of the optimal Liouville exponents, which will allow to enhance the regularity estimates. Our aim is to prove the following result.
\begin{theorem}\label{thm: sharp Liouville}
Let $\nu\in\left(0,\frac12\right)$. If
\begin{enumerate}
 \item either $\bv\in\classG_s\cap \C^{0,\nu}\left(\overline{\R^{N+1}_+}\right)$,
 \item or $\bv\in\classG_c$ and $|\bv(X)| \leq C(1 + |X|^{\nu})$ for every $X$,
\end{enumerate}
then $\bv$ is constant.
\end{theorem}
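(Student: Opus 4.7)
The plan is to argue by contradiction in both parts, reducing via blow-down to a homogeneous profile in $\classG_s$ and then ruling out the multi-component case by dimensional reduction.

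Part~(2) reduces to part~(1). The rescaled sequence $\bv_R(X):=\bv(RX)/R^\nu$ is locally bounded and satisfies a coexistence system whose competition parameter $R^{1+2\nu}\to\infty$. By Proposition~\ref{prp: approx classG_s}, combined with the local H\"older estimates of Theorem~\ref{thm:_local_holder}, any subsequential limit $\bv_\infty$ lies in $\classG_s$ and inherits the bound $|\bv_\infty(X)|\leq C|X|^\nu$. Applying part~(1) to $\bv_\infty$ (after a further blow-down to promote polynomial growth to a global $\nu$-H\"older bound on the resulting homogeneous profile) gives $\bv_\infty$ constant, and the Liouville theorems of Propositions~\ref{prp: liouville system} and~\ref{prp: global eigenfunction} then propagate the rigidity back to $\bv$.

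For part~(1), assume $\bv\in\classG_s\cap C^{0,\nu}(\overline{\R^{N+1}_+})$ is non-constant. By the Almgren monotonicity (Theorem~\ref{thm:_Almgren_for_classG_s} and Lemma~\ref{lem: second consequence classG_s}) the limit $\gamma_\infty:=N(0,\infty)$ is well-defined, and Lemma~\ref{lem: first consequence classG_s} gives $\gamma_\infty\leq\nu<1/2$. The normalized blow-down $\bv(R\,\cdot)/\sqrt{H(0,R)}$ is precompact in $H^1_\loc\cap C^{0,\alpha}_\loc$ by Proposition~\ref{prp: compactness in Gs}, so any subsequential limit $\bv_\infty$ is a nontrivial, $\gamma_\infty$-homogeneous element of $\classG_s$ that remains globally $\nu$-H\"older. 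If only one component of $\bv_\infty$ is nontrivial, the argument of Proposition~\ref{prp: liouville inequalities}(2) applies verbatim (it only needs $\gamma_\infty<1$), forcing $\bv_\infty\equiv 0$ via the appropriate reflection through $\{y=0\}$ and Liouville on $\R^{N+1}$, contradicting nontriviality.

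To exclude two nontrivial segregated components in $\bv_\infty$, I would perform a second blow-up at a point $(\xi_0,0)$ with $\xi_0\in\S^{N-1}$ chosen on the common topological boundary of the supports on the sphere. The cylindrical Pohozaev identity, item~(3) of Definition~\ref{def:segr ent prof}, was introduced precisely to survive this kind of reduction: an adaptation of Proposition~\ref{prp: approx classG_s} to cylinders shows that the blow-up limit lies in $\classG_s$ and is homogeneous with respect to both the origin and $(\xi_0,0)$, hence translation-invariant along the direction $\xi_0$. This descends to a homogeneous two-component $\classG_s$ profile on a half-space of one lower ambient dimension, still of degree $<1/2$. Iterating down to $\R^2_+$, the profile solves in polar coordinates the ODE $f_i''+\gamma_\infty^2 f_i=0$ with Dirichlet conditions at the support boundaries and Neumann $f_i'=0$ at any endpoint $\theta\in\{0,\pi\}$ where $f_i>0$; since $\gamma_\infty<1/2$, the non-negative solution $f_1(\theta)=A\cos(\gamma_\infty\theta)$ (from $f_1(0)>0$, $f_1'(0)=0$) is strictly positive on all of $(0,\pi)$, leaving no room for a segregated second nontrivial component---the desired contradiction. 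The main obstacle will be this dimensional-reduction step: rigorously ensuring that the cylindrical Pohozaev identity passes to the limit under the second blow-up, that the reduction preserves two segregated nontrivial components at each stage (possibly requiring further blow-ups to flatten a singular interface before $\xi_0$ can be chosen), and that one actually reaches $\R^2_+$ in finitely many steps. Proposition~\ref{prp: liouville_boundary} could alternatively replace the $\R^2_+$ ODE analysis, applied to a single component once flattening forces its complement to vanish on an entire half-space.
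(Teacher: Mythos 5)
Your proposal follows essentially the same strategy as the paper: reduce to a nontrivial homogeneous $\classG_s$ profile by blow-down, perform dimensional descent via a second blow-up at a boundary point of the nodal set (using the cylindrical Pohozaev identities to obtain translation invariance), iterate down to $\R^2_+$, and classify the homogeneous solutions there to conclude $\nu\geq 1/2$. This is precisely what the paper does with Lemma~\ref{lem:_blowdown_compactness}, Lemma~\ref{lem: dimesional descent}, and Proposition~\ref{prp: classification on plane}, organized around the invariant $\nuLio(N)$. Two technical caveats: the normalization $\bv(RX)/R^\nu$ in your reduction of part~(2) may produce a trivial limit, and your closing step about Propositions~\ref{prp: liouville system} and~\ref{prp: global eigenfunction} ``propagating rigidity back'' is not how the contradiction is obtained --- the paper instead normalizes the blow-down by $\sqrt{H(0,R)}$ and extracts a dyadic subsequence on which $H(r_n)\leq CH(r_n/2)$, guaranteeing a nontrivial $\nu'$-homogeneous limit in $\classH(\nu',N)$ with $\nu'\leq\nu$, which directly contradicts $\nuLio(N)\geq 1/2$ once that inequality has been established by descent.
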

The rest of the section is devoted to the proof of the above theorem. As of now, we already know by Propositions \ref{prp: liouville system} and \ref{prp: liouville
inequalities} that such theorem holds true whenever $\nu$ is smaller than $\nuACF$. In order to
refine such result, we will prove that it holds for $\nu$ smaller than $\nuLio$, according
to the following definition.
\begin{definition}
For $\nu>0$ and for every dimension $N$, we define the class
\[
\classH(\nu,N) := \left\{\bv \in \classG_s :
\begin{array}{l}
\bv \in \C^{0,\alpha}_{\loc}\left(\overline{\R^{N+1}_+}\right),\text{ for some }\alpha>0\\
\bv \text{ is non trivial 
and $\nu$-homogeneous}
\end{array}
\right\},
\]
and the critical value
\[
\begin{split}
    \nuLio(N) &= \inf\{\nu>0: \classH(\nu,N) \text{ is non empty} \}
\end{split}
\]
\end{definition}
\begin{remark}\label{rem:nuLio_leq_1}
Since $(y,0,\dots,0)\in\classH(1,N)$, for every $N$, we have that $\nuLio(N)\leq1$.
\end{remark}
\begin{remark}
By Corollary \ref{cor:4.5} we have that, if $\bv$ is non constant and
satisfies assumption (1) in Theorem \ref{thm: sharp Liouville} for some $\nu$,
then $\bv\in\classH(\nu,N)$.
\end{remark}
To prove Theorem \ref{thm: sharp Liouville}, we start by showing
that, given any non constant $\bv$ satisfying assumption (2) in Theorem
\ref{thm: sharp Liouville} for some $\nu$, we can construct a function
$\bar\bv\in\classH(\nu',N)$, for a suitable $\nu'\leq\nu$. This, together
with the previous remark, will imply the equivalence between
Theorem \ref{thm: sharp Liouville} and the inequality
\[
\nuLio(N)\geq\frac12.
\]

To construct such $\bar\bv$, we will use the blow down method. For any (non trivial)
$\bv\in\classG_c$ we denote with $N_{\bv}(x_0,r)$, $H_{\bv}(x_0,r)$ the
related quantities involved in the Almgren frequency
formula, defined in Section \ref{sec:almgren}. For any $r>0$, let us define
\[
    \bv_r(X) := \frac{1}{\sqrt{H_{\bv}(0,r)}} \bv(rX).
\]
Since $H$ is a strictly positive increasing function in $\R_+$ (recall Theorem
\ref{thm:_Almgren_for_classG_c}), $\bv_r$ is well defined. We have the following.
\begin{lemma}[Blow down method]\label{lem:_blowdown_compactness}
Let $\bv$ be a non constant function, satisfying assumption (2) in Theorem \ref{thm: sharp Liouville} for some $\nu$, and let
\[
0<\nu' = \lim_{r \to \infty} N_{\bv}(r)\leq\nu.
\]
Then there exists $\bar\bv \in \classH(\nu',N)$ such that,
for a suitable sequence $r_n\to\infty$,
\[
	\bv_{r_n} \to \bar\bv \text{ in }(H^1\cap C)(K)
\]
for every compact $K\subset\overline{\R^{N+1}_+}$.
\end{lemma}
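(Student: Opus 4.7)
The plan is to combine the Almgren monotonicity for $\classG_c$ with the compactness results of the previous sections, then identify the limit via the approximation Proposition \ref{prp: approx classG_s}. The argument splits into four steps.

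\emph{Step 1 (Rescaling identities).} A direct change of variables shows that $\bv_r$ solves $\problem{\beta_r}$ on $\R^{N+1}_+$ with $f_i\equiv 0$ and coupling constant $\beta_r := r\,H_\bv(0,r)$, and that
\[
H_{\bv_r}(0,R) = \frac{H_\bv(0,rR)}{H_\bv(0,r)},\qquad N_{\bv_r}(0,R)=N_\bv(0,rR),\qquad H_{\bv_r}(0,1)=1.
\]
In particular $N_{\bv_r}(0,R)\to\nu'$ as $r\to\infty$ for every $R>0$.

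\emph{Step 2 (Uniform bounds and $\beta_r \to \infty$).} Tracking the computations in the proof of Theorem \ref{thm:_Almgren_for_classG_c} one obtains the identity
\[
\frac{H'(r)}{H(r)}=\frac{2N(r)}{r}+\frac{2}{r^N H(r)}\int_{\partial^0B^+_r}\tsum_{i<j}v_i^2v_j^2.
\]
Since $\int_{\partial^0B^+_r}\sum_{i<j}v_i^2v_j^2\le r^{N-1}E(r)=r^{N-1}N(r)H(r)$, the remainder is at most $2N(r)/r$, hence $H'/H\le 4N(r)/r\le 4\nu'/r$. Integrating yields the doubling estimate
\[
\frac{H(rR)}{H(r)}\leq R^{4\nu'}\qquad\text{for every }R\geq 1,
\]
which in turn provides uniform bounds on $\int_{\partial^+B^+_R}\sum v_{i,r}^2$ and, via $E_{\bv_r}(R)=N_{\bv_r}(R)H_{\bv_r}(R)\le\nu' R^{4\nu'}$, uniform $H^1$ bounds on $B^+_R$ for every $R>0$. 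On the other hand, integrating the lower bound $H'/H\ge 2N(r)/r$ with $N(r)\to\nu'>0$, we get $H(r)\ge Cr^{2(\nu'-\eps)}$ for $r$ large, hence $\beta_r=r\,H(r)\to\infty$.

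\emph{Step 3 (Compactness).} A standard subharmonic argument — each $v_{i,r}^2$ is subharmonic in $\R^{N+1}_+$, and the even reflection across $\{y=0\}$ produces a subsolution in $\R^{N+1}$ — upgrades the uniform $L^2$ control of Step 2 to locally uniform $L^\infty$ bounds on $\{\bv_r\}$. Theorem \ref{thm:_local_holder}, applied after a rescaling to every fixed half-ball, then furnishes locally uniform $\C^{0,\alpha}$ estimates for some $\alpha\in(0,\nuACF)$, as well as local $H^1$-compactness. Extracting a diagonal subsequence, we obtain $\bv_{r_n}\to\bar\bv$ in $(H^1\cap C)(K)$ for every compact $K\subset\overline{\R^{N+1}_+}$.

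\emph{Step 4 (Identification of $\bar\bv$).} Since $\beta_{r_n}\to\infty$ and $f_{i,\beta}\equiv 0$, Proposition \ref{prp: approx classG_s} applies and gives $\bar\bv\in\classG_s$; passing to the limit in the Hölder estimates also yields $\bar\bv\in\C^{0,\alpha}_{\loc}$. Non-triviality follows from passing to the limit in the normalization $\int_{\partial^+B^+_1}\sum_i v_{i,r_n}^2\,d\sigma = 1$, allowed by the trace-continuous convergence. Finally,
\[
N_{\bar\bv}(0,R)=\lim_{n\to\infty}N_{\bv_{r_n}}(0,R)=\lim_{n\to\infty}N_\bv(0,r_nR)=\nu'
\]
for every $R>0$, so the second assertion of Theorem \ref{thm:_Almgren_for_classG_s} forces $\bar\bv$ to be $\nu'$-homogeneous. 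This places $\bar\bv\in\classH(\nu',N)$ and concludes the proof.

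\emph{The main obstacle} is producing locally uniform bounds on the blow-down sequence: the naive bound $\|\bv_r\|_\infty\le\sup|\bv|/\sqrt{H(r)}$ may diverge when $\nu>\nu'$. It is circumvented by the Almgren-based doubling estimate $H(rR)/H(r)\le R^{4\nu'}$ of Step 2, whose rate depends only on the intrinsic asymptotic frequency $\nu'$ and not on the a priori growth exponent $\nu$; this is precisely what makes the blow-down procedure compatible with the regularity theory developed in Section \ref{section:_uniform_local}.
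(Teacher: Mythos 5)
Your proof is correct and follows the same overall blueprint as the paper's (rescale, establish uniform bounds via subharmonicity, apply Theorem \ref{thm:_local_holder} for compactness, identify the limit in $\classG_s$ via Proposition \ref{prp: approx classG_s}, and extract $\nu'$-homogeneity from constancy of the Almgren quotient via Theorem \ref{thm:_Almgren_for_classG_s}). Where you genuinely diverge is in the doubling argument of Step 2: you observe that for $\classG_c$ the logarithmic derivative $H'/H$ equals $2N(r)/r$ plus the nonnegative competition remainder $\tfrac{2}{r^N H(r)}\int_{\partial^0 B_r^+}\sum_{i<j}v_i^2v_j^2$, which in turn is bounded by $2N(r)/r$ since the competition integral sits inside $r^{N-1}E(r)$; combined with monotonicity $N(r)\leq\nu'$ this yields the uniform doubling $H(rR)/H(r)\leq R^{4\nu'}$ for all $r$ and $R\geq1$. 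The paper instead establishes doubling only along a dyadic subsequence, by contradiction from the a priori growth hypothesis $|\bv(X)|\leq C(1+|X|^\nu)$: assuming $H(r)\geq 3^{2\nu}H(r/2)$ eventually leads to $3^{2\nu j}H(r_0)\leq C(2^j)^{2\nu}$, a contradiction. Your version buys uniform $L^\infty$ bounds on every half-ball $B_R^+$ (rather than just $B_{3/4}^+$, as in the paper), so compactness holds on arbitrary compacts and the normalization $H_{\bv_r}(0,1)=1$ passes directly to the limit via uniform convergence on $\partial^+B_1^+$ — making the non-triviality step immediate and independent of the a priori growth rate $\nu$. The paper instead only controls $L^\infty$ on $B_{3/4}^+$ and must rely on the lower bound $\|\bv_{r_n}\|_{L^2(\partial^+B^+_{1/2})}\geq C^{-1/2}$ coming from the doubling to preclude a trivial limit. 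Your variant is thus slightly cleaner and more self-contained; the rest of the argument (Steps 1, 3, 4) mirrors the paper.

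One small point to keep in mind for completeness, which the paper also glosses over: in Step 4, to conclude $N_{\bar\bv}(0,R)=\lim_n N_{\bv_{r_n}}(0,R)$, you need the competition contribution $\tfrac{1}{R^{N-1}}\int_{\partial^0 B^+_R}\sum_{i<j}v^2_{i,r_n}v^2_{j,r_n}$ in the $\classG_c$ frequency of $\bv_{r_n}$ to vanish in the limit — this follows a fortiori from Lemma \ref{lem:_competition_term_vanishes_in_strong_blowup} (and the divergence of $\beta_{r_n}$), since that lemma already controls the same integral multiplied by $\beta_n$.
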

\begin{proof}
First of all, by construction, we have that
\[
    \|\bv_r\|_{L^2(\partial^+ B^+)} = 1 \qquad \text{so that} \qquad \|v_{i,r}\|_{L^2(\partial^+ B^+)} \leq 1 \quad \text{for } i = 1,\dots,k.
\]
Each $\bv_r$ is solution to the system
\[
    \begin{cases}
    - \Delta v_{i,r} = 0 & \text{in } B^+\\
    \partial_{\nu} v_{i,r} + rH(r) v_{i,r} \tsum_{j \neq i} v_{j,r}^2 = 0& \text{on } \partial^0 B^+,
    \end{cases}
\]
where $r H(r) \to \infty$ monotonically as $r \to \infty$. Reasoning as in Lemma \ref{lem: measure inequality}, we have that the even reflection of $|v_{i,r}|$ through $\{y=0\}$ satisfies
\[
    \begin{cases}
    - \Delta |v_{i,r}| \leq 0 & \text{in } B\\
    \|v_{i,r}\|_{L^2(\partial B)} \leq \sqrt{2}.
    \end{cases}
\]
By the Poisson representation formula, it follows that there exists a constant $C$, not depending
on $r$, such that
\[
    \| \bv_r \|_{L^{\infty}(B_{3/4}^+)} \leq C
\]
for every $r$. Thus we are in a position to apply Theorem \ref{thm:_local_holder} in order to see
that the family $\{\bv_r\}_{r > 1}$ is relatively compact in $(H^1\cap \C^{0,\alpha})(B^+_{1/2})$,
for all $\alpha < \nuACF$. Furthermore, Proposition \ref{prp: approx classG_s} implies that
any limiting point of the family is an element of $\classG_s$ on $B^+_{1/2}$. In order to find a non trivial
limiting point,
we claim that there exists a sequence of radii $\{r_n\}_{n \in \N}$, $r_n \to \infty$, and a
positive constant $C$ such that
\[
    H(r_n) \leq C H(r_n/2) \qquad \forall r_n > 0.
\]
Indeed, we can argue by contradiction, assuming that there exists $r_0 > 0$ such that
\[
    H(r) \geq 3^{2\nu} H(r/2) \qquad \forall r \geq r_0.
\]
Using the diadic sequence of radii $\{2^jr_0\}_{j \in \N}$ we see that
\[
    3^{2\nu j} H(r_0) \leq H(2^j r_0) \leq C (2^{j})^{2\nu},
\]
by assumption. The above inequality provides a contradiction for $j$ sufficiently large,
yielding the validity of the claim. If we denote with $\bar\bv$ a limiting point
of the sequence $\{\bv_{r_n}\}_{n \in \N}$, it follows that
\[
    \|{\bv}_{r_n}\|_{L^2(\partial^+ B^+_{1/2})}=\sqrt{\frac{H(r_n/2)}{H(r_n)}}
    \geq \sqrt{\frac{1}{C}},
\]
implying, in particular, that $\bar\bv$ is a nontrivial element of $\classG_s$.
Moreover, its Almgren quotient $N_{\bar\bv}(0,r)$ is constant for all $r \in (0, 1/2)$, indeed
\[
    N_{\bar\bv}(0,r) = \lim_{r_n \to \infty} N_{\bv_{r_n}}(0,r) =
    \lim_{r_n \to \infty} N_{\bv}(0,r_n r) = \lim_{r \to \infty} N_{\bv}(0,r) = \nu',
\]
where the latter limit exists by the monotonicity of $N$ (Theorem
\ref{thm:_Almgren_for_classG_c}); moreover, since $\bv$ is not constant we have that $\nu'>0$, while $\nu'\leq\nu$ by Lemma \ref{lem:_first consequence classG_c}.
Since $N(0,r)$ is constant, we conclude by Theorem \ref{thm:_Almgren_for_classG_s} that $\bar\bv$ is homogeneous of degree $\nu'$,  and then it can be extended on the whole $\R^{N+1}_+$ to a $\C_{\loc}^{0,\alpha}$ function, for every $\alpha<\nuACF$.
\end{proof}
%
%
By the previous lemma, if we show that $\nuLio(N)\geq 1/2$ then Theorem \ref{thm: sharp Liouville}
will follow. The next step in this direction consists in reducing such problem to the one of
estimating $\nuLio(1)$.
\begin{lemma}[Dimensional descent]\label{lem: dimesional descent}
For any dimension $N\geq 2$, it holds
\[
    \nuLio (N) \geq \nuLio (N-1).
\]
\end{lemma}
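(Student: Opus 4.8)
\emph{Strategy.} The plan is to show that a non-constant homogeneous element of $\classG_s$ that a priori lives in $\R^N$ must actually be invariant in one direction of $\R^N$, so that suppressing that variable produces a non-constant homogeneous element of $\classG_s$ in one dimension less, of no larger homogeneity degree. Concretely: \emph{if} $\nu\in(0,1)$ and $\classH(\nu,N)\neq\emptyset$, \emph{then} $\classH(\nu',N-1)\neq\emptyset$ for some $0<\nu'\leq\nu$. Granting this, the lemma follows: if $\nuLio(N)\geq1$ it is immediate from Remark~\ref{rem:nuLio_leq_1}; if $\nuLio(N)<1$, choose $\nu\in(\nuLio(N),1)$ with $\classH(\nu,N)\neq\emptyset$, deduce $\nuLio(N-1)\leq\nu$, and let $\nu\downarrow\nuLio(N)$.

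\emph{Finding a nonzero boundary zero.} Fix $\bv\in\classH(\nu,N)$ with $\nu<1$; being non-trivial and $\nu$-homogeneous with $\nu>0$, it is non-constant and $\bv(0)=0$. Put $\mathcal{Z}:=\{x\in\R^N:\bv(x,0)=0\}$ and $U_i:=\{x\in\R^N:v_i(x,0)\neq0\}$, so that the $U_i$ are relatively open, pairwise disjoint (because $v_iv_j|_{y=0}=0$), and $\R^N\setminus\mathcal{Z}=\bigcup_iU_i$. If $\mathcal{Z}=\emptyset$ then $\bigcup_iU_i=\R^N$; as $N\geq2$ this set is connected, so a single $U_i$ fills $\R^N$ and all others are empty, whence $\partial_\nu v_i\equiv0$ on $\R^N$, the even reflection of $v_i$ across $\{y=0\}$ is entire harmonic with sublinear growth ($\nu<1$), hence constant, hence $\equiv0$ — contradicting $U_i=\R^N$. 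If $\mathcal{Z}=\{0\}$ the same reasoning gives one $U_i=\R^N\setminus\{0\}$, still connected, with $\partial_\nu v_i=0$ on $\R^N\setminus\{0\}$; now the even reflection of $v_i$ is harmonic in $\R^{N+1}\setminus\{0\}$ and locally bounded, and since a point has zero capacity in $\R^{N+1}$ (again $N\geq2$) the singularity is removable and we conclude as before. Therefore there is a point $P=(\bar x,0)\in\mathcal{Z}$ with $\bar x\neq0$; after rotating the $x$-variables and rescaling we may take $P=(e_N,0)$.

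\emph{Blow-up at $P$ and the descent.} Set $\bv_\rho(X):=\bv(P+\rho X)/\sqrt{H_{\bv}(P,\rho)}$, which is well posed since $H_{\bv}(P,\cdot)>0$ (Theorem~\ref{thm:_Almgren_for_classG_s}). Reasoning exactly as in Lemma~\ref{lem:_blowdown_compactness} — subharmonicity of the $|v_{i,\rho}|$ and the Poisson formula for uniform $L^\infty$ bounds, a dyadic argument on $H_{\bv}(P,\cdot)$ to keep the limit non-trivial, and Theorem~\ref{thm:_local_holder}/Proposition~\ref{prp: approx classG_s} for compactness — one extracts $\rho_k\to0$ with $\bv_{\rho_k}\to\bar\bv$ in $(H^1\cap C)_{\loc}$, $\bar\bv\in\classG_s$ non-trivial, and $N_{\bar\bv}(0,r)\equiv N_{\bv}(P,0^+)=:\mu$, so $\bar\bv$ is $\mu$-homogeneous; here $\mu>0$ because $\bv\in\C^{0,\alpha}_{\loc}$ vanishes at $P$ (Lemma~\ref{lem: first consequence classG_s}), and $\mu\leq N_{\bv}(P,\infty)=N_{\bv}(0,\infty)=\nu$ by monotonicity, Lemma~\ref{lem: second consequence classG_s}, and the $\nu$-homogeneity of $\bv$. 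Moreover $\bar\bv$ is invariant along $e_N$: from $P+\rho(X+se_N)=(1+\rho s)\bigl(P+\tfrac{\rho}{1+\rho s}X\bigr)$ and the $\nu$-homogeneity of $\bv$ about $0$ one gets $\bv_{\rho_k}(X+se_N)=(1+\rho_k s)^{\nu}\sqrt{H_{\bv}(P,\rho_k/(1+\rho_k s))/H_{\bv}(P,\rho_k)}\;\bv_{\rho_k/(1+\rho_k s)}(X)$, and passing to the limit the scalar prefactors tend to $1$ (the radii involved have logarithmic ratio tending to $0$) while $\bv_{\rho_k/(1+\rho_k s)}\to\bar\bv$ by the uniform convergence, since $\rho_k/(1+\rho_k s)\sim\rho_k$; hence $\bar\bv(X+se_N)=\bar\bv(X)$. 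Thus $\bar\bv(x',x_N,y)=\hat\bv(x',y)$ with $\hat\bv$ non-trivial, $\mu$-homogeneous and $\C^{0,\alpha}_{\loc}$ on $\overline{\R^N_+}$; it lies in $\classG_s$ in the reduced dimension — harmonicity and the identities $\hat v_i\hat v_j=0$, $\hat v_i\partial_\nu\hat v_i=0$ pass over verbatim, while the cylindrical Pohozaev identities~\eqref{eqn: Pohozaev for classGs} for $\hat\bv$ follow from those for $\bar\bv$ applied to the cylinder $C^+_{r,l}(x_0)\times(x_{0,N}-L,x_{0,N}+L)$, the two extra faces $Q_l\times\{x_{0,N}\pm L\}$ contributing nothing there because $\partial_\nu\bar v_i=\pm\partial_{x_N}\bar v_i=0$, upon dividing by $2L$. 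Hence $\hat\bv\in\classH(\mu,N-1)$ with $\mu\leq\nu$.

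\emph{Main obstacle.} The real content is the second paragraph — excluding $\mathcal{Z}=\emptyset$ and $\mathcal{Z}=\{0\}$ — which is precisely where the hypothesis $N\geq2$ enters (connectedness of $\R^N$ and of $\R^N\setminus\{0\}$, and removability of a point for harmonic functions in $\R^{N+1}$). The remaining technical burden is the bookkeeping showing that the blown-up and then dimensionally reduced profile $\hat\bv$ genuinely retains \emph{all} the defining structure of $\classG_s$, in particular the full family of cylindrical Pohozaev identities.
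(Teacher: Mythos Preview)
Your approach mirrors the paper's: blow up a $\nu$-homogeneous profile at a non-zero boundary zero, identify the limit as homogeneous of degree $\mu\leq\nu$ and invariant in the radial direction, then slice to $\R^{N}_+$. Two points need tightening.

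First, the compactness tool. The rescalings $\bv_\rho$ are already elements of $\classG_s$, so the relevant result is Proposition~\ref{prp: compactness in Gs}, not Theorem~\ref{thm:_local_holder} or Proposition~\ref{prp: approx classG_s} (both of which are tailored to solutions of $\problem{\beta}$). Incidentally, no dyadic argument is needed here: since $N_{\bv}(P,r)\leq N_{\bv}(P,\infty)=\nu$ for every $r$, equation~\eqref{eqn: logarithmic derivative of H} gives $H(P,r/2)\geq 2^{-2\nu}H(P,r)$ for all $r$, so every blow-up subsequence has a non-trivial limit.

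Second, and more substantively, your invariance step has a gap. You assert $\bv_{\rho_k/(1+\rho_k s)}\to\bar\bv$ ``by the uniform convergence, since $\rho_k/(1+\rho_k s)\sim\rho_k$'', but convergence is only established along the specific subsequence $\{\rho_k\}$, and asymptotic equivalence of radii does not by itself transfer the limit. The claim is true, but needs the extra line: with $\sigma_k=\rho_k/(1+\rho_k s)$ one has $\bv_{\sigma_k}(X)=\sqrt{H(P,\rho_k)/H(P,\sigma_k)}\;\bv_{\rho_k}\bigl((\sigma_k/\rho_k)X\bigr)$, and now $(\sigma_k/\rho_k)X\to X$ together with uniform convergence of $\bv_{\rho_k}$ on compacts (and the ratio of $H$'s tending to $1$) gives the conclusion. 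The paper bypasses this entirely with a cleaner identity: it computes
\[
\bv_r\bigl(x+h(x_0+rx),(1+hr)y\bigr)-\bv_r(x,y)=\bigl[(1+hr)^\nu-1\bigr]\,\bv_r(x,y),
\]
so only the single family $\bv_r$ appears, evaluated at a point drifting to $(x+hx_0,y)$; passing to the limit is then immediate.
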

\begin{proof}
For every $\nu>0$ such that there exists $\bv \in \classH(\nu,N)$, we will prove that
$\nuLio (N-1)\leq \nu$. Let $\nu$, $\bv$ as above.
By homogeneity, we have that $\bv(0,0) = 0$ and $N(0,r) = \nu$ for all $r > 0$.
Since the function $\bv$ is homogeneous, its boundary nodal set
\[
    \mathcal{Z} = \{x \in \R^N: \bv(x,0) = 0\}
\]
is a cone at $(0,0)$. We can easily rule out two degenerate situations:
\begin{enumerate}
\item $\mathcal{Z} = \R^N$, in which case all the components of $\bv$ have trivial trace
on  $\R^N$. As a consequence, the odd extension of $\bv$ through $\{y=0\}$ is a nontrivial vector of
harmonic functions on $\R^{N+1}$, forcing $\nu\geq1\geq\nuLio(N-1)$ by Remark \ref{rem:nuLio_leq_1};
\item $\mathcal{Z} = \{ (0,0) \}$, in which case all the components of $\bv$ but one have trivial
trace, and the last one has necessarily a vanishing normal derivative in $\{ y = 0\}$. As before,
extending the former functions oddly and the latter evenly through $\{y=0\}$, we obtain
again that $\nu \geq1\geq\nuLio(N-1)$.
\end{enumerate}
We are left to analyze the third and most delicate case, namely the one in which the boundary
$\partial \mathcal{Z}$ is non trivial.
Let $x_0 \in \partial \mathcal{Z}\setminus\{(0,0)\}$, and let us introduce the following blow up family (here $r \to 0$)
\[
    \bv_r(X) = \frac{1}{\sqrt{H(x_0,r)}} \bv \left((x_0,0) +rX\right).
\]
We want to apply Proposition \ref{prp: compactness in Gs} to (a subsequence of) $\{\bv_r\}_{r}$:
the only assumption non trivial to check is the uniform $L^{\infty}$ bound. To prove it, we observe that
the even extension of $|v_{i,r}|$ through $\{y=0\}$ (denoted with the same writing) is subharmonic, indeed the inequality
\[
    |v_{i,r}|(X) \leq \frac{1}{|B_\rho|}\int\limits_{B_\rho(X)} |v_{i,r}|(Y) \de{Y}
\]
holds true, if $\rho$ is sufficiently small, both when $v_{i,r}(X)=0$ and when $v_{i,r}(X)\neq0$;
once we know that each $|v_{i,r}|$ is non negative and subharmonic, arguing as in the first part of the proof of Lemma \ref{lem:_blowdown_compactness}, one can show that $w_{i,r}$ is uniformly bounded in $L^{\infty}(B_{3/4})$. Applying Proposition \ref{prp: compactness in Gs} we obtain that, up to
subsequences, $\bv_{r}$ converges uniformly and strongly in $H^1$ to $\bar \bv$, an element $\classG_s(N)$ on $B^+_{1/2}$. Reasoning as in the end of the proof of Lemma \ref{lem:_blowdown_compactness}, we
infer that $\bar\bv$ is non trivial, locally $\C^{0,\alpha}$, and that
\[
    N_{\bar\bv}(0,\rho) = \lim_{r \to 0} N_{\bv_{r}}(0,\rho) =
    \lim_{r \to } N_{\bv}(x_0,r\rho) = \lim_{r \to 0} N_{\bv}(x_0,r) =: \nu',
\]
where
\[
    \alpha\leq N_{\bv}(x_0, 0^+) = \nu' \leq N_{\bv}(x_0, \infty) = \nu
\]
by Lemmas \ref{lem: first consequence classG_s}, \ref{lem: second consequence classG_s} and the
monotonicity of $N$. In particular, $\bar\bv$ is homogeneous of degree $\nu'$.

To conclude the proof, we will show that $\bar \bv$ is constant along the
direction parallel to $(x_0,0)$, and that its restriction on the orthogonal half plane belongs to
$\classG_s(N-1)$. Let $(x,y) \in \R^{N+1}_+$ and $h \in \R$ be fixed. By the homogeneity of $\bv$ we have
\begin{multline*}
     \big|\bv_r(x+h (x_0+rx),(1+hr)y) - \bv_r(x,y)\big| \\
    = \frac{\big|\bv((1+hr)(x_0 +rx,ry)) - \bv(x_0 +rx, ry)\big|}{\sqrt{H(x_0,r)}}  \\
   = |(1+ h r)^\nu -1|\,\frac{|\bv(x_0 +rx, ry)| }{\sqrt{H(x_0,r)}}
   = |(1+ h r)^\nu -1|\,|\bv_r(x,y)|.
\end{multline*}
As $r\to0$ (up to subsequences) we infer, by  uniform convergence,
\[
| \bar\bv(x+hx_0,y) - \bar\bv(x,y)| = 0,\qquad\text{ for every }h\in\R.
\]
Let us denote by $\hat \bv$ a section of $\bar\bv$ with respect to the direction $\{h(x_0,0)\}_{h \in \R}$: we claim that $\hat \bv \in \classH(\nu',N-1)$. It is a direct check to verify that
$\hat\bv$ is nontrivial, $\nu'$-homogeneous, and $\C^{0,\alpha}_\loc$. In order to show that $\hat\bv\in\classG_s(N-1)$, we observe that the equations and the segregation conditions are trivially satisfied, therefore we only need to prove the Pohozaev identities on cylindrical domains (recall the discussion before Definition \ref{def:segr ent prof}). To this aim, let $C'$ denote one
of such domains in $\R^{N}_+$, and $C''$ the corresponding domain in $\R^{N+1}_+$ having $C'$ as $N$-dimensional section,  and the further axis parallel to $(x_0,0)$. But then the Pohozaev identity for $\hat \bv$
on $C'$ immediately follows by the one for $\bar \bv$ on $C''$, using Fubini theorem.
\end{proof}
We are ready to obtain the proof of Theorem \ref{thm: sharp Liouville} as a byproduct of the
following classification result, which completely characterize the elements of $\classH(\nu,1)$ and
shows  that $\nuLio (1)=1/2$.

\begin{proposition}\label{prp: classification on plane}
Let $\nu>0$. The following holds.
\begin{enumerate}
 \item $\classH(\nu,1)=\emptyset \iff 2\nu\not\in\N$;
 \item if $\nu\in\N$ any element of $\classH(\nu,1)$ consists in homogeneous polynomial, and
 only one of its components may have non trivial trace on $\{y=0\}$;
 \item if $\nu = k + 1/2$, $k\in\N$, any element of $\classH(\nu,1)$ has exactly two non
 trivial components, say $v$ and $w$, and there exists $c \neq0$ such that
\[
v(\rho, \theta) = c\, \rho^{\frac{1}{2} + k} \cos \left(\frac{1}{2} + k\right) \theta,
\qquad 
w(\rho, \theta) = \pm c\, \rho^{\frac{1}{2} + k} \sin\left(\frac{1}{2} + k\right) \theta
\]
\end{enumerate}
%
%
%
(here $(\rho,\theta)$ denote polar coordinates in $\R^2_+$ around the homogeneity pole).
\end{proposition}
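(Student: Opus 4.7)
The plan is to exploit the drastic simplification that occurs when $N=1$: on $\R^2_+$, harmonic $\nu$-homogeneous functions with pole on the boundary are completely determined by their angular profile, which satisfies an explicit ODE. Since any homogeneity pole of such a $\bv$ must lie on $\partial^0 \R^2_+$, after translation we may take it to be the origin; writing polar coordinates $(\rho,\theta)$ with $\theta\in[0,\pi]$, separation of variables in $-\Delta v_i=0$ forces
$$v_i(\rho,\theta)=\rho^\nu\bigl(a_i\cos(\nu\theta)+b_i\sin(\nu\theta)\bigr),$$
with $a_i,b_i\in\R$, since the angular factor solves $f''(\theta)+\nu^2 f(\theta)=0$. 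The proposition is thus reduced to a finite-dimensional algebraic classification of the pairs $(a_i,b_i)_{i=1,\ldots,k}$ compatible with the defining conditions of $\classG_s$.

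Next I would translate every flat-boundary condition into an algebraic constraint. At $\theta=0$ the trace and outward normal derivative read $v_i|_{\theta=0}=a_i\rho^\nu$ and $\partial_\nu v_i|_{\theta=0}=-\nu b_i\rho^{\nu-1}$, so the segregation $v_iv_j=0$ and the Neumann-type condition $v_i\partial_\nu v_i=0$ become $a_ia_j=0$ (for $i\neq j$) and $a_ib_i=0$. An analogous computation at $\theta=\pi$ yields
$$\tfrac12(b_i^2-a_i^2)\sin(2\nu\pi)+a_ib_i\cos(2\nu\pi)=0,$$
together with $\bigl(a_i\cos(\nu\pi)+b_i\sin(\nu\pi)\bigr)\bigl(a_j\cos(\nu\pi)+b_j\sin(\nu\pi)\bigr)=0$. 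Since $a_ib_i=0$ already holds for every $i$, any nontrivial $(a_i,b_i)$ forces $\sin(2\nu\pi)=0$, hence $2\nu\in\N$; combined with the explicit examples exhibited below, this settles item (1).

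The remaining case analysis follows from the parity of $2\nu$. If $\nu\in\N$, then $\sin(\nu\pi)=0$ and $\cos(\nu\pi)=(-1)^\nu$, so the condition at $\theta=\pi$ only reiterates $a_ia_j=0$; thus at most one index carries a nontrivial $a_i$ (forcing $b_i=0$ there), while every other nontrivial component has $a_i=0$ and is a multiple of $\rho^\nu\sin(\nu\theta)=\operatorname{Im}\bigl((x+iy)^\nu\bigr)$, a harmonic polynomial whose trace on $\{y=0\}$ vanishes identically, which proves (2). If instead $\nu=k+\tfrac12$, then $\cos(\nu\pi)=0$ and $\sin(\nu\pi)=(-1)^k$, so the trace at $\theta=\pi$ is $(-1)^k b_i$ and segregation there becomes $b_ib_j=0$; together with $a_ia_j=0$ and $a_ib_i=0$, at most one index carries a nontrivial $a_i$ and at most one other carries a nontrivial $b_i$, producing the pair $v,w$ of item (3) after relabeling, with the ambiguous sign reflecting the choice of square-root branch.

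The main obstacle here is minimal once the polar representation is in place: the result is essentially linear algebra plus a case split on $2\nu\in\N$. The only point meriting verification is that the Pohozaev identities required for membership in $\classG_s$ hold for these explicit candidates; but for each harmonic component separately, testing $-\Delta v_i=0$ against $X\cdot\nabla v_i$ on spherical or cylindrical domains and using $v_i\partial_\nu v_i=0$ on the flat boundary produces them, and the identities for $\bv$ follow by summation over $i$.
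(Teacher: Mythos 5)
Your reduction to the angular ODE and the translation of the trace and segregation conditions into the algebraic constraints $a_i a_j = 0$, $a_i b_i = 0$, and the $\theta=\pi$ counterpart are correct, and your derivation that a nontrivial component forces $\sin(2\nu\pi)=0$ (hence $2\nu\in\N$) matches the structure of the paper's case analysis. However, the final paragraph contains a genuine gap that undermines both the forward and backward directions in the half-integer case. You assert that membership in $\classG_s$, i.e.\ the Pohozaev identities centered at \emph{every} $x_0\in\R^N$, follows automatically by testing $-\Delta v_i=0$ against $X\cdot\nabla v_i$ and using $v_i\partial_\nu v_i=0$; that reasoning is valid at the homogeneity pole $x_0=0$, but fails at $x_0\neq 0$ whenever $\nu=k+\tfrac12$. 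The reason is that near the pole $|\nabla v_i|^2\sim\rho^{2\nu-2}$, so $\nabla|\nabla v_i|^2\sim\rho^{2\nu-3}$, which for $\nu<\tfrac32$ is not integrable; the formal integration by parts behind the Pohozaev identity then produces a residual contribution at the singular point, and the identity becomes a nontrivial extra constraint rather than a consequence of the equation. Indeed, for $v=c\,\mathrm{Re}\sqrt{z}$, $w=d\,\mathrm{Im}\sqrt{z}$, a direct computation with $\partial_\nu$ the normal to $\partial^+B_r^+(x_0,0)$ gives $|\partial_\nu v|^2+|\partial_\nu w|^2=c^2\mathrm{Re}(e^{i\alpha}f')^2+d^2\mathrm{Im}(e^{i\alpha}f')^2$ while $|\nabla v|^2+|\nabla w|^2=(c^2+d^2)|f'|^2$ with $f'=\tfrac12 z^{-1/2}$, and equality of the Pohozaev form forces $c^2=d^2$, not any pair $(c,d)$.

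Concretely, your argument establishes only that at most one index carries $a_i\neq 0$ and at most one other carries $b_j\neq 0$; it does not show that there are \emph{exactly} two nontrivial components (the paper uses the Pohozaev identity to rule out a single component with trace supported on a half-line, deriving $c=0$ there), and it does not establish the sharp constraint $w=\pm c\,\rho^{\frac12+k}\sin((\tfrac12+k)\theta)$, which is the genuinely rigid part of item (3). Relatedly, the "only if" direction of item (1) for half-integers requires exhibiting an actual element of $\classG_s$, which means verifying the Pohozaev identities for the conjugate pair; in the paper this uses the conjugate-harmonic structure ($\nabla v\perp\nabla w$ and $|\nabla v|=|\nabla w|$, forcing $|c|=|d|$), not the generic boundary condition argument you give. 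To repair the proof you would need to (i) write the Pohozaev identity at two symmetric centers $x_0=\pm 1$ and exploit the $x\mapsto -x$ symmetry of $\bar v(\rho,\theta)=\rho^{\frac12+k}\cos((\tfrac12+k)\theta)$ to derive $c^4=d^4$, ruling out single-component configurations as the case $d=0$, and (ii) verify sufficiency using the conjugate-harmonic identity $|\partial_{\mathbf n}v|^2+|\partial_{\mathbf n}w|^2=|\nabla v|^2=|\nabla w|^2$.
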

\begin{proof}
Let $\nu>0$ be such that $\classH(\nu,1)$ is not empty, and $\bv \in \classH(\nu,1)$. Since, by
assumption, $\bv$ is homogeneous, the Almgren quotient $N(0,r)$ is equal to $\nu$ for every $r>0$.
Moreover, for topological reasons, no more than two components of $\bv$ can have non trivial
trace on $\{y=0\}$. We will classify $\bv$, and hence $\nu$, according to the number of such
components.

As a first case, let us suppose that two components of $\bv$, say $v$ and $w$, have non trivial
trivial trace, in such a way that they solve
\[
    \begin{cases}
    - \Delta v = 0 & \text{ in } \R_+^2\\
    v (x,0)= 0 & \text{ on } x<0 \\
    \partial_{\nu} v (x,0)= 0 & \text{ on } x>0,
    \end{cases}
    \qquad \text{and} \qquad
    \begin{cases}
    - \Delta w = 0 & \text{ in } \R_+^2\\
    w (x,0)= 0 & \text{ on } x>0\\
    \partial_{\nu} w (x,0)= 0 & \text{ on } x<0.
    \end{cases}
\]
By homogeneity, we can easily find $v$ and $w$; indeed, for instance, $v$ must be of the form
$v(\rho,\theta)=\rho^{\nu} g(\theta)$ with $\nu$ and $g$ solutions to
\[
    \begin{cases}
    \nu^2 g + g'' = 0 & \text{ in } (0,\pi)\\
    g(\pi) = 0, \, g'(0) = 0, \\
    \end{cases}
\]
and an analogous argument holds for $w$. We conclude that
\[
    v(\rho, \theta) = c \rho^{\frac{1}{2} + k} \cos \left(\frac{1}{2} + k\right) \theta,  \qquad w(\rho, \theta) = d \rho^{\frac{1}{2} + k} \sin\left(\frac{1}{2} + k\right) \theta,
\]
with $c, d \neq0$ and $k \in \N$, forcing $\nu = k+1/2$. All the other components of $\bv$ must satisfy
\[
    \begin{cases}
    - \Delta v_i = 0 & \text{ in } \R_+^2\\
    v_i = 0 & \text{ on } \R \times \{ 0 \},
    \end{cases}
\]
with homogeneity degree equal to $k+1/2$, which is impossible unless they are null. Let $\bar{v}$ be the function
\[
    \bar{v}( \rho, \theta ) = \rho^{\frac{1}{2} + k} \cos\left(\frac{1}{2} + k\right) \theta,
\]
in such a way that $v(x,y) = c \bar{v}(x,y)$, while $w(x,y) = d \bar{v}(-x,y)$. Since $\bv$ must
satisfy the Pohozaev identities for the elements of $\classG_s$, we infer that
\[
    \int\limits_{\partial^+ B^+_r(x_0,0)} |\nabla v|^2 + |\nabla w|^2 \, \de\sigma = 2 \int\limits_{\partial^+ B^+_r(x_0,0)} |\partial_{\nu} v|^2 + |\partial_{\nu} w|^2 \, \de\sigma,
\]
for every $x_0 \in \R$ and $r>0$. Considering the choices $x_0 = 1$ and $x_1 = -1$,
and using the symmetries, one has
\begin{align*}
    A_+ c^2  + A_- d^2 &= 2 B_+ c^2  + 2 B_- d^2 ,\\
    A_- c^2  + A_+ d^2 &= 2 B_- c^2  + 2 B_+ d^2 ,
\end{align*}
where
\[
A_\pm=\int\limits_{\partial^+ B^+_r(\pm1,0)} |\nabla \bar{v}|^2  \, \de\sigma,\qquad
B_\pm=\int\limits_{\partial^+ B^+_r(\pm1,0)} |\partial_{\nu} \bar{v}|^2  \, \de\sigma.
\]
Since $A_\pm - 2 B_\pm\neq0$, at least for some $r$, the above equalities force $c^4 - d^4 = 0$,
that is $d = \pm c$. We want to show that
this condition is also sufficient for $(v,w,0,\dots,0)$ to belong to $\classH(\nu,1)$.
To this aim, we only need to prove the actual validity of the Pohozaev identity for any $x_0$ and
$R$. We begin by observing that $v$ and $w$ are conjugated harmonic functions, thus in
particular it holds
\[
    \nabla v \cdot \nabla w = 0 \quad \text{and} \quad |\nabla v| = |\nabla w | \quad \text{in } \R^{2}_+.
\]
Hence, for any unitary vector $\mathbf{n} \in \R^2$ we have
\[
    |\nabla v|^2 = |\nabla w|^2 = |\nabla v \cdot \mathbf{n}|^2 + |\nabla w \cdot \mathbf{n}|^2
    = |\partial_{\mathbf{n}} v|^2 + |\partial_{\mathbf{n}} w|^2,
\]
and the Pohozaev identity follows by integrating over half circles, and choosing
$\nu$ as the outer normal. Resuming, the case in which $\bv$ has two components with non trivial
trace on $\{y=0\}$ always fall into alternative (3) of the statement.

Secondly, let us assume that only one component, say $v$, has non trivial trace on
$\{ y = 0 \}$. Then $\{ v(x,0) > 0 \}$ is either a half line or the entire real line. The first case never happens, since $v$ would solve
\[
    \begin{cases}
    - \Delta v = 0 & \text{ in } \R_+^2\\
    v(x,0) = 0 & \text{ on } x < 0 \\
    \partial_{\nu} v = 0 & \text{ on } x > 0.
    \end{cases}
\]
Reasoning as before, we would obtain that $v$ is of the form
\[
    v(\rho, \theta) = c \rho^{\frac{1}{2} + k} \cos \left(\frac{1}{2} + k\right) \theta,
\]
with $c \in \R$ and $k \in \N$, while all the (odd extensions of the) other components
should be harmonic on $\R^2$ and homogeneous of degree $k+1/2$, that is null;
the Pohozaev identity would force $c = 0$, and $\bv$ would be trivial.
In the second case, if $v(x,0) \neq 0$ for every  $x\neq0$, then $v$ is of the form
\[
    v(\rho, \theta) = c \rho^{k}\cos( k \theta )
\]
with $c \in \R \setminus \{0\}$ and $k \in \N$, while all the other components of $\bv$ are of the form
\[
    v_i(\rho, \theta) = c_i \rho^{k} \sin( k \theta )
\]
for some $c_i \in \R$. Then the case of one non trivial
trace on $\{y=0\}$ always falls into alternative (2) of the statement.

As the last case, let us suppose that $v_i(x,0)\equiv0$ for every $i$. Then each of them is a $\nu$-homogeneous solution to
\[
    \begin{cases}
    - \Delta v_i = 0 & \text{ in } \R_+^2\\
    v_i = 0 & \text{ on } \R \times \{ 0 \}
    \end{cases}
\]
that is, for some $k \in \N$ and $c_i \in \R$, $\nu=1+k$ and
\[
    v_i(\rho, \theta) = c_i \rho^{1+k} \sin( (1+k) \theta ).
\]
Also this case always falls into alternative (2) of the statement, and the proposition follows.
\end{proof}
\section{$\C^{0,\alpha}$ uniform bounds, $\alpha<1/2$}\label{section:_uniform_global}
This section is devoted to the proof of the uniform H\"older bounds, with every exponent
less that $1/2$, for the problem with exterior boundary Dirichlet data. In this direction,
let us consider the problem
\[
    \begin{cases}
    - \Delta v_i = 0 & \text{in } B^+\\
    \partial_{\nu} v_i = f_{i, \beta}(v_i) - \beta v_i \tsum_{j \neq i} v_j^2 &
    \text{on } \partial^0 B^+\cap\Omega\\
    v_i = 0 &     \text{on } \partial^0 B^+\setminus\Omega,
    \end{cases} \eqno \probbd{\beta}
\]
where $\Omega$ is a smooth bounded domain of $\R^N$ and the functions $f_{i,\beta}$ are continuous and uniformly bounded, with respect to $\beta$, on bounded sets.
\begin{remark}
For $\probbd{\beta}$ it is known that, if $\Omega$ is of class $\C^3$, then any $L^\infty$ solution
is in fact $\C^{0,\alpha}$ for every $\alpha<1/2$, see \cite{Shamir}. Furthermore, a uniform
bound holds when $\beta$ is bounded, similarly to Remark \ref{rem: troianiello}. Actually, the assumption
on the smoothness of $\Omega$ can be weakened, at least when considering global problems for
$u(\cdot)=v(\cdot,0)$, as done in the recent paper \cite{serra}.
\end{remark}
We prove the following.
\begin{theorem}\label{thm:_global_holder}
Let $\{\bv_{\beta}\}_{\beta > 0}$ be a family of solutions to problem $\probbd{\beta}$ on $B^+_1$
such that
\[
    \| \bv_{\beta} \|_{L^{\infty}(B^+_1)} \leq \bar m,
\]
with $\bar m$ independent of $\beta$. Then for every $\alpha \in (0,1/2)$ there exists a constant
$C = C(\bar m,\alpha)$, not depending on $\beta$, such that
\[
    \| \bv_\beta\|_{\C^{0,\alpha}\left(\overline{B^+_{1/2}}\right)} \leq C.
\]
Furthermore, $\{\bv_\beta\}_{\beta > 0}$ is relatively compact in $H^1(B^+_{1/2}) \cap \C^{0,\alpha}\left(\overline{B^+_{1/2}}\right)$ for every $\alpha < 1/2$.
\end{theorem}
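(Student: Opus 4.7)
The plan is to adapt the blow-up contradiction scheme of Theorem \ref{thm:_local_holder} to allow any $\alpha \in (0, 1/2)$, via two improvements over the local case. First, the sharp Liouville theorems of Section 7 (in particular Theorem \ref{thm: sharp Liouville}) replace Propositions \ref{prp: liouville system} and \ref{prp: liouville inequalities} in the final non-existence arguments, upgrading the admissible range of exponents from $[0, \nuACF)$ to $[0, 1/2)$. Second, Proposition \ref{prp: liouville_boundary} is required to rule out blow-up limits whose centers concentrate near $\partial \Omega$, where the exterior Dirichlet condition survives the rescaling.

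Fix $\alpha \in (0, 1/2)$ and a cutoff $\eta$ as in \eqref{eqn: eta_blowup}. Assume by contradiction that $L_n \to \infty$ along a sequence of solutions $\bv_n$ to $\probbd{\beta_n}$; standard regularity for $\probbd{\beta}$ on the smooth domain $\Omega$ yields $\beta_n \to \infty$. Choose maximizing pairs $(X'_n, X''_n)$, set $r_n = |X'_n - X''_n|$, $P_n = (x'_n, 0)$, and define $\bw_n$, $\bar\bw_n$ exactly as in Section \ref{section:_uniform_local}. By Remark \ref{rem: basta gamma<1}, all the preliminary lemmas of that section carry over verbatim, producing a non-constant blow-up limit $\bw \in (H^1 \cap \C^{0,\alpha})_{\loc}\left(\overline{\R^{N+1}_+}\right)$ and, up to subsequences, one of the three asymptotics $M_n \to 0$, $M_n \to C > 0$, or $M_n \to +\infty$.

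The new ingredient is to track the rescaled exterior set $\tilde\Omega_n := (\Omega - x'_n)/r_n \subset \R^N$. Up to a further subsequence and a rotation, either $\tilde\Omega_n \to \R^N$ (interior regime) or $\tilde\Omega_n \to \{x_1 > 0\}$ (boundary regime); the case $\tilde\Omega_n \to \emptyset$ is ruled out since it would force $\bv_n \equiv 0$ on a full neighborhood of $P_n$, incompatible with the concentration of the H\"older seminorm at $P_n$. In the interior regime, the three asymptotics for $M_n$ yield contradictions exactly as at the end of the proof of Theorem \ref{thm:_local_holder}, with Theorem \ref{thm: sharp Liouville} (applicable since $\alpha < 1/2$) replacing Propositions \ref{prp: liouville system} and \ref{prp: liouville inequalities} in the two competitive subcases. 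In the boundary regime, the blow-up limit additionally satisfies $w_i(x, 0) = 0$ on $\{x_1 \leq 0\}$ for every $i$, inherited by the uniform convergence of $\bw_n$ and the vanishing of $v_{i,n}$ on $\partial^0 B^+ \setminus \Omega$. Passing to the limit in the weak formulation of the equation tested against $w_{i,n} \phi$ with $\phi \geq 0$ compactly supported, the competition term $M_n \int w_{i,n}^2 \tsum_{j \neq i} w_{j,n}^2\, \phi$ carries a favorable sign while $f_{i,n}(w_{i,n}) \to 0$ by Remark \ref{rem:f_i to 0}, yielding $w_i \partial_\nu w_i \leq 0$ in the distributional sense of Proposition \ref{prp: ACF sym}. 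Proposition \ref{prp: liouville_boundary}, applied with $\gamma = \alpha < 1/2$, then forces each non-trivial component of $\bw$ to be constant, in contradiction with the non-constancy of the limiting profile produced by Lemma \ref{lem:_uniform_convergence_of W}.

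The main obstacle lies in the boundary regime: one must verify that both the Dirichlet trace condition $w_i|_{\{x_1 \leq 0\}} = 0$ and the weak inequality $w_i \partial_\nu w_i \leq 0$ are inherited by the blow-up limit uniformly across all three sub-cases of $M_n$, which is essentially a matter of repeating the identities of Section \ref{section:_uniform_local} with additional care for the moving domain $\tilde\Omega_n$. Once the uniform $\C^{0,\alpha}$ bound is secured, relative compactness in $\C^{0,\alpha}\left(\overline{B^+_{1/2}}\right)$ follows from Ascoli--Arzel\`a, and the $H^1(B^+_{1/2})$ compactness by repeating the localized energy estimate of Lemma \ref{lem: uniform implies strong convergence local}.
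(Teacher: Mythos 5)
Your overall strategy is the same as the paper's: blow up at maximizing pairs, track the rescaled exterior set, split into an interior regime and a straight-line boundary regime, and invoke Theorem \ref{thm: sharp Liouville} in the interior cases and Proposition \ref{prp: liouville_boundary} in the boundary case. The issue is your treatment of the third possibility.

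You attempt to rule out the case $\tilde\Omega_n \to \emptyset$ (equivalently, $\tau_n(\partial^0 B^+\setminus\Omega)\to\R^N$) by claiming that it would force $\bv_n\equiv 0$ on a full neighborhood of $P_n$. This confuses vanishing of the trace on $\{y=0\}$ with vanishing of the function itself: in that regime $v_{i,n}$ vanishes only on the flat part $\partial^0 B^+_{Cr_n}(P_n)$, not in the interior of $B^+_{Cr_n}(P_n)$, where it is merely harmonic (compare $v(x,y)=y$). So the case cannot be discarded; it must be handled, and the argument it needs is exactly what the paper does there: in this regime $\bw_n(0)=\bar\bw_n(0)=0$, the blow-up limit $\bw$ is nonconstant, globally $\C^{0,\alpha}$, harmonic on $\R^{N+1}_+$, and vanishes identically on $\R^N$; its odd extension across $\{y=0\}$ is then a nonconstant entire harmonic function of sub-linear growth, contradicting the classical Liouville theorem. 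With that correction your sketch is the paper's proof. One minor remark: in the boundary regime you phrase the derivation of $w_1\partial_\nu w_1\le 0$ via passing to the limit in the tested weak form using the sign of the competition term; the paper instead obtains it from the strong $H^1_\loc$ convergence established as in Lemma \ref{lem: uniform implies strong convergence local}. Both routes land in the hypotheses of Proposition \ref{prp: liouville_boundary}, so this is only a stylistic difference.
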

Actually, two particular cases of the above theorem can be obtained
in a rather direct way.
\begin{remark}
If $\partial^0B^+\cap\Omega=\emptyset$ then Theorem \ref{thm:_global_holder} holds true.
Indeed, the family of functions obtained from $\{\bv_{\beta}\}_{\beta > 0}$ by odd reflection
across $\{y=0\}$ consists in harmonic, $L^\infty$ uniformly bounded functions on $B_1$.
\end{remark}
\begin{remark}[Proof of Theorem \ref{thm: intro_local}]\label{rem:bourbaki}
If $\partial^0B^+\subset\Omega$ then Theorem \ref{thm:_global_holder} holds true.
This is indeed the content of Theorem \ref{thm: intro_local}, that is the one of Theorem
\ref{thm:_local_holder} with $\nuACF$ replaced by $1/2$. In order to prove this result, one can
reason as in the proof of such theorem, by using Theorem \ref{thm: sharp Liouville} instead
of Propositions \ref{prp: liouville system} and \ref{prp: liouville
inequalities} (also recall Remark \ref{rem: basta gamma<1}).
\end{remark}
\begin{proof}[Proof of Theorem \ref{thm:_global_holder}]
The outline of the proof follows the one of Theorem \ref{thm:_local_holder}, to which we refer the reader
for further details. To start with, let $\eta$ be a smooth cutoff function as in equation
\eqref{eqn: eta_blowup}, and let $\alpha \in (0,1/2)$ be fixed. We assume by contradiction that
\[
\begin{split}
    L_n :=& \max_{i = 1, \dots, k} \max_{X'\neq X'' \in \overline{B^+}} \frac{|(\eta v_{i,n})(X')-(\eta v_{i,n})(X'')|}{|X'-X''|^{\alpha}}\\
     =&  \frac{|(\eta v_{1,n})(X'_n)-(\eta v_{i,n})(X''_n)|}{r_n^{\alpha}} \rightarrow \infty,
\end{split}
\]
where, as usual, $\bv_n$ solves $\probbd{\beta_n}$, $\beta_n \rightarrow \infty$, and $r_n := |X'_n-X''_n|\to0$. Furthermore, reasoning as in Lemmas \ref{lem: acc non a part+} and
\ref{lem: shift up_local}, one can prove that the sequences $\{X'_n\}_{n\in\N}$ and
$\{X''_n\}_{n\in\N}$ accumulate near $\partial^0 B^+$ and far away from $\partial^+ B^+$, at least in the scale of $r_n$.

Under the previous notations, we define the blow up sequences
\[
    w_{i,n}(X) := \eta(P_n) \frac{v_{i,n}(P_n + r_n X)}{L_n r_n^{\alpha}} \quad \text{and} \quad \bar{w}_{i,n}(X) := \frac{(\eta v_{i,n})(P_n + r_n X)}{L_n r_n^{\alpha}},
\]
where
\[
P_n := (x'_n,0)
\quad\text{ and }\quad
X\in\tau_n B^+ := \frac{B^+ - P_n}{r_n}.
\]
Such sequences satisfy the following properties:
\begin{itemize}
 \item $\{\bar\bw_n\}_{n\in\N}$ have uniformly bounded H\"older quotient on
 $\tau_n \overline{B^+} $, and $\osc w_{1,n}=1$ for every $n$ on a suitable compact
 set;
 \item each $\bw_n$ solves
 \begin{equation*}
    \begin{cases}
    - \Delta w_{i,n} = 0 & \text{ in }\tau_nB^{+}\\
    \partial_{\nu} w_{i,n} = f_{i,n}(w_{i,n}) - M_n w_{i,n} \tsum_{j \neq i}
    w_{j,n}^2 & \text{on } \tau_n(\partial^0 B^+\cap\Omega)\\
    w_{i,n} = 0 &     \text{on } \tau_n(\partial^0 B^+\setminus\Omega),
    \end{cases}
 \end{equation*}
 where $\sup |f_{i,n}(w_{i,n})| \to 0$ as $n\to\infty$;
 \item $ | \bw_{n}- \bar{\bw}_{n}| \to 0$ uniformly, as $n\to\infty$, on every compact set.
\end{itemize}
By the regularity assumption on $\partial\Omega$ we infer that, up to translations, rotations and subsequences, one of the following three cases must hold.

\textbf{Case 1:} $\tau_n(\partial^0 B^+\setminus\Omega)\to\R^N$. In particular, we have that
$\bw_n(0)=\bar\bw_n(0)=0$ for $n$ large. Reasoning as in Section \ref{section:_uniform_local}
we obtain that both $\bw_n$ and $\bar\bw_n$ converge, uniformly on compact sets, to the same $\bw$
which is harmonic and globally H\"older continuous on $\R^{N+1}_+$, vanishing on $\R^N$ and nonconstant.
But then the odd extension of $\bw$ across $\{y=0\}$ contradicts Liouville theorem.

\textbf{Case 2:} $\tau_n(\partial^0 B^+\cap\Omega)\to\R^N$. In this case, for every compact set $K\subset\overline{\R^{N+1}_+}$, we have that $\{\bw_n|_K\}_{n\in\N}$ and $\{\bar\bw_n|_K\}_{n\in\N}$, for $n$ large, fit in the setting of Section \ref{section:_uniform_local}. Consequently we can argue
exactly in the same way, recalling that the regularity for every $\alpha<1/2$ is obtained by means
of Theorem \ref{thm: sharp Liouville} (see also Remark \ref{rem:bourbaki}).

\textbf{Case 3:} $\tau_n(\partial^0 B^+\cap\Omega)\to\{x\in\R^N:x_1>0\}$. As in the first case,
we have that $\bw_n(0)=\bar\bw_n(0)=0$ for $n$ large, implying that $w_{1,n}\to w_1$, uniformly
on compact sets of $\overline{\R^{N+1}_+}$, with $w_1$ non constant, harmonic, and such that
$w_1(x,0)=0$ for $x_1\leq0$. Finally, reasoning as in Lemma
\ref{lem: uniform implies strong convergence local}, we have that $w_{1,n}\to w_1$ also strongly
in $H^1_\loc$, thus $w_{1}\partial_\nu w_1\leq0$. We are in a position to apply Proposition
\ref{prp: liouville_boundary} to $w_1$ and reach a contradiction.
\end{proof}
Using the above result, we can prove the following global theorem.
\begin{theorem}\label{thm: global_global}
Let
$\{\bv_{\beta}\} \in H^1_{\loc}(\R^N\times(0,1))$ solve
\[
    \begin{cases}
    - \Delta v_{i,\beta} = 0 & \text{in } \R^N\times(0,1)\\
    \partial_{\nu} v_{i,\beta} = f_{i, \beta}(v_{i,\beta}) - \beta v_{i,\beta} \tsum_{j \neq i} v_{j,\beta}^2 &\text{on } \Omega\\
    v_{i,\beta} = 0 &     \text{on } \R^N\setminus\Omega.
    \end{cases}
\]
If there exists a constant $\bar m$ such that
\[
    \| v_{i,\beta}\|_{L^{\infty}(\R^N\times(0,1))} \leq \bar m
\]
then for any $\alpha \in (0,1/2)$
\[
    \| \bv_\beta\|_{\C^{0,\alpha}(\R^N\times[0,1/3])} \leq C(\bar m,\alpha).
\]
Furthermore, $\{\bv_\beta\}_{\beta > 0}$ is relatively compact in $(H^1 \cap \C^{0,\alpha})_{\loc}$
for every $\alpha < 1/2$.
\end{theorem}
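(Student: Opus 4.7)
The plan is to derive Theorem \ref{thm: global_global} from the already established local result Theorem \ref{thm:_global_holder} by a translation--rescaling/covering argument. Fix an arbitrary $x_0\in\R^N$. The half-ball $B^+_{2/3}(x_0,0)$ is contained in the slab $\R^N\times(0,2/3)$, on which the full problem holds. Setting $\tilde{\bv}_\beta(X):=\bv_\beta((x_0,0)+\tfrac{2}{3}X)$ for $X\in B^+_1$, each $\tilde v_{i,\beta}$ solves a problem of the form $\probbd{\tilde\beta}$ on $B^+_1$, with rescaled parameters $\tilde\beta=\tfrac{2}{3}\beta$, $\tilde f_{i,\beta}(s)=\tfrac{2}{3}f_{i,\beta}(s)$, and translated/dilated domain $\tilde\Omega=\tfrac{3}{2}(\Omega-x_0)$. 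The $L^\infty$ bound $\bar m$ is preserved, the family $\{\tilde f_{i,\beta}\}$ is still uniformly bounded on bounded sets, and $\partial\tilde\Omega$ remains smooth. Applying Theorem \ref{thm:_global_holder} to $\tilde\bv_\beta$ and unscaling would then yield
\[
\|\bv_\beta\|_{\C^{0,\alpha}\left(\overline{B^+_{1/3}(x_0,0)}\right)}\leq C(\bar m,\alpha),
\]
provided that the constant from Theorem \ref{thm:_global_holder} can be taken independent of $x_0$.

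Once this uniform local bound is in hand, patching it into a global one is straightforward. Given $X',X''\in\R^N\times[0,1/3]$: if $|X'-X''|\leq 1/6$, then both points lie in the common half-ball $B^+_{1/3}((x',0),0)$ (where $X'=(x',y')$), and the local bound gives the Hölder estimate; if $|X'-X''|>1/6$, the uniform $L^\infty$ bound yields $|v_{i,\beta}(X')-v_{i,\beta}(X'')|\leq 2\bar m\leq 2\bar m\cdot 6^\alpha|X'-X''|^\alpha$. Combining the two cases gives the global uniform $\C^{0,\alpha}$ estimate on $\R^N\times[0,1/3]$. The relative compactness claim follows by a standard diagonal extraction: fix a countable cover of $\R^N\times[0,1/3]$ by half-balls $B^+_{1/3}(x_n,0)$ with $x_n\in\R^N$ and apply the local compactness from Theorem \ref{thm:_global_holder} on each, passing to a diagonal subsequence.

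The main obstacle is making rigorous the claim that the constant in Theorem \ref{thm:_global_holder} is uniform in translations of $\Omega$. This is essentially implicit in the blow-up proof of that theorem: the three limiting geometries analyzed there---$\tau_n(\partial^0 B^+\setminus\Omega)$ converging to the full $\R^N$, to the empty set, or to a half-space---correspond exactly to the possible relative positions of $x_0$ with respect to $\Omega$ (far outside, deep inside, or near $\partial\Omega$), and each case is handled using only $\bar m$ and $\alpha$ together with the sharp Liouville theorem of Theorem \ref{thm: sharp Liouville}. Checking that no further dependence on $\Omega$ enters the constant---beyond the smoothness of $\partial\Omega$, which is preserved under translation and dilation---is the one point that requires care; once it is granted, the localization argument above goes through without further difficulty.
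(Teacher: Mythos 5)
Your covering strategy is exactly the paper's: cover $\R^N\times[0,1/3]$ by half-balls $B^+_{1/2}(x_0,0)$ with $x_0\in\R^N$, apply Theorem \ref{thm:_global_holder} on the corresponding $B^+_1(x_0,0)\subset\R^N\times(0,1)$, and patch. Your version inserts an unnecessary $\tfrac23$-rescaling, and this produces a small but real slip in the patching step: the unscaled estimate lives only on $\overline{B^+_{1/3}(x',0)}$, which does \emph{not} contain all $X''$ with $|X'-X''|\le 1/6$ when $X'=(x',y')$ has $y'$ near $1/3$ (take $X'=(0,1/3)$, $X''=(1/6,1/3)$; then $|X''-(0,0)|=\sqrt 5/6>1/3$). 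Drop the rescaling entirely: $B^+_1(x_0,0)$ already sits inside $\R^N\times(0,1)$, so Theorem \ref{thm:_global_holder} applies directly (with $\Omega$ replaced by $\Omega-x_0$) and yields the bound on $\overline{B^+_{1/2}(x_0,0)}$; then $1/3+1/6\le 1/2$ closes the patching argument.

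On the uniformity of the constant in $x_0$: you are right that this is the only non-cosmetic content of the proof, and the paper passes over it silently. Your diagnosis is also right. The proof of Theorem \ref{thm:_global_holder} is a blow-up contradiction in which the rescaled boundary domains $\tau_n(\partial^0 B^+\cap\Omega)$ can converge to any of the three admissible limits; translating $\Omega$ by $x_n$ only changes which limit geometry occurs, never the estimates, since $\partial(\Omega-x_n)$ has exactly the regularity of $\partial\Omega$, and $x_n$ far from $\Omega$ is the trivial case where $\partial^0 B^+\subset\R^N\setminus(\Omega-x_n)$ (odd reflection and Liouville). Running that contradiction with a sequence of pairs $(\beta_n,x_n)$ instead of $\beta_n$ alone gives the uniform constant without any new ingredient, which is what makes the paper's two-line covering argument legitimate.
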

\begin{proof}
The proof easily follows by a covering argument. Indeed, we can cover $\R^N\times[0,1/3]$ with a countable number of half-balls of radius $1/2$, centered on $\R^N$, and apply Theorem \ref{thm:_global_holder}
to each of the corresponding half-ball of radius $1$.
\end{proof}
\begin{proof}[Proof of Theorem \ref{thm: intro_global}]
This is actually a corollary of Theorem \ref{thm: global_global}: indeed, if $u\in (H^{1/2}\cap L^\infty)(\R^N)$, and $v\in H^1(\R^{N+1}_+)$ is its unique harmonic extension satisfying
\[
(-\Delta)^{1/2} u(\cdot) = -\partial_y v(\cdot,0),
\]
then $v$ is uniformly bounded in $L^\infty$.
\end{proof}
\begin{remark}
Analogous results can be proved, with minor changes, when the fractional operator considered is the spectral square root of the laplacian, as studied in \cite{ct}. Indeed, in such situation, the corresponding extension problem is given by
\[
    \begin{cases}
    - \Delta v_{i,\beta} = 0 & \text{in } \Omega\times(0,\infty)\\
    \partial_{\nu} v_{i,\beta} = f_{i, \beta}(v_{i,\beta}) - \beta v_{i,\beta} \tsum_{j \neq i} v_{j,\beta}^2 &\text{on } \Omega\times\{0\}\\
    v_{i,\beta} = 0 &     \text{on } \partial\Omega\times(0,\infty),
    \end{cases}
\]
and the starting regularity for $\beta$ bounded is even finer. As a consequence, one can consider the extension of $v$ which is trivial outside $\Omega\times(0,\infty)$, and conclude by using
a modified version of Proposition \ref{prp: liouville_boundary}, suitable for subharmonic functions.
\end{remark}

\section{$\C^{0,1/2}$ regularity of the limiting profiles}

In this section we consider the regularity of the limiting profiles, that is, the
accumulation points of solutions to problem $\problem{\beta}$ as $\beta \to
\infty$. In Section \ref{section:_uniform_local} we proved that, if
$\{\bv_\beta\}_{\beta>0}$ is a family of solutions to problem $\problem{\beta}$,
and $\|\bv_\beta\|_{L^{\infty}(B^+)} \leq \bar m$ for a constant
$\bar m$ independent of $\beta$,
then there exists a sequence $\bv_n:=\bv_{\beta_n}$ such that $\beta_n\to\infty$ and
\[
    \bv_n \to \bv \qquad \text{ in }(H^1\cap \C^{0,\alpha})(K\cap B^+),
\]
for every compact set $K\subset B$ and every $\alpha \in (0,1/2)$.
Now we turn to the proof of Theorem \ref{thm: intro_limiting_prof}, that is, we show that
$\bv \in \C^{0,1/2}_\loc(B^+\cup\partial^0B^+)$. Actually, we will prove such theorem under
a more general assumption: from now on we will assume that the reaction terms in problem
$\problem{\beta}$ satisfy
\[
    \lim_{n \to \infty} f_{i,n} = f_i \quad \text{uniformly in every compact set},
\]
where $(f_1, \dots, f_k)$ are locally Lipschitz, and such that, for some $\eps > 0$,
\begin{equation}\label{eqn: technical assumption}
2F_i(s)-sf_i(s)\geq -C |s|^{2+\eps} \text{ for $s$ sufficiently small,}
\end{equation}
for every $i$, where $F_i(s)=\int\limits_0^s f_i(t)\de t$ (in particular, $f_i(0) = 0$).
\begin{remark}
If $f_i\in \C^{1,\eps}$ in a neighborhood of $0$, and $f_i(0)=0$, then assumption \eqref{eqn: technical assumption} holds true. Indeed this implies that $2F_i(s)-sf_i(s)=O(s^{2+\eps})$ as $s\to0$.
\end{remark}
We will obtain Theorem \ref{thm: intro_limiting_prof} as a byproduct of a stronger result,
in the form of the following proposition.
\begin{proposition}\label{prp: 1/2_reg_lim_prof}
Let $\bv \in H^1(B^+)$ be such that
\begin{enumerate}
 \item $\bv\in(H^1\cap \C^{0,\alpha})(K\cap B^+)$,
 for every compact set $K\subset B$ and every $\alpha \in (0,1/2)$;
 \item $v_i v_j |_{\partial^0 B^+} = 0$ for every $j\neq i$ and
 \begin{equation*}
    \begin{cases}
    - \Delta v_i = 0 & \text{in } B^+\\
    v_i \partial_{\nu} v_i = v_if_{i}(v_i) & \text{on } \partial^0 B^+,
    \end{cases}
 \end{equation*}
 where $f_i$ is locally Lipschitz continuous and satisfies \eqref{eqn: technical assumption}, for every $i=1,\dots,k$;
 \item for every $x_0 \in \partial^0 B^+$ and a.e. $r > 0$ such that $B_r^+(x_0,0) \subset B^+$,
 the following Pohozaev identity holds
\begin{multline*}
    (1-N) \int\limits_{B^+_r } \tsum_{i} |\nabla v_i|^2 \,\de{x}\de{y}  +
    r\int\limits_{\partial^+B^+_r } \tsum_{i} |\nabla v_i|^2 \,\de{\sigma} + \\
    + 2N \int\limits_{\partial^0 B^+_r } \tsum_{i} F_{i}(v_i) \, \de{x}
    - 2r \int\limits_{S_r^{N-1} } \tsum_{i} F_{i}(v_i) \, \de{\sigma}
    = 2r \int\limits_{\partial^+B^+_r }  \tsum_{i} |\partial_{\nu} v_i|^2 \,\de{\sigma}.
\end{multline*}
\end{enumerate}
Then $\bv\in  \C^{0,1/2}(K\cap B^+)$, for every compact $K\subset B$.
\end{proposition}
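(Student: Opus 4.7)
The plan is to adapt the Almgren frequency formula to the limit profile $\bv$, and then run a blow-up at every zero point, identifying the limiting profile with an element of $\classG_s$ whose homogeneity exponent, by Theorem \ref{thm: sharp Liouville}, must be at least $1/2$. Translating this lower bound on the frequency back to a growth bound on $\bv$ will produce the desired $\C^{0,1/2}$ estimate.

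First I would fix $x_0 \in \partial^0 B^+$ and set, for small $r > 0$,
\[
E(x_0,r) := \frac{1}{r^{N-1}} \int_{B_r^+(x_0,0)} \tsum_i |\nabla v_i|^2 \,\de{x}\de{y} - \frac{2}{r^{N-1}} \int_{\partial^0 B_r^+(x_0,0)} \tsum_i F_i(v_i) \,\de{x},
\]
together with $H(x_0,r) := r^{-N} \int_{\partial^+ B_r^+(x_0,0)} \tsum_i v_i^2 \,\de{\sigma}$ and $N(x_0,r) := E(x_0,r)/H(x_0,r)$. Differentiating $E$ and using the Pohozaev identity (3) yields $E'(r)$ as the sum of $\frac{2}{r^{N-1}} \int_{\partial^+ B_r^+} \tsum_i |\partial_\nu v_i|^2 \,\de{\sigma}$ and a remainder involving $F_i$, while integrating $-\Delta v_i = 0$ against $v_i$ and using $v_i \partial_\nu v_i = v_i f_i(v_i)$ on $\partial^0 B^+$ gives
\[
E(r) = \frac{1}{r^{N-1}} \int_{\partial^+ B_r^+} \tsum_i v_i \partial_\nu v_i \,\de{\sigma} + \frac{1}{r^{N-1}} \int_{\partial^0 B_r^+} \tsum_i \bigl( v_i f_i(v_i) - 2 F_i(v_i) \bigr) \de{x}.
\]
Assumption (1) provides the uniform smallness $|v_i| \leq C r^{\alpha}$ on $\partial^0 B_r^+(x_0)$, which, combined with the Lipschitz property of $f_i$ (so $|f_i(s)| \leq L|s|$, $|F_i(s)|\leq C|s|^2$) and \eqref{eqn: technical assumption}, bounds both error integrals by $C r^{2\alpha}$. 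A Cauchy--Schwarz computation identical in spirit to the proof of Theorem \ref{thm:_Almgren_for_classG_s} then shows that $r \mapsto N(x_0,r) + C r^{2\alpha}$ is non-decreasing on a small interval, so that $\gamma(x_0) := N(x_0, 0^+)$ is well defined in $[0, \infty]$, and moreover $\frac{d}{dr} \log H(x_0, r) \geq \frac{2 \gamma(x_0)}{r} - C r^{2\alpha - 1}$ for $r$ small.

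Now fix $x_0 \in \partial^0 B^+$ with $\bv(x_0,0) = 0$, and introduce the blow-up
\[
\bv_r(X) := \frac{\bv((x_0,0) + rX)}{\sqrt{H(x_0,r)}}, \qquad X \in B^+_{R/r}.
\]
The normalisation makes $\|\bv_r\|_{L^2(\partial^+ B^+)} = 1$, and assumption (1) combined with the monotonicity of $N(x_0,\cdot)$ yields $|\bv_r(X)| \leq C(1 + |X|^{\alpha})$ on every compact set for every $\alpha < 1/2$. Extracting $r_n \to 0$, one obtains a nontrivial limit $\bar\bv$, locally uniform and, mimicking Lemma \ref{lem: uniform implies strong convergence local}, strongly $H^1_\loc$-convergent. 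The segregation condition $v_i v_j|_{y=0} = 0$ survives the limit; the bound $|f_i(s)| \leq L|s|$ makes the rescaled Neumann datum $r f_i(\sqrt H \, v_{i,r})/\sqrt H$ vanish uniformly of order $O(r)$; the Pohozaev identity (3) passes to the limit thanks to the strong $H^1_\loc$-convergence and the vanishing of the rescaled $F_i$-terms. Hence $\bar\bv \in \classG_s$. By construction its Almgren frequency is constant and equal to $\gamma(x_0)$, so Theorem \ref{thm:_Almgren_for_classG_s} forces global $\gamma(x_0)$-homogeneity. As $\bar\bv$ inherits, by homogeneity, a global $\C^{0,\alpha}$ bound for every $\alpha < 1/2$, and is non-constant, Theorem \ref{thm: sharp Liouville} forces $\gamma(x_0) \geq 1/2$.

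From $\gamma(x_0) \geq 1/2$ and the differential inequality for $\log H(x_0, r)$, integration yields $H(x_0, r) \leq C r$ for all zero points $x_0$ on a compact subset of $\partial^0 B^+$, uniformly in $x_0$. Harmonicity of $\bv$ in $B^+$ together with the Poisson representation then converts this $L^2$-boundary decay into the pointwise growth $\sup_{B_{r/2}^+(x_0,0)} |\bv - \bv(x_0,0)| \leq C r^{1/2}$; at points where $\bv(x_0,0) \neq 0$, only one component is locally non-trivial and standard boundary regularity for the resulting scalar Neumann problem gives $\C^{0,1/2}$ (and better). A Campanato-type characterisation based on these pointwise estimates then yields the uniform $\C^{0,1/2}$ norm on compact subsets. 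I expect the principal obstacle to be the quantitative control of the $f_i$-perturbation in both the Almgren formula and the blow-up analysis: assumption \eqref{eqn: technical assumption} is tailored exactly so that the error in the monotonicity is integrable near $r=0$ and the rescaled reaction vanishes in the limit, and the strong $H^1_\loc$-convergence of the blow-ups, necessary to pass the Pohozaev identity to the limit, requires an argument analogous to the one in Proposition \ref{prp: compactness in Gs}.
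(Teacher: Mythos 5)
Your proposal diverges from the paper's proof at the step that establishes the lower bound on the frequency at zero points. The paper obtains $N(x_0,0^+)\geq\frac32$ (i.e.\ frequency $\geq 1/2$ after the $+1$ shift) \emph{directly}, without any blow-up: if $e^{Cr^*(1+\psi(r^*))}N(r^*)\leq\frac32-\eps$ for some small $r^*$, then $\frac{\de}{\de r}\log H\leq\frac{1-2\eps}{r}$, which upon integration contradicts the a priori $\C^{0,(1-\eps)/2}$-continuity of $\bv$ at $x_0$ (Lemma~\ref{lem: Almgren monotonicity for the limiting profiles}(3)). You instead propose a blow-up at each $x_0\in\mathcal{Z}$, identification of the limit with an element of $\classG_s$, homogeneity via Almgren, and then Theorem~\ref{thm: sharp Liouville}. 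This is a legitimate alternative route, and it is notable that it re-uses the sharp Liouville theorem where the paper does not. But it is substantially heavier: you need a doubling inequality (à la Lemma~\ref{lem:_blowdown_compactness}) to ensure the blow-up limit is non-trivial, and strong $H^1_\loc$ convergence à la Proposition~\ref{prp: compactness in Gs} to pass the Pohozaev identity to the limit — you flag both, but they are not small details, and the paper's direct argument avoids them entirely.

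Two imprecisions in your Almgren step deserve notice. First, the reaction errors in $E'$ cannot be controlled by an absolute bound $Cr^{2\alpha}$: what enters $N'/N$ is $I/E$, and $E$ itself decays. The errors must be bounded \emph{relative} to $E+H$ through a Poincaré-type inequality (Lemma~\ref{lem: first lower bound on E+aH} and \ref{lem: bound by AC function}), leading to the monotone quantity $e^{Cr(1+\psi(r))}N(r)$ rather than $N(r)+Cr^{2\alpha}$. Second, your inequality $\frac{\de}{\de r}\log H\geq\frac{2\gamma}{r}-Cr^{2\alpha-1}$ yields a \emph{lower} bound $H(r)\gtrsim r^{2\gamma}$, not the needed upper bound $H(r)\leq Cr$; the correct conclusion follows from $N(r)\geq\gamma-o(1)\geq\frac12-o(1)$ together with $\frac{\de}{\de r}\log\frac{H(r)}{r}\geq -C$, integrated upward.

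The genuine gap is the treatment of points $x_0\in\partial^0 B^+$ with $\bv(x_0,0)\neq0$. You dismiss this case as ``standard boundary regularity'' for the resulting scalar Neumann problem, but that is only qualitative: as $x_0$ approaches $\mathcal{Z}$, the domain on which only one component survives shrinks, and Schauder/De~Giorgi constants for the scalar problem degenerate. The paper resolves this via a quantitative doubling estimate (Lemma~\ref{lem:_upperbound_Morrey 2}): it builds a strictly positive harmonic supersolution $\varphi$ with $\partial_\nu\varphi=a\varphi$, proves that $r\mapsto e^{Cr}\,r^{-N}\int_{\partial B_r}\varphi\,|\nabla v_1|^2$ is non-decreasing, and chains $\Phi(x_0,r)\leq C\,\Phi(x_0,d)$ with $d=\dist(x_0,\mathcal{Z})$, so that the Morrey bound at the nearest zero point propagates inward. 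Without some quantitative version of this comparison, the uniform $\C^{0,1/2}$ estimate on compacts does not follow, and your proposed Campanato closing step cannot start.
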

As we mentioned, Theorem \ref{thm: intro_limiting_prof} will follow from the above proposition
by virtue of the following result.
\begin{lemma}
Let $\beta_n\to\infty$ and $\bv_n$ solve problem $\problem{\beta_n}$, for every $n$,
be such that
\[
    \bv_n \to \bv \qquad \text{ in }(H^1\cap \C^{0,\alpha})(K\cap B^+),
\]
for every compact set $K\subset B$ and every $\alpha \in (0,1/2)$. Moreover,
let the corresponding reaction terms $f_{i,n}$ converge, uniformly on compact sets, to the
locally Lipschitz functions $f_i$ satisfying \eqref{eqn: technical assumption}.  Then $\bv$ fulfills the assumptions of the Proposition \ref{prp: 1/2_reg_lim_prof}.
\end{lemma}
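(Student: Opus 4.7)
The plan is to verify the three hypotheses of Proposition \ref{prp: 1/2_reg_lim_prof} for $\bv$ by passing to the limit in the corresponding identities for $\bv_n$. Hypothesis (1) is immediate from the convergence assumption, so the real work is (2) and (3). The argument essentially parallels that of Proposition \ref{prp: approx classG_s}, with the only new feature being that $f_{i,n}\to f_i$ need not be zero in the limit, which introduces the $F_i$ terms in the final Pohozaev identity.

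For hypothesis (2), I would first note that $-\Delta v_i=0$ in $B^+$ because the uniform limit of harmonic functions is harmonic, and that the segregation $v_iv_j|_{\partial^0 B^+}=0$ follows from Lemma \ref{lem:_competition_term_vanishes_in_strong_blowup} combined with the uniform convergence of $\bv_n$. The boundary relation in the weak form $\int\nabla v_i\cdot\nabla(v_i\phi)=\int v_if_i(v_i)\phi$ is then obtained by testing $\problem{\beta_n}$ against $v_{i,n}\phi$ for $\phi\in \C^\infty_0(B)$ and taking $n\to\infty$: strong $H^1$ convergence on $\supp\phi\cap B^+$ handles the left-hand side, uniform convergence of $v_{i,n}f_{i,n}(v_{i,n})$ handles the reaction term, and the competition contribution vanishes by Lemma \ref{lem:_competition_term_vanishes_in_strong_blowup}.

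For hypothesis (3), my strategy is to start from the Pohozaev identity supplied by Lemma \ref{lem: Pohozaev identity} for $\bv_n$ and pass to the limit term by term. Interior gradient integrals converge by strong $H^1$ convergence; the $F_{i,n}(v_{i,n})$ terms converge by uniform convergence of both $\bv_n$ and $f_{i,n}$ on compact sets (which yields uniform convergence of the antiderivatives); and the two competition terms weighted by $\beta_n$ drop out in the limit---the one on $\partial^0 B^+_r$ by the first part of Lemma \ref{lem:_competition_term_vanishes_in_strong_blowup}, and the one on $S_r^{N-1}$ by the second part, valid for a.e. $r$.

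I expect the main technical obstacle to be exactly the one met in the concluding argument of Proposition \ref{prp: approx classG_s}: the gradient integrals on the spherical part $\partial^+B^+_r$ must converge for almost every $r$, not merely in an averaged sense. I plan to handle this by the standard Fubini--dominated-convergence trick: the strong $H^1_\loc$ convergence gives
\[
\int\limits_0^{R}\int\limits_{\partial^+ B_r^+}|\nabla v_{i,n}-\nabla v_i|^2\,\de\sigma\,\de r\to 0,
\]
so along a subsequence the spherical integrals converge pointwise for a.e. $r$ with an $L^1(0,R)$ dominant. Dominated convergence then yields the identity at almost every $r$, and since every subsequence of $\{\bv_n\}$ admits a sub-subsequence producing the same limit, the identity holds along the full sequence, completing the verification of (3).
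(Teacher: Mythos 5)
Your proposal is correct and follows exactly the route the paper intends: the paper's own proof consists of the single sentence that the argument ``follows the line of the one of Proposition \ref{prp: approx classG_s}, with minor changes,'' and your write-up supplies precisely those changes (the non-vanishing $f_{i,n}\to f_i$ producing the $F_i$ terms in the Pohozaev identity, the boundary relation $v_i\partial_\nu v_i = v_if_i(v_i)$ obtained by testing against $v_{i,n}\phi$, and the same Fubini--dominated-convergence device for the spherical integrals). No further comparison is needed.
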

\begin{proof}
The proof follow the line of the one of Proposition  \ref{prp: approx classG_s}, with
minor changes.
\end{proof}
In view of the previous lemma, with a slight abuse of terminology, we will denote as limiting profiles
also functions which simply satisfy the assumptions of Proposition \ref{prp: 1/2_reg_lim_prof}. For
the rest of this section we will denote with $\bv$ a fixed limiting profile.

In the proof of Proposition \ref{prp: 1/2_reg_lim_prof} we shall use a further monotonicity formula of Almgren type. For every $x_0 \in \partial^0 B^+$ and $r > 0$ such that $B_r^+(x_0,0) \subset B^+$, we introduce the functions
\[
    \begin{split}
    E(x_0, r) &:= \frac{1}{r^{N-1}} \left( \int\limits_{B^+_r(x_0,0)}
\tsum_{i} |\nabla v_i|^2 \, \de{x} \de{y} - \int\limits_{\partial^0
B^+_r(x_0,0)} \tsum_{i} f_{i}(v_i) v_i \, \de{x} \right)\\
    H(x_0, r) &:= \frac{1}{r^{N}} \int\limits_{\partial^+ B^+_r(x_0,0)}
\tsum_{i} v_i^2 \, \de\sigma.
    \end{split}
\]
As usual, the function $E(x_0,r)$ admits an equivalent
expression: indeed, multiplying the equation in assumption (2)
by $v_i$, integrating over $B_r^+(x_0,0)$ and summing over $i=1,
\dots, k$ we obtain
\begin{equation}\label{eqn: E and H prime}
    E(x_0,r) = \frac{1}{r^{N-1}} \int\limits_{\partial^+ B^+_r(x_0,0)}
\tsum_{i} v_i \partial_{\nu}v_i \de{\sigma} = \frac{2}{r} H'(x_0, r).
\end{equation}
The presence of internal reaction terms in the definition of the function $E$ has to be dealt with. To this end, the next two lemmas will provide a crucial estimate in order to bound the Almgren quotient. Before we state them, let us recall the following Poincar\'e inequality: for every $p \in [2,p^\#]$,
where $p^\#=2N/(N-1)$ denotes the critical Sobolev exponent for trace embedding (or simply $p\geq2$ in dimension $N=1$), there exists a constant $C_P = C_P(N,p)$ such that, for every $w \in H^{1}(B^+_r)$,
\begin{equation}\label{eqn: modified Poincare}
    \left[\frac{1}{r^{N}} \int\limits_{\partial^0 B_r^+} |w|^p \, \de{x}\right]^{\frac{2}{p}}
\leq C_P \left[\frac{1}{r^{N-1}} \int\limits_{B_r^+} |\nabla w|^2 \, \de{x}
\de{y} + \frac{1}{r^N} \int\limits_{\partial^+ B_r^+} w^2 \, \de{\sigma}
\right]
\end{equation}
(such an inequality follows by the one on $B^+$ by scaling arguments).
\begin{lemma}\label{lem: first lower bound on E+aH}
For every $p \in [2,p^\#]$ there exist constants $C >0$, $\bar{r} >0$ such that
\[
 \left[\frac{1}{r^{N}} \int\limits_{\partial^0 B_r^+} \tsum_i |v_i|^p \, \de{x}\right]^{\frac{2}{p}} \leq C\left[  E(r) + H(r) \right]\quad\text{for every $r \in (0,\bar{r})$}.
\]
\end{lemma}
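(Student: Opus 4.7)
The plan is to combine the trace Poincar\'e inequality \eqref{eqn: modified Poincare} with the identity expressing the Dirichlet energy in terms of $E(r)$, and then to reabsorb the resulting nonlinear correction on $\partial^0 B_r^+$ into the left-hand side for $r$ sufficiently small. Let $A(r)$ denote the left-hand side of the desired inequality. Applying \eqref{eqn: modified Poincare} to each $v_i$ separately, summing over $i$, and invoking the subadditivity of $t\mapsto t^{2/p}$ (valid since $p\geq 2$) to commute the sum past the exponent, I obtain the preliminary bound
\[
A(r)\;\leq\; C_P\Bigl[\,\frac{1}{r^{N-1}}\int_{B_r^+}\tsum_i|\nabla v_i|^2\,\de x\de y\;+\;H(r)\,\Bigr].
\]

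Next, the definition of $E(r)$ yields $\frac{1}{r^{N-1}}\int_{B_r^+}\sum_i|\nabla v_i|^2 = E(r)+\frac{1}{r^{N-1}}\int_{\partial^0 B_r^+}\sum_i f_i(v_i)v_i\,\de x$. Since $\bv\in\C^{0,\alpha}$ on compact subsets of $B$ it is uniformly bounded near the origin, say $|v_i|\leq M$ on $\partial^0 B_r^+$ for $r\leq \bar r_0$; because each $f_i$ is locally Lipschitz with $f_i(0)=0$, this yields $|f_i(v_i)v_i|\leq L\,v_i^2$ on $\partial^0 B_r^+$. A H\"older inequality on $\partial^0 B_r^+$ with exponents $p/2$ and $p/(p-2)$, combined with $|\partial^0 B_r^+|=c_N r^N$, converts this $L^2$-integral into the $L^p$ one at the cost of exactly one extra power of $r$:
\[
\frac{1}{r^{N-1}}\int_{\partial^0 B_r^+}v_i^2\,\de x\;\leq\; C\,r\,\Bigl[\frac{1}{r^N}\int_{\partial^0 B_r^+}|v_i|^p\,\de x\Bigr]^{2/p}\;\leq\; C\,r\,A(r).
\]
Summing over $i$ and chaining the estimates produces $A(r)\leq C_P\bigl[E(r)+C'r\,A(r)+H(r)\bigr]$, and choosing $\bar r:=\min\bigl(\bar r_0,\,1/(2C_PC')\bigr)$ permits the nonlinear term to be absorbed into the left-hand side, giving the claim with constant $C=2C_P$.

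The only delicate point is verifying the dimensional identity behind the H\"older step above: the factors $r^{-(N-1)}$, $r^{N(1-2/p)}$, and $r^{2N/p}$ collapse to exactly $r^{1}$, which is what allows the absorption to succeed uniformly in $r$ (for $r$ below a threshold); any less favourable power of $r$ would destroy the argument, and the assumption $p\leq p^\#$ enters precisely through the applicability of Poincar\'e. Apart from this bookkeeping and the elementary linearization of $f_i$ near zero via the Lipschitz assumption, all ingredients are already at hand.
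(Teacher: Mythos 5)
Your argument is correct and follows essentially the same route as the paper: the trace Poincar\'e inequality \eqref{eqn: modified Poincare} combined with the Lipschitz bound $|f_i(v_i)v_i|\leq C v_i^2$, and an absorption for $r$ small. The only cosmetic difference is that where the paper bounds the reaction term via a second application of \eqref{eqn: modified Poincare} with $p=2$ and then absorbs into the Dirichlet-plus-$H$ quantity, you bound it by $CrA(r)$ via H\"older on $\partial^0 B_r^+$ and absorb into $A(r)$ directly; the dimensional bookkeeping ($r^{N(1-2/p)-(N-1)+2N/p}=r$) and the subadditivity of $t\mapsto t^{2/p}$ for $p\geq 2$ are both handled correctly.
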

\begin{proof}
Since $\bv\in L^\infty(B^+)$, and each $f_i$ is locally Lipschitz continuous with $f_i(0)= 0$,
we have
\begin{equation*}
\begin{split}
    \left|\frac{1}{r^{N-1}}  \int\limits_{\partial^0 B^+_r} \tsum_{i}f_{i}(v_i) v_i \, \de{x} \right|
    &\leq C  \frac{1}{r^{N-1}}\int\limits_{\partial^0 B^+_r} \tsum_{i} v_i^2 \, \de{x}\\
    &\leq C'r \left[\frac{1}{r^{N-1}} \int\limits_{B_r^+} \tsum_{i} |\nabla v_i|^2 \, \de{x}
\de{y} + \frac{1}{r^N} \int\limits_{\partial^+ B_r^+} \tsum_{i} v_i^2 \, \de{\sigma}
\right],
\end{split}
\end{equation*}
where we used inequality \eqref{eqn: modified Poincare} with $p=2$. As a consequence,
\begin{equation}\label{eqn: equazione non eliminabile}
    E(r)+ H(r)\geq
     (1-Cr) \left[\frac{1}{r^{N-1}} \int\limits_{B_r^+} \tsum_{i} |\nabla v_i|^2 \, \de{x}
\de{y} + \frac{1}{r^N} \int\limits_{\partial^+ B_r^+} \tsum_{i} v_i^2 \, \de{\sigma}
\right],
\end{equation}
and the lemma follows by taking into account equation \eqref{eqn: modified Poincare}
and choosing
\(
\bar{r}
\)
sufficiently small.
\end{proof}
For the following lemma we introduce, for  $p \in (2,p^\#]$, the auxiliary function
\[
	\psi(x_0,r) := \left(\frac{1}{r^N} \int \limits_{\partial^0 B^+_r(x_0,0)} \tsum_i |v_i|^p \, \de x \right)^{1-\frac{2}{p}},
\]
which is bounded for $r$ small. We have the following.
\begin{lemma}\label{lem: bound by AC function}
For every $p \in (2,p^\#]$ there exist constants $C >0$, $\bar{r} >0$ such that
\[
   \frac{1}{r^{N-1}} \int \limits_{S_r^{N-1}} \tsum_i |v_i|^p \, \de{\sigma} \leq C  \left[ E(r) + H(r)\right]  \cdot \frac{\de}{\de r}(r \psi(r))\quad\text{for every $r \in (0,\bar{r})$}.
\]
\end{lemma}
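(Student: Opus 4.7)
The plan is to recast the claim as an inequality about the primitive
\[
I(r) := \int_{\partial^0 B^+_r(x_0,0)} \tsum_i |v_i|^p \, \de x,
\]
which satisfies $I'(r) = \int_{S_r^{N-1}(x_0,0)} \tsum_i |v_i|^p \, \de\sigma$, so that the left--hand side of the claim equals $r^{-(N-1)} I'(r)$, while by definition $\psi(x_0,r) = (I(r)/r^N)^{a}$ with $a := 1-2/p \in (0,(p^\#-2)/p^\#]$. The natural move is then to write $r\psi(r) = r^{1-Na}\, I(r)^{a}$ and differentiate directly:
\[
\frac{\mathrm{d}}{\mathrm{d}r}(r\psi(r)) = r^{-Na}\, I(r)^{a-1}\,[(1-Na)\, I(r) + a\, r\, I'(r)].
\]

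The arithmetic key is that $1-Na = 1 - N(1-2/p) \geq 0$ precisely because $p \leq p^\# = 2N/(N-1)$ (with equality at the critical exponent). Both summands inside the brackets are therefore nonnegative, and dropping the first one yields, using $1-a = 2/p$,
\[
I'(r) \;\leq\; \frac{1}{a}\, r^{Na-1}\, I(r)^{2/p}\, \frac{\mathrm{d}}{\mathrm{d}r}(r\psi(r)).
\]

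The final step is to absorb the factor $I(r)^{2/p}$ into $E(r)+H(r)$ via Lemma~\ref{lem: first lower bound on E+aH}: raising both sides of that estimate to the $p/2$--th power gives $I(r)^{2/p}\leq C\, r^{2N/p}\,[E(r)+H(r)]$ for $r<\bar r$. Substituting and using the exponent identity $Na-1+2N/p = N-1$ produces
\[
\frac{1}{r^{N-1}}\, I'(r) \;\leq\; C\,[E(r)+H(r)]\, \frac{\mathrm{d}}{\mathrm{d}r}(r\psi(r)),
\]
which is the statement. The main difficulty — controlling the reaction terms $\int f_i(v_i) v_i$ that contaminate $E(r)$ so that $E+H$ actually dominates an $L^p$--trace norm — was already dealt with in Lemma~\ref{lem: first lower bound on E+aH}. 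What remains is pure exponent bookkeeping, and the specific choice of the power $1-2/p$ in the definition of $\psi$ is precisely what simultaneously forces $1-Na\geq 0$ and yields the cancellation $Na-1+2N/p=N-1$, so that no spurious $r$--scale survives in the final estimate.
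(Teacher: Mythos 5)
Your proof is correct and follows essentially the same route as the paper: differentiate $r\psi(r)$, drop the nonnegative term thanks to $1-N(1-2/p)\geq 0$ (which is exactly $p\leq p^\#$), and invoke Lemma~\ref{lem: first lower bound on E+aH} to absorb $I(r)^{2/p}$ into $E+H$, with the exponents cancelling to leave $r^{N-1}$. One trivial wording slip: you say you raise the estimate of Lemma~\ref{lem: first lower bound on E+aH} to the $p/2$-th power, but the inequality $I(r)^{2/p}\leq C\,r^{2N/p}[E(r)+H(r)]$ that you actually use comes from simply multiplying that estimate by $r^{2N/p}$, not from taking powers.
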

\begin{proof}
A direct computation yields the identity
\[
\begin{split}
\frac{\de}{\de r} \psi(r)  &= \left(1-\frac{2}{p}\right) \psi^{-2/(p-2)}
\left(\frac{1}{r^N} \int \limits_{\partial^0 B^+_r(x_0,0)} \tsum_i |v_i|^p \, \de x \right)'\\
&= \left(1-\frac{2}{p}\right) \psi(r) \frac{
\left(r^{-N} \int \limits_{\partial^0 B^+_r} \tsum_i |v_i|^p \, \de x \right)'}{
r^{-N} \int \limits_{\partial^0 B^+_r} \tsum_i |v_i|^p \, \de x }.
\end{split}
\]
As a consequence we infer
\[
   \frac{\de}{\de r}(r \psi(r)) = \psi(r) \left[ r \left(1-\frac{2}{p}\right)\frac{ \int \limits_{S_r^{N-1}} \tsum_i |v_i|^p \, \de{\sigma}}{  \int \limits_{\partial^0 B^+_r} \tsum_i |v_i|^p \, \de{\sigma}}+\left(1-N\left(1-\frac{2}{p}\right)\right)\right].
\]
Now, $p \leq p^\#$ implies $N\left(1-\frac{2}{p}\right) \leq 1$, so that
\[
    \frac{\de}{\de r}(r \psi(r)) \geq r \psi(r) \left(1-\frac{2}{p}\right)\frac{ \int \limits_{S_r^{N-1}} \tsum_i |v_i|^p \, \de{\sigma}}{  \int \limits_{\partial^0 B^+_r} \tsum_i |v_i|^p \, \de{\sigma}}.
\]
Recalling the definition of $\psi$ and using Lemma \ref{lem: first lower bound on E+aH}, we finally obtain
\[
   ( E(r) + H(r)) \frac{\de}{\de r}(r \psi(r)) \geq C \frac{1}{r^{N-1}} \int \limits_{S_r^{N-1}} \tsum_i |v_i|^p \, \de{\sigma},
\]
where, since $p>2$, $C>0$.
\end{proof}
As a matter of fact, we need to estimate the Almgren quotient only on the zero set of $\bv$ (which is well defined since $\bv$ is continuous).
\begin{definition}
We define the boundary zero set of the limiting profile $\bv$ as
\[
    \mathcal{Z} = \{x \in \partial^0B^+: \bv(x,0) = 0 \}.
\]
\end{definition}

\begin{remark}
A natural notion of free boundary, associated to a limiting profile $\bv$, is the set in which the boundary condition of assumption (2) does not reduce to
\[
    \partial_{\nu} v_i = f_i(v_i), v_j \equiv 0 \quad \text{for some } j \neq i,
\]
that is, \emph{a posteriori}, the support of the singular part of the measure $\partial_{\nu} \mathbf{v}$. It is then clear that the free boundary is a subset of $\mathcal{Z}\subset\R^N$.
\end{remark}
We are now in a position to state the Almgren type result which we use in this framework. As we mentioned, we prove it only at points of $\mathcal{Z}$; furthermore, it concerns boundedness of a (modified)
Almgren quotient, rather than its monotonicity. More precisely, let us consider the function
\[
N(x_0,r) := \frac{E(x_0,r)}{H(x_0, r)} + 1.
\]
We have the following.
\begin{lemma}\label{lem: Almgren monotonicity for the limiting profiles}
There exist constants $C>0$, $\bar{r}>0$ such that, for every $x_0 \in {\mathcal{Z}}$, $r \in (0, \bar r)$ and $B^+_r(x_0,0) \subset B^+$, we have:
\begin{enumerate}
 \item $H(r)>0$, $N(r)>0$  on $(0, \bar r)$;
 \item the function $r \mapsto e^{Cr(1+\psi(r))}N(x_0,r)$ is monotone non decreasing;
 \item $N(x_0,0^+)\geq 1+\dfrac12$.
\end{enumerate}
\end{lemma}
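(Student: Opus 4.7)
The plan pivots on computing $d(\log N)/dr$ and playing the main term against an exponential corrector carrying the internal dynamics. From \eqref{eqn: E and H prime} one gets $H'/H = 2(N-1)/r$; differentiating $E(r) = r^{1-N}[\int_{B^+_r}\tsum_i|\nabla v_i|^2 - \int_{\partial^0 B_r^+}\tsum_i v_if_i(v_i)]$ and applying the Pohozaev identity of assumption (3) to eliminate $\int_{\partial^+B_r^+}\tsum_i|\nabla v_i|^2$ yields
\[
E'(r) = \frac{2}{r^{N-1}}\int\limits_{\partial^+ B_r^+}\tsum_i|\partial_\nu v_i|^2\,\de\sigma + \mathrm{err}(r),
\]
where
\[
\mathrm{err}(r) = \frac{1}{r^N}\int\limits_{\partial^0 B_r^+}\tsum_i[(N-1)v_if_i(v_i) - 2NF_i(v_i)]\,\de x + \frac{1}{r^{N-1}}\int\limits_{S_r^{N-1}}\tsum_i[2F_i(v_i)-v_if_i(v_i)]\,\de\sigma.
\]
Cauchy--Schwarz on $\partial^+ B_r^+$ gives $\int_{\partial^+ B_r^+}\tsum_i|\partial_\nu v_i|^2 \geq r^{N-2}E^2/H$, and the algebraic identity $N'/N = (E' - 2E^2/(rH))/(E+H)$, which follows from $N = E/H + 1$ and $H'=2E/r$, then reduces the problem to bounding $\mathrm{err}(r)/(E+H)$ from below.

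For the error itself I would combine three ingredients. Since $f_i$ is Lipschitz with $f_i(0)=0$, the first summand of $\mathrm{err}(r)$ is bounded in modulus by $Cr^{-N}\int_{\partial^0 B_r^+}\tsum_i v_i^2$, which by the Poincar\'e inequality \eqref{eqn: modified Poincare} with $p=2$ and equation \eqref{eqn: equazione non eliminabile} is at most $C(E+H)$. Assumption \eqref{eqn: technical assumption} gives $2F_i(v_i)-v_if_i(v_i) \geq -C|v_i|^{2+\varepsilon}$, and Lemma \ref{lem: bound by AC function} with $p = 2+\varepsilon$ then bounds $r^{1-N}\int_{S_r^{N-1}}\tsum_i|v_i|^{2+\varepsilon}$ by $C(E+H)\,\tfrac{d}{dr}(r\psi(r))$. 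Putting the pieces together gives $\mathrm{err}(r) \geq -C(E+H)[1+(r\psi)']$, whence $N'/N \geq -C[1+(r\psi)']$. Since $\tfrac{d}{dr}\log e^{Cr(1+\psi)} = C[1+(r\psi)']$, choosing the constant in the exponent at least as large as the one produced by the error estimate yields $\tfrac{d}{dr}\log(e^{Cr(1+\psi)}N) \geq 0$, proving (2). Part (1) then drops out: \eqref{eqn: equazione non eliminabile} gives $E+H>0$ for small $r$, hence $N>0$, while $E \geq -CrH$ yields $H'/H \geq -2C$, so $H$ cannot collapse to zero once positive; non-triviality of $\bv$ together with real analyticity of its harmonic components secures the initial positivity.

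For (3) I would run a blow-up at $x_0$, setting $\bv_r(X) := \bv(x_0 + rX)/\sqrt{H(x_0,r)}$. The normalization gives $H_{\bv_r}(0,1) = 1$, and the rescaled boundary terms $v_{i,r}\partial_\nu v_{i,r} = r v_if_i(v_i)/H(x_0,r) \leq Cr v_{i,r}^2$ vanish on bounded sets. Uniform $L^\infty$ estimates obtained via the Poisson representation (exactly as in Lemma \ref{lem:_blowdown_compactness}) combined with the convergence machinery of Proposition \ref{prp: approx classG_s} produce a subsequential limit $\bar\bv \in \classG_s$ in $(H^1\cap \C^{0,\alpha})_\loc$. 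The monotonicity in (2) guarantees existence of $N(x_0,0^+) =: \gamma + 1$, and by scaling $\mathcal{N}_{\bar\bv}(0,\rho) := E_{\bar\bv}(\rho)/H_{\bar\bv}(\rho) \equiv \gamma$, so Theorem \ref{thm:_Almgren_for_classG_s} makes $\bar\bv$ homogeneous of degree $\gamma$. Since $\bar\bv(0,0)=0$ (as $x_0 \in \mathcal{Z}$) while $H_{\bar\bv}(0,1) = 1$, $\bar\bv$ is non-constant, and $\gamma$-homogeneity together with local H\"older regularity upgrades it to $\bar\bv \in \classG_s \cap \C^{0,\gamma}\left(\overline{\R^{N+1}_+}\right)$ globally. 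Theorem \ref{thm: sharp Liouville} then forces $\gamma \geq 1/2$, that is $N(x_0,0^+) \geq 1 + 1/2$. The principal obstacle is the error bookkeeping in (2): it is precisely the matching of the exponent $2+\varepsilon$ in \eqref{eqn: technical assumption} with the admissible range $p \in (2, p^\#]$ of Lemma \ref{lem: bound by AC function} that lets the singular surface integral over $S_r^{N-1}$ be absorbed into the weight $e^{Cr(1+\psi)}$, while the shift $N = E/H + 1$ (in place of the bare $E/H$) is what shields the logarithmic derivative from the transient sign changes that the reaction term can induce in $E(r)$ near $r=0$.
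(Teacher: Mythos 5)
Your treatment of part (2) is essentially the paper's: compute $E'$ via the Pohozaev identity, bound the two reaction-error terms through Lemma \ref{lem: first lower bound on E+aH} (with $p=2$) and Lemma \ref{lem: bound by AC function} (with $p=\min\{2+\eps,p^\#\}$), and absorb the result into the exponential corrector. That part is sound.

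Part (1) contains a genuine error of direction. You derive $E\geq -CrH$, hence $H'/H\geq -2C$, and claim this prevents $H$ from collapsing ``once positive.'' But a \emph{lower} bound on $H'/H$ only prevents $H$ from vanishing as $r$ \emph{increases}; it says nothing as $r$ decreases (take $H(r)=(r-r_1)_+^k$, which satisfies $H'/H\geq 0$ and yet is identically zero on $(0,r_1]$). What is needed, and what the paper uses, is an \emph{upper} bound on $H'/H=2(N-1)/r$, obtained from the upper bound $N(r)\leq e^{Cr^*(1+\psi(r^*))}N(r^*)$ that part (2) supplies; integrating $\frac{\de}{\de r}\log H$ between $r$ and $r^*$ as in Theorem \ref{thm:_Almgren_for_classG_s} then bounds $H(r)$ from below and rules out a largest zero of $H$ in $(0,\bar r)$. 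Your opening step ``$E+H>0$ hence $N>0$'' is also circular as stated, since $N$ is only defined where $H>0$; one must first secure $H>0$, after which Lemma \ref{lem: first lower bound on E+aH} yields $E+H\geq 0$ and thus $N\geq 0$.

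For part (3) your blow-up route is conceptually reasonable but much heavier than what the paper does, and it has nontrivial gaps. The compactness machinery you invoke (Lemma \ref{lem:_blowdown_compactness}, Proposition \ref{prp: approx classG_s}) is stated for entire profiles or for sequences solving $\problem{\beta_n}$; here the $\bv_r$ solve the \emph{limit} boundary equation with a nonzero reaction term, so one would need to re-derive uniform local H\"older bounds, the $H^1_{\loc}$ strong convergence, the passage to the limit of the Pohozaev identities, and the homogeneity of the limit, all in this modified setting. The paper avoids all of this: under the contradiction hypothesis $e^{Cr^*(1+\psi(r^*))}N(r^*)\leq \tfrac32-\eps$, part (2) gives $\frac{\de}{\de r}\log H\leq (1-2\eps)/r$ on $(0,r^*)$, so $H(r)\geq cr^{1-2\eps}$; on the other hand the standing hypothesis $\bv\in\C^{0,\alpha}$ for every $\alpha<1/2$ together with $\bv(x_0,0)=0$ gives $H(r)\leq Cr^{2\alpha}=Cr^{1-\eps}$ with $\alpha=(1-\eps)/2$, a contradiction as $r\to0$. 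This direct integration argument is exactly the one outlined for Lemma \ref{lem: first consequence classG_s}, and it is what you should use here.
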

\begin{proof}
The proof is similar to the one of Theorem \ref{thm:_Almgren_for_classG_s}, but in this case the internal reaction terms  do not vanish. Let $x_0 \in {\mathcal{Z}}$ and let $\bar{r}$ be such that both Lemma \ref{lem: first lower bound on E+aH} and Lemma \ref{lem: bound by AC function} hold. First, we ensure that the Almgren quotient, where defined, is non negative. Indeed, by Lemma \ref{lem: first lower bound on E+aH},
\[
    E(r) + H(r) \geq 0  \implies N(r) = \frac{E}{H} + 1 \geq 0,
\]
whenever $H(r) \neq 0$. By continuity of $H$ we can consider, as in the proof of Theorem \ref{thm:_Almgren_for_classG_s}, a neighborhood of $r$ where $H$ does not vanish.
We compute the derivative of $E$ and we use the Pohozaev identity (assumption
(3) of Proposition \ref{prp: 1/2_reg_lim_prof}), to obtain
\begin{multline*}
    E'(r)= \frac{1-N}{r^{N}} \left( \int\limits_{B^+_r}
\tsum_{i} |\nabla v_i|^2 \, \de{x} \de{y} - \int\limits_{\partial^0
B^+_r} \tsum_{i} v_if_{i}(v_i) \, \de{x} \right)\\ + \frac{1}{r^{N-1}} \left( \int\limits_{\partial^+ B^+_r}
\tsum_{i} |\nabla v_i|^2 \, \de{x} \de{y} - \int\limits_{ S^{N-1}_r} \tsum_{i} v_i f_{i}(v_i) \, \de{x} \right)\\
= \underbrace{\frac{2}{r^{N-1}} \int\limits_{\partial^+B^+_r }  \tsum_{i} |\partial_{\nu} v_i|^2 \,\de{\sigma}}_{T} + \underbrace{\frac{1}{r^{N}} \int\limits_{\partial^0
B^+_r} \left[(N-1) \tsum_{i} v_if_{i}(v_{i}) - 2N \tsum_{i}
F_{i}(v_{i})\right] \, \de{x}}_{I} \\ + \underbrace{\frac{1}{r^{N-1}} \int\limits_{S_r^{N-1}} \left[- \tsum_{i}
v_i f_{i}(v_{i}) + 2 \tsum_{i} F_{i}(v_{i})\right] \, \de{\sigma}}_{Q}.
\end{multline*}
Since $\bv \in L^{\infty}$, $f_i$ are locally Lipschitz and $f_i(0) = 0$, there exists a positive constant $C$, such that
\begin{equation*}
    |f(v_i)v_i| \leq C v_i^2 \text{ and }  |F(v_i)| \leq C v_i^2.
\end{equation*}
The direct application of Lemma \ref{lem: first lower bound on E+aH} (with $p=2$) provides
\[
I \geq - C (E+H).
\]
On the other hand, by assumption \eqref{eqn: technical assumption} and Lemma \ref{lem: bound by AC function} (it is sufficient to choose $p = \min\{2+\eps, p^\#\}$), we obtain
\[
     Q \geq -C(E+ H) (r \psi)'.
\]
The two estimates yield
\[
  E'\geq T - C\left[ 1 +  (r \psi)' \right](E+H).
\]
Therefore, differentiating the Almgren quotient and using the Cauchy-Schwarz inequality, we obtain
\[
    \frac{N'}{N} =  \frac{E'+H'}{E+H} - \frac{H'}{H} \geq  \frac{TH - E H'}{H(E+H)}- C\left[ 1 +
    (r \psi)' \right] \geq - C\left[ 1 +  (r \psi)' \right],
\]
which implies that the function $e^{Cr(1+\psi(r))} N(r)$ is non decreasing as far as $H(r)\neq0$.
Equation \eqref{eqn: E and H prime} directly implies
\[
    \frac{\mathrm{d}}{\mathrm{d}r} \log H(r) = \frac{H'(r)}{H(r)} = \frac{2E(r)}{rH(r)} = \frac{2(N(r)-1)}{r};
\]
reasoning as  in the proof of Theorem  \ref{thm:_Almgren_for_classG_s}, we can use this formula,
together with the bound
\[
N(r) \leq e^{Cr^*(1+\psi(r^*))}N(r^*)\quad\text{ for every  }r \leq r^*,
\]
in order to obtain the strict positivity of
$H$ for $r \in (0,\bar r)$ (for a possibly smaller $\bar r$). Finally, reasoning as in the proof of
Lemma \ref{lem: first consequence classG_s}, part (2), let us assume by contradiction that, for some
$r^*<\bar r$ and $\eps>0$, $e^{Cr^*(1+\psi(r^*))}N(r^*)\leq \frac32-\eps$. By the above bound
we obtain that
\[
\begin{split}
 \frac{\mathrm{d}}{\mathrm{d}r} \log H(r)
       \leq \frac{2(e^{Cr^*(1+\psi(r^*))}N(r^*) - 1)}{r} \leq \frac{1-2\eps}{r}
\end{split}
\]
for every $r\in(0,r^*)$. But this is in contradiction with the fact that $\bv$ is in
$\C^{0,\alpha}$ for $\alpha=(1-\eps)/2$.
\end{proof}
The proof Proposition \ref{prp: 1/2_reg_lim_prof} is based on a contradiction argument, involving Morrey inequality. Indeed, let $K \subset B$ be compact, and let us define, for every $X\in K\cap \{y\geq0\}$ and every $r<\dist(K, \partial B)$, the function
\[
    \Phi(X,r):=\frac{1}{r^N} \int\limits_{B_{r}(X) \cap \{y > 0\}} \tsum_{i} |\nabla v_i|^2 \, \de{x} \de{y} .
\]
It is well known that if $\Phi$ is bounded then $\bv\in \C^{0,1/2}(K\cap B^+)$.

As a consequence of Lemma \ref{lem: Almgren monotonicity for the
limiting profiles}, we can prove a first estimate on $\Phi$.
\begin{lemma}\label{lem:_upperbound_Morrey}
For every compact $K\subset B$ there exists constants $C>0$, $\bar r>0$, such that for every $x_0 \in
{\mathcal{Z}\cap K}$ and $r \in (0, \bar r)$, it holds
\[
    \Phi(x_0,r) \leq C.
\]
\end{lemma}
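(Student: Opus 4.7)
The plan is to combine the Almgren monotonicity of Lemma \ref{lem: Almgren monotonicity for the limiting profiles} with a Caccioppoli-type energy estimate, in three short steps.

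First, I would prove the uniform density bound $H(x_0, r) \leq Cr$ for $r\in(0,\bar r)$ and $x_0\in\mathcal{Z}\cap K$, where $K\subset B$ is compact. Since $\bv\in L^\infty$ forces $\psi(x_0,\cdot)$ to be uniformly bounded, the monotonicity of $e^{Cr(1+\psi(r))} N(x_0,r)$ together with the lower bound $N(x_0,0^+)\geq 3/2$ from Lemma \ref{lem: Almgren monotonicity for the limiting profiles} yield, uniformly in $x_0$,
\[
N(x_0,r)-1 \geq \tfrac12 - C'r, \qquad r\in(0,\bar r).
\]
Plugging this into the identity $(\log H)'(r)=2(N(r)-1)/r$ coming from \eqref{eqn: E and H prime} and integrating over $[r,\bar r]$ gives $H(x_0,r) \leq e^{2C'\bar r}(r/\bar r)H(x_0,\bar r)$; since $H(x_0,\bar r)\leq\|\bv\|_{L^\infty}^2 |\partial^+ B^+|$ uniformly in $x_0$, the claim follows.

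Second, I would integrate in polar coordinates to obtain the volumetric bound $\int_{B_r^+(x_0,0)} \tsum_i v_i^2 \,\de x\de y= \int_0^r s^N H(x_0,s)\,\de s\leq C r^{N+2}$, and then close with a standard Caccioppoli inequality: testing the boundary value problem for $v_i$ with $\eta^2 v_i$, for a cutoff $\eta\in\C^\infty_0(B_r)$ with $\eta\equiv 1$ on $B_{r/2}$ and $|\nabla\eta|\leq C/r$, using the harmonicity of $v_i$, the identity $v_i\partial_\nu v_i=v_if_i(v_i)$ on $\partial^0 B^+$, the bound $|v_if_i(v_i)|\leq Cv_i^2$, and Young's inequality to absorb the gradient term, one obtains
\[
\int_{B_{r/2}^+(x_0,0)} |\nabla v_i|^2 \,\de x\de y\leq \frac{C}{r^2}\int_{B_r^+(x_0,0)} v_i^2 \,\de x\de y+ C\int_{\partial^0 B_r^+(x_0,0)} v_i^2\,\de x.
\]
Inserting the volumetric bound just proved and the trivial estimate $\int_{\partial^0 B_r^+} v_i^2 \leq \|\bv\|_\infty^2 c_N r^N$ yields $\int_{B_{r/2}^+} \tsum_i |\nabla v_i|^2 \leq Cr^N$, that is $\Phi(x_0,r/2)\leq 2^N C$, which is the desired estimate after relabelling $\bar r$.

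The main delicate point is the uniformity in $x_0\in\mathcal{Z}\cap K$ of all constants involved. This reduces to the uniformity of the constants in Lemma \ref{lem: Almgren monotonicity for the limiting profiles}, which in turn boils down to the $L^\infty$ control on $\bv$ and to the local Lipschitz bound on the $f_i$ near zero; no Liouville-type classification is needed, and the whole argument is essentially a direct computation.
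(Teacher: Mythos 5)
Your argument is correct, and the overall strategy (control $H(x_0,r)$ via the Almgren-type monotonicity of Lemma \ref{lem: Almgren monotonicity for the limiting profiles}, then pass to a bound on the Dirichlet energy) is the same as the paper's. The two proofs agree verbatim on the first step: from $N(x_0,0^+)\geq 3/2$, the monotonicity of $e^{Cr(1+\psi)}N$, and the uniform boundedness of $\psi$, one deduces $(\log H)'\geq (1-Cr)/r$ and hence $H(x_0,r)\leq Cr$ on $(0,\bar r)$, with the uniform-in-$x_0$ bound $H(x_0,\bar r)\leq k\|\bv\|^2_\infty|\partial^+B^+|$. Where you diverge is the second step. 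The paper instead observes that, since $E+H=N H$, the Poincar\'e-type inequality \eqref{eqn: equazione non eliminabile} yields $r\Phi(x_0,r)\leq C H(x_0,r)$ \emph{provided} one also has a uniform upper bound $N(x_0,r)\leq C$; combined with $H(r)/r\leq C$ this closes the estimate in one line. You replace this by a Caccioppoli inequality: testing the weak formulation $\int\nabla v_i\cdot\nabla(v_i\phi)=\int_{\partial^0}v_if_i(v_i)\phi$ with $\phi=\eta^2$, using $|v_if_i(v_i)|\leq Cv_i^2$ (which holds since $f_i$ is Lipschitz near $0$ with $f_i(0)=0$ and $\bv\in L^\infty$), and inserting the volumetric bound $\int_{B_r^+}\tsum v_i^2\leq Cr^{N+2}$ coming from $H\leq Cr$. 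Both routes are valid; yours is slightly more elementary and has the small bonus that it does \emph{not} need the uniform upper bound $N(x_0,r)\leq C$ — which, to be fully justified uniformly in $x_0\in\mathcal{Z}\cap K$, would require a positive lower bound on $H(x_0,\bar r)$ over that set, a point the paper's one-line assertion leaves implicit (it does follow from continuity of $H(\cdot,\bar r)$, positivity from the Almgren lemma, and compactness of $\mathcal{Z}\cap K$). Conversely, the paper's argument stays entirely inside the Almgren framework already set up and avoids a separate Caccioppoli computation. In short: same key ingredient, different closing mechanism, and your version quietly sidesteps one implicit compactness step.
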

\begin{proof}
If $\bar r$ is sufficiently small, from Lemma \ref{lem: Almgren monotonicity for the limiting profiles} we know that
\[
 \frac32 e^{-Cr(1+\psi(r))} \leq  N(r)  \leq  C
\]
for every $r \in (0,\bar r)$. Since $E+H=NH$, equation \eqref{eqn: equazione non eliminabile} implies that
\begin{equation*}
    \frac{1}{r^{N}} \int\limits_{B^+_r(x_0,0)} \tsum_{i} |\nabla v_i|^2 \,
\de{x} \de{y} \leq C \frac{H(r)}{r}.
\end{equation*}
On the other hand, by the lower estimate on $N$,
\[
    \frac{\mathrm{d}}{\mathrm{d} r} \log \frac{H(r)}{r} 
    \geq  3 \frac{e^{-Cr(1+\psi(r))}-1}{r} \geq -3C(1+\psi(r)) \geq - C.
\]
Integrating, we obtain
\[
\frac{H(r)}{r} \leq e^{C\bar r}\frac{H(\bar r)}{\bar r},
\]
and the lemma follows.
\end{proof}
The above result can be complemented by the following lemma.
\begin{lemma}\label{lem:_upperbound_Morrey 2}
For every compact $K\subset B$ there exist constants, $C>0$, $\bar r>0$, such that for every $x_0 \in
{(K\cap\{y=0\})\setminus\mathcal{Z}}$ and
\[
0< r <d:=\min\{\dist(x_0,\mathcal{Z}), \bar r\},
\]
it holds
\[
   \Phi(x_0,d)\geq C \Phi(x_0,r).
\]
\end{lemma}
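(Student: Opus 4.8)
The plan is to prove that the rescaled Morrey quantity $\rho\mapsto \Phi(x_0,\rho)/\rho$ is \emph{almost monotone non-decreasing} on $(0,d)$, i.e. $\Phi(x_0,r)/r\le C\,\Phi(x_0,d)/d$ with $C$ depending only on $K$; since $r<d$ this gives $\Phi(x_0,d)\ge \frac1C\Phi(x_0,r)$ at once (and even the stronger $\Phi(x_0,d)\ge \frac{d}{Cr}\Phi(x_0,r)$). The first reduction is that on $B^+_d(x_0,0)$ the profile $\bv$ solves a \emph{one-phase} problem. Indeed, since $d\le\dist(x_0,\mathcal Z)$, the flat disk $\partial^0B^+_d(x_0,0)$ does not meet $\mathcal Z$, so at each of its points some component is nonzero; by the segregation condition $v_iv_j|_{\{y=0\}}=0$ only one component can be nonzero at a given point, and $\partial^0B^+_d(x_0,0)$ being connected the active index is globally the same, say $i=1$. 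Hence $v_1\neq0$, $v_j\equiv0$ on $\partial^0B^+_d(x_0,0)$ for $j\neq1$, and on $B^+_d(x_0,0)$ the boundary conditions of assumption (2) reduce to the Neumann condition $\partial_\nu v_1=f_1(v_1)$ (with $f_1$ locally Lipschitz, bounded, $f_1(0)=0$) together with $v_j=0$ on $\partial^0B^+_d(x_0,0)$ and $v_j$ harmonic in $B^+_d(x_0,0)$ for $j\neq1$. I would also note that the proof of Lemma~\ref{lem: Almgren monotonicity for the limiting profiles}, parts (1)--(2), uses only $\bv\in L^\infty$, the equations, the Pohozaev identity of assumption (3), and Lemmas~\ref{lem: first lower bound on E+aH}--\ref{lem: bound by AC function}, none of which requires $x_0\in\mathcal Z$; thus, shrinking $\bar r$ uniformly on $K$, one may use $H(x_0,\cdot)>0$ and $N(x_0,\cdot)\ge 1-C\rho>0$ on $(0,\bar r)$ (the lower bound coming from \eqref{eqn: equazione non eliminabile}), with all constants uniform in $x_0\in K$.

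The core of the argument is the differential inequality $\frac{\de}{\de\rho}\log\bigl(D(\rho)\rho^{-(N+1)}\bigr)\ge -g(\rho)/\rho$ with $g(\rho)\to0$ integrably, where $D(\rho):=\int_{B^+_\rho(x_0,0)}\tsum_i|\nabla v_i|^2$, so that $\Phi(x_0,\rho)=D(\rho)\rho^{-N}$ and $D'(\rho)=\int_{\partial^+B^+_\rho(x_0,0)}\tsum_i|\nabla v_i|^2\,\de\sigma$; equivalently, I aim at
\[
D(\rho)\ \le\ \frac{\rho}{N+1}\,D'(\rho)\ +\ \bigl(\text{lower-order boundary terms}\bigr),\qquad\rho\to 0 .
\]
For the inactive components $v_j$ ($j\neq1$), the odd reflection across $\{y=0\}$ is harmonic in $B_d(x_0,0)$, hence $|\nabla v_j|^2$ is subharmonic there, and the classical monotonicity $\rho\mapsto\rho^{-(N+1)}\int_{B_\rho}w$ (for $w\ge0$ subharmonic in a ball of $\R^{N+1}$, equivalently $\int_{B_\rho}w\le\frac{\rho}{N+1}\int_{\partial B_\rho}w$) gives $\int_{B^+_\rho(x_0,0)}|\nabla v_j|^2\le\frac{\rho}{N+1}\int_{\partial^+B^+_\rho(x_0,0)}|\nabla v_j|^2$ with \emph{no error}. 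For the active component, $|\nabla v_1|^2$ is still subharmonic in the interior (Bochner, since $v_1$ is harmonic), and the only defect is a boundary integral on $\partial^0B^+_\rho(x_0,0)$ generated by the Neumann datum $f_1(v_1)$; using that $f_1$ is Lipschitz with $f_1(0)=0$, assumption \eqref{eqn: technical assumption}, the trace Poincaré inequality \eqref{eqn: modified Poincare}, and the regularity of the one-phase Neumann problem (after rescaling $X\mapsto(X-(x_0,0))/d$ the reaction term has size $O(d)$, so $v_1$ is uniformly $\C^{1,\alpha}$ on the half-ball of half-radius, with norms controlled by $\bar m$), this defect is genuinely of lower order relative to $D(\rho)$ and yields the $-g(\rho)/\rho$ term. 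Integrating from $r$ to $d$ then gives $\Phi(x_0,d)/d\ge e^{-\int_0^d g/\rho}\,\Phi(x_0,r)/r$, and the lemma follows with uniform constants.

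The step I expect to be the main obstacle is exactly this last estimate for the active component $v_1$: differentiating $|\nabla v_1|^2$ along $\partial^0$ produces, a priori, second derivatives of $v_1$, which are not controlled by the Dirichlet energy alone, and moreover the naive bound of the defect by $C\rho\,(\rho\Phi(x_0,\rho)+H(x_0,\rho))$ is \emph{not} automatically $o(1)\,D(\rho)$ when $D(\rho)$ is small. The remedy is to trade the second derivatives away via the $\C^{1,\alpha}$ estimate for the rescaled one-phase problem (exploiting that a harmonic function with small Dirichlet energy and bounded sup-norm has, perforce, a small normal datum), so that only first derivatives and the factor $f_1(v_1)=O(|v_1|)$ survive in the boundary term, which is then absorbed through \eqref{eqn: modified Poincare} and \eqref{eqn: technical assumption}; this is precisely where the technical hypothesis on the $f_i$ is used. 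As a cross-check, in the model case $f_i\equiv0$ each $|\nabla v_i|^2$ is genuinely subharmonic after even/odd reflection, the defect vanishes identically, and one gets the clean statement that $\rho\mapsto\Phi(x_0,\rho)/\rho$ is non-decreasing, so that $\Phi(x_0,d)\ge\frac dr\Phi(x_0,r)\ge\Phi(x_0,r)$.
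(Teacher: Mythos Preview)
Your plan and the paper's proof agree on the overall architecture: after the one-phase reduction (only $v_1$ active on $\partial^0 B^+_d(x_0,0)$), you treat the inactive components by odd reflection and use the subharmonicity of $|\nabla v_j|^2$ to get the clean inequality
\[
\frac{1}{r^{N}}\int_{B^+_r}|\nabla v_j|^2 \le \frac{r}{d}\cdot\frac{1}{d^{N}}\int_{B^+_d}|\nabla v_j|^2,\qquad j\neq1,
\]
which is exactly equation \eqref{eqn: phi fuori da Z} in the paper. The discrepancy is entirely in how you handle $u:=|\nabla v_1|^2$.

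The paper does \emph{not} try to estimate the boundary defect as a perturbation. Instead it notes that $u$ satisfies $-\Delta u\le 0$ in $B^+_d$ together with a Robin-type condition $\partial_\nu u\le a\,u$ on $\partial^0B^+_d$, with $a=2\|f_1'(v_1)\|_{L^\infty}$, and then removes the Robin term by a ground-state transformation: it fixes a positive smooth $\varphi$ with $-\Delta\varphi=0$ in $B^+_{\bar r}$ and $\partial_\nu\varphi=a\varphi$ on $\partial^0B^+_{\bar r}$ (this exists once $\bar r$ is small), so that
\[
-\div\!\big(\varphi^2\nabla(u/\varphi)\big)\le 0\quad\text{in }B^+_d,\qquad \varphi^2\,\partial_\nu(u/\varphi)\le 0\quad\text{on }\partial^0B^+_d.
\]
After even reflection this is a clean divergence-form subsolution on the full ball, and the weighted spherical averages $r^{-N}\!\int_{\partial B_r}\varphi u$ are almost monotone (with an innocuous $e^{Cr}$ factor coming from $\partial_\nu\varphi/\varphi$). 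Integrating in $r$ then gives the analogue of \eqref{eqn: phi fuori da Z} for $v_1$. Note that only the local Lipschitz bound on $f_1$ is used here; neither the Pohozaev identity, Lemmas~\ref{lem: first lower bound on E+aH}--\ref{lem: bound by AC function}, nor assumption~\eqref{eqn: technical assumption} play any role in this lemma.

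The gap in your approach is the step you yourself flag. You need the boundary defect to be controlled by $g(\rho)\,D(\rho)$ with $\int_0^{\bar r} g(\rho)/\rho\,\de\rho<\infty$, but your ``remedy'' via $\C^{1,\alpha}$ estimates only yields a \emph{uniform} bound $|\nabla v_1|\le C$ on $\partial^0B^+_\rho$; this makes the defect of order $\rho^N$, whereas $D(\rho)$ can be much smaller (think of $v_1$ close to a nonzero constant, where $D(\rho)$ is tiny while $|v_1|$, $f_1(v_1)$ and hence the defect are not). The ratio defect$/D(\rho)$ is then uncontrolled and the differential inequality $(\log(D(\rho)\rho^{-(N+1)}))'\ge -g(\rho)/\rho$ does not follow. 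The heuristic ``small Dirichlet energy $\Rightarrow$ small normal datum'' is false in this setting, since the normal datum is $f_1(v_1)$, determined by the value of $v_1$, not by its gradient. What is missing is precisely the idea of absorbing the Robin condition into a weight via the auxiliary $\varphi$, which converts the problem into one where the even reflection is subharmonic in a divergence-form sense and no defect appears at all.
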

\begin{proof}
Since $x_0\not\in\mathcal{Z}$ and $r \leq \dist(x_0,\mathcal{Z})$, we can assume that
$v_j \equiv 0$ on $\partial^0 B^+_r{(x_0,0)}$ for, say, $j\geq 2$. As a consequence,
the odd extension of $v_j$ across $\{y=0\}$ is harmonic on $B_r(x_0,0)$, and the
mean value property applied to the subharmonic function $|\nabla v_j|^2$ provides
\begin{equation}\label{eqn: phi fuori da Z}
\frac{1}{r^N} \int\limits_{B^+_{r}(x_0,0)} |\nabla v_j|^2 \, \de{x} \de{y}
\leq  \frac{r}{d}
\frac{1}{d^N} \int\limits_{B^+_{d}(x_0,0)} |\nabla v_j|^2 \, \de{x} \de{y},
\quad\text{ for every }j\geq2.
\end{equation}
We now show that a similar estimate holds true also for $v_1$.
Indeed, let $ u :=|\nabla v_1|^2$; by a straightforward computation, we have that
\[
\begin{cases}
    - \Delta  u  \leq 0 &\text{ in } B_{d}^+\\
    \partial_{\nu}  u  \leq a  u  &\text{ in }\partial^0 B_{d}^+,
\end{cases}
\]
where $a := 2 ||f_1'(v_1)||_{L^{\infty}(B^+)}$ is bounded by assumption. Now, by scaling, one can show
that if $\bar r = \bar r(a)$ is sufficiently small, then the equation
\begin{equation*}
\begin{cases}
    - \Delta \varphi = 0 &\text{ in }B_{\bar{r}}^+\\
    \partial_{\nu} \varphi = a \varphi &\text{ on } \partial^0 B_{\bar{r}}^+
\end{cases}
\end{equation*}
admits a strictly positive (and smooth) solution. By the definition of $d$ we deduce that
\[
\begin{cases}
    - \div\left( \varphi^2 \nabla\frac{ u }{\varphi} \right) \leq 0 &B_{d}^+\\
    \varphi^2 \partial_{\nu} \frac{ u }{\varphi} \leq 0  &\partial^0 B_{d}^+,
\end{cases}
\]
so that the even extension of $ u $ is a solution to
\[
  - \div\left( \varphi^2 \nabla\frac{ u }{\varphi} \right) \leq 0 \quad \text{in } B_{d}.
\]
Integrating such equation on any ball $B_r$, we obtain
\[
  \int\limits_{\partial B_r} \varphi^2 \partial_{\nu} \frac{u}{\varphi} \de{\sigma} \geq 0
\]
If we introduce the function
\[
  H(r) = \frac{1}{r^{N}} \int\limits_{\partial B_r} \varphi u \de{\sigma} =  \int\limits_{\partial B} \varphi^2(rx) \frac{u(rx)}{\varphi(rx)} \de{\sigma}  ,
\]
a straightforward computation shows that
\[
  H'(r) = \frac{2}{r^{N}} \int\limits_{\partial B_r} u \varphi \frac{\partial_{\nu} \varphi}{ \varphi } \de{\sigma} + \frac{1}{r^{N}} \int\limits_{\partial B_r} \varphi^2 \partial_{\nu} \frac{u}{\varphi} \de{\sigma} \geq - 2 \left\| \frac{\partial_{\nu} \varphi}{\varphi} \right\|_{L^{\infty}(B)} H(r) \geq -C H(r),
\]
that is, the function $r \mapsto e^{Cr} H(r)$ is monotone non decreasing in $r$. Hence, for every
$0 < r_1 \leq r_2 \leq d$, we obtain that $H(r_1)\leq C H(r_2)$. Multiplying by
$r_1^N r_2^N$ and integrating in $(0,r) \times (r,d)$, with $r \leq d$, we obtain
\[
	\left(1-\frac{r^{N+1}}{d^{N+1}}\right)\frac{1}{r^{N+1}} \int\limits_{ B_r}\varphi u  \, \de{x} \de{y} \leq \frac{C}{d^{N+1}}\int\limits_{B_d\setminus B_r}\varphi u  \,  \de{x} \de{y} .
\]
Adding $C d^{-N-1} \int\limits_{B_r}\varphi u  \,  \de{x} \de{y}$, we infer
\[
  \frac{1}{r^{N+1}} \int\limits_{ B_{r}} \varphi u \de x\de y \leq C \frac{1}{d^{N+1}}
  \int\limits_{ B_{d}} \varphi u \de{x}\de y.
\]
Recalling that $\varphi$ is positive and bounded, and that $u=|\nabla v_1|^2$, we finally obtain that
\begin{equation*}
\frac{1}{r^N} \int\limits_{B^+_{r}(x_0,0)} |\nabla v_1|^2 \, \de{x} \de{y}
\leq  C\frac{r}{d}
\frac{1}{d^N} \int\limits_{B^+_{d}(x_0,0)} |\nabla v_1|^2 \, \de{x} \de{y}.
\end{equation*}
The lemma now follows by summing up with inequality \eqref{eqn: phi fuori da Z}, for $j=2,\dots,k$, and recalling that $d/r\geq1$.
\end{proof}
\begin{proof}[End of the proof of Proposition \ref{prp: 1/2_reg_lim_prof}]
Let us assume by contradiction that there exists a sequence $\{(X_n,r_n)\}_{n \in \N}$ such that $X_n=(x_n,y_n)\in K\cap \{y\geq0\}$, $r_n<\dist(K, \partial B)$, and
\[
    \Phi(X_n,r_n)\to + \infty,\quad\text{ as }n\to\infty.
\]
It is immediate to prove that $r_n \to 0$ and $y_n \to 0$: indeed, $\bv$ is $H^1$ and harmonic for $\{y>0\}$. In particular, the sequence $\{X_n\}_{n\in \N}$ accumulates at $\partial^0 K$. First we observe that,  thanks to the subharmonicity of $\tsum_{i} |\nabla v_i|^2$, if $r_n< y_n$ then
\[
 \Phi(X_n,y_n)\geq\frac{y_n}{r_n}\Phi(X_n,r_n) \geq \Phi(X_n,r_n) ;
\]
as a consequence we can assume without loss of generality that $r_n\geq y_n$. Analogously, once
$r_n\geq y_n$, we have that
\[
\Phi((x_n,0),2r_n) \geq \frac{1}{2^N} \Phi(X_n,r_n),
\]
and again, without loss of generality, we can assume that $y_n=0$ for every $n$, and drop it from
our notation.

Now, by the result of Lemma \ref{lem:_upperbound_Morrey 2}, the sequence $(x_n,r_n)$ can be replaced by a sequence of points in ${\mathcal{Z}}$. Indeed, if $\dist(x_n,\mathcal{Z}) > \bar r$ for every $n \in \N$, then
\[
	\Phi(x_n,r_n) \leq C \Phi(x_n,\bar r)
\]
and the right hand side is bounded since $\bv \in H^1(B^+)$. Consequently, it must be $\dist(x_n,\mathcal{Z}) \leq \bar r$, and then
\[
	\Phi(x_n,r_n) \leq C \Phi(x_n, \dist(x_n,\mathcal{Z})).
\]
Since the set ${\mathcal{Z}}$ is locally closed and $\dist(K,\partial^+B) > 0$, for $n$ sufficiently large, to each $x_n$ we can associate $x'_n \in \mathcal{Z}$ such that $\dist(x_n,\mathcal{Z}) = |x_n-x'_n| \leq \frac12 \dist(x_n,\partial^+B)$ and we can substitute the sequence $(x_n, \dist(x_n,\mathcal{Z}))$ with $(x'_n, 2\dist(x_n,\mathcal{Z}))$. We are in position to apply Lemma \ref{lem:_upperbound_Morrey} and find a contradiction to the unboundedness of the Morrey quotient.
\end{proof}

%


\end{document}